\newtheorem{thm}{Theorem}[section]
\newtheorem{cor}[thm]{Corollary}
\newtheorem{lem}[thm]{Lemma}
\newtheorem{defi}[thm]{Definition}
\newtheorem{prop}[thm]{Proposition}
\theoremstyle{definition}
\newtheorem{rem}[thm]{Remark}
\DeclareMathOperator{\topo}{top}
\DeclareMathOperator{\Tor}{Tor}
\newcommand{\R}{\mathbb{R}}
\renewcommand{\Im}{\textrm{Im}}
\DeclareMathOperator{\Ker}{Ker}
\DeclareMathOperator{\Ima}{Im}
\newcommand{\N}{\mathbb{N}}
\newcommand{\MT}{\mathcal{MT}}
\newcommand{\Z}{\mathbb{Z}}
\DeclareMathOperator{\Th}{Th}
\DeclareMathOperator{\Ext}{Ext}
\DeclareMathOperator{\Hom}{Hom}
\DeclareMathOperator{\Sq}{Sq}
\newcommand{\A}{\mathcal{A}}
\newcommand{\V}{\mathcal{V}}
\newcommand*\InsertTheoremBreak{
\begingroup % keep changes local
\setlength\itemsep{0pt}
\setlength\parsep{0pt}
\item[\vbox{\null}]
\endgroup
}
\title{A generalization of the stable EHP spectral sequence}
\author{Marcel B\"{o}kstedt and Anne Marie Svane}
\date{}
\begin{document} 
\maketitle
\begin{abstract}
For any vector bundle, we define an inverse system of spectra. In the case of a trivial bundle over a point, the homotopy groups of the filtration quotients give rise to the stable EHP spectral sequence, as was shown by Mahowald in \cite{mahowald}. The limit was determined by Lin in \cite{lin}. We also obtain a spectral sequence and show strong convergence in certain special cases. For compact base spaces, we obtain a generalization of Lin's theorem. In the case of the universal bundle over $BO$, we can also determine the limit by means of an Adams spectral sequence. This turns out to be quite different from the compact case. We also obtain partial results for the universal bundle over $BSO$. 
\end{abstract}
%\tableofcontents
\section{Introduction}
The well-known EHP spectral sequence is usually constructed from the long exact sequences of homotopy groups associated to the filtration
\begin{equation*}
\Omega S^1 \to \Omega^2 S^2 \to \dotsm.
\end{equation*}
This spectral sequence exhibits a periodic behaviour in a certain range. This naturally leads to the construction of the stable EHP spectral sequence given by ex\-ten\-ding periodically. 

In \cite{mahowald}, Mahowald constructed the stable EHP spectral sequence rather by considering the Atiyah--Hirzebruch spectral sequence for the suspension spectra $\Sigma^\infty P_{d,r}$ of the stunted projective space $P_{d,r} = \R P^{d-1}/\R P^{d-r-1}$. 
By the James homeomorphisms $\Sigma^{a_r} P_{d,r} \to P_{d+a_r,r}$, see e.g.\ \cite{james2}, the suspension spectra $\Sigma^\infty P_{d,r}$ are well-defined even for $d<0$ and $r\geq d$ by interpreting $\Sigma^{ka_{r}}P_{d,r}$ as $P_{d+ka_{r},r}$ for $k$ sufficiently large. 
 The number $a_{r+1}$ is the dimension of an irreducible representation of the Clifford algebra $Cl_r$, see e.g.\ \cite{lawson}. It is given by the table
\begin{equation}\label{RH}
\begin{tabular}{|c|c|c|c|c|c|c|c|c|}
\hline 
{$r$}&{1}&{2}&{3}&{4}&{5}&{6}&{7}&{8}\\
\hline
{$a_r$}&{1}&{2}&{4}&{4}&{8}&{8}&{8}&{8}\\
\hline
\end{tabular}
\end{equation}
for $r \leq 8$, and in general $a_{r+8} = 16 a_r$. 

The projections $P_{d+ka_{r+1},r+1} \to P_{d+ka_{r+1},r}$ define an inverse system of spectra 
%associated to an inverse system of spectra
\begin{equation}\label{Plim}
\dotsm \to \Sigma^{\infty } \Sigma P_{d,r+1} \to \Sigma^\infty \Sigma P_{d,r} \to \dotsm \to \Sigma^\infty \Sigma P_{d,1}
\end{equation}
Each map induces a map of spectral sequences, and Mahowald considered the spectral sequence obtained in the limit.
%and letting $d$ tend to infinity.
%Here $P_{d,r}$ is the stunted projective space $\R P^{d-1}/\R P^{d-r-1}$. 

Mahowald's spectral sequence converges to $ \varprojlim_r \pi_*(\Sigma^{\infty +1} P_{d,r})$.
Mahowald conjectured that the homotopy limit $ \varprojlim_r \Sigma^{\infty +1} P_{d,r}$ was the 2-completed sphere spectrum $({S}^{0})_2^\wedge$ in dimensions $*<d-1$. This was later proved by Lin in \cite{lin}. Letting $d$ tend to infinity, one obtains the stable EHP spectral sequence, and it follows that this converges to $\pi_*(({S}^{0})_2^\wedge)$.

The initial motivation for this paper was certain spectra $MT(d,r)$ defined in \cite{adb}, related to the cobordism category of \cite{GMTW} and to the existence of trivial subbundles of vector bundles in \cite{adb}. Certain low-dimensional homotopy groups were computed in \cite{adb}. The computations can be summarized in a spectral sequence. We do this in Section \ref{thess}. 

The spectra $MT(d,r)$ satisfy a version of James periodicity, see \cite{adb}, Section 3.3. Hence, in a certain range the spectral sequence behaves periodically. Thus it seems natural to let $r$ tend to infinity by periodicity.

In Section \ref{insy} we are going to construct a version of \eqref{Plim} that naturally generalizes to any vector bundle $E\to X$ classified by a map $f:X \to BO(d)$. We obtain an inverse system
\begin{equation}\label{fMlim}
\dotsm \to f^*\MT(d,r+1)\to f^*\MT(d,r)\to \dotsm \to f^*\MT(d,1)
\end{equation}
of spectra.
We consider in particular the case where $X$ is compact and the one where it is $BO$, $BSO$, or $BSpin$. The case where $X$ is a point corresponds to \eqref{Plim}. 

There is a spectral sequence for each of the spectra $f^*\MT(d,r)$.
As in Mahowald's case, we obtain a spectral sequence in the limit.
In Section~\ref{sss} we show that this spectral sequence converges strongly to $\varprojlim_r\pi_*(f^*\MT(d,r))^\wedge_2$.  
When $X$ is a point, we recover the stable EHP spectral sequence as constructed by Mahowald. 
In the $BO$, $BSO$, and $BSpin$ cases, we obtain exactly the stabilization of the spectral sequence for $MT(d,r)$ suggested in Section~\ref{thess}.

The remainder of this paper is concerned with the description of the homotopy limit $\varprojlim_r f^*\MT(d,r)$.
The case where $X$ is compact is the topic of Section~\ref{pbs}. We shall see that the limit is $\Sigma^{\infty-n}\Th(N)_2^\wedge$, i.e.\ the 2-completion of the suspension  spectrum of the Thom space of an $n$-dimensional complement $N$ of $E$. We give two proofs of this. The first proof is a topological argument using induction on the number of cells in $X$, starting with Lin's theorem. The other one is more algebraic. 

Recall that $H^*(\R P^{d-1};\Z/2) \cong \Z/2[t]/(t^{d})$, i.e.\ the truncated polynomial algebra on the generator $t$.
On $\Z/2$ cohomology, the sequence \eqref{Plim} induces a direct sequence
\begin{equation*}
\Z/2\{t^{d-r+1},\dots ,t^{d}\} \to \Z/2\{t^{d-r},\dots ,t^{d}\} \to \dotsm.
\end{equation*}
Here $\Z/2\{x^1,\dots ,x^k\}$ denotes the graded $\Z/2$-vector space with basis elements $x^i$ in dimension $i$.
The direct limit of these cohomology groups is $\Z/2\{t^l,l\leq d\}$, i.e.\ the Laurent polynomials in the variable $t$ of degree at most $d$. 
There are maps of spectra $ S^{0} \to \Sigma^{\infty} \Sigma P_{d,r}$ commuting with the maps in \eqref{Plim} and thus  inducing a map $\hat{\varphi}_0: S^{0} \to \varprojlim_r \Sigma^\infty \Sigma P_{d,r}$. 
Lin's proof is based on the observation that the induced map on cohomology 
\begin{equation}\label{isoext}
\hat{\varphi}^*_0:\Z/2\{t^l,l\leq d\} \to  \Z/2
\end{equation}
induces an isomorphism on $\Ext_\A^{s,t}$ in degrees $t-s\leq d$. 
A generalization of this, known as the Singer construction, yields the algebraic proof for general vector bundles.

In the last Section \ref{nonres}, we consider the cases $X=BO$ and $X=BSO$. From the above, one would expect the limit of \eqref{fMlim} to be a 2-completion of the Thom cobordism spectra $MO$ and $MSO$, respectively. However, we get a completely different result in this case. The limit is not even connected anymore. In fact, the homotopy groups are uncountable in all dimensions. We determine these completely in the $BO$ case and partially in the $BSO$ case. 

The proof goes by studying a version of the Adams spectral sequence for inverse limits. This is the direct limit as $r$ tends to infinity of the Adams spectral sequences used to compute $\pi_*(MT(d,r))$ in \cite{adb} for low values of $r$. We determine this direct limit and in the $BO$ case, we show that there are no non-trivial differentials in the limit.

\section{The inverse systems}\label{insy}
In this first section we are going to construct the inverse system \eqref{fMlim}. We first do so in the $X=pt$ case in Section \ref{dcls} and then generalize this to maps $f: X \to BO$ in Section \ref{dils}. It also serves as an introduction of notation.

\subsection{An alternative construction of Mahowald's spectrum}\label{dcls}
Let $V_{d,r}$ be the Stiefel manifold consisting of ordered $r$-tuples of orthonormal vectors in $\R^d$. Similarly, let $W_{d,r}$ denote the cone on $V_{d,r}$. A point in $W_{d,r}$ may be thought of as an $r$-tuple $(v_1,\dots,v_r)$
of orthogonal vectors of length $0\leq |v_1|=\dots =|v_r|\leq 1$.

James has constructed a $2(d-r)$-equivalence $P_{d,r} \to V_{d,r}$, see \cite{james}. We will use this to construct a version of $\Sigma^{\infty +1}P_{d,r}$ that is independent of the coordinates in $\R^{d}$ and hence easier to generalize to vector bundles.

Let $V$ be a representation of the Clifford algebra $Cl_{r-1}$ of dimension ${ka_r}$ for some $k$. This restricts to a multiplication $\R \oplus \R^{r-1}\times V\to V$. Let $e_1,\dots ,e_{r}$ be an orthonormal basis for $\R\oplus \R^r$. For a suitable inner product on $V$, the multiplication will satisfy that for any $x\in V$, $e_1x,\dots ,e_rx$ are orthogonal of the same length and $e_1x=x$, see \cite{lawson}. With this inner product, we identify $V$ with $\R^{ka_r}$.

Using such an orthogonal multiplication $\R^r \times \R^{ka_r} \to \R^{ka_r}$, define a map of pairs 
\begin{equation} \label{Vmap}
g_0 : (W_{d,r}, V_{d,r}) \times  (D^{ka_r},S^{ka_r-1}) \to (W_{d+ka_r,r},V_{d+ka_r,r})
\end{equation}
 by
\begin{equation*}
(v_1,\dots ,v_{r},x) \mapsto (   \sqrt{1-|x|^2}v_1 + e_1x,\dots,  \sqrt{1-|x|^2}v_{r} + e_{r}x).
\end{equation*} 
This defines a periodicity map 
\begin{equation*}
g_{0}: \Sigma^{ka_r} \Sigma V_{d,r} \to \Sigma V_{d+ka_r,r}.
\end{equation*}
It was shown in \cite{adb} that $g_0$ is a $(2(d-r)+ka_r+1)$-equivalence. 
 
The periodicity maps form a direct system of spectra
\begin{equation*}
\Sigma^\infty \Sigma V_{d,r} \to  \Sigma^{\infty -ka_r} \Sigma V_{d+ka_r,r} \to \Sigma^{\infty-2ka_r } \Sigma V_{d+2ka_r,r} \to \dotsm .
\end{equation*}
Taking the homotopy direct limit of these spectra yields a spectrum which we denote by $\mathcal{V}_{d,r}$. This satisfies
\begin{equation*}
\pi_q(\mathcal{V}_{d,r}) \cong \varinjlim_l{\pi_{q + lka_r}^s(\Sigma V_{d+lka_r,r})}.
\end{equation*} 
Note that this is defined even when $d$ is negative or $ r>d $ just by starting the direct sequence at $ \Sigma^{\infty -lka_r } \Sigma V_{d+lka_r,r}$ for some large $l$. 

\begin{thm}\label{PVeq}
The composite map  
\begin{equation*}
\Sigma^{\infty} \Sigma P_{d,r}\cong \Sigma^{\infty-ka_r} \Sigma P_{d+ka_r,r} \to \Sigma^{\infty-ka_r} \Sigma V_{d+ka_r,r} \to \mathcal{V}_{d,r}
\end{equation*}
defines a homotopy equivalence if $2r < d+ka_r$.
\end{thm}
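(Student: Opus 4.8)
The plan is to exploit the two equivalences at our disposal and to track the connectivity bookkeeping carefully. Recall that James's map $P_{d',r} \to V_{d',r}$ is a $2(d'-r)$-equivalence, and that $g_0 : \Sigma^{ka_r}\Sigma V_{d,r} \to \Sigma V_{d+ka_r,r}$ is a $(2(d-r)+ka_r+1)$-equivalence. The target spectrum $\mathcal{V}_{d,r}$ is by construction the homotopy colimit of the tower of periodicity maps starting (after enough suspensions) at $\Sigma^{\infty-ka_r}\Sigma V_{d+ka_r,r}$. So the composite in the statement factors as James followed by the structure map into the colimit, after the James homeomorphism $\Sigma^\infty\Sigma P_{d,r}\cong \Sigma^{\infty-ka_r}\Sigma P_{d+ka_r,r}$.

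First I would record the connectivity of each map in the composite, as a function of how far out in the periodicity tower we are. For the James map $\Sigma^{\infty-lka_r}\Sigma P_{d+lka_r,r}\to \Sigma^{\infty-lka_r}\Sigma V_{d+lka_r,r}$, desuspending by $lka_r$ costs nothing for homotopy groups in a fixed degree; James gives a $2(d+lka_r-r)$-equivalence of spaces, hence on the level of the shifted suspension spectra this map is an equivalence on $\pi_q$ for $q$ in a range that grows linearly in $l$. Likewise, the periodicity map $g_0$ at stage $l$ is a $(2(d+lka_r-r)+ka_r+1)$-equivalence, and these connectivities tend to infinity with $l$. Therefore, in any fixed homotopy degree $q$, the James maps and the structure maps of the two colimit systems (one for $P$ via James homeomorphisms, one for $V$ via $g_0$) are eventually isomorphisms; the two colimits are computed by cofinal subsystems of eventual isomorphisms, and the colimit of the James maps identifies them.

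The one place where the hypothesis $2r<d+ka_r$ must enter is to guarantee that the very first map in the composite — James at stage $l=0$, i.e.\ $\Sigma^{\infty-ka_r}\Sigma P_{d+ka_r,r}\to \Sigma^{\infty-ka_r}\Sigma V_{d+ka_r,r}$ — together with the first structure map into $\mathcal{V}_{d,r}$, already induces an isomorphism on all homotopy groups, not just in a range. Concretely: James at this stage is a $2(d+ka_r-r)$-equivalence of spaces; the condition $2r<d+ka_r$ is exactly $2(d+ka_r-r) > d+ka_r \geq \dim(\Sigma V_{d+ka_r,r})$ up to the additive constants, so James is already a weak equivalence of the relevant suspension spectra and not merely of a skeleton. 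I would check the arithmetic: $\dim V_{d',r}$ grows like $(d'-1)+\dots+(d'-r)$, which is roughly $rd'$, so one must be slightly more careful than the one-line estimate above suggests — the honest statement is that $P_{d',r}\to V_{d',r}$ is $2(d'-r)$-connected and both are $(2(d'-r)-1)$-dimensional-ish after the relevant truncation, but the clean input is that $P_{d',r}$ and $V_{d',r}$ have the same cells through dimension $2(d'-r)-1$ and this already exhausts $\Sigma P_{d,r}$ once $2r<d+ka_r$.

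The main obstacle, then, is the dimension count: verifying that under $2r<d+ka_r$ the James equivalence is surjective on homotopy in the top relevant degree as well, so that no information is lost at the bottom of the colimit system. Once that is in hand, the rest is the formal observation that a map between two sequential homotopy colimits which is an isomorphism on $\pi_*$ at every sufficiently late stage — and an isomorphism at the initial stage — induces an equivalence on colimits, hence the composite $\Sigma^\infty\Sigma P_{d,r}\to\mathcal{V}_{d,r}$ is an equivalence. I would also remark that this handles the case $d<0$ or $r>d$ uniformly, since the colimit definitions are set up precisely so that one may start the tower at a large value of $l$.
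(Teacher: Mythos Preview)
Your overall strategy is the paper's: use the James connectivity of $P_{d',r}\to V_{d',r}$ together with the connectivity of the periodicity maps $g_0$, and let the stage go to infinity so that in each fixed degree the maps are eventually isomorphisms. That skeleton is correct.

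The genuine gap is in the paragraph where you try to pin down the role of $2r<d+ka_r$. You assert it should make the James map at stage~$k$ ``already a weak equivalence of the relevant suspension spectra'', then immediately (and correctly) note this cannot be right because $\dim V_{d',r}$ is of order $rd'$, far above the James connectivity $2(d'-r)$. You never resolve this, and your final formal statement --- that you need the map of colimit systems to be ``an isomorphism at the initial stage'' --- is both false and unnecessary. For a map of directed systems to induce an isomorphism on colimits you only need it to be an isomorphism eventually in each degree; nothing is required of the initial map beyond existence.

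What makes the argument go through, and what the paper states explicitly, is the homology gap in the Stiefel manifold: $V_{d',r}$ has no cells strictly between dimensions $d'-1$ and $2(d'-r)$. You come close with ``$P_{d',r}$ and $V_{d',r}$ have the same cells through dimension $2(d'-r)-1$'', but the point is the converse direction: all the cells of $V$ that are \emph{not} in $P$ lie in dimensions $\geq 2(d'-r)$. Under $2r<d+ka_r$ this gap is nonempty, and since $\Sigma^{ka_r}\Sigma P_{d,r}$ has top cell in dimension $d+ka_r$, the composite $\Sigma^{ka_r}\Sigma P_{d,r}\to\Sigma V_{d+ka_r,r}$ is a homology isomorphism through dimension $2(d+ka_r-r)$ --- not merely through $d+ka_r$. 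Replacing $ka_r$ by $lka_r$ and desuspending, the connectivity of $\Sigma^\infty\Sigma P_{d,r}\to\Sigma^{\infty-lka_r}\Sigma V_{d+lka_r,r}$ is $2(d-r)+lka_r$, which tends to infinity with $l$; passing to the colimit gives the equivalence on all $\pi_*$.

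So: drop the demand that the initial James map be a global equivalence, invoke the cell gap for $V_{d',r}$ to get the connectivity to grow with the stage, and your argument becomes the paper's.
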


\begin{proof}
Assume $2r<d$.
The James map $P_{d,r} \to V_{d,r}$ is $2(d-r)$-connected. Thus it induces an isomorphism on homology in dimensions less than $2(d-r)>d$. The map $\Sigma^{ka_r} \Sigma V_{d,r} \to \Sigma V_{d+ka_r, r}$ is $(2(d-r+1) + ka_r)$-connected. Thus the composite map 
\begin{equation}\label{comp}
\Sigma^{ka_r} \Sigma P_{d,r} \to \Sigma^{ka_r} \Sigma V_{d,r} \to \Sigma V_{d+ka_r,r}
\end{equation}
induces an isomorphism on homology in dimensions up to $d + ka_r$. Both $\Sigma^{ka_r} \Sigma P_{d,r}$ and $\Sigma V_{d+ka_r,r}$ have homology zero in dimensions between $d + ka_r$ and $2(d-r + ka_r)$, since they have no cells in these dimensions, see e.g.\ \cite{hatcher}, Chapter~3.D. Thus  \eqref{comp} is $(2(d-r) + ka_r)$-connected. Letting $l$ tend to infinity, we get an isomorphism
\begin{equation*}
\pi_*^s(\Sigma P_{d,r}) \to \varinjlim_k \pi_{* + ka_r}^s(\Sigma V_{d+ka_r,r}) = \pi_*(\V_{d,r}) .
\end{equation*}
Thus the map $\Sigma^{\infty +1}P_{d,r} \to \V_{d,r}$ must be a homotopy equivalence.
%
%For general $d$ and $r$, the claim follows as above by considering
%\begin{equation*}
%\Sigma^{\infty +1}P_{d,r} \cong \Sigma^{\infty-ka_r} \Sigma P_{d+ka_r,r} \to  \Sigma^{\infty - ka_r} \Sigma V_{d+ka_r,r} \to \V_{d,r}.
%\end{equation*}
\end{proof}

The  following diagram of Stiefel manifolds
\begin{equation}\label{ustabil}
\vcenter{\xymatrix{{V_{d,r+1}}\ar[d]\ar[r]&{V_{d,{r}}}\ar[d]\\
{V_{d+1,r+1}}\ar[r]&{V_{d+1,r}}
}}
\end{equation}
commutes. The vertical maps come from the splitting $\R^{d+1}\cong \R^d \oplus \R$ and the horizontal maps come from forgetting the $(r+1)$th vector. We want to see that this induces a well-defined commutative diagram on direct limits
\begin{equation}\label{system}
\vcenter{\xymatrix{{\V_{d,r+1}}\ar[d]\ar[r]&{\V_{d,{r}}}\ar[d]\\
{\V_{d+1,r+1}}\ar[r]&{\V_{d+1,r}.}
}}
\end{equation}
%\begin{lem} 
%The diagram
%\begin{equation*}
%\xymatrix{{\Sigma^{a_r}\Sigma V_{d,r}}\ar[r] \ar[d]^{i_{d,r}}&{\Sigma V_{d+a_r,r}}\ar[d]^{i_{d + a_r,r}}\\
%{\Sigma^{a_r}\Sigma V_{d+1,r}}\ar[r]&{\Sigma V_{d+1+a_r,r}}
%}
%\end{equation*}
%commutes where $i_{d+a_r,r}$ comes from the inclusion $\R^d \oplus \R^{a_r} \subseteq \R^d \oplus \R \oplus \R^{a_r}$. Thus there is a map of direct limits
%\begin{equation*}
%\V_{d,r} \to \V_{d+1,r}.
%\end{equation*}
%\end{lem}
%This is easy to check.
%\end{proof}
So far, we have made no assumptions on the actual choice of orthogonal multiplication in the construction of the periodicity maps. However, this becomes important if we want to make the diagram strictly commutative.
\begin{lem} \label{henvis}
Let $m_{r+1}:\R^{r+1}\times \R^{ka_{r+1}}\to \R^{ka_{r+1}}$ be an orthogonal multiplication. 
Define $\V_{d,r+1}$ and $\V_{d,r}$ using $m_{r+1}$ and its resriction to $(\R^{r}\oplus 0) \times \R^{ka_{r+1}}$, respectively. 
% and define $\V_{d,r} = \varinjlim_l \Sigma^{\infty-lka_{r+1}}\Sigma V_{d+lka_{r+1},r}$ using the periodicity maps defined by the restriction of $m_r$ to $\R^{r}\times \R^{la_{r+1}}$.  
Then the diagram \eqref{system} is well-defined and commutative. 
\end{lem}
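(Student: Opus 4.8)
The plan is to produce the square \eqref{system} directly from a levelwise-commuting square of direct systems of Stiefel manifolds, using the common period $N := ka_{r+1}$ at all four corners; since the mapping telescope (homotopy direct limit) is functorial in maps of direct systems, such a square of systems yields a commuting square of spectra, so the whole statement reduces to a space-level check. Working at level $r$ with period $N$ is legitimate: the $Cl_r$-module $\R^{N}$ underlying $m_{r+1}$, restricted along $\R^{r}\subset\R^{r+1}$, is a $Cl_{r-1}$-module of dimension $N$, so $m_r := m_{r+1}|_{(\R^{r}\oplus 0)\times\R^{N}}$ is a genuine orthogonal multiplication and defines periodicity maps $g_0$ for $\V_{d,r}$ (and $\V_{d+1,r}$) exactly as in \eqref{Vmap}. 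It therefore suffices to check that the forgetful maps $V_{d+lN,r+1}\to V_{d+lN,r}$ and the stabilization maps $V_{d+lN,r+1}\to V_{d+1+lN,r+1}$ — together with their analogues on the other edge of the square — commute strictly with $g_0$, and that the square \eqref{ustabil} commutes levelwise.

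For the forgetful maps this is exactly where the hypothesis is used. Write $g_0$ from \eqref{Vmap} at level $r+1$ with the orthonormal vectors $e_1,\dots,e_{r+1}$, and at level $r$ with the first $r$ of them, which is precisely $m_r$. Forgetting the $(r+1)$st output coordinate of
\begin{equation*}
g_0(v_1,\dots,v_{r+1},x) = \bigl(\sqrt{1-|x|^2}\,v_1 + e_1 x,\ \dots,\ \sqrt{1-|x|^2}\,v_{r+1} + e_{r+1}x\bigr)
\end{equation*}
gives $\bigl(\sqrt{1-|x|^2}\,v_1 + e_1x,\ \dots,\ \sqrt{1-|x|^2}\,v_r + e_rx\bigr)$, which is literally $g_0(v_1,\dots,v_r,x)$ at level $r$; hence the corresponding square of maps of pairs $(W,V)\times(D^{N},S^{N-1})\to(W',V')$, and therefore of spectra, commutes on the nose. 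Had we used an unrelated multiplication at level $r$ the two composites would only agree up to homotopy — this is the entire reason for restricting $m_{r+1}$. The stabilization maps are then pure bookkeeping: fix once and for all the convention $\R^{d+1+lN} = \R^{d}\oplus\R\oplus\R^{lN}$ with the stabilizing summand $\R$ in a fixed slot, immediately after the original $\R^{d}$ and before all $l$ period blocks, for every $l$. With this choice the map $V_{d+lN,r+1}\to V_{d+1+lN,r+1}$ is the identity on the $\R^{d}$- and $\R^{lN}$-coordinates and puts a $0$ in the stabilizing slot, whereas $g_0$ only rescales the $\R^{d}$-coordinates by $\sqrt{1-|x|^2}$ and writes each $e_ix$ into the newest period block; neither ever touches the stabilizing coordinate, so the stabilization maps commute strictly with $g_0$ at both levels $r$ and $r+1$. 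Since forgetting the $(r+1)$st vector plainly commutes with the inclusion $\R^{d}\hookrightarrow\R^{d+1}$, the square \eqref{ustabil} commutes levelwise, and passing to telescopes yields the (in particular well-defined) strictly commutative square \eqref{system}.

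I expect the only real friction to be exactly this coordinate bookkeeping: one must commit to a single rule for the position of every newly introduced coordinate — stabilizing and period alike — and check that this one rule is respected simultaneously by the forgetful maps, the stabilization maps, and all the periodicity maps, so that no homotopies are needed anywhere. Given such a rule, everything that remains is a direct inspection of the defining formula for $g_0$.
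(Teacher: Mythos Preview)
Your proposal is correct and follows essentially the same approach as the paper: verify that the periodicity map $g_0$ commutes strictly with both the forgetful maps $V_{d,r+1}\to V_{d,r}$ (this is where the restriction hypothesis on the multiplication is used) and the stabilization maps $V_{d,r}\to V_{d+1,r}$, then pass to direct limits. The paper's proof is terser and simply says these commutativities are ``easy to see from the formulas,'' but your explicit coordinate bookkeeping and the care you take about where the stabilizing $\R$-summand sits are exactly what underlies that claim.
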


\begin{proof}
It is easy to see from the formulas that the diagrams
\begin{equation*}
\xymatrix{{\Sigma^{a_r}\Sigma V_{d,r}}\ar[r] \ar[d]^{}&{\Sigma V_{d+a_r,r}}\ar[d]^{}&{\Sigma^{a_{r+1}}\Sigma V_{d,r+1}} \ar[r]\ar[d]& {\Sigma V_{d+a_{r+1},r+1}}\ar[d]\\
{\Sigma^{a_r}\Sigma V_{d+1,r}}\ar[r]&{\Sigma V_{d+1+a_r,r}}& {\Sigma^{a_{r+1}}\Sigma V_{d,r}}\ar[r]&{\Sigma V_{d+a_{r+1},r}}
}
\end{equation*}
commute. Here the vertical maps in the first diagram come from the inclusion $\R^d \oplus \R^{a_r} \subseteq \R^d \oplus \R \oplus \R^{a_r}$ and in the second diagram they come from forgetting the last vector. Hence the maps  $\V_{d,r} \to \V_{d+1,r}$ and $\V_{d,r+1} \to \V_{d,r}$ are well-defined.

Commutativity of \eqref{system} now follows because the diagram
\begin{equation*}
\xymatrix{{V_{d + ka_{r+1},r+1}}\ar[d]^{}\ar[r]&{V_{d+ka_{r+1},{r}}}\ar[d]^{}\\
{V_{d+1 + ka_{r+1},r+1}}\ar[r]&{V_{d+1 +ka_{r+1},r}}
}
\end{equation*}
commutes for all $k$.
\end{proof}

\begin{lem}
For each $d$, there is an inverse system of spectra
\begin{equation}\label{insyV}
\dotsm \to \V_{d,r+1} \to \V_{d,r}.
\end{equation}
The inclusions $\V_{d,r} \to \V_{d+1,r}$ define a map of inverse systems.
\end{lem}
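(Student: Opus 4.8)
The plan is to deduce both assertions from Lemma~\ref{henvis} once a suitable family of orthogonal multiplications has been fixed; the only genuinely delicate point is arranging those choices. Recall that to get the single map $\V_{d,r+1}\to\V_{d,r}$, Lemma~\ref{henvis} asks us to build $\V_{d,r+1}$ from an orthogonal multiplication $m_{r+1}$ on $\R^{r+1}$ --- equivalently a $Cl_r$-module structure on some $\R^{ka_{r+1}}$ with a compatible inner product as in \cite{lawson} --- and to build $\V_{d,r}$ from the restriction $m_{r+1}|_{\R^{r}\oplus 0}$, viewed as a $Cl_{r-1}$-module structure. Because $a_r\mid a_{r+1}$, and hence $a_r\mid a_{r'}$ for every $r'\ge r$, such a $Cl_{r-1}$-module has dimension a multiple of $a_r$ and is an admissible datum for $\V_{d,r}$ in the sense of Section~\ref{dcls}. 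I would therefore fix, for each $d$ and each $R$, a single $Cl_{R-1}$-module $M$ of dimension $ka_R$ with an invariant inner product, and build $\V_{d,r}$ for $1\le r\le R$ from the restricted structure $m|_{\R^{r}\oplus 0}$ on $M$.

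With these choices, consecutive rungs fit the hypotheses of Lemma~\ref{henvis} with literally matching data: the structure used for the source $\V_{d,r+1}$ is $m|_{\R^{r+1}\oplus 0}$, and restricting it along $Cl_{r-1}\hookrightarrow Cl_r$ gives exactly the structure $m|_{\R^{r}\oplus 0}$ used for the target $\V_{d,r}$. Hence the maps concatenate, yielding the inverse system \eqref{insyV} truncated at $R$; letting $R\to\infty$ and invoking the standard fact that $\V_{d,r}$ is, up to canonical equivalence, independent of the admissible datum used to define it, one obtains the full inverse system (and in particular $\varprojlim_r\V_{d,r}$, used later, is unambiguous). For the second assertion, the splitting $\R^{d+1}\cong\R^d\oplus\R$ induces the vertical maps $\V_{d,r}\to\V_{d+1,r}$ of \eqref{system}, and Lemma~\ref{henvis} says precisely that \eqref{system} commutes for each $r$ --- with the same data as above. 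A family of such commuting squares, one per $r$, is by definition a morphism of inverse systems $\{\V_{d,r}\}_r\to\{\V_{d+1,r}\}_r$, which is the claim.

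The main obstacle is the bookkeeping in the first paragraph: the relevant Clifford modules have dimensions $ka_r$ that necessarily grow with $r$, so no single Euclidean space can carry the multiplications for all rungs at once, and one must thread them through a nested sequence of modules while keeping track of the canonical equivalences comparing the different admissible models of each $\V_{d,r}$. Once that is set up, both the composability of the tower and the map-of-inverse-systems property are immediate consequences of Lemma~\ref{henvis}; no new homotopy-theoretic input is needed.
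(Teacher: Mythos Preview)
Your outline correctly identifies that everything reduces to Lemma~\ref{henvis} once a compatible family of orthogonal multiplications is in hand, and you correctly flag the bookkeeping as the only real issue. But the way you propose to handle that bookkeeping has a genuine gap.

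You fix, for each finite $R$, a single $Cl_{R-1}$-module $M_R$ and build the tower $\V_{d,R}\to\dotsm\to\V_{d,1}$ from its successive restrictions. So far so good. The problem is the passage $R\to\infty$. For different $R$'s the modules $M_R$ are different, so the model of $\V_{d,r}$ you obtain from $M_R$ is not the same spectrum as the one obtained from $M_{R+1}$. You resolve this by ``invoking the standard fact that $\V_{d,r}$ is, up to canonical equivalence, independent of the admissible datum.'' That fact is neither standard nor established here. It is true that any two such models have the same homotopy groups (since the periodicity maps are increasingly highly connected), but you need a specific comparison map between the two models that strictly commutes with the maps $\V_{d,r+1}\to\V_{d,r}$ built from each. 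Without that, you have a collection of finite towers with no glue, not an inverse system. This coherence problem is exactly what the paper's remark before Lemma~\ref{henvis} warns about: the choice of multiplication ``becomes important if we want to make the diagram strictly commutative.''

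The paper does not sidestep this with an abstract comparison argument; it solves it by constructing one infinite compatible family of multiplications outright. The key construction is the tensor product: given $m_{8k}$ on $\R^{16^k}$ and $m_8$ on $\R^{16}$, one sets $m_{8(k+1)}=m_{8k}\otimes 1+1\otimes m_8$ on $\R^{16^k}\otimes\R^{16}$, checks orthogonality, and observes that under a suitable isometry $\R^{16^k}\otimes\R^{16}\cong\bigoplus_{1}^{16}\R^{16^k}$ the restriction to $\R^{8k}$ is a direct sum of $16$ copies of $m_{8k}$. This gives genuinely nested multiplications for all $r$ at once, and one then only has to verify that the periodicity map for a direct sum is the composite of the periodicity maps for the summands. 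That explicit construction is the missing content in your proposal.
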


\begin{proof}
We first show that we can choose multiplications  
\begin{equation*}
1 \oplus m_r : (\R \oplus \R^{r}) \times \R^{16^k}  \to \R^{16^k}
\end{equation*}
 for each $r$, such that the first $\R$-summand acts as the identity and $m_{r}(a,x)$ is orthogonal to $x$ and has length $|a||x|$ for all $a\in \R^r$ and $x \in \R^{16^k}$, and such that the restriction of $1 \oplus m_r$ to $\R \oplus \R^{r-1}$ is an orthogonal direct sum of the chosen $1\oplus m_{r-1}$. 

Suppose given such multiplications $m_{8s}: \R^{8s} \times \R^{16^s}\to \R^{16^s}$ for all $s\leq k$. Then there is a multiplication 
\begin{equation*}
m_{8(k+1)}:( \R^{8k} \oplus \R^8) \times (\R^{16^k} \otimes \R^{16})\to \R^{16^k} \otimes \R^{16}
\end{equation*}
 given by $m_{8k} \otimes 1 + 1\otimes m_8$ satisfying orthogonality when $\R^{16^k} \otimes \R^{16}$ is given the inner product
\begin{equation*}
\langle x\otimes y, x'\otimes y' \rangle = \langle x,x'\rangle \langle y,y'\rangle.
\end{equation*}
For a suitable isometry $\R^{16^k} \otimes \R^{16}\cong \bigoplus_{i=1}^{16} \R^{16^k}$, the restriction to $ \R^{8k}\oplus 0$ is the orthogonal sum of 16 copies of $m_{8k}$.

This allows us to  choose multiplications $m_{8(k+1)}$ inductively such that each restricts
to an orthogonal sum of 16 copies of $m_{8k}$. For all $8k <r <8(k+1)$, we choose the multiplication $m_r:\R^{r}  \times \R^{16^{k+1}}  \to \R^{16^{k+1}}$ to be the restriction of $m_{8(k+1)}$. 

With these choices, the diagram \eqref{system} commutes for all $r\neq 8k$, and for $r=8k$, it is enough to check that the periodicity map defined by an orthogonal sum of two multiplications is the same as the composition of the two periodicity maps corresponding to each of the two multiplications. This is straightforward.
\end{proof}
%\begin{lem} 
%The diagram
%\begin{equation*}
%\xymatrix{{\Sigma^{a_r}\Sigma V_{d,r}}\ar[r] \ar[d]^{i_{d,r}}&{\Sigma V_{d+a_r,r}}\ar[d]^{i_{d + a_r,r}}\\
%{\Sigma^{a_r}\Sigma V_{d+1,r}}\ar[r]&{\Sigma V_{d+1+a_r,r}}
%}
%\end{equation*}
%commutes where $i_{d+a_r,r}$ comes from the inclusion $\R^d \oplus \R^{a_r} \subseteq \R^d \oplus \R \oplus \R^{a_r}$. Thus there is a map of direct limits
%\begin{equation*}
%\V_{d,r} \to \V_{d+1,r}.
%\end{equation*}
%\end{lem}
\begin{defi} 
\begin{equation*}
\begin{split}
\widehat{V}_d &= \varprojlim_r \V_{d,r}\\
\widehat{V} &= \varinjlim_d \widehat{V}_d.
\end{split}
\end{equation*}
\end{defi}
%One could even define $\widehat{V}_d$ for any $d \in \Z_2^\wedge = \varprojlim_r \Z/a_r$, but we shall not consider this. 

For $d\geq 0$, the orthogonal multiplication defines a map $\varphi_0 :S^{ka_r-1}\to V_{d+ka_r,r}$ by 
\begin{equation*}
\varphi_0(x)= (e_1x,\dots,e_{r}x) \in 0\oplus \R^{ka_r}\subseteq \R^{d+ka_r}.
\end{equation*}
This yields a map of spectra $\varphi_0 : S^{0} \to \V_{d,r}$ for each $d$ and $r$ such that
\begin{equation*}
\xymatrix{{\V_{d,r+1}} \ar[r]&{\V_{d+1,r+1}}\ar[d]\\
{S^{0}}\ar[ur]^{\varphi_0} \ar[r]^{\varphi_0} \ar[u]^{\varphi_0}&{\V_{d+1,r}}
}
\end{equation*}
commutes. Thus there are maps 
\begin{equation*}
\begin{split}
\hat{\varphi}_0 &: S^0 \to \widehat{V}_d\\
\hat{\varphi}_0 &: S^0 \to \widehat{V}.
\end{split}
\end{equation*}

The inverse system \eqref{insyV} is essentially \eqref{Plim}. We have not been able to construct a homotopy commutative map between the two diagrams, but \eqref{insyV} does satisfy the following version of Lin's theorem:
\begin{thm}\label{linV}
\InsertTheoremBreak
\begin{itemize}
\item[(i)] $\varprojlim_r \pi_q(\V_{d,r}) = 0$ when $q<0$ and $q<d$.
\item[(ii)] For  $0<d-1$, $\varprojlim_r \pi_0(\V_{d,r}) \cong \Z_2^\wedge$ as topological groups. The inclusion
\begin{equation*}
\hat{\varphi}_{0*}:\Z = \pi_0(S^{0}) \to \varprojlim_r \pi_0(\V_{d,r})
\end{equation*}
is non-zero mod $2$.
\item[(iii)] When $q>0$ and  $d>0$, the map induced by $\hat{\varphi}_0$
\begin{equation*}
\hat{\varphi}_{0*}: \pi_q(S^0)_2^\wedge \to \varprojlim_r \pi_q(\V_{d,r})
\end{equation*}
is an isomorphism when $q<d-1$ and surjective when $q=d-1$.
\end{itemize}
\end{thm}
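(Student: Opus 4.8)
The plan is to compute the homotopy of $\widehat V_d=\varprojlim_r\V_{d,r}$ by an Adams spectral sequence and then to read off (i)--(iii) from the Milnor sequences
\begin{equation*}
0\longrightarrow {\varprojlim_r}^{1}\pi_{q+1}(\V_{d,r})\longrightarrow\pi_q(\widehat V_d)\longrightarrow\varprojlim_r\pi_q(\V_{d,r})\longrightarrow 0 .
\end{equation*}
The first step is to identify the mod~$2$ cohomology of the tower \eqref{insyV} together with the map induced by $\hat\varphi_0$. For fixed $d$ and all sufficiently large $r$ the hypothesis $2r<d+ka_r$ of Theorem~\ref{PVeq} holds (since $a_r\to\infty$), so $\V_{d,r}\simeq\Sigma^{\infty+1}P_{d,r}$; as only large $r$ matters for the limit, I may work with the stunted projective spectra. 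Thus $\widetilde H_n(\V_{d,r};\Z/2)$ is $\Z/2$ for $d-r+1\le n\le d$ and $0$ otherwise, and the bonding map $\V_{d,r+1}\to\V_{d,r}$, coming from the Stiefel map $V_{d,r+1}\to V_{d,r}$ that forgets the last vector, corresponds under James's equivalence to the projection $P_{d,r+1}\to P_{d,r}$; on cohomology the tower is therefore the direct system of \eqref{Plim}. A Mittag--Leffler argument --- each tower $\{\widetilde H_n(\V_{d,r};\Z/2)\}_r$ is eventually constant with isomorphism bonding maps --- then identifies $\widetilde H^*(\widehat V_d;\Z/2)$ with $\Z/2\{t^l:l\le d\}$ as a module over $\A$, and $\hat\varphi_0^*$ with the map \eqref{isoext} (this last point from the compatibility with $\varphi_0$, which is the inclusion of the degree~$0$ cell when present, hence the projection onto the degree~$0$ summand on cohomology).

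The heart of the proof then imports two inputs from Lin \cite{lin}, \emph{both of which depend only on the cohomology of the tower} and so are available to us even though, as remarked before the theorem, no homotopy-commutative comparison of \eqref{insyV} with \eqref{Plim} is known. The algebraic input is that \eqref{isoext} --- a truncation of the Singer construction applied to $\Z/2$ --- induces an isomorphism on $\Ext_\A^{s,t}(-,\Z/2)$ for $t-s\le d$. The homotopy-theoretic input is that, although $\widehat V_d$ has cells in arbitrarily negative degrees, its Adams spectral sequence
\begin{equation*}
E_2^{s,t}=\Ext_\A^{s,t}\!\bigl(\widetilde H^*(\widehat V_d;\Z/2),\Z/2\bigr)\ \Longrightarrow\ \pi_{t-s}(\widehat V_d)_2^\wedge
\end{equation*}
converges, and that $\hat\varphi_0$ induces a map from the Adams spectral sequence of $S^0$. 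Granting both, the standard comparison of spectral sequences shows that $\hat\varphi_0\colon S^0\to\widehat V_d$ induces an isomorphism $\pi_q(S^0)_2^\wedge\to\pi_q(\widehat V_d)_2^\wedge$ for $q<d-1$ and an epimorphism for $q=d-1$, the loss of one degree at the edge being the usual phenomenon; in particular $\widehat V_d$ is $2$-complete in this range, so there we may drop the completion on the target. I expect the main obstacle to be precisely the verification that Lin's convergence estimates use only the cohomological data of \eqref{Plim}, and hence transfer to \eqref{insyV}; everything around this step is routine bookkeeping.

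It remains to deduce (i)--(iii) from the Milnor sequences. In the range $q\le d-1$ and for $r$ large, the free part of $\pi_q(\V_{d,r})=\pi_q^s(P_{d,r})$ comes from the top cell of $P_{d,r}$, which is present for all $r$, so the tower $\{\pi_q(\V_{d,r})\}_r$ has a constant free sub-tower with isomorphism bonding maps and a finite quotient tower; similarly $\pi_d(\V_{d,r})$ is finite. Hence every ${\varprojlim_r}^{1}$-term occurring for $q\le d-1$ vanishes, and $\varprojlim_r\pi_q(\V_{d,r})\cong\pi_q(\widehat V_d)$ there. Under the hypotheses of (i), i.e. $q<0$ and $q<d$, the $E_2$-page above vanishes in stem $q$ (it agrees there with that of $S^0$, which is zero for negative stems), so $\pi_q(\widehat V_d)=0$ and a fortiori $\varprojlim_r\pi_q(\V_{d,r})=0$; this is (i). For $q>0$ with $q<d-1$ the comparison of the preceding paragraph gives $\varprojlim_r\pi_q(\V_{d,r})\cong\pi_q(\widehat V_d)\cong\pi_q(S^0)_2^\wedge$ via $\hat\varphi_{0*}$, which is (iii), and at $q=d-1$ it gives the stated surjectivity. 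For $q=0$ it gives $\varprojlim_r\pi_0(\V_{d,r})\cong\pi_0(\widehat V_d)\cong\pi_0(S^0)_2^\wedge=\Z_2^\wedge$; since each $\pi_0(\V_{d,r})$ is finitely generated and discrete, the inverse-limit topology on $\varprojlim_r\pi_0(\V_{d,r})$ is the profinite ($2$-adic) one, so this is an isomorphism of topological groups, and $\hat\varphi_{0*}(1)$ corresponds to $1\in\Z_2^\wedge$, a topological generator which is in particular non-zero mod~$2$. This proves (ii).
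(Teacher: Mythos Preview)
Your approach is essentially the paper's: identify $\varinjlim_r H^*(\V_{d,r})$ with $\varinjlim_r H^*(\Sigma P_{d,r})$ as $\A$-modules and then import Lin's argument, which depends only on this cohomological data (plus the existence of $S$-duals, equivalently the convergence statement for the inverse-limit Adams spectral sequence, which the paper cites from \cite{rognes}). One small wrinkle: you assert that the bonding map $\V_{d,r+1}\to\V_{d,r}$ ``corresponds under James's equivalence to the projection $P_{d,r+1}\to P_{d,r}$''. The paper explicitly notes that no homotopy-commutative comparison of the two towers is known. What saves you is that you only use this on $\Z/2$-cohomology, where each group is at most $\Z/2$ in a given degree and the map is determined once you know it is non-zero; but you should say this rather than invoke a compatibility you have not established. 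Similarly, the identification of $\hat\varphi_0^*$ with \eqref{isoext} deserves an argument; the paper does this by factoring $\varphi_0$ through $S^{ka_r-1}\to V_{ka_r,r}\to V_{d+ka_r,r}$ and observing that the first map is a degree-one map onto the bottom cell.

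Your handling of the $\varprojlim{}^1$-terms, however, contains genuine errors. The claim that for $q\le d-1$ ``the free part of $\pi_q(\V_{d,r})$ comes from the top cell of $P_{d,r}$'' is false: the top cell of $\Sigma P_{d,r}$ sits in dimension $d$, not $q$. What is actually true is that for fixed $q<d$ and $r$ large (so that $d-r+1<q<d$), the rational homology $H_q(\Sigma P_{d,r};\Q)$ vanishes, hence $\pi_q(\V_{d,r})$ is finite; Mittag--Leffler then holds on a cofinal subtower of finite groups, giving $\varprojlim{}^1\pi_q=0$. This is exactly the content of the paper's Remark~\ref{improveV}, which sharpens it to ``finite $2$-primary for $d-r$ odd''. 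Your further claim that ``$\pi_d(\V_{d,r})$ is finite'' is wrong when $d$ is even (the top cell then contributes a $\Z$), but fortunately you do not need $\varprojlim{}^1\pi_d=0$: at $q=d-1$ the theorem only asserts surjectivity of $\hat\varphi_{0*}$ onto $\varprojlim_r\pi_{d-1}(\V_{d,r})$, and this follows from the surjection $\pi_{d-1}(S^0)_2^\wedge\twoheadrightarrow\pi_{d-1}(\widehat V_d)$ composed with the Milnor surjection $\pi_{d-1}(\widehat V_d)\twoheadrightarrow\varprojlim_r\pi_{d-1}(\V_{d,r})$.
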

Here $G_2^\wedge$ denotes the 2-completion $\varprojlim_r G/2^r G$ of the group $G$.
\begin{proof}
In \cite{lin}, Lin proves the corresponding theorem for the homotopy inverse limit of \eqref{Plim}.  The proof of Lin's theorem only relies on the structure of $\varinjlim_r H^*(\Sigma P_{d,r})$ as a module over the Steenrod algebra and the fact that each spectrum in the inverse system has an S-dual. 
%\begin{equation*}
%\dotsm \to \Sigma^\infty P_{d,r} \to \Sigma^\infty P_{d,{r-1}} \to \dotsm \to \Sigma^\infty P_{d,1}.
%\end{equation*}  
%Since $\V_{d,r}$ is equivalent to $\Sigma^\infty P_{d,r}$, we are also considering such an inverse system, but we have not been able to construct an explicit map between these two inverse systems. 
Since $\varinjlim_r H^*(\V_{d,r})\cong \varinjlim_r H^*(\Sigma P_{d,r})$ as Steenrod modules, the proof carries over to our situation.

To see that $\varphi_{0*}$ is as claimed, we need $\varphi_0^{*}: \varinjlim_r H^0(\V_{d,r};\Z/2) \to H^0(S^0;\Z/2)$ to be non-zero. But $\varphi_0$ factors as 
\begin{equation*}
 S^{ka_r-1} \to V_{ka_r,r} \to  V_{d+ka_r,r}.
\end{equation*} 
The composition $S^{ka_r-1} \to V_{ka_r,r} \to S^{ka_r -1}$ is the identity. Thus the first map is an isomorphism on $H^{ka_r-1}$. So is the second map if $r>d$. 

Alternatively, one could prove the theorem by referring to \cite{rognes}, Proposition 2.2. This is the approach in Section \ref{singerc}.
\end{proof}

From now on, a finite 2-primary group will mean a finite abelian group where all elements have order some power of 2. A 2-profinite group is an inverse limit of finite 2-primary groups. 

\begin{rem}\label{improveV}
%Actually, since the composition 
%\begin{equation*}
%\hat{\varphi}_{0*}: \pi_q(S^0)_2^\wedge \to \varprojlim_r \pi_q(\V_{d,r}) \to \varprojlim_r \pi_q(\V_{d+1,r})
%\end{equation*}
%is injective, the first map is an isomorphism for $q< d$. 
For $d-r$ and $p$ odd, $H^q(\Sigma V_{d,r};\Z/p)=0$ for $q<d$ or $q=d$ odd. Thus $\pi_q^s(V_{d,r})$ must be a finite 2-primary group. Since $\pi_q(\V_{d,r}) \cong \pi_{q+ka_r}^s(V_{d+ka_r,r})$ for some even $ka_r$, $\pi_q(\V_{d,r})$ is also a finite 2-primary group for $d-r$ odd. 

%It follows that $\varprojlim_r \pi_1({\V_{d,r}})$ is 2-profinite for $q<d$. 

Therefore, by the Mittag--Leffler condition, $\varprojlim_r\nolimits^1 \pi_q(\V_{d,r})=0$ for those $q$, and hence $\pi_q(\widehat{V}_{d}) \cong \varprojlim_r \pi_q(\V_{d,r})$ for $q<d-1$. 
%Furthermore, by Section \ref{dirinv}, $\pi_q(\widehat{V}_d) \cong \pi_q((\widehat{V}_d)_2^\wedge)$ for $q<d$. 
\end{rem}

\subsection{Generalization to Thom spectra}\label{dils}
%In this section, we shall generalize the construction of the direct and inverse limit spectra in the previous section to the spectra $MT(d,r)$ and, even more generally, to any vector bundle over a compact CW complex. 
Let $G(d,n)$ denote the Grassmannian consisting of $d$-dimensional subspaces of $\R^{n+d}$. Let $U_{d,n}\to G(d,n)$ be the universal bundle with $n$-dimensional orthogonal complement $U_{d,n}^\perp$. Then $MTO(d)$ is the spectrum with $n$th space
\begin{equation*}
MTO(d)_n=\Th(U_{d,n}^\perp)
\end{equation*} 
where $\Th(\cdot)$ denotes the Thom space.
The splitting $\R^{1+n +d} = \R\oplus \R^{n+d}$ defines an inclusion $G(d,n)\to G(d,n+1)$ and the restriction of $U_{d,n+1}^\perp$ to $G(d,n)$ is $\R \oplus U_{d,n}^\perp$. This defines spectrum maps
\begin{equation*}
\Sigma\Th(U_{d,n}^\perp) = \Th(\R \oplus U_{d,n}^\perp) \to \Th(U_{d,n+1}^\perp).
\end{equation*}

More generally, let $f: X \to BO$ be any map. Then $X$ is filtered by the subspaces $X_{d,n} = f^{-1}(G(d,n))$. Again this defines a spectrum $f^*MT(d)$ with 
\begin{equation*}
f^*MT(d)_n = \Th(f^*U_{d,n}^\perp \to X_{d,n}).
\end{equation*}

Let $E= f^*U_d$. There is a fiber bundle
\begin{equation*}
V_{d,r} \to V_r(E) \xrightarrow{p_{V_r}} X
\end{equation*}
where the fiber over $x\in X$ is the set of ordered $r$-tuples of orthonormal vectors in the fiber $E_x$.  
Similarly there is a fibration 
\begin{equation*}
W_{d,r}\to W_r(E) \xrightarrow{p_{W_r}} X
\end{equation*}
with contractible fiber $W_{d,r}$. A point in $p_{W_r}^{-1}(x)$ is an $r$-tuple of orthogonal vectors in $E_x$ of common length at most one.

The above construction yields an inclusion
\begin{equation}\label{defdr}
(f\circ p_{V_r})^*MT(d) \to (f\circ p_{W_r})^*MT(d). 
\end{equation}

\begin{defi}
Let $f^*MT(d,r)$ denote the cofiber of  \eqref{defdr}.
\end{defi}

When $X$ is $BO$, $BSO$, or $BSpin$, we shall sometimes denote these spectra by $MTO(d,r)$, $MTSO(d,r)$, and $MTSpin(d,r)$, respectively. When $Y$ is a subspace of $X$, we sometimes use the notation $f^*MT(d,r)_{\mid Y}$.
Apart from these examples, we are mainly interested in the case where $X$ is compact. 

The definition of the periodicity map extends to a map  
\begin{equation*}
\Sigma^{ka_r} f^*MT(d,r) \to f^*MT(d+ka_r,r).
\end{equation*}
This is defined by the map of pairs
\begin{align*}
g:  ( W_{r}(f^*U_{d,n}), V_{r}(f^*U_{d,n}))\times (D^{ka_r},S^{ka_r-1}) \to (W_{r}(f^*U_{d+ka_r,n}),V_{r}(f^*U_{d+ka_r,n})),
\end{align*}
given on $x\in X_{d,n}$, $(v_1,\dots,v_r)\in f(x)$, and $t\in D^{ka_r}$ by 
\begin{align*}
g(x,v_1,\dots,v_r,t) = (x, \sqrt{1-|t|^2}v_1 + e_1t,\dots ,\sqrt{1-|t|^2}v_{r} + e_{r}t).
\end{align*}
That is, $g$ is just a fiberwise application of the map $g_0$.

As before, we define the direct limit 
\begin{equation*}
f^*\MT(d,r) = \varinjlim_l \Sigma^{-lka_r} f^*MT(d+lka_r,r) .
\end{equation*}
 
The same commutativity results leading to the diagram \eqref{system} immediately yields that 
\begin{equation*}
\xymatrix{{f^*\MT(d,r+1)}\ar[r]\ar[d]&{f^*\MT(d,r)}\ar[d]\\
{f^*\MT(d+1,r´+1)}\ar[r]&{f^*\MT(d+1,r)}
}
\end{equation*}
is well-defined and commutative. 
The vertical maps are defined using the inclusion $G(d+ka_r ,n) \to G(d+1 + ka_r,n)$ coming from $\R^n \oplus \R^{d} \oplus \R^{ka_r} \subseteq \R^n \oplus \R^{d} \oplus \R \oplus \R^{ka_r}$. 
Again we may define:
\begin{defi}
\begin{equation*}
\begin{split}
\widehat{f^*MT}(d) &= \varprojlim_r f^*\MT(d,r)\\
\widehat{f^*MT} &= \varinjlim_d \widehat{f^*MT}(d). 
\end{split}
\end{equation*}
\end{defi}

\begin{prop}
The construction is natural, i.e.\ a composition $X\xrightarrow{f} Y \xrightarrow{g} BO$ induces a map $f_* : (g\circ f)^*\MT(d,r) \to g^*\MT(d,r)$. Furthermore,
\begin{equation*}
\begin{split}
\V_{d,r}&=\MT(d,r)_{\mid pt} \\
\mathcal{MTO}(d,r)&=\varinjlim_{X} \mathcal{MTO}(d,r)_{\mid X}
\end{split}
\end{equation*}
where the direct limit is taken over compact $X\subseteq BO$.
\end{prop}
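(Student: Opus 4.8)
The plan is to verify the three assertions in turn, each of which is essentially a formal consequence of the constructions in this section. For the naturality statement, note that a map $f\colon X\to Y$ induces, by pullback of the tautological data, a map of fibrations $V_r(f^*g^*U_d)\to V_r(g^*U_d)$ over $f$, and likewise for the $W_r$-fibrations; applying Thom spaces fiberwise and taking cofibers gives a map $(g\circ f)^*MT(d,r)\to g^*MT(d,r)$ for each $n$, compatible with the structure maps of the spectra, hence a spectrum map. One then checks this is compatible with the periodicity maps $g$ (which are defined fiberwise and so commute with any base-space map) and therefore descends to the direct limit $f_*\colon (g\circ f)^*\MT(d,r)\to g^*\MT(d,r)$. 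This is all diagram-chasing; the only thing to be careful about is that the filtration $X_{d,n}=f^{-1}(G(d,n))$ is compatible under $f$, which it is by construction.

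Second, to identify $\MT(d,r)_{\mid pt}$ with $\V_{d,r}$: when $X=pt$ the bundle $E=f^*U_d$ is $\R^d$, so $V_r(E)=V_{d,r}$, $W_r(E)=W_{d,r}$, and the orthogonal complement $U_{d,n}^\perp$ pulled back is trivial of rank $n$, whence $\Th(p_{V_r}^*U_{d,n}^\perp)=\Sigma^n (V_{d,r})_+$ and similarly $\Th(p_{W_r}^*U_{d,n}^\perp)=\Sigma^n (W_{d,r})_+$. Taking the cofiber of the inclusion and using that $W_{d,r}$ is the (contractible) cone on $V_{d,r}$ identifies the cofiber with $\Sigma^n\Sigma V_{d,r}$; stabilizing in $n$ and then passing to the periodicity direct limit over $l$ reproduces exactly the defining direct system for $\V_{d,r}$ from Section~\ref{dcls}. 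One must check that the periodicity map $g$ restricts over a point to the map $g_0$ of \eqref{Vmap}, which is immediate from the formulas.

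Third, for $\mathcal{MTO}(d,r)=\varinjlim_X \mathcal{MTO}(d,r)_{\mid X}$ over compact $X\subseteq BO$: this follows from the fact that $BO$ (and each $G(d,n)$) is a CW complex, hence the colimit of its finite subcomplexes, together with the observation that each construction involved—pullback of bundles, Stiefel and cone fibrations, Thom spaces, cofibers, suspension spectra, and the sequential homotopy colimit defining $\MT$—commutes with filtered colimits of the base, at least up to the usual compactness arguments for CW pairs. Concretely, $V_r(U_d)=\varinjlim_X V_r(U_d{\mid}X)$ and the Thom space of a bundle over a CW complex is the colimit of the Thom spaces over finite subcomplexes, and filtered colimits commute with cofibers and with the sequential colimit over $l$.

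The main obstacle is the bookkeeping in the third part: one needs that the \emph{double} colimit—over compact $X$ and over the stabilization parameter $l$ (and implicitly over $n$)—can be interchanged, and that the filtration $X_{d,n}$ behaves well under the exhaustion of $BO$ by compacta, i.e.\ that a compact subset of $BO$ meets only finitely many "strata" in a controlled way and that $(\,\cdot\,)_{\mid X}$ is taken with respect to the induced filtration. This is a routine but slightly delicate compactness-and-colimit argument; everything else is formal. I would handle it by fixing $n$ and $l$ first, establishing the colimit statement for the spaces $\Th(\,\cdot\,)$ levelwise using that Thom spaces commute with filtered colimits of paracompact base spaces, and only then passing to the homotopy colimits, invoking that these commute with filtered (hence directed) colimits of spectra.
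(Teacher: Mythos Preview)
Your proposal is correct, and in fact it supplies considerably more detail than the paper does: the paper states this proposition without proof, treating all three assertions as immediate from the constructions in Section~\ref{dils}. Your verification of naturality via functoriality of pullback, your unwinding of the $X=pt$ case to recover $\Sigma^\infty\Sigma V_{d,r}$ and hence $\V_{d,r}$, and your colimit argument for the third part are all sound and match the spirit in which the paper leaves these as routine. The care you take with the double colimit (over compact $X$ and over the periodicity index $l$) is appropriate; the paper simply takes for granted that all the constructions involved commute with filtered colimits of the base.
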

However, direct and inverse limits do not commute in general, so we cannot expect
\begin{equation*}
\varinjlim_{X} \widehat{MTO}(d)_{\mid X} \cong \varinjlim_{X}  \varprojlim_r \mathcal{MTO}(d,r)_{\mid X}  \cong  \varprojlim_r \varinjlim_{X} \mathcal{MTO}(d,r)_{\mid X} \cong \widehat{MTO}(d).
\end{equation*}
In fact, as we shall see in Section \ref{nonres}, this is not at all the case, since the right hand side is connected while the left hand side is not.

The definition of  $\hat{\varphi}_0: S^0 \to \widehat{V}_d$ naturally extends to a  map 
\begin{equation*}
\widehat{f^*\varphi} :f^*MT(d) \to \widehat{f^*MT}(d)
\end{equation*}
such  that the diagram 
\begin{equation}\label{sam}
\vcenter{\xymatrix{{S^0}\ar[r]^{\hat{\varphi}_0}\ar[d]&{\widehat{V}_d}\ar[d]\\
{f^*MT(d)}\ar[r]^{\widehat{f^*\varphi}}&{\widehat{f^*MT}(d)}
}}
\end{equation}
commutes. The vertical maps come from the inclusion $pt \to X$.

The map $\widehat{f^*\varphi}$ is defined as follows.
First we  define 
\begin{equation*}
\Sigma^{ka_r}f^*MT(d) \to f^*MT(d+ka_r,r). 
\end{equation*}
Recall that  $\Sigma^{ka_r}f^*MT(d)$ is the quotient of
\begin{equation*}
(B^{ka_r}, S^{ka_r - 1})\times (W_1(f^*U_{d,n}^\perp),V_1(f^*U_{d,n}^\perp) ).
\end{equation*}

If $p: W_r(f^*U_{d+ka_r,n}) \to X_{d+ka_r,n}$, we can think of $f^*MT(d+ka_r,r)$ as the quotient of 
\begin{equation*}
(p^*W_1(f^*U_{d+ka_r,n}^\perp),p^*V_1(f^*U_{d+ka_r,n}^\perp) \cup p^*W_1(f^*U_{d+ka_r,n}^\perp)_{\mid V_r(f^*U_{d+ka_r,n})}).
\end{equation*}
Let $(t,x,v) \in B^{ka_r}\times W_1(f^*U_{d,n}^\perp) $ where $t \in B^{ka_r}$, $x \in X_{d,n}$, and $v \in f(x)^\perp$. This should be mapped to $(e_1t,\dots,e_{r}t,x,v)$  
 where $x \in X_{d,n}\subseteq X_{d+ka_r,n}$, $(e_1t,\dots,e_{r}t)$ is a frame in $0\oplus \R^{ka_r} \subseteq f(x)\oplus \R^{ka_r}$, and  $v \in (f(x)\oplus \R^{ka_r})^\perp$. It is easy to see that this map commutes with all the relevant maps in the limit systems and thus defines the desired map $\widehat{f^*\varphi}$.

%
%The maps $f_\theta : \Sigma^\infty \Sigma V_{d,r} \to MT(d,r)$ induce maps $\V_{d,r} \to \MT(d,r)$ and thus also maps $\hat{\theta} : \widehat{V}_d \to \widehat{MT}(d)$ and $\hat{\theta} : \widehat{V} \to \widehat{MT}$. 

\begin{prop} \label{limtheta}
The map induced by $pt \to BSO(d)$,
\begin{equation*}
 \pi_q( \widehat{V}_{d}) \to \pi_q( \widehat{MTSO}(d)),
\end{equation*}
is zero for $q<d$ and $q \neq 0$.
\end{prop}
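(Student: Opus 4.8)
First reduce the range. For $q<0$ we have $\pi_q(\widehat{V}_d)=0$ by Theorem~\ref{linV}(i), so the statement holds; thus assume $0<q<d$ (in particular $d\ge 2$). By Theorem~\ref{linV}(iii), combined with Remark~\ref{improveV} and the finiteness of $\pi_q^s$ for $q>0$, the map $\hat{\varphi}_{0*}\colon\pi_q(S^0)\to\pi_q(\widehat{V}_d)$ is surjective for $0<q<d$. Let $\theta\colon\widehat{V}_d\to\widehat{MTSO}(d)$ be the map of the proposition. If $\theta\circ\hat{\varphi}_0$ is null-homotopic, then $\theta$ factors through the cofibre $C$ of $\hat{\varphi}_0$; since $\pi_q(S^0)\xrightarrow{\hat{\varphi}_{0*}}\pi_q(\widehat{V}_d)\to\pi_q(C)$ is exact and $\hat{\varphi}_{0*}$ is onto, the second map vanishes, hence so does $\theta_*$ on $\pi_q$. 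So it suffices to prove that $\theta\circ\hat{\varphi}_0\colon S^0\to\widehat{MTSO}(d)$ is null-homotopic.

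By the commutative square~\eqref{sam} for the inclusion $pt\to BSO(d)$, this composite equals $\widehat{f^*\varphi}\circ\iota$, where $\iota\colon S^0=f^*MT(d)_{\mid pt}\to f^*MT(d)=MTSO(d)$ is induced by the point inclusion. For each $r$ the spectrum $f^*\MT(d,r)$ is a James-periodic direct limit of cofibres of maps $(f\circ p_{V_r})^*MT(d)\to (f\circ p_{W_r})^*MT(d)\simeq f^*MT(d)$, so there is a compatible family of fibre sequences $\fib_r\to f^*MT(d)\to f^*\MT(d,r)$ in which $\fib_r$ is the (stabilised) Thom spectrum over the frame bundle $V_r(E)$, $E=U^{SO}_d$. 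Now $\iota$ is induced by $pt\to BSO(d)$, and — choosing, once $V_{d,r}\neq\emptyset$ in the relevant stabilised range, an $r$-frame in a fibre of $E$ — it lifts through $V_r(E)\to BSO(d)$; taking the frames nested makes the resulting lifts $\iota_r\colon S^0\to\fib_r$ compatible with the maps of the inverse system. Since homotopy inverse limits preserve fibre sequences, $\varprojlim_r\fib_r\to f^*MT(d)\xrightarrow{\widehat{f^*\varphi}}\widehat{MTSO}(d)$ is a fibre sequence, and the $\iota_r$ assemble to a lift $\tilde{\iota}\colon S^0\to\varprojlim_r\fib_r$ of $\iota$. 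Therefore $\widehat{f^*\varphi}\circ\iota$ is a composite of two successive maps in a fibre sequence, hence null-homotopic, which is what we needed.

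The step I expect to be the main obstacle is making the lifts $\iota_r$ genuinely compatible across the whole inverse system rather than only for $r\le d$: when $r>d$ the Stiefel manifold $V_{d,r}$ is empty, and one must carry out the construction inside the James-periodic direct limits defining $f^*\MT(d,r)$, where the periodicities $a_r$ and $a_{r+1}$ disagree — precisely the bookkeeping organised by Lemma~\ref{henvis} and the lemma following it via the compatible choices of Clifford multiplications, and one must check the relevant homotopy inverse limits of these direct limits behave as claimed. The remaining ingredients — surjectivity of $\hat{\varphi}_{0*}$ from Lin's theorem, and the formal fact that a map factoring through two consecutive maps of a fibre sequence is null — are routine; the boundary degree $q=d-1$, where Theorem~\ref{linV}(iii) gives only surjectivity, follows from the same null-homotopy together with the Mittag--Leffler considerations of Remark~\ref{improveV}.
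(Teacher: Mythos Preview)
Your reduction via surjectivity of $\hat{\varphi}_{0*}$ and the square~\eqref{sam} is exactly how the paper begins, but you then work much harder than necessary. Once you observe that the composite $\pi_q(S^0)\to\pi_q(\widehat{V}_d)\to\pi_q(\widehat{MTSO}(d))$ equals $\pi_q(S^0)\to\pi_q(MTSO(d))\to\pi_q(\widehat{MTSO}(d))$, the paper simply notes that the first map $\pi_q(S^0)\to\pi_q(MTSO(d))$ is the inclusion of framed cobordism into oriented cobordism, which vanishes for $q>0$. That finishes the proof in one line: you only need vanishing on $\pi_q$, not a null-homotopy, so surjectivity of $\hat{\varphi}_{0*}$ immediately forces $\theta_*=0$.

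Your detour through the cofibre of $\hat{\varphi}_0$, the fibre sequence $\varprojlim_r\fib_r\to MTSO(d)\to\widehat{MTSO}(d)$, and the construction of compatible lifts $\iota_r$ is aimed at the stronger statement that $\theta\circ\hat{\varphi}_0$ is null-homotopic. This can be made to work---the lifts you seek are essentially the maps $\varphi_0$ already built into the definition of $\widehat{f^*\varphi}$ in Section~\ref{dils}, and the compatibility across $r$ is exactly the Clifford bookkeeping of Lemma~\ref{henvis}---but it is entirely avoidable. The obstacle you correctly flag (compatibility for $r>d$ across different periodicities $a_r$) is real and would need care, whereas the cobordism-theoretic vanishing of $\pi_q(S^0)\to\pi_q(MTSO(d))$ sidesteps it completely.
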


\begin{proof}
The map $\pi_q(S^0) \to \pi_q(MTSO(d))$ is zero in the relevant dimensions, being the inclusion of the framed cobordism group into the oriented cobordism group. So since $\pi_q(S^0) \to \pi_q(\widehat{V}_{d})$ is surjective in these dimensions, the claim follows from the diagram~\eqref{sam}. 
\end{proof}

\section{Application to spectral sequences}
In the first Section \ref{thess}, we consider a spectral sequence converging to $\pi_*(MT(d,r))$ and identify the first differential. Then in Section \ref{sss}, we stabilize this spectral sequence and let $r$ tend to infinity and we show that the resulting spectral sequence converges strongly. In the $X=pt$ case, we show that we obtain the stable EHP spectral sequence.

\subsection{A spectral sequence for $MT(d,r)$}\label{thess}
Let $B(d)$ denote either $BO(d)$ or $BSO(d)$ and let $MT$ denote the corresponding spectra. 
Consider the inclusion $MT(d-r) \to MT(d)$ coming from the splitting $\R^{n+d}\cong \R^{n+d-r}\oplus \R^r$. 
This inclusion is filtered as the composition
\begin{equation}\label{filt}
MT(d-r) \to MT(d-r+1) \to \dotsm \to MT(d-1) \to MT(d).
\end{equation}

%anet $SU_d$ denote the sphere bundle of the corresponding universal bundle. 
%Let $p_S : SU_{d}\to B(d)$ and $p_B : BU_d \to B(d)$ be the sphere and disk bundles, respectively, associated to $U_{d}$.  
There is a homotopy equivalence $B(d-r) \to V_r(U_d)$ given by mapping $V\subseteq \R^{\infty + d}$ to $V\oplus \R^r \subseteq \R^{\infty + d+r}$ with the orthogonal $r$ frame given by the standard basis in $0\oplus \R^r$.
This extends to a commutative diagram
\begin{equation*}
\xymatrix{{MT(d-r)}\ar[r]\ar[d]&{MT(d)}\ar[d]\\
{p_{V_r}^*MT(d)}\ar[r]&{p_{W_r}^*MT(d)}
}
\end{equation*}
where the vertical maps are homotopy equivalences. In particular:
\begin{lem}\label{cofdr}
The cofiber of the inclusion $MT(d-r)\to MT(d)$ is homotopy equivalent to $MT(d,r)$.  
\end{lem}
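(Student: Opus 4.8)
The plan is to identify the cofiber of the filtered inclusion $MT(d-r)\to MT(d)$ from \eqref{filt} with the cofiber of the map \eqref{defdr} that defines $MT(d,r)$. The commutative square just above the statement already provides the essential input: its vertical maps $MT(d-r)\to p_{V_r}^*MT(d)$ and $MT(d)\to p_{W_r}^*MT(d)$ are homotopy equivalences (the first via the homotopy equivalence $B(d-r)\to V_r(U_d)$, the second because the fiber $W_{d,r}$ is contractible so that $p_{W_r}:W_r(U_d)\to X_{d,n}$ is a fiber homotopy equivalence and $\Th(p_{W_r}^*U_{d,n}^\perp)\simeq\Th(U_{d,n}^\perp)$). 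So the horizontal map on the bottom, $p_{V_r}^*MT(d)\to p_{W_r}^*MT(d)$, is identified up to homotopy with the top map $MT(d-r)\to MT(d)$.

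First I would spell out, space by space in the spectra, why the square commutes and why the verticals are levelwise homotopy equivalences: on the $n$th space the left vertical is $\Th$ applied to the bundle map over $B(d-r)\xrightarrow{\sim}V_r(U_{d,n})$ covering $U_{d-r,n}^\perp\cong (\text{restriction of }U_{d,n}^\perp)$, and the right vertical is $\Th$ applied to the bundle map over $X_{d,n}\xrightarrow{\sim}W_r(U_{d,n})$; the homotopy inverse of $p_{W_r}$ is the zero section. Second, I would invoke the fact that a levelwise homotopy equivalence of spectra is a stable equivalence, and that the cofiber is a homotopy functor, so that the cofiber of the bottom map is equivalent to the cofiber of the top map. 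By construction (the Definition preceding the statement) the bottom cofiber is exactly $MT(d,r)$, which gives the claim.

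One small point to handle carefully: the map \eqref{defdr} as written is the inclusion $(f\circ p_{V_r})^*MT(d)\to (f\circ p_{W_r})^*MT(d)$ for a general $f:X\to BO$, whereas here $f$ is essentially the identity on $B(d)$; I would note that $p_{V_r}^*MT(d)$ in the displayed square means precisely $(\mathrm{id}\circ p_{V_r})^*MT(d)=(f\circ p_{V_r})^*MT(d)$ with $X=B(d)$, $X_{d,n}=B(d)$, so the two notations agree and no translation is needed. I would also remark that although the inclusion \eqref{filt} is visibly filtered through the intermediate $MT(d-j)$, only the outer two terms $MT(d-r)$ and $MT(d)$ enter the statement, so the filtration is not needed for this lemma (it is recorded for later use).

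The main obstacle — really the only thing requiring care rather than being formal — is verifying that the right-hand vertical map $MT(d)\to p_{W_r}^*MT(d)$ is a homotopy equivalence \emph{of spectra}, i.e.\ compatibly with the structure maps $\Sigma MT(d)_n\to MT(d)_{n+1}$, and likewise that the square is commutative at the level of spectra and not merely spacewise up to unspecified homotopy. This amounts to checking that the zero-section/projection pair for $W_r(U_{d,n})\to B(d)=X_{d,n}$ is compatible with the stabilization inclusions $G(d,n)\hookrightarrow G(d,n+1)$ and with the bundle identification $U_{d,n+1}^\perp|_{G(d,n)}\cong\R\oplus U_{d,n}^\perp$; this is a routine but slightly tedious compatibility check, entirely analogous to the one already carried out for the diagram \eqref{system} (as the sentence ``The same commutativity results leading to the diagram \eqref{system}\dots'' indicates), and I would simply cite that pattern rather than redo it.
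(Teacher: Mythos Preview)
Your proposal is correct and follows exactly the paper's approach: the paper states the lemma as an immediate consequence (``In particular:'') of the commutative square with vertical homotopy equivalences displayed just above it, and your argument simply unpacks this. Your added care about spectrum-level compatibility and the identification of notations is more detail than the paper supplies, but the underlying idea is identical.
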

In particular, each map in the filtration \eqref{filt} fits into a cofibration sequence
\begin{equation*}
MT(d-r+k) \to MT(d-r+k+1) \to MT(d-r+k+1,1).
\end{equation*}
The corresponding long exact sequences of homotopy groups form a spectral sequence with
\begin{equation*}
E^1_{s,t} = \pi_{s+t}(MT(s,1))\cong \pi_t^s(S^0)\oplus \pi_t(\Sigma^\infty B(d))
\end{equation*}
for $d-r< s\leq d$ and $E^1_{s,t}=0$ otherwise, and with differentials
\begin{equation*}
d^k: E^k_{s,t} \to E^k_{s-k,t+k-1}.
\end{equation*}

%Let $MT(d,r)$ denote the cofiber of the inclusion $MT(d-r)\to MT(d)$.
There is also a filtration
\begin{equation}\label{filt2}
MT(d-r,0) \to MT(d-r+1,1) \to \dotsm \to MT(d-1,r-1) \to MT(d,r).
\end{equation}
The obvious map from \eqref{filt} to~\eqref{filt2} induces a homotopy equivalence of cofibers. Hence it induces an isomorphism of the associated spectral sequences.

\begin{prop}
The spectral sequence converges to $\pi_{s+t}(MT(d,r))$ filtered by the subgroups
\begin{equation*}
F_{s,t}=\Ima(\pi_{s+t}(MT(s,r-d+s))\to\pi_{s+t}(MT(d,r)))
\end{equation*}
such that $E^\infty_{s,t}=F_{s,t}/F_{s-1,t+1}$. 
\end{prop}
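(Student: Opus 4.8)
The plan is to recognize this as the standard convergence statement for the spectral sequence built from an exhaustive, bounded-below filtration of a spectrum by cofibrations. Recall that we have the filtration \eqref{filt2},
\begin{equation*}
MT(d-r,0) \to MT(d-r+1,1) \to \dotsm \to MT(d-1,r-1) \to MT(d,r),
\end{equation*}
whose consecutive cofibers are the spectra $MT(s,1)$ with $d-r<s\leq d$, together with the isomorphism of spectral sequences induced by the map from \eqref{filt} to \eqref{filt2}. Writing $Y_s = MT(s, r-d+s)$ for $d-r\leq s\leq d$ (so $Y_d = MT(d,r)$ and $Y_{d-r}$ is the zero spectrum since $r-d+s = 0$ there, using that $MT(d-r,0)$ is contractible), the filtration is finite: $Y_{d-r}\simeq *$ and $Y_d = MT(d,r)$, and each $Y_{s-1}\to Y_s$ has cofiber $MT(s,1)$. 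I would set $F_{s,t}=\Ima\!\big(\pi_{s+t}(Y_s)\to \pi_{s+t}(MT(d,r))\big)$; these give an increasing filtration of $\pi_{s+t}(MT(d,r))$ which is exhaustive because $Y_d = MT(d,r)$ gives $F_{d,t}=\pi_{d+t}(MT(d,r))$ when $s=d$, and is eventually zero below because $Y_{d-r}\simeq *$ gives $F_{d-r-1,*}=0$.

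First I would recall the general mechanism: given a sequence of cofibrations $Y_{s-1}\to Y_s\to C_s$, splicing the long exact sequences
\begin{equation*}
\dotsm \to \pi_{n}(Y_{s-1}) \to \pi_{n}(Y_s) \to \pi_{n}(C_s) \to \pi_{n-1}(Y_{s-1}) \to \dotsm
\end{equation*}
into an exact couple with $D^1_{s,t}=\pi_{s+t}(Y_s)$ and $E^1_{s,t}=\pi_{s+t}(C_s)=\pi_{s+t}(MT(s,1))$ produces precisely the spectral sequence described, with $d^k\colon E^k_{s,t}\to E^k_{s-k,t+k-1}$. This is the exact couple whose derived couples compute, in the standard way, the subquotients $\pi_{s+t}(Y_s)\big/\ker\big(\pi_{s+t}(Y_s)\to\pi_{s+t}(Y_{s+k})\big)$ relative to the image of $\pi_{s+t+?}$; the bookkeeping is routine once one fixes the indexing. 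Since the filtration has only finitely many steps ($d-r<s\leq d$, all other $E^1_{s,t}=0$), the spectral sequence is bounded in the filtration direction, hence for each $(s,t)$ the groups $E^k_{s,t}$ stabilize to $E^\infty_{s,t}$ after finitely many steps, with no lim${}^1$ or convergence subtleties.

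Next I would identify $E^\infty_{s,t}$ with $F_{s,t}/F_{s-1,t+1}$. Run the standard argument: from the long exact sequence of the pair $(Y_s,Y_{s-1})$ one gets
\begin{equation*}
\pi_{s+t}(Y_s)/\Ima(\pi_{s+t}(Y_{s-1})) \hookrightarrow \pi_{s+t}(C_s) = E^1_{s,t}.
\end{equation*}
Pushing forward along $Y_s\to MT(d,r)$, the subgroup $F_{s,t}/F_{s-1,t+1}$ of the associated graded is identified with a subquotient of $E^1_{s,t}$, and a diagram chase through the exact couple shows this subquotient is exactly the one surviving to the $E^\infty$ page: an element of $\pi_{s+t}(C_s)$ lifts to $\pi_{s+t}(Y_s)$ precisely when all differentials $d^1,\dots$ into lower filtration vanish on it (this is where the higher $d^k$ enter), and the indeterminacy in that lift, modulo $F_{s-1,t+1}$, is exactly the image of the incoming differentials. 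This matching of "lifts a class to $\pi_{s+t}(Y_s)$ mapping into $MT(d,r)$" with "survives to $E^\infty$" is the technical heart, and it is the step I expect to require the most care in the write-up, since one must track images under both the internal maps $Y_{s-1}\to Y_s$ and the external map $Y_s\to MT(d,r)$ simultaneously; however, because the filtration is finite it is entirely formal, with no limiting arguments needed. Finally, I would remark that the identification of $F_{s,t}$ with $\Ima(\pi_{s+t}(MT(s,r-d+s))\to\pi_{s+t}(MT(d,r)))$ uses the homotopy equivalence of filtrations \eqref{filt}$\simeq$\eqref{filt2} together with Lemma~\ref{cofdr}, which lets us replace each $Y_s = MT(s,r-d+s)$ (a term of the filtration \eqref{filt2}) by the corresponding term $MT(s)$ of \eqref{filt} without changing the images.
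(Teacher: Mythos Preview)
Your proposal is correct and follows the same route as the paper: the paper's proof is a one-liner invoking the standard convergence theorem for a bounded filtration (citing Hatcher's spectral sequence notes), using only that $\pi_*(MT(d-r,0))=0$, and your argument is precisely an unpacking of that standard theorem in this instance. One small redundancy: your final paragraph is unnecessary, since you already set $Y_s = MT(s,r-d+s)$, so $F_{s,t}=\Ima(\pi_{s+t}(Y_s)\to\pi_{s+t}(MT(d,r)))$ is literally the formula in the statement without any appeal to the comparison with filtration~\eqref{filt} or Lemma~\ref{cofdr}.
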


\begin{proof}
Using the construction \eqref{filt2}, the result follows from standard convergence theorems since $\pi_*(MT(d-r,0))=0$, see e.g.\ \cite{hatcher2}, Proposition 1.2. 
\end{proof}

The first differential $d^1: E^1_{s,t} \to E^1_{s-1,t}$ in the spectral sequence is induced by the composite map $\tau$
\begin{equation}\label{q}
\xymatrix{{}&{\Sigma MT(d-s-1)}\ar[r]^q\ar[d]^{\simeq} & {\Sigma MT(d-s-1,1)}\\
{MT(d-s,1)}\ar[r]^{\partial} & {\Sigma p_{V_1}^*MT(d-s)} & {.}
}
\end{equation}
This is also the boundary map in the cofibration sequence
\begin{equation}\label{seq}
MT(d-s-1,1) \to MT(d-s,2) \to MT(d-s,1) \xrightarrow{\tau} \Sigma MT(d-s-1,1).
\end{equation}
It turns out that this is a familiar map:
\begin{prop}
The map $\tau :MT(d,1)\to \Sigma MT(d-1,1)$
is the Becker--Gottlieb transfer associated to the sphere bundle
\begin{equation*}
S^{d-1} \to V_1(U_{d}) \xrightarrow{p_{V_1}} B(d).
\end{equation*}
\end{prop}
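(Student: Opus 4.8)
The plan is to identify the map $\tau: MT(d,1) \to \Sigma MT(d-1,1)$ explicitly and recognize it as the Becker--Gottlieb transfer. First I would recall the standard construction of the Becker--Gottlieb transfer for the sphere bundle $S^{d-1} \to V_1(U_d) \xrightarrow{p_{V_1}} B(d)$: one embeds the total space fiberwise into a trivial bundle, takes the Pontryagin--Thom collapse onto a tubular neighborhood, and identifies the resulting Thom space via the fiberwise normal bundle, which in this case involves the vertical tangent bundle $T^v$ of the sphere bundle together with the stable normal data of $U_d$. The upshot is a stable map whose source is the Thom spectrum of the virtual bundle $-U_d$ over $B(d)$, i.e.\ $MT(d)$ twisted appropriately, and whose target is a desuspension of a similar Thom spectrum over $B(d)$ built from $U_{d-1}$; after unwinding the James-type identifications $V_1(U_d) \simeq B(d-1)$ from Lemma~\ref{cofdr} and the preceding diagram, the source becomes $MT(d,1)$ and the target becomes $\Sigma MT(d-1,1)$.

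The key steps, in order, would be: (1) Use the homotopy equivalence $B(d-1) \to V_1(U_d)$ and the identification of the restriction of $U_{d-1}^\perp$ so that both $MT(d-r) \to MT(d)$ in \eqref{filt} and the cofibration \eqref{seq} are translated into statements about Thom spectra over Grassmannians; in particular realize $\partial: MT(d-s,1) \to \Sigma p_{V_1}^* MT(d-s)$ from diagram~\eqref{q} as a concrete Pontryagin--Thom collapse map. (2) Recall the ``intrinsic'' description of the Becker--Gottlieb transfer for a smooth fiber bundle $F \to E \to B$ with compact fibers: it is the composite of a fiberwise collapse $\Sigma^\infty_B B_+ \to \Sigma^\infty_B E^{T^v}$ (Thom spectrum of the vertical tangent bundle) with the map induced by the zero section $E^{T^v} \to E^{T^v \oplus \nu} = \Sigma^\infty E_+$ up to the relevant stable trivialization. (3) For the sphere bundle $S(U_d) = V_1(U_d)$ over $B(d)$, compute $T^v \cong U_d|_{S(U_d)} \ominus \R$ (the usual relation for a sphere bundle), and check that the Thom spectrum of $T^v$ pulled back over $B(d)$, in the stable range dictated by James periodicity, is exactly $\Sigma MT(d-1,1)$ in the normalization used here, while the collapse map from $\Sigma^\infty B(d)_+$ (the $-U_d$-twisted version giving $MT(d)$, hence $MT(d,1)$ after passing to the cofiber) matches $\tau$. (4) Compare the two constructions of the map — the boundary map $\tau$ coming from the cofibration sequence of the filtration, and the Pontryagin--Thom transfer — and verify they agree, e.g.\ by checking that both fit into the same cofiber sequence and induce the same map on homology, or by a direct geometric identification of the collapse.

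I expect the main obstacle to be step (3)–(4): matching the \emph{normalizations}. The Becker--Gottlieb transfer naturally lives between suspension spectra (or Thom spectra over the total space and base), whereas $MT(d,1)$ is defined as a cofiber of a map of tangential Thom spectra $MT(d-1) \to MT(d)$, which involves a shift by the line bundle coming from the extra $\R$-summand and the fiberwise desuspension built into the $\MT$ construction. Keeping track of exactly which (virtual) bundles are being Thom-ified, and which suspension coordinates are consumed by the James identification $\Sigma^{a_r} P_{d,r} \simeq P_{d+a_r,r}$ versus by the transfer's embedding into a trivial bundle, is where sign/degree bookkeeping could go wrong. A clean way around this is probably to not compute either map on the nose but to characterize $\tau$ by a universal property — it is the unique (up to the indeterminacy that does not affect the spectral sequence) map completing the cofiber sequence \eqref{seq} whose effect on mod~$2$ cohomology is the relevant Steenrod-module map — and then invoke the known cohomological behaviour of the Becker--Gottlieb transfer of a sphere bundle (it is multiplication by the Euler class composed with the Gysin map, dually $\Sq$-linear in the expected way) to conclude they coincide. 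The bundle-twisting calculation is then the only genuinely computational part, and it is essentially the Euler class / Gysin sequence of $S(U_d) \to B(d)$.
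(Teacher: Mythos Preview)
Your outline is broadly correct, but you are making the argument much harder than it needs to be, and you miss the paper's key simplification. The crucial observation is equation~\eqref{MTd1}: since $U_{d,n}^\perp \oplus U_{d,n}$ is the trivial bundle of rank $n+d$, one has $MT(d,1)_n = \Th(U_{d,n}^\perp \oplus U_{d,n}) = \Sigma^{n+d}(G(d,n)_+)$, and likewise $\Sigma MT(d-1,1)_n \simeq \Sigma^{n+d}(V_1(U_{d,n})_+)$. So both source and target of $\tau$ are literally suspension spectra of base and total space, exactly the setting in which the Becker--Gottlieb transfer is classically defined; no virtual bundles, vertical tangent bundles, or James periodicity enter at all.

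With that identification in hand, the paper's argument is a direct two-step geometric comparison rather than your proposed cohomological characterization. The transfer is written as the composite $(\gamma_2\oplus 1)\circ(\gamma_1\oplus i)$, where $\gamma_1$ is the Pontryagin--Thom collapse onto a tubular neighborhood of $V_1(U_{d,n})\subset U_{d,n}$ and $\gamma_2$ reinterprets the normal line as a subbundle of $p_{V_1}^*U_{d,n}$. The boundary map $\partial$ in~\eqref{q} is then identified on the nose with $\gamma_1\oplus i$ (both collapse the zero-section), and the remaining map $q$ is matched with $\gamma_2\oplus 1$ by a single commutative square coming from the inclusion $G(d-1,n)\to V_1(U_{d,n})$. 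Your worry about normalizations and your fallback plan of characterizing $\tau$ via its effect on cohomology are unnecessary: once you see that $MT(d,1)$ is a suspension spectrum, the bookkeeping evaporates and the maps can be compared pointwise.
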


%Here $SE$ denotes the sphere bundle of some vector buundle $E$. Similarly, $BE$ will denote the disk bundle.
\begin{proof}
Recall how the Becker--Gottlieb transfer
\begin{equation*}
\Sigma^{n+d}G(d,n) \to \Sigma^{n+d}V_1(U_{d,n})
\end{equation*}
is defined. We may think of $V_1(U_{d,n})$ as a subset of $U_{d,n}$. This extends to an embedding of the normal bundle $\R \times V_1(U_{d,n}) \to U_{d,n}$ in the obvious way.
Hence there is a map
\begin{equation}\label{tau1}
\gamma_1 : \Th(U_{d,n}) \to \Th(\R \times V_1(U_{d,n}) \to V_1(U_{d,n}))
\end{equation}
given by collapsing $G(d,n)$.
Let $\gamma_2 : \R \times V_1(U_{d,n}) \to p_{V_1}^* U_{d,n}$ be the inclusion of a subbundle over $V_1(U_{d,n})$ that takes a point $(t,v\in P)$ where $t\in \R$ and $P\in G(d,n)$ to  $tv$ in the fiber over $v\in P$. 
%This defines a  map
%\begin{equation}\label{tau2}
%\gamma_2 : \Th(\R \times SU_{d,n}) \to \Th(p^*U_{d,n}).
%\end{equation}
The transfer is then defined to be the composition
\begin{equation*}
\Th(U_{d,n}^\perp \oplus U_{d,n})  \xrightarrow{\gamma_1 \oplus i} \Th(\R \oplus p_{V_1}^*U_{d,n}^\perp)\xrightarrow{\gamma_2 \oplus 1} \Th(p_{V_1}^*U_{d,n} \oplus p_{V_1}^*U_{d,n}^\perp)
\end{equation*}
where $i$ is the natural inclusion of fibers.

%Now,
%\begin{equation*}
%MT(d,1)_n \cong \Th(U_{d,n}^\perp \to BU_{d,n})/\Th(U_{d,n}^\perp \to SU_{d,n}).
%\end{equation*}
Recall that
\begin{equation}\label{MTd1}
MT(d,1)_n  = \Th(p_{V_1}^*U_{d,n}^\perp)/ \Th(p_{W_1}^*U_{d,n}^\perp ) =\Th(U_{d,n}^\perp \oplus U_{d,n}) =\Sigma^{n+d}(G(d,n)_+),
\end{equation}
and the boundary map $\partial : MT(d,1)_n \to \Sigma MT(d-1)_{n}$ is given by collapsing the subset $MT(d)_n = \Th(U_{d,n}^\perp\to G(d,n))$.
This is exactly what the map $\gamma_1\oplus i$ does.

%Since $\tau$ factors as 
%\begin{equation*}
%MT(d,1) \xrightarrow{\partial} \Sigma MT(d-1) \xrightarrow{q} \Sigma MT(d-1,1),
%\end{equation*}
Thus we just need to see that $\gamma_2 \oplus 1$ is homotopic to the map $q$ in \eqref{q}.
This $q$ can be thought of as the inclusion 
\begin{equation*}
\Sigma MT(d-1)_n = \Th(\R \oplus U_{d-1,n}^\perp ) \to  \Th(\R \oplus U_{d-1,n}^\perp\oplus U_{d-1,n})=\Sigma MT(d-1,1)_n,
\end{equation*}
while $\gamma_2 \oplus 1$ was the inclusion $\Th(\R \oplus p_{V_1}^*U_{d,n}^\perp  ) \to \Th(p_{V_1}^*(U_{d,n}^\perp \oplus U_{d,n}))$. The result now follows by commutativity of the diagram
\begin{equation*}
\xymatrix{{\Th(\R \oplus U_{d-1,n}^\perp )}\ar[r]^-{q}\ar[d]&{\Th(\R \oplus U_{d-1,n}^\perp\oplus U_{d-1,n})}\ar[d]\\
{\Th(\R \oplus p_{V_1}^*U_{d,n}^\perp ) }\ar[r]^-{\gamma_2 \oplus 1}&{\Th(p_{V_1}^*(U_{d,n}^\perp\oplus U_{d,n}))}
}
\end{equation*}
where the vertical maps are the homotopy equivalences of spectra induced by the inclusion $G(d-1,n)\to V_1(U_{d,n})$.
\end{proof}

\begin{cor}\label{bg}
The composition
\begin{equation*}
\Sigma^\infty S^0 \to \Sigma^\infty B(d)_+ \xrightarrow{\tau} \Sigma^\infty B(d-1)_+ \to \Sigma^\infty S^0
\end{equation*}
has degree $\chi(S^d)$, which is $2$ for $d$ even and $0$ for $d$ odd. Thus on the $\pi_*(S^0)$ summands of $E^1,$ $d^1$ is multiplication by $\chi(S^d)$.
\end{cor}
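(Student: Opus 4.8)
\emph{Proof proposal.} The plan is to recognise this composition as the standard Becker--Gottlieb pretransfer self-map of the sphere bundle $p_{V_1}$ and then quote the defining property of the transfer: that composing it with the bundle projection induces multiplication by the Euler characteristic of the fibre on homology.

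First I would pin down the three maps. Under the identification $MT(d,1)\simeq\Sigma^d\Sigma^\infty B(d)_+$ of \eqref{MTd1} (and $\Sigma MT(d-1,1)\simeq\Sigma^d\Sigma^\infty B(d-1)_+$ via $B(d-1)\simeq V_1(U_d)$), the summand $\pi_t(S^0)\subseteq E^1_{d,t}=\pi_{d+t}(MT(d,1))$ is the summand of $\pi_t(\Sigma^\infty B(d)_+)$ split off by a point of $B(d)$; thus the map $\Sigma^\infty S^0\to\Sigma^\infty B(d)_+$ is the section of the augmentation, the map $\Sigma^\infty B(d-1)_+\to\Sigma^\infty S^0$ is the augmentation, and on homotopy groups the composite they bracket is $d^1$ restricted to the $\pi_*(S^0)$-summand of $E^1_{d,t}$ and corestricted to that of $E^1_{d-1,t}$. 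By the preceding Proposition, $\tau$ is the Becker--Gottlieb transfer of $p_{V_1}:V_1(U_d)\to B(d)$, and under $B(d-1)\simeq V_1(U_d)$ the projection $p_{V_1}$ becomes the standard inclusion $B(d-1)\to B(d)$. Since the augmentation of $\Sigma^\infty B(d-1)_+$ equals, by naturality, $(p_{V_1})_+$ followed by the augmentation of $\Sigma^\infty B(d)_+$, the composition in the statement is
\[
\Sigma^\infty S^0\longrightarrow\Sigma^\infty B(d)_+\xrightarrow{\ \tau\ }\Sigma^\infty B(d-1)_+\xrightarrow{\ (p_{V_1})_+\ }\Sigma^\infty B(d)_+\longrightarrow\Sigma^\infty S^0 ,
\]
whose middle two arrows compose to the Becker--Gottlieb pretransfer $(p_{V_1})_+\circ\tau$ of $p_{V_1}$.

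The outer composition is a self-map of the sphere spectrum, hence multiplication by an integer, and that integer can be read off on $H_0(-;\Z)$, where the section and the augmentation induce isomorphisms. Because $(p_{V_1})_+\circ\tau$ induces multiplication by the Euler characteristic of the fibre of $p_{V_1}$ on $H_*(B(d);\Z)$ (equivalently, $\tau$ alone is that Euler characteristic on $H_0$), the integer is exactly that Euler characteristic, i.e.\ $2$ or $0$ according to parity, which is the value $\chi(S^d)$ asserted; the claim about $d^1$ on the $\pi_*(S^0)$-summands follows at once, as $d^1$ there is multiplication by this degree in every internal degree $t$. The one point needing care is the identification step: one must trace the chain of equivalences ($MT(d,1)\simeq\Sigma^d\Sigma^\infty B(d)_+$, the suspension in $\Sigma MT(d-1,1)$, $B(d-1)\simeq V_1(U_d)$) and the description of $\tau$ in \eqref{q} precisely enough that the two outer maps are correctly recognised as section and augmentation and the fibre sphere supplying the Euler characteristic has the dimension that produces the stated parity; once the composite is recast as the Becker--Gottlieb pretransfer, computing its degree is formal.
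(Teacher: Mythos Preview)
Your approach is essentially the paper's own: the proof there is a one-line citation of Becker--Gottlieb's Properties (3.2) and (3.4), which amount precisely to the argument you spell out---that the transfer is natural in pullbacks and that projection composed with transfer induces multiplication by the Euler characteristic of the fibre, so the self-map of $S^0$ bracketed by section and augmentation has that degree.

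One remark on your hedge about the fibre dimension: it is well-placed. The fibre of $p_{V_1}:V_1(U_d)\to B(d)$ is $S^{d-1}$, so the Becker--Gottlieb property gives degree $\chi(S^{d-1})$, which is $2$ for $d$ odd and $0$ for $d$ even---the opposite parity to the printed $\chi(S^d)$. This is consistent with Figure~\ref{spfl2}, where the multiplication-by-$2$ horizontal differentials leave the odd columns, so the printed $\chi(S^d)$ appears to be a typo for $\chi(S^{d-1})$; your argument is correct and the discrepancy is in the statement, not in your reasoning.
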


\begin{proof}
This follows from \cite{becker}, Property (3.2) and (3.4).
\end{proof}

The computations of the groups $\pi_q(MTSO(d,r))$ from \cite{adb} yields the limit of the spectral sequence in a range. This allows us to determine all differentials in the spectral sequence for $MTSO(d,r)$ that enter the first four rows, at least in the stable area $ t+s < 2(d-r)$. This is displayed in Fi\-gure~\ref{spfl2}. An arrow indicates a non-zero differential. The horizontal differentials are multiplication by $2$ according to Corollary \ref{bg}.
The picture is repeated horizontally with a period of 4. The spectral sequence does not seem to give any new information about $\pi_*(MTSO(d,r))$.

The periodicity map
\begin{equation*}
 MT(d,r) \to \Sigma^{-ka_r} MT(d+ka_r,r) 
\end{equation*}
induces an isomorphism of spectral sequences in the stable area $t+s < 2(d-r)$. Thus it is tempting to extend the spectral sequence to a half plane spectral sequence by replacing all terms on the $E^1$-page by their stable versions and defining differentials by periodicity. We do this more formally in the next section, allowing us to determine the limit.

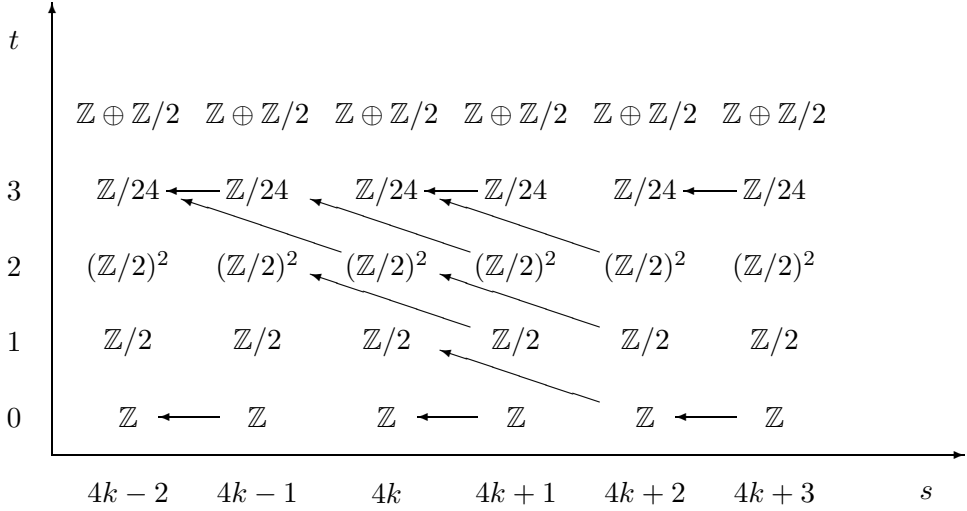
\begin{figure}
\begin{equation*}
\setlength{\unitlength}{1cm}
\begin{picture}(13,7)
\put(0.5,0.5){\vector(0,1){6}}
\put(0.5,0.5){\vector(1,0){12}}
\put(1.5,1){\makebox(0,0){$\Z$}}
\put(3.2,1){\makebox(0,0){$\Z$}}
\put(4.9,1){\makebox(0,0){$\Z$}}
\put(6.6,1){\makebox(0,0){$\Z$}}
\put(8.3,1){\makebox(0,0){$\Z$}}
\put(10,1){\makebox(0,0){$\Z$}}

\put(1.5,2){\makebox(0,0){$\Z/2$}}
\put(3.2,2){\makebox(0,0){$\Z/2$}}
\put(4.9,2){\makebox(0,0){$\Z/2$}}
\put(6.6,2){\makebox(0,0){$\Z/2$}}
\put(8.3,2){\makebox(0,0){$\Z/2$}}
\put(10,2){\makebox(0,0){$\Z/2$}}

\put(1.5,3){\makebox(0,0){$(\Z/2)^2$}}
\put(3.2,3){\makebox(0,0){$(\Z/2)^2$}}
\put(4.9,3){\makebox(0,0){$(\Z/2)^2$}}
\put(6.6,3){\makebox(0,0){$(\Z/2)^2$}}
\put(8.3,3){\makebox(0,0){$(\Z/2)^2$}}
\put(10,3){\makebox(0,0){$(\Z/2)^2$}}

\put(1.5,4){\makebox(0,0){$\Z/24$}}
\put(3.2,4){\makebox(0,0){$\Z/24$}}
\put(4.9,4){\makebox(0,0){$\Z/24$}}
\put(6.6,4){\makebox(0,0){$\Z/24$}}
\put(8.3,4){\makebox(0,0){$\Z/24$}}
\put(10,4){\makebox(0,0){$\Z/24$}}

\put(1.5,5){\makebox(0,0){$\Z \oplus \Z/2$}}
\put(3.2,5){\makebox(0,0){$\Z \oplus \Z/2$}}
\put(4.9,5){\makebox(0,0){$\Z \oplus \Z/2$}}
\put(6.6,5){\makebox(0,0){$\Z \oplus \Z/2$}}
\put(8.3,5){\makebox(0,0){$\Z \oplus \Z/2$}}
\put(10,5){\makebox(0,0){$\Z \oplus \Z/2$}}

\put(0,1){\makebox(0,0){$0$}}
\put(0,2){\makebox(0,0){$1$}}
\put(0,3){\makebox(0,0){$2$}}
\put(0,4){\makebox(0,0){$3$}}
\put(0,6){\makebox(0,0){$t$}}
%\put(0,4.5){$s$}
\put(1.5,0){\makebox(0,0){$4k-2$}}
\put(3.2,0){\makebox(0,0){$4k-1$}}
\put(4.9,0){\makebox(0,0){$4k$}}
\put(6.6,0){\makebox(0,0){$4k+1$}}
\put(8.3,0){\makebox(0,0){$4k+2$}}
\put(10,0){\makebox(0,0){$4k+3$}}
\put(12,0){\makebox(0,0){$s$}}

\put(2.7,1){\vector(-1,0){0.8}}
\put(2.7,4){\vector(-1,0){0.7}}
\put(6.1,1){\vector(-1,0){0.8}}
\put(6.1,4){\vector(-1,0){0.7}}
\put(9.5,1){\vector(-1,0){0.8}}
\put(9.5,4){\vector(-1,0){0.7}}

\put(7.7,1.2){\vector(-3,1){2.1}}
\put(6.0,2.2){\vector(-3,1){2.1}}
\put(6.0,3.2){\vector(-3,1){2.1}}
\put(7.7,2.2){\vector(-3,1){2.1}}
\put(7.7,3.2){\vector(-3,1){2.1}}
\put(4.3,3.2){\vector(-3,1){2.1}}

%\put(1.8,2.3){\line(3,2){0.8}}
%\put(1.8,3.3){\line(3,2){0.8}}
%\put(3.5,3.3){\line(3,2){0.8}}
%\put(5.2,2.3){\line(3,2){0.8}}
%\put(5.2,3.3){\line(3,2){0.8}}
%\put(6.9,1.3){\line(3,2){0.8}}
%\put(6.9,3.3){\line(3,2){0.8}}
%\put(8.6,2.3){\line(3,2){0.8}}
%\put(8.6,3.3){\line(3,2){0.8}}

\end{picture}
\end{equation*}
\caption{The first differentials in the spectral sequence.}\label{spfl2}
\end{figure}

\subsection{The stable spectral sequence}\label{sss}
In this section, we consider maps $f :X \to BO$ where $X$ is either a compact CW complex, $BO$, $BSO$, or $BSpin$. In all these cases, we use the notation $f^*\MT(d,r)$ etc.\ for the corresponding spectra.

The first step in stabilizing the spectral sequence of the previous section is to replace the spectrum $f^*MT(d,r)$ by $f^*\MT(d,r)$.
 
\begin{lem}\label{flg0}
For $X$ compact and $f:X \to G(d+ma_r,n)$ and $N=ka_{r+l}$, there are long exact sequences
\begin{equation*}
\to \pi_*(f^*MT(d + N,r)) \to \pi_*(f^*MT(d+l+N,r+l)) \to \pi_*(f^*MT(d+l+N,l)) \to 
\end{equation*}
for all $N$ sufficiently large and  $*< 2(d+N-1)-r$.
\end{lem}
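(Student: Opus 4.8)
The plan is to exhibit the desired long exact sequence as the direct limit (over $l$) of the long exact sequences of homotopy groups associated to the cofibration sequences
\begin{equation*}
f^*MT(d+N',r) \to f^*MT(d+l+N',r+l) \to f^*MT(d+l+N',l)
\end{equation*}
coming from the filtration \eqref{filt2}, suitably desuspended, where $N' = ka_{r+l} - (\text{a multiple of } a_r)$ is chosen to make the indices compatible. More precisely, recall from Lemma~\ref{cofdr} and the discussion around \eqref{filt2} that for any base space the cofiber of the inclusion $MT(d'-r')\to MT(d')$ is $MT(d',r')$, and the composite filtration \eqref{filt2} identifies the cofiber of $f^*MT(d',r') \to f^*MT(d'+l,r'+l)$ with $f^*MT(d'+l,l)$. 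Pulling this back along $f: X \to G(d+ma_r,n)$ gives, for each $l$ and each sufficiently large multiple $ja_r$ with $ja_r \geq l$ (so that the James-periodicity desuspensions make sense), a cofibration sequence
\begin{equation*}
\Sigma^{-ja_r} f^*MT(d+ja_r,r) \to \Sigma^{-ja_r} f^*MT(d+l+ja_r,r+l) \to \Sigma^{-ja_r} f^*MT(d+l+ja_r,l).
\end{equation*}
Taking $\varinjlim_j$ of the resulting long exact sequences of homotopy groups, and using that $f^*\MT(d,r) = \varinjlim_j \Sigma^{-ja_r} f^*MT(d+ja_r,r)$ (and likewise for the $r+l$ and $l$ terms, noting $a_{r+l}$ and $a_l$ are rationally commensurable powers of $2$ so the cofinal subsequences align), together with the fact that direct limits are exact, yields a long exact sequence
\begin{equation*}
\to \pi_*(f^*\MT(d,r)) \to \pi_*(f^*\MT(d,r+l)) \to \pi_*(f^*\MT(d,l)) \to .
\end{equation*}
Re-indexing by $N = ka_{r+l}$ to match the statement, and using that over a compact $X$ the finite-skeleton approximation lets one replace $f^*\MT$ by $f^*MT(\,\cdot\,,\,\cdot\,)$ in a range, gives the claimed sequence for $* < 2(d+N-1) - r$.

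The range restriction $* < 2(d+N-1) - r$ enters precisely at the point where one replaces the stabilized spectrum $f^*\MT$ by the un-stabilized $f^*MT$: Theorem~\ref{PVeq} (and its Thom-spectrum analogue, via the periodicity equivalences $g$ being $(2(d-r)+ka_r+1)$-equivalences as recorded in Section~\ref{dcls}) says the natural map $f^*MT(d+N,r) \to \Sigma^{-ja_r}f^*MT(d+N+ja_r,r) \to f^*\MT(d+N,r)$ is a $c$-equivalence with $c$ growing like $2(d+N-r)$. One has to track this connectivity bound through all three spectra in the cofiber sequence — the weakest bound comes from the $r$-term (smallest frame count relative to dimension), giving roughly $2(d+N-1)-r$ — and confirm it is uniform in the direct-limit index so that passing to $\varinjlim$ does not erode it; since $X$ is compact, $f$ factors through some $G(d+ma_r,n)$ and the pullbacks of finite-dimensional Grassmannian data keep everything within a fixed range.

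The main obstacle I expect is bookkeeping the index shifts so that the three direct systems (indexed by powers of $a_r$, $a_{r+l}$, $a_l$ respectively) are genuinely cofinal in a common system, and simultaneously checking that the periodicity maps $g$ used to form $f^*\MT$ are compatible with the maps in the filtration \eqref{filt2} — i.e.\ that the square relating ``apply $g$'' to ``forget the last $l$ vectors'' commutes up to homotopy, which is the Thom-spectrum version of Lemma~\ref{henvis}. Granting that compatibility (which follows from the same fiberwise-formula argument as Lemma~\ref{henvis}, since $g$ is a fiberwise $g_0$), the rest is the exactness of filtered colimits plus the connectivity estimate above, and the stated range is exactly what that estimate delivers.
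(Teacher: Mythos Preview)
Your proposal has a genuine gap and is structurally backwards relative to what the lemma actually asserts.

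The lemma is a statement about \emph{unstabilized} spectra at a fixed large $N$: for compact $X$, the three spectra $f^*MT(d+N,r)$, $f^*MT(d+l+N,r+l)$, $f^*MT(d+l+N,l)$ sit in a long exact sequence in the range $*<2(d+N-1)-r$. No direct limits appear in the statement, and in the paper this lemma is the \emph{input} to Corollary~\ref{restflg}, which is where the direct limit is taken. Your plan reverses this: you try to first build the $\MT$-level sequence by a colimit and then destabilize. That makes the argument circular, because the finite-stage cofibration sequences you feed into the colimit are precisely what is in question.

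Concretely, you write down a ``cofibration sequence''
\[
f^*MT(d+ja_r,r) \to f^*MT(d+l+ja_r,r+l) \to f^*MT(d+l+ja_r,l)
\]
by citing Lemma~\ref{cofdr} and the filtration \eqref{filt2}. But Lemma~\ref{cofdr} is proved using the homotopy equivalence $B(d-r)\simeq V_r(U_d)$, which is special to the universal base $B(d)$ and has no analogue over an arbitrary compact $X$. For general $X$ the cofiber of the first map is the Thom spectrum of the pair $(W_{r+l}(E),\,V_{r+l}(E)\cup W_r(F))$, and this is \emph{not} equal to $f^*MT(d+l+N,l)$ on the nose; it only maps to it, and that map is an equivalence only in a range. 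Establishing that range is exactly the content of the lemma.

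The paper's proof is a single direct connectivity computation: write $F=f^*U_{d+N,n}$, $E=F\oplus\R^l$, identify the cofiber and the target as Thom spectra over the pairs $(W_{r+l}(E),\,V_{r+l}(E)\cup W_r(F))$ and $(W_l(E),\,V_l(E)\cup X\times I)$, and check fiberwise that the comparison map is $(2(d+N-1)-r)$-connected, using that $V_{d+l+N,r+l}\cup W_{d+N,r}$ (the mapping cone of the fiber inclusion $V_{d+N,r}\to V_{d+l+N,r+l}$) and $V_{d+l+N,l}$ are both $(2(d+N-1)-r)$-connected. No limits, no periodicity maps, no index-alignment bookkeeping are needed.
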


\begin{proof}
Let $f^*(U_{d+N,n }) = F$ and $f^*(U_{d+l+N,n })=E$ for simplicity. These are both vector bundles over $X$ and $E \cong F \oplus \R^l$. The maps in the sequence are the maps of Thom spaces over the maps of pairs
\begin{equation*}
(W_r(F),V_r(F))\to (W_{r+l}(E),V_{r+l}(E))
\to (W_l(E),V_l(E)).
\end{equation*}
 We need to see that the map of pairs
\begin{equation}\label{maponVs}
(W_{r+l}(E),V_{r+l}(E) \cup W_r(F)) \to (W_l(E),V_l(E)\cup X\times I)
\end{equation}
is highly connected, since the first pair corresponds to the cofiber of 
\begin{equation*}
f^*MT(d+N,r)\to f^*MT(d+N+l,r+l),
\end{equation*}
while the second pair corresponds to $f^*MT(d+N+l,l)$. A point $(x,s)$ in $ X\times I$ should be interpreted as $(su_1,\dots,su_l )\in W_l(F\oplus \R^l)$ where $(u_1,\dots,u_l)$ is the standard frame in $ 0\oplus \R^l$.  All the spaces in \eqref{maponVs} are fiber bundles over $X$, so it is enough to see that the fibers are highly connected. Now, $W_{d+l +N,r+l}$ and $W_{d+l +N,l}$ are both contractible. The fibers $V_{d+l+N,r+l} \cup W_{d+N,r}$ and $V_{d+l+N,l}$ are $(2(d+N-1)-r)$-connected since the first is the mapping cone of the fiber inclusion $ V_{d+N,r} \to V_{d+l+N,r+l}$ and the other one is the base space.
\end{proof}

\begin{cor}\label{restflg}
%There is a long exact sequence
%\begin{equation}
%\dotsm \to \pi_*(\MT(d,r)) \to \pi_*(\MT(d+k,r+k)) \to \pi_*(\MT(d+k,k)) \to \dotsm .
%\end{equation}
There are long exact sequences
\begin{equation}\label{esBO}
\to \pi_*(f^*\MT(d,r)) \to \pi_*(f^*\MT(d+l,r+l)) \to \pi_*(f^*\MT(d+l,l)) \to .
\end{equation}
\end{cor}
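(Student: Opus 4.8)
The plan is to deduce Corollary~\ref{restflg} from Lemma~\ref{flg0} by passing to the direct limit over $l$ that defines the spectra $f^*\MT(d,r)$, treating the compact and the $BO$, $BSO$, $BSpin$ cases separately. For $X$ compact, recall that $f:X\to BO$ factors through some $G(d+ma_r,n)$, so Lemma~\ref{flg0} applies, giving for each sufficiently large $N=ka_{r+l}$ a long exact sequence in homotopy in the range $*<2(d+N-1)-r$. First I would check that these long exact sequences are compatible with the maps defining the direct limits $f^*\MT(d,r)=\varinjlim_k \Sigma^{-ka_r}f^*MT(d+ka_r,r)$: the vertical periodicity maps $\Sigma^{ka_r}f^*MT(d,r)\to f^*MT(d+ka_r,r)$ are fiberwise applications of $g_0$, and the maps in the three-term sequence of pairs in the proof of Lemma~\ref{flg0} commute with $g_0$ on the nose (this is the same ``it is easy to see from the formulas'' commutativity already used for \eqref{system}). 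Hence we get a directed system of long exact sequences, and since the connectivity bound $2(d+N-1)-r$ tends to infinity with $N$, in each fixed degree $q$ the sequence is eventually exact. Since homotopy groups commute with sequential homotopy colimits, taking $\varinjlim$ yields the long exact sequence \eqref{esBO} in all degrees.

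The one subtlety is matching the indices: in Lemma~\ref{flg0} the three spectra are $f^*MT(d+N,r)$, $f^*MT(d+l+N,r+l)$, $f^*MT(d+l+N,l)$, and after desuspending by $N$ and letting $N\to\infty$ these become $\Sigma^{-l}f^*\MT(d,r)$ (reindexed), $f^*\MT(d+l,r+l)$, and $f^*\MT(d+l,l)$ — here I would be careful that the $l$-fold suspension discrepancy between the $r$-frame bundle over $F$ and over $E\cong F\oplus\R^l$ is exactly absorbed by the shift $d\rightsquigarrow d+l$ on the Grassmannian side, which is built into the definition of the vertical maps $f^*\MT(d,r)\to f^*\MT(d+1,r+1)$ recorded just before the second \texttt{defi}. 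So the cofiber sequence obtained in the limit really is
\begin{equation*}
f^*\MT(d,r)\to f^*\MT(d+l,r+l)\to f^*\MT(d+l,l),
\end{equation*}
and its long exact sequence in homotopy is \eqref{esBO}.

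For $X=BO$, $BSO$, or $BSpin$, I would reduce to the compact case using that $\mathcal{MTO}(d,r)=\varinjlim_{X}\mathcal{MTO}(d,r)_{\mid X}$ over compact $X\subseteq BO$ (the Proposition after the second \texttt{defi}), and likewise for $BSO$, $BSpin$. The long exact sequences \eqref{esBO} for the compact pieces are natural in $X$, so taking the direct limit over compact $X$ — which again commutes with $\pi_*$ and preserves exactness — gives \eqref{esBO} for $X=BO$, etc. Alternatively, one can note directly that for these $X$ the filtration $X_{d+ma_r,n}=f^{-1}(G(d+ma_r,n))$ exhausts $X$ and run the same fiberwise connectivity argument; but the reduction to compacta is cleaner.

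The main obstacle I expect is purely bookkeeping: keeping the three indices $(d,r,l,N,k)$ straight through the desuspensions and verifying that the connecting maps, periodicity maps, and frame-forgetting maps all commute strictly rather than just up to homotopy, so that one genuinely has a diagram of long exact sequences over the directed set and may apply exactness of filtered colimits. There is no hard homotopy-theoretic input beyond Lemma~\ref{flg0} itself; once the compatibility is in place, the passage to the limit is formal.
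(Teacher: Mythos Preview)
For the compact case your approach is essentially the paper's: start from Lemma~\ref{flg0}, verify that the periodicity maps induce maps between the long exact sequences, and pass to the direct limit using that filtered colimits preserve exactness. (The paper records only that the relevant square commutes up to homotopy, which is all that is needed for a map of long exact sequences; your stronger ``on the nose'' claim is harmless but not required.)

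For $X=BO$, $BSO$, or $BSpin$ you take a different route. You reduce to the compact case via $\mathcal{MTO}(d,r)=\varinjlim_{X}\mathcal{MTO}(d,r)_{\mid X}$ and a second filtered colimit over compact $X$. The paper instead appeals directly to Lemma~\ref{cofdr}, which identifies $MT(d,r)$ as the cofiber of $MT(d-r)\to MT(d)$; this yields genuine cofibration sequences
\[
MT(d+N,r)\to MT(d+l+N,r+l)\to MT(d+l+N,l)
\]
valid in \emph{all} degrees, with no connectivity restriction, so only the periodicity compatibility needs to be checked before taking the direct limit over $N$. Your reduction is correct and has the virtue of treating all cases uniformly, but the paper's use of Lemma~\ref{cofdr} is shorter and bypasses the connectivity bookkeeping entirely in the classifying-space cases.
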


\begin{proof}
First consider the compact case.
We may assume $f(X) \subseteq G(d+N,n)$ for all $N=ka_{r+l}$ with $k$ sufficiently large. 
We need to see that the periodicity map defines a map between the long exact sequences in Lemma \ref{flg0}.
The diagram
\begin{equation*}
\xymatrix{{\Sigma^{a_{r+l}} f^*MT(d+N,r)}\ar[r]\ar[d]&{\Sigma^{a_{r+l}} f^*MT(d+l+N,r+l)}\ar[d]\\
{f^*MT(d+N+a_{r+l},r)}\ar[r]&{f^*MT(d+l+N+a_{r+l},r+l)}
}
\end{equation*}
commutes up to homotopy. Thus there is an induced map of the long exact sequences.  It is left to the reader to check that the map of cofibers is actually the periodicity map. The claim follows because direct limits preserve exactness. 

For $X=BO$, $BSO$, or $BSpin$, the proof is similar, except the long exact sequences 
\begin{equation*}
\to \pi_*(f^*MT(d+N,r)) \to \pi_*(f^*MT(d+l+N,r+l)) \to \pi_*(f^*MT(d+l+N,l)) \to,
\end{equation*}
are immediate from Lemma \ref{cofdr}.
\end{proof}

\begin{cor}
The Becker--Gottlieb transfer
\begin{equation*}
\tau_*:\pi_*(\Sigma^\infty B(d)_+) \to \pi_*(\Sigma^\infty B(d-1)_+)
\end{equation*}
depends only on $d $ mod $ 2$ in dimensions $*<d-1$.
\end{cor}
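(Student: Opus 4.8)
The plan is to deduce the stated periodicity of the Becker--Gottlieb transfer $\tau_*\colon\pi_*(\Sigma^\infty B(d)_+)\to\pi_*(\Sigma^\infty B(d-1)_+)$ from the periodicity of the spectral sequence for $MT(d,r)$ established in the preceding discussion, using the identification of $d^1$ on the sphere summands. Recall that by the Proposition identifying $\tau$, the map $\tau\colon MT(d,1)\to\Sigma MT(d-1,1)$ is the Becker--Gottlieb transfer of the sphere bundle $S^{d-1}\to V_1(U_d)\to B(d)$, and under the splitting $\pi_{s+t}(MT(s,1))\cong\pi_t^s(S^0)\oplus\pi_t(\Sigma^\infty B(d)_+)$ the first differential $d^1\colon E^1_{s,t}\to E^1_{s-1,t}$ is induced by $\tau_*$. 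So it suffices to exhibit the transfer $\tau_*\colon\pi_*(\Sigma^\infty B(d)_+)\to\pi_*(\Sigma^\infty B(d-1)_+)$ as (a summand of) the first differential of a spectral sequence which, in the relevant range, is visibly periodic in $d$ with period $2$.

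The key step is the periodicity isomorphism already recorded in the text: the periodicity map $MT(d,r)\to\Sigma^{-ka_r}MT(d+ka_r,r)$ induces an isomorphism of the spectral sequences for $MT(d,r)$ and $MT(d+ka_r,r)$ in the stable range $t+s<2(d-r)$, and from the table \eqref{RH} the relevant $a_r$ are even as soon as $r\geq 2$ (indeed $a_r\equiv 0\pmod 2$ for all $r\geq 2$, and $a_1=1$). First I would fix $r=2$, so $a_2=2$, giving an isomorphism of spectral sequences relating $MT(d,2)$ with $MT(d+2k,2)$ in the stable range; iterating, the whole $E^1$-page together with its $d^1$ in a fixed internal degree is $2$-periodic in $d$. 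Restricting this isomorphism of $d^1$ differentials to the $\pi_*(\Sigma^\infty B(\cdot)_+)$-summands of $E^1_{s,*}$ for $s$ and $s-1$ both in the stabilized window, and using the naturality of the periodicity map with the identification of $d^1$ as $\tau_*$, one gets that $\tau_*\colon\pi_*(\Sigma^\infty B(d)_+)\to\pi_*(\Sigma^\infty B(d-1)_+)$ agrees with $\tau_*\colon\pi_*(\Sigma^\infty B(d+2)_+)\to\pi_*(\Sigma^\infty B(d+1)_+)$ after the identifications $\pi_*(\Sigma^\infty B(d)_+)\cong\pi_{*}(\Sigma^\infty B(d+2)_+)$ coming from $B(d)\to B(d+2)$ (which is a homology isomorphism through a range growing with $d$). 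Hence in dimensions $*<d-1$ the transfer depends only on the parity of $d$.

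The main obstacle I anticipate is bookkeeping the range: the periodicity isomorphism of spectral sequences holds only for $t+s<2(d-r)$, and the identification of $\pi_*(\Sigma^\infty B(d)_+)$ with $\pi_*(\Sigma^\infty B(d+2)_+)$ holds only through the connectivity of $B(d)\to B(d+2)$ (roughly dimension $d$). One must check that for a homotopy class in $\pi_q(\Sigma^\infty B(d)_+)$ with $q<d-1$, both it and its image under $\tau_*$ live in the overlap of all the relevant stable windows, so that the chain of isomorphisms is legitimate; this is a routine but slightly delicate inequality chase using that $d^1$ lowers $s$ by one and preserves total degree $s+t$. A secondary, purely formal point is to spell out why the periodicity isomorphism of spectral sequences is compatible with the direct-sum decomposition $E^1_{s,t}\cong\pi_t^s(S^0)\oplus\pi_t(\Sigma^\infty B(d)_+)$ and with the transfer description of $d^1$ — this follows from the naturality of the constructions in Section~\ref{thess} but should be stated explicitly. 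Modulo these range checks, the corollary is immediate from what has already been proved.
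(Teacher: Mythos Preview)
Your proposal is correct and takes essentially the same approach as the paper. The paper's one-line proof simply observes that the argument for Corollary~\ref{restflg} already shows the periodicity map defines a map between the cofibrations \eqref{seq}; since $\tau$ is the boundary map of that cofibration and $a_2=2$, this immediately gives the $2$-periodicity in the stable range. Your version routes through the induced isomorphism of spectral sequences and reads off $d^1=\tau_*$, which amounts to the same thing but is slightly less direct than appealing to the cofibration sequence itself.
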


\begin{proof}
The proof of Corollary \ref{restflg} shows that the periodicity map defines a map between the cofibrations \eqref{seq}.
\end{proof}

The sequences \eqref{esBO} fit together to form a  spectral sequence:
\begin{thm} \label{stabss}
There is a spectral sequence converging to $\pi_*(f^*\MT(d,r))$ and with 
\begin{equation*}
E^1_{s,t} = \pi_{s+t}(f^*\MT(s,1)) \cong \pi_{t}^s(X) \oplus \pi_{t}^s(S^0)
\end{equation*}
for $d-r< s \leq d$ and $E^1_{s,t}=0$ otherwise. 
\end{thm}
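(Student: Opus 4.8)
The strategy is the standard one for producing an exact-couple spectral sequence from a filtered object, applied to the filtration \eqref{filt2} after stabilizing. First I would observe that by Lemma~\ref{cofdr} (together with the direct-limit construction of $f^*\MT$), the spectra $f^*\MT(s,1)$ are the cofibers of the successive maps in a filtered sequence
\begin{equation*}
f^*\MT(d-r,0) \to f^*\MT(d-r+1,1) \to \dotsm \to f^*\MT(d-1,r-1) \to f^*\MT(d,r),
\end{equation*}
obtained by taking the direct limit over $l$ of the stabilized filtration of Section~\ref{thess}; indeed each consecutive cofiber is $\varinjlim_l \Sigma^{-lka_r} MT(d-r+k+1+lka_r,1) = f^*\MT(d-r+k+1,1)$, using that suspension and direct limits commute and that cofiber sequences are preserved under direct limits. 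The bottom term $f^*\MT(d-r,0)$ has trivial homotopy groups (it is a point, as in the unstable case), so the filtration is exhaustive and bounded below.

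Second, the long exact sequences in homotopy associated to each of these cofibrations assemble, in the usual way, into an exact couple $(D,E)$ with $D_{s,t} = \pi_{s+t}(f^*\MT(s,r-d+s))$ and $E^1_{s,t} = \pi_{s+t}(f^*\MT(s,1))$, and the derived couples give the pages $E^k$ with differentials $d^k: E^k_{s,t}\to E^k_{s-k,t+k-1}$. The vanishing $E^1_{s,t}=0$ for $s\le d-r$ or $s>d$ is immediate from the shape of the filtration. This is exactly the spectral sequence of the previous section with each term replaced by its stable analogue, which is the point of working with $f^*\MT$ rather than $f^*MT$.

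Third, for the identification of the $E^1$-term I would compute $\pi_*(f^*\MT(s,1))$. From Lemma~\ref{cofdr}, $f^*\MT(s,1)$ is the cofiber of $f^*MT(s-1)\to f^*MT(s)$ after stabilizing, and as recorded around \eqref{MTd1} this cofiber is $\Sigma^\infty(X_{s,\infty})_+$ smashed appropriately; in the limit the stable cofiber splits as a wedge whose homotopy is $\pi_t^s(X)\oplus \pi_t^s(S^0)$ — concretely, $f^*\MT(s,1)\simeq \Sigma^\infty X_+ \vee S^0$ stably in the relevant range, exactly as in the unstable computation $E^1_{s,t}=\pi_t^s(S^0)\oplus\pi_t(\Sigma^\infty B(d))$ of Section~\ref{thess}, now with $B(d)$ replaced by the general base $X$. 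For $X$ compact this is the James-type splitting; for $X=BO$, $BSO$, $BSpin$ it is the corresponding stable splitting of $MT(s,1)$ used in \cite{adb}.

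\textbf{Main obstacle.} The delicate point is convergence of the spectral sequence to $\pi_*(f^*\MT(d,r))$, i.e.\ that the spectral sequence abuts correctly despite being an inverse-limit-free but direct-limit-built object. For fixed finite $d$ and $r$ the filtration \eqref{filt2} has only finitely many stages, so convergence is formal (standard finite-filtration convergence, as in the Proposition following \eqref{filt2}); the content is checking that passing to the stabilized spectra $f^*\MT$ — itself a direct limit over $l$ — does not disturb this, which follows since a finite filtration is preserved under the exact functor $\varinjlim_l$. So the real work is bookkeeping: verifying that the periodicity maps are compatible with the filtration maps (done in the proof of Corollary~\ref{restflg}) and that the resulting map of exact couples is an isomorphism in the stable range, so that the limiting couple is the one described. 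I expect no serious difficulty beyond this compatibility check; the statement is essentially the formal shadow of Corollary~\ref{restflg}.
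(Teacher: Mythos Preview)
Your proposal is correct and follows essentially the same approach as the paper, which offers no formal proof beyond the sentence preceding the theorem: ``The sequences \eqref{esBO} fit together to form a spectral sequence.'' You supply the standard exact-couple details that the paper leaves implicit, and your identification of convergence as automatic (finite filtration, bottom term trivial) and of the $E^1$-term via the Thom-space description \eqref{MTd1} matches the paper's reasoning.

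One small remark: you invoke Lemma~\ref{cofdr} for the cofiber identification, but that lemma is stated only for $X=B(d)$; for compact $X$ the relevant input is Lemma~\ref{flg0} and Corollary~\ref{restflg}, which you do cite at the end. The point is that the highly connected map \eqref{maponVs} becomes an equivalence in the direct limit, so the cofibration sequences exist at the spectrum level after stabilizing, not merely on homotopy groups in a range---this is what justifies assembling an honest filtration of $f^*\MT(d,r)$ in the general compact case.
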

For $X=BO$ or $BSO$, this is the spectral sequence from Section \ref{thess} but with all groups on the $E^1$-page replaced by their stable versions. 

Replacing all groups in \eqref{esBO} by their $2$-completions yields
\begin{equation*}
\to \pi_*(f^*\MT(d,r))_2^\wedge \to \pi_*(f^*\MT(d+1,r+1))_2^\wedge \to\pi_*(f^*\MT(d+1,1))_2^\wedge \to .
\end{equation*}
This is again an exact sequence because all the groups involved are finitely generated.
Since the inverse limit functor is exact with respect to sequences of profinite groups,
 there are long exact sequences
\begin{equation*}
\to \varprojlim_r \pi_*(f^*\MT(d,r) )_2^\wedge \to \varprojlim_r \pi_*(f^*\MT(d +1,r+1) )_2^\wedge \to  \pi_*(f^*\MT(d+1,1)) _2^\wedge \to.
\end{equation*}
These sequences form a spectral sequence where $\varprojlim_r \pi_*(f^*\MT(d,r) )_2^\wedge$ is filtered by the image of the groups $\varprojlim_r \pi_*(f^*\MT(d-l,r-l))_2^\wedge$ and with
\begin{equation*}
\hat{E}^1_{s,t} = \pi_{s+t}(f^*\MT(s,1))_2^\wedge
\end{equation*}
and differentials
\begin{equation*}
d^k: \hat{E}^k_{s,t} \to \hat{E}^k_{s-k,t+k-1}.
\end{equation*}

\begin{thm}
This spectral sequence $\hat{E}^*_{*,*}$ converges strongly in the sense of \cite{boardman} to $\varprojlim_r\pi_*(f^*\MT(d,r))_2^\wedge$. 
\end{thm}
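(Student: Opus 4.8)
The plan is to verify Boardman's criteria for strong convergence of a half-plane spectral sequence with exiting differentials, applied to the tower of long exact sequences above. Recall that for such a spectral sequence, strong convergence amounts to three things: the filtration is exhaustive and Hausdorff on the target, the spectral sequence is conditionally convergent, and the derived $E^\infty$-term vanishes (equivalently, $RE^\infty = 0$, which rules out the phantom obstruction). Since each column $\hat{E}^1_{s,t}$ is nonzero only for $d-r < s \le d$ — that is, only finitely many columns contribute — the spectral sequence is concentrated in a vertical strip of width $r$, so for each fixed $(s,t)$ the groups $\hat{E}^k_{s,t}$ stabilize after finitely many pages: $\hat{E}^k_{s,t} = \hat{E}^\infty_{s,t}$ for $k > r$. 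This immediately gives $RE^\infty = \varprojlim^1_k \hat{E}^k_{s,t} = 0$ by Mittag--Leffler, and it makes the filtration on the abutment finite in each total degree.

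The key steps, in order, are as follows. First I would record that the abutment $\varprojlim_r \pi_n(f^*\MT(d,r))_2^\wedge$ carries the filtration by the images $F_{s} = \Ima\big(\varprojlim_r \pi_n(f^*\MT(d-l,r-l))_2^\wedge \to \varprojlim_r \pi_n(f^*\MT(d,r))_2^\wedge\big)$ with $s = d-l$, coming from the iterated long exact sequences. Because only $r$ of the steps $\MT(d-l-1,r-l-1) \to \MT(d-l,r-l)$ can contribute a nontrivial $\hat{E}^1$-column, the filtration is finite: $F_{d-r-1} = 0$ and $F_d$ is everything. Finiteness of the filtration makes it automatically exhaustive and Hausdorff, so the only substantive point is to identify the associated graded with $\hat{E}^\infty$. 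Second, I would show $F_s/F_{s-1} \cong \hat{E}^\infty_{s,t}$ with $t = n-s$ by the standard argument: run the long exact sequences and observe that $F_s/F_{s-1}$ is exactly the subquotient of $\hat{E}^1_{s,t} = \pi_{s+t}(f^*\MT(s,1))_2^\wedge$ consisting of classes that survive (are hit by no incoming boundary from the previous stages, i.e.\ lie in the kernels defining successive $\hat{E}^k$) and die (map to zero, i.e.\ lift to the next stage). This is the content of the construction of a spectral sequence from an exact couple, and since the couple is built from genuine long exact sequences of abelian groups, no further convergence hypothesis on the individual stages is needed.

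The reason this works cleanly is the \emph{compactness in the column direction}: the $E^1$-page lives in the finite strip $d-r < s \le d$, so there are no lim$^1$ issues either in the page direction (differentials stabilize) or in the filtration direction (the filtration is finite), and Boardman's strong convergence reduces to the elementary bookkeeping of the exact couple. I would also note the one place where $2$-completion is genuinely used: to pass from the sequences \eqref{esBO} to their $\varprojlim_r$, we needed exactness of $\varprojlim_r$ on the relevant towers, which holds because each $\pi_*(f^*\MT(s,1))_2^\wedge$ is a finitely generated $\Z_2$-module (hence the towers $\{\pi_*(f^*\MT(d,r))_2^\wedge\}_r$ are Mittag--Leffler, being towers of finitely generated modules over the Noetherian ring $\Z_2$); this is already established in the text just before the theorem. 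The main obstacle — and it is a mild one — is simply checking carefully that the filtration on $\varprojlim_r \pi_*(f^*\MT(d,r))_2^\wedge$ induced by the limit of the exact couples genuinely terminates after $r$ steps, i.e.\ that taking $\varprojlim_r$ does not secretly enlarge the filtration beyond the strip; but this follows because $\hat{E}^1_{s,t} = 0$ outside the strip forces every filtration quotient outside the strip to vanish, whence $F_{d-r-1} = 0$ and $F_d$ is the whole group, as claimed.
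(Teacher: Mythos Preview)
Your proposal rests on a misreading of which spectral sequence is at stake. The spectral sequence $\hat E$ is built from the tower
\[
\dotsm \to \varprojlim_r \pi_*(f^*\MT(d-l,r-l))_2^\wedge \to \dotsm \to \varprojlim_r \pi_*(f^*\MT(d,r))_2^\wedge,
\]
where the inverse limit over $r$ has \emph{already been taken}. There is no longer a fixed $r$, and the $E^1$-page has $\hat E^1_{s,t}=\pi_{s+t}(f^*\MT(s,1))_2^\wedge$ for \emph{every} $s\le d$, not just for $d-r<s\le d$. You have confused $\hat E$ with the finite-strip spectral sequence of Theorem~\ref{stabss} for a fixed $r$. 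Consequently the filtration is not finite, the pages do not stabilize after finitely many steps, and your Mittag--Leffler argument for $RE^\infty=0$ does not apply.

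The paper treats $\hat E$ correctly as a half-plane spectral sequence with \emph{entering} differentials (the half-plane is $t\ge 0$; the differential $d^k$ hitting $(s,t)$ has source $(s+k,t-k+1)$, which lies outside the half-plane once $k>t+1$). It then verifies the three hypotheses of Boardman's Theorem~7.3: (i) $\varprojlim_l \varprojlim_r \pi_*(f^*\MT(d-l,r-l))_2^\wedge=0$ and (ii) the corresponding $\varprojlim^1$ vanishes, both by interchanging the limits and using that the inner system eventually hits zero; and (iii) $\varprojlim^1_k \hat Z^k_{s,t}=0$, which requires an honest argument: each $\hat Z^k_{s,t}$ is identified as the kernel of a continuous map out of the $2$-profinite group $\pi_{s+t}(f^*\MT(s,1))_2^\wedge$, hence is closed and therefore profinite, and $\varprojlim^1$ vanishes on towers of profinite groups. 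None of this is automatic, and none of it follows from a finite-strip argument.
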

%Strong convergence basically means that the $E^\infty$-terms are the filtration quotients of a complete Hausdorff filtration of $\varprojlim_r\pi_*(f^*\MT(d,r))_2^\wedge$. 

\begin{proof}
Since $\hat{E}^1_{s,t}=0$ for $t<0$, we have a half-plane spectral sequence with entering differentials in the sense of \cite{boardman}. Thus, by Theorem 7.3 of this paper, it is enough to check that the following three conditions are satisfied:
\begin{itemize}
\item[(i)]
\begin{equation*}
\varprojlim\nolimits_l \varprojlim\nolimits_r \pi_*(f^*\MT(d-l,r-l))_2^\wedge \cong \varprojlim\nolimits_r \varprojlim\nolimits_l \pi_*(f^*\MT(d-l,r-l))_2^\wedge =0,
\end{equation*}
since $\pi_*(f^*\MT(d-l,r-l))$ is zero for $k>r$.
\item[(ii)]
\begin{equation*}
\varprojlim\nolimits_l^1 \varprojlim\nolimits_r \pi_*(f^*\MT(d-l,r-l))_2^\wedge =0.
\end{equation*}
This follows from the fact that 
\begin{equation*}
\varprojlim\nolimits_l \pi_*(f^*\MT(d-l,r-l))_2^\wedge = \varprojlim\nolimits_l^1 \pi_*(f^*\MT(d-l,r-l))_2^\wedge=0
\end{equation*}
and diagram chasing in the diagram defining the double limit.
\item[(iii)]
\begin{equation*}
\varprojlim\nolimits_k^1 \hat{Z}^k_{s,t} = 0.
\end{equation*}
Here $\hat{Z}^k_{s,t}$ denotes the cycles on the $k$th page, i.e.\ the inverse image of 
\begin{equation*}
\Im (\varprojlim\nolimits_r \pi_{s+t-1}(f^*\MT(s-k,r-k))_2^\wedge \to \varprojlim\nolimits_r\pi_{s+t-1}(f^*\MT(s-1
,r))_2^\wedge) 
\end{equation*}
under the map 
\begin{equation*}
\pi_{s+t}(f^*\MT(s,1))_2^\wedge \to \varprojlim_r \pi_{s+t-1}(f^*\MT(s-1,r))_2^\wedge.
\end{equation*}
 But the sequence 
\begin{align*}
 \to  \varprojlim\nolimits_r \pi_{s+t-1}(f^*\MT(s-k,r-k))_2^\wedge \to \varprojlim\nolimits_r &\pi_{s+t-1}(f^*\MT(s-1,r))_2^\wedge \\ 
\to &\pi_{s+t-1}(f^*\MT(s-1,k))_2^\wedge \to 
\end{align*}
is exact, so $\hat{Z}^k_{s,t}$ is actually the kernel of the composite map
\begin{align*}
\pi_{s+t}(f^*\MT(s,1))_2^\wedge &\to \varprojlim\nolimits_r \pi_{s+t-1}(f^*\MT(s-1,r))_2^\wedge \\&\to \pi_{s+t-1}(f^*\MT(s-1,k))_2^\wedge ,
\end{align*}
which is continuous. Thus the $\hat{Z}^k_{s,t}$ are closed subgroups of a 2-profinite group. But $\varprojlim^1$ vanishes for any inverse system of closed subgroups of a profinite group because these are again profinite, the quotients are profinite, and $\varprojlim$ is exact on sequences of profinite groups with continuous maps.
\end{itemize}
\end{proof}

%For the restricted spectra, we can say what the spectral sequence converges to:
%\begin{prop}
%For $q<d-1$,
%\begin{equation*}
%\varprojlim_r \pi_q(f^*\MT(d,r))_2^\wedge \cong \pi_q(\widehat{f^*MT}(d)).
%\end{equation*}
%\end{prop}
%
%\begin{proof}
%The groups $\pi_q(f^*\MT(d,r))$ are finite 2-primary groups for infinitely many $r$, therefore
%\begin{equation*}
%\varprojlim_r \pi_q(f^*\MT(d,r))_2^\wedge =  \varprojlim_r \pi_q(f^*\MT(d,r)) \cong \pi_q(\widehat{f^*MT(d)}).
%\end{equation*}
%\end{proof}

The inclusion $\widehat{f^*MT}(d) \to \widehat{f^*MT}(d+1)$ defines a map of these spectral sequences. This is an isomorphism on $\hat{E}^1_{s,t}$ in the area $s+t<d$. Letting $d$ tend to infinity, we obtain a spectral sequence converging to  $\pi_*(\widehat{f^*MT})$.

\begin{thm}
For $X=pt$, $\hat{E}$ from Theorem \ref{stabss}  is the stable EHP spectral sequence.
\end{thm}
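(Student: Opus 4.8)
The plan is to show that, for $X=pt$, the filtration \eqref{filt2} becomes the skeletal filtration of the stunted projective space $\Sigma^{\infty+1}P_{d,r}$, so that the spectral sequence of Theorem \ref{stabss} is exactly the Atiyah--Hirzebruch spectral sequence Mahowald uses, and then to pass to the limits over $r$ and $d$. Thus the argument is organized as a comparison of filtrations followed by a comparison of the resulting limiting spectral sequences.

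First I would recall that for $X=pt$ one has $f^*\MT(d,r)=\V_{d,r}$, and that Theorem \ref{PVeq} together with James periodicity (which is exactly what makes both $\V_{d,r}$ and $\Sigma^{\infty+1}P_{d,r}$ meaningful for all $d$ and $r$) gives a homotopy equivalence $\V_{d,r}\simeq \Sigma^{\infty+1}P_{d,r}$. The $k$-th term of \eqref{filt2} is $MT(d-r+k,k)$, which for $X=pt$ is $\V_{d-r+k,k}\simeq \Sigma^{\infty+1}P_{d-r+k,k}$ with $P_{d-r+k,k}=\R P^{d-r+k-1}/\R P^{d-r-1}$, i.e.\ the $(d-r+k-1)$-skeleton of $P_{d,r}$. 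I would check that under the James equivalences the inclusion in \eqref{filt2} becomes the skeletal inclusion, so that \eqref{filt2} is equivalent to the skeletal filtration of $\Sigma^{\infty+1}P_{d,r}$ and the cofibre of its $k$-th map is $\Sigma^\infty S^{d-r+k}\simeq \V_{d-r+k+1,1}$, consistently with the identification $\hat E^1_{s,t}=\pi_{s+t}(\V_{s,1})$. The content here is that James's map $P_{d,r}\to V_{d,r}$ and the periodicity maps $g_0$ of Section \ref{dcls} are compatible with the inclusions $\R P^{j-1}\subseteq\R P^{j}$, resp.\ $V_{j,j}\to V_{j+1,j+1}$, defining the skeleta; this verification is the main technical point of the proof.

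Granting this, the spectral sequence of Theorem \ref{stabss} for fixed $d$ and $r$ is by construction the Atiyah--Hirzebruch spectral sequence of $\Sigma^{\infty+1}P_{d,r}$. For $X=pt$ its $E^1$-page is $\pi_{s+t}(\V_{s,1})=\pi_t^s(S^0)$ concentrated in cell-dimensions $d-r<s\le d$, and by Corollary \ref{bg} the differential $d^1$ is multiplication by $\chi(S^s)$, i.e.\ alternately $2$ and $0$ --- precisely the cellular boundary map of $\R P^\infty$. The maps $\V_{d,r+1}\to\V_{d,r}$ of \eqref{insyV} collapse the bottom cell, so on $E^1$ they induce the evident maps; hence, after $2$-completing and forming $\varprojlim_r$, one recovers the inverse limit of Atiyah--Hirzebruch spectral sequences that Mahowald takes as the ($2$-primary) stable EHP spectral sequence for fixed $d$. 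Letting $d\to\infty$ yields the stable EHP spectral sequence, and by Theorem \ref{linV} (Lin's theorem) it converges to $\pi_*((S^0)^\wedge_2)$, as it should.

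The main obstacle is the compatibility claim of the second paragraph. Since, as the excerpt notes, no homotopy--commutative comparison of the full inverse systems \eqref{insyV} and \eqref{Plim} is available, I would use the equivalences $\V_{d,r}\simeq\Sigma^{\infty+1}P_{d,r}$ only for each fixed $r$, as equivalences of skeletally filtered spectra, and then match the two inverse systems of spectral sequences at the level of their $E^1$- and $E^2$-pages, which already pins down the stable EHP spectral sequence. Careful bookkeeping of James periodicity in the range $d\le 0$ (needed even to make sense of $P_{d,r}$ and its skeleta there) will also be required, but should be routine.
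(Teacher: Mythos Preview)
Your approach is essentially the same as the paper's: for each fixed $r$ you identify the filtration \eqref{filt2} with the skeletal filtration of $\Sigma^{\infty+1}P_{d,r}$ via the James map of Theorem~\ref{PVeq}, obtain an isomorphism of spectral sequences, and then handle the passage to the inverse limit over $r$ by comparing on $E^1$ rather than on the level of spectra. The paper makes one point more explicit than you do: the reason the $E^1$-isomorphism is independent of the Clifford representation used (and hence compatible as $r$ varies) is that the induced map on filtration quotients $\Sigma^{\infty+1}P_{d-k,1}\to\V_{d-k,1}$ is always a degree $\pm 1$ map of sphere spectra, so the identification $\bar E^1_{s,t}\cong\hat E^1_{s,t}=\pi_t^s(S^0)$ is canonical; you should state this rather than leave it as ``match at the level of $E^1$- and $E^2$-pages''.
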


\begin{proof}
For a fixed $r$, a homotopy equivalence $\Sigma^{\infty + 1}P_{d,r} \to \V_{d,r}$ as in Theorem \ref{PVeq} coming from a map $P_{d+ka_r,r} \to  V_{d+ka_r,r}$, defines commutative diagrams
\begin{equation}\label{PVhom}
\xymatrix{{\Sigma^{\infty +1} P_{d-k-1,r-k-1} }\ar[r]\ar[d]&{\Sigma^{\infty +1} P_{d-k,r-k}}\ar[r]\ar[d]&{\Sigma^{\infty +1} P_{d-k,1} }\ar[d]\\
{\V_{d-k-1,r-k-1}}\ar[r]&{\V_{d-k,r-k}}\ar[r]&{\V_{d-k,1}.}
}
\end{equation}
This induces an isomorphism between 
 the spectral sequence $\bar{E}$ associated to the filtration 
\begin{equation*}
\Sigma^{\infty + 1} P_{d-r,0}\to \dotsm \to \Sigma^{\infty + 1} P_{d,r}
\end{equation*}
and $\hat{E}$.

If a different Clifford representation is used to define $\Sigma^{\infty + 1}P_{d,r} \to \V_{d,r}$, we have not been able to show that we get a homotopy equivalent diagram \eqref{PVhom}. However, the right vertical map is always a degree $-1$ map. Hence the induced isomorphism $\bar{E}^1_{*,*} \to \hat{E}^1_{*,*}$ is independent of the chosen Clifford map. 
In particular we $r$ tends to infinity, we get an isomorphism of inverse systems of spectral sequences. 

%In general we have not been able to show that the homotopy equivalence induced is independent of the chosen $k$. However, for $r=1$, it is always a degree $-1$ map. Thus the map of $E^1$-pages is unique and commutes with increasing $r$. Hence there is a natural isomorphism of $E^1$-pages when we stabilize with respect to $r$. 
%
%In the spectral sequences, some $x\in E^1_{s,t}$ is an $l$-cycle if and only if it is in the kernel of
%\begin{equation*}
%\pi_{d-s+t}(S^{d-s}) \to \pi_{d-s+t-1}(\Sigma^{\infty+1}P_{d-s-1,l})
%\end{equation*} 
%and 
%\begin{equation*}
%\pi_{d-s+t}(S^{d-s}) \to \pi_{d-s+t-1}(\V_{d-s-1,l}),
%\end{equation*}
%respectively. 
%For any choice of homotopy equivalence $\Sigma^{\infty +1} P_{d-s,l+1} \to \V_{d-s,l+1}$, the diagram \eqref{PVhom} shows that the subgroup of $l$-cycles map isomorphically to each other.  
%Similarly for the $l$-boundaries. Similarly, the isomorphism preserves $l$-boundaries.
Letting $d$ tend to infinity, this is exactly the stable EHP spectral sequence as constructed by Mahowald in~\cite{mahowald}. 
\end{proof}

\begin{rem}
There is a short exact sequence
\begin{equation*}
\varprojlim\nolimits_r^1 \pi_{q+1}(f^*\MT(d,r)_2^\wedge )\to \pi_q(\varprojlim\nolimits_r (f^*\MT(d,r)_2^\wedge)) \to\varprojlim\nolimits_r \pi_q(f^*\MT(d,r)_2^\wedge).
\end{equation*}
%since all $\pi_q(f^*\MT(d,r))$ are finitely generated.
In the cases we consider, the $\varprojlim\nolimits^1$ term vanishes for $q<d-1$, see Lemma \ref{endelig} below in the compact case. Hence the spectral sequence converges to 
\begin{equation*}
\pi_{s+t}(\varprojlim\nolimits_r (f^*\MT(d,r)_2^\wedge))=\pi_{s+t}(\widehat{f^*MT}(d){}_2^\wedge)
\end{equation*}
 in the area $s+t < d-1$. Furthermore, $\pi_q(f^*\MT(d,r))\cong \pi_q(f^*\MT(d,r)_2^\wedge) $ for infinitely many $r$, so in fact, the spectral sequence converges to $\pi_{s+t}(\widehat{f^*MT}(d))$ in this area. 
\end{rem} 
The remaining sections are devoted to the study of the homotopy limit $\widehat{f^*MT}(d)$.

\section{The compact case}\label{pbs}
When $X$ is a point, we saw in Theorem \ref{linV} that for $q< d-1$, the map $\hat{\varphi}_0 = \widehat{f^*\varphi}: S^0 \to \widehat{V}_d$ induces
an isomorphism
\begin{equation*}
\widehat{f^*\varphi}_{*}: \pi_q(S^0)_2^\wedge \to \pi_q(\widehat{V}_d)_2^\wedge \cong \pi_q(\widehat{V}_d).
\end{equation*}
The analogue statement holds for the map 
\begin{equation*}
\widehat{f^*\varphi}: f^*MT(d) \to \widehat{f^*MT}(d)
\end{equation*}
for any $f: X \to BO$ where $X$ is compact.
We give a topological argument in Section~\ref{topa}, using induction on the number of cells in $X$ and starting with Lin's theorem. In Section \ref{sing}, we outline an algebraic approach to determine $\widehat{f^*MT}(d)$. Finally, in Section \ref{singerc}, we recall the Singer construction and see how this applies to give an alternative proof.

\subsection{Topological approach}\label{topa}
We first need a few lemmas:
\begin{lem}\label{homiso}
If $g:X \to Y$ is a homotopy equivalence and the diagram
\begin{equation*}
\xymatrix{{X}\ar[r]^-{f_X}\ar[d]^-{g}&{BO}\\
{Y}\ar[ur]_-{f_Y}&{}
}
\end{equation*}
commutes, then the induced map
\begin{equation*}
g_*: \varprojlim_r \pi_*( f^*_X\MT(d,r)) \to \varprojlim_r \pi_*(f^*_Y\MT(d,r))
\end{equation*}
is an isomorphism.
\end{lem}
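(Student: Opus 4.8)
The plan is to exploit that strict commutativity of the triangle, $f_Y\circ g=f_X$, makes $g$ a map \emph{over} $BO$, so that $g$ carries along all the bundle data entering the construction of $f^*\MT(d,r)$. Write $E_X=f_X^*U_d$ and $E_Y=f_Y^*U_d$. From $f_Y\circ g=f_X$ one gets a canonical isomorphism $E_X\cong g^*E_Y$ of rank-$d$ bundles over $X$ covering $g$, together with the equality of filtrations $X_{d,n}=g^{-1}(Y_{d,n})$; consequently $g$ lifts to maps of the frame bundles $V_r(E_X)\to V_r(E_Y)$, of the contractible cone bundles $W_r(E_X)\to W_r(E_Y)$, and of the pulled-back universal bundles together with their orthogonal complements $(f_X\circ p)^*U_{d+N,n}^\perp\to(f_Y\circ p)^*U_{d+N,n}^\perp$ for $p=p_{V_r},p_{W_r}$, each of these covering $g$ and an isomorphism on every fibre.

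The first step is to check that every such bundle map is a homotopy equivalence of total spaces. Indeed $V_r(E_X)\cong g^*V_r(E_Y)$ canonically, and the natural map $g^*V_r(E_Y)\to V_r(E_Y)$ is a map of fibre bundles covering the homotopy equivalence $g$ and an isomorphism on each fibre, hence a weak equivalence by comparison of the long exact sequences of homotopy groups, hence a homotopy equivalence (all spaces in sight may be taken to have the homotopy type of CW complexes); the same applies to $W_r$ and to the universal-bundle pullbacks. Taking Thom spaces levelwise then yields homotopy equivalences $(f_X\circ p)^*MT(d+N)\to(f_Y\circ p)^*MT(d+N)$ compatible with the inclusion~\eqref{defdr}, and hence a homotopy equivalence of cofibres $f_X^*MT(d+N,r)\to f_Y^*MT(d+N,r)$ for all $N$ and $r$. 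One point deserves care: the $n$-th space of $f^*MT(d+N)$ is a Thom space over the filtration piece $X_{d+N,n}$, which need not be all of $X$. But $X$ is compact, so $f_X(X)\subseteq G(M,m)$ for some finite $M,m$; hence for each fixed $n$ and all $N$ large one has $X_{d+N,n}=X$ and $Y_{d+N,n}=Y$, and the relevant map is simply the homotopy equivalence $g$ itself, so no connectivity estimate enters. (For $X$ non-compact one would instead have to compare the two inverse systems one filtration layer at a time --- which is precisely what makes the $BO$ and $BSO$ cases behave so differently.)

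Next I would verify --- essentially as in the proof of Corollary~\ref{restflg} --- that these equivalences are compatible up to homotopy with the periodicity maps built from~\eqref{Vmap}, so that they pass to the homotopy colimit over $l$ and give a homotopy equivalence $f_X^*\MT(d,r)\xrightarrow{\ \simeq\ }f_Y^*\MT(d,r)$ for each $r$. As $g$ is equally compatible with the maps $f^*\MT(d,r+1)\to f^*\MT(d,r)$ in $r$ (which only forget the last frame vector and use the same bundle data), these fit into a map of inverse systems of spectra that is a levelwise homotopy equivalence; applying $\pi_*$ and then $\varprojlim_r$ gives the desired isomorphism $g_*$. The main obstacle is organisational rather than conceptual: one has to confirm that the single lift of $g$ is simultaneously compatible with \emph{all} the structure maps --- the inclusion~\eqref{defdr}, the suspension maps of a spectrum, the periodicity maps of~\eqref{Vmap}, and the stabilisation maps in $r$ --- and one has to keep track of the filtration by the $X_{d,n}$ throughout; compactness of $X$ is exactly what makes this bookkeeping close up, since it ensures that in the relevant range every bundle occurring is defined over all of $X$ rather than over a proper filtration piece.
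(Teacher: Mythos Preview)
Your approach is essentially the same as the paper's, only far more thorough: the paper's entire proof is the sentence ``Since $g_* : f_X^*MT(d +ka_r,r)\to f_Y^*MT(d+ka_r,r)$ is a homotopy equivalence, it induces an isomorphism on homotopy groups. Thus it also induces an isomorphism in the limit.'' You have unpacked precisely why that first clause holds (bundle isomorphism, fibrewise equivalence on $V_r$, $W_r$, Thom spaces, cofibres) and why the equivalences are compatible with the periodicity and stabilisation maps, all of which the paper takes for granted.

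One small point: you invoke compactness of $X$ to trivialise the filtration by $X_{d+N,n}$, but the lemma as stated does not assume compactness. This is harmless in context, since the lemma sits in Section~\ref{pbs} and is only applied to compact $X$ (indeed to $D^n$, $S^{n-1}$, and finite CW complexes); but strictly speaking the paper's terse argument works without compactness too, since the map of spectra $f_X^*MT(d+ka_r,r)\to f_Y^*MT(d+ka_r,r)$ is a levelwise map of Thom spectra over the increasing filtration $X_{d+ka_r,n}\to Y_{d+ka_r,n}$, and the induced map on $\pi_q$ of the spectrum is the colimit over $n$, where $g$ becomes the equivalence $X\to Y$. Your remark that in the non-compact case ``one would have to compare the two inverse systems one filtration layer at a time'' conflates this harmless $n$-filtration with the genuinely problematic interchange of $\varinjlim_X$ and $\varprojlim_r$ discussed later in Section~\ref{nonres}.
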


\begin{proof}
Since $g_* : f_X^*MT(d +ka_r,r)\to f_Y^*MT(d+ka_r,r)$ is a homotopy equivalence, it induces an isomorphism on homotopy groups. Thus it also induces an isomorphism in the limit.
\end{proof}

\begin{lem} \label{endelig}
Assume $q<d$ and let $f: X \to BO$ be defined on a finite CW complex $X$. Then $\pi_q(f^* \MT(d,r))$ is a finite $2$-primary group when $d-r$ is odd, and thus $\varprojlim_r \pi_q(f^*\MT(d,r))$ is $2$-profinite. For $d$ odd, this also holds when $d=q$. 
Moreover, the periodicity map
\begin{equation*}
\Sigma^{ka_r} f^*MT(d,r) \to f^*MT(d + ka_r,r)
\end{equation*}
is a $(2(d-r) + ka_r+1)$-equivalence. 
\end{lem}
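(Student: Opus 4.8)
The plan is to reduce everything to the fiber bundle structure of $f^*MT(d,r)$ and then combine two standard ingredients: a connectivity estimate coming from the fiber $V_{d,r}$ (or rather the stunted projective space $P_{d,r}$ it is equivalent to), and a cohomological vanishing argument at odd primes, exactly as in Remark \ref{improveV} but carried out fiberwise over the finite complex $X$. First I would recall that $f^*MT(d,r)_n = \Th(f^*U_{d,n}^\perp \to X_{d,n})/\Th(f^*U_{d,n}^\perp|_{V_r(f^*U_{d,n})})$, so that $f^*MT(d,r)$ is, up to the shift built into the definition of $f^*\MT(d,r)$, the Thom spectrum of a virtual bundle over the pair $(W_r(E), V_r(E))$ with contractible total pair $W_r(E)$; hence it is equivalent to a Thom spectrum over a space fibered over $X$ with fiber $V_{d,r}\simeq P_{d,r}$ (via the James map, which is a $2(d-r)$-equivalence as used in Theorem \ref{PVeq}).

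For the finiteness claims: since $X$ is a finite CW complex, $\pi_q^s(X_+)$ is finitely generated in each degree, and by the same Atiyah--Hirzebruch / Serre spectral sequence argument as in Remark \ref{improveV}, one has $H^*(f^*MT(d,r);\Z/p)=0$ in degrees $<d$ and in degree $d$ when $d$ is odd, for odd primes $p$, whenever $d-r$ is odd. This uses that $H^*(\Sigma V_{d,r};\Z/p)$ vanishes in those degrees for $d-r$ odd, together with the fact that $H^*(X;\Z/p)$ is bounded and finite-dimensional so the spectral sequence computing $H^*$ of the Thom spectrum is concentrated appropriately; the Thom isomorphism handles the twisting by the (stably orientable after a shift) bundle. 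It follows that $\pi_q(f^*MT(d,r))$ is a finitely generated group with no odd torsion and finite rational part in the relevant range, hence finite $2$-primary; the shift to $f^*\MT(d,r)$ only reindexes by an even amount $ka_r$, so the same conclusion holds for $\pi_q(f^*\MT(d,r))$ when $q<d$ (and when $q=d$ with $d$ odd). The $2$-profiniteness of $\varprojlim_r$ is then immediate since an inverse limit of finite $2$-primary groups is $2$-profinite.

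For the connectivity of the periodicity map, I would argue exactly as in the proof of Theorem \ref{PVeq} but relativized over $X$. The periodicity map $\Sigma^{ka_r} f^*MT(d,r)\to f^*MT(d+ka_r,r)$ is the Thom spectrum map induced by the fiberwise application of $g_0$, so on fibers it is the map $\Sigma^{ka_r}\Sigma V_{d,r}\to \Sigma V_{d+ka_r,r}$, which was shown in \cite{adb} to be a $(2(d-r)+ka_r+1)$-equivalence; since $X$ is a finite-dimensional complex and the map is a map of Thom spectra over $X$ inducing this connectivity on each fiber, the map of total spectra is again $(2(d-r)+ka_r+1)$-connected (compare homotopy/homology fiberwise via the Serre spectral sequence, or filter $X$ by skeleta). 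The main obstacle I anticipate is being careful about orientations and the Thom isomorphism at odd primes: the bundle $f^*U_{d,n}^\perp$ need not be orientable, so one must either work with twisted $\Z/p$-coefficients or note that after the stabilizing shift the relevant virtual bundle is appropriately orientable, and check that the vanishing range from the fiber is not destroyed by the (finite, bounded) cohomology of $X$ — this is where one has to match the hypothesis $q<d$ (resp. $q=d$, $d$ odd) precisely against the fiber's vanishing range and the top dimension of $X$.
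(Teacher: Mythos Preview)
Your approach is correct but genuinely different from the paper's. You argue fiberwise: using the Thom isomorphism (with twisted $\Z/p$-coefficients if needed) and the Serre spectral sequence for the bundle pair $(W_r(E),V_r(E))\to X$ with fiber pair $(W_{d,r},V_{d,r})$, the vanishing $H^q(\Sigma V_{d,r};\Z/p)=0$ for $q<d$ (and $q=d$ odd) from Remark~\ref{improveV} propagates to $H^q(f^*MT(d,r);\Z/p)=0$ in the same range; finite generation then forces the homotopy groups to be finite $2$-primary. For the periodicity map you invoke the principle that a map of fibrations over $X$ which is $N$-connected on fibers is $N$-connected on total spaces. The paper instead performs an explicit Mayer--Vietoris induction on the cells of $X$, writing $X=Y\cup D^n$ and using the long exact sequence \eqref{MV0} to reduce both statements to the case $X=pt$, which is Remark~\ref{improveV}.

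Two remarks on your write-up. First, your worry that the dimension of $X$ might erode the vanishing range is unfounded: in the Serre spectral sequence $E_2^{p,q}=H^p(X;\mathcal{H}^q(\text{fiber}))$, vanishing of the fiber cohomology for $q<d$ forces $E_2^{p,q}=0$ for $q<d$ regardless of $\dim X$, so the total space also vanishes below degree $d$. Second, the orientability issue is harmless here: even if the Thom class only exists with twisted $\Z/p$-coefficients, the twist acts on a zero group in the relevant range. What your approach buys is directness; what the paper's approach buys is that the very same Mayer--Vietoris sequence is reused verbatim in Lemma~\ref{limMV} and in the inductive proof of Theorem~\ref{hovedthm}, so the cell-induction framework is doing double duty.
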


\begin{proof}
%First consider the case where $X$ is a point, i.e. $f^*\MT(d,r) =\V_{d,r}$.
%For $d-r$ odd, $H^q(\Sigma V_{d,r},F)=0$ when $q<d$ and $F$ is either $\Q$ or $\Z/p$ for $p$ odd, as explained in Section \ref{cohomology af MT}. Thus $\pi_q^s(V_{d,r})$ must be a finite 2-group. Since $\pi_q(\V_{d,r}) \cong \pi_{q+ka_r}^s(V_{d+ka_r,r})$ for some even $ka_r$, we conclude that $\pi_q(\V_{d,r})$ is a finite 2-group for $d-r$ odd. Since we may replace the inverse system by a subsequence consisting of finite 2-groups, the inverse limit is 2-profinite.
In the case where $X$ is a point, i.e.\ $f^*\MT(d,r) =\V_{d,r}$, the first statement is true by Remark \ref{improveV}.

Now suppose that $X$ is obtained from $Y$ by glueing on an $n$-cell $D^n$ such that $Y\cap D^n = S^{n-1}$. Then there is a Mayer--Vietoris sequence
\begin{align}\label{MV0}
 \to \pi_q( f^*MT(d,r)_{\mid S^{n-1}})\to \pi_q(f^*&MT(d,r)_{\mid D^n})\oplus \pi_q( f^*MT(d,r)_{\mid Y})\\
 \to &\pi_q( f^* MT(d,r)_{\mid X})\to  \pi_{q-1}( f^*MT(d,r)_{\mid S^{n-1}})\to \nonumber.
\end{align} 
The periodicity map defines a map of these exact sequences, and since direct limits preserve exactness, there is also an exact sequence
\begin{align*}
\to \pi_q( f^*\MT(d,r)_{\mid S^{n-1}})\to \pi_q(f^*&\MT(d,r)_{\mid D^n})\oplus \pi_q(f^* \MT(d,r)_{\mid Y}) \\
\to &\pi_q( f^*\MT(d,r)_{\mid X})\to \pi_{q-1}( f^*\MT(d,r)_{\mid S^{n-1}}) \to .
\end{align*} 
Since $D^n \simeq pt$, the theorem is known for $X=D^n$ by Lemma \ref{homiso}.

First consider $X= S^{n}$  as the union of two disks glued together along a copy of $S^{n-1}$. By induction on $n$, $\pi_q( f^*\MT(d,r)_{\mid S^{n}})$ must be a finite 2-primary group in order to fit into the exact sequence. 
 
For a general simplicial complex $X$, it now follows by induction on the number of cells in $X$ that $\pi_q(f^* \MT(d,r)_{\mid X})$ is a finite 2-primary group when $d-r$ is odd, by applying the Mayer--Vietoris sequence and the sphere case. Finally we get the result for a general CW complex from Lemma \ref{homiso}.

The last statement is true for a point and the general claim follows by an induction argument similar to the above applied to the periodicity map between the sequences \eqref{MV0}.
\end{proof}

\begin{lem}\label{limMV}
Let $f: X \to BO$ be given and assume that $X$ is the union of two finite subcomplexes $Y_1$ and $Y_2$. There is an exact Mayer--Vietoris sequence
%\begin{align*}
%\to \pi_*(\widehat{f^*\MT}(d)_{\mid Y_1 \cap Y_2})\to \pi_*(\widehat{f^*\MT}(d)_{\mid Y_1})\oplus \pi_*( \widehat{f^*\MT}(d)_{\mid Y_2})\to \pi_*( \widehat{f^*\MT}(d)_{\mid Y_1\cup Y_2}) \to .
%\end{align*}
\begin{align*}
 \varprojlim_r\pi_q( f^*\MT(d,r)_{\mid Y_1 \cap Y_2})_2^\wedge\to \varprojlim_r\pi_q(f^*\MT(d,r&)_{\mid Y_1})_2^\wedge\oplus \varprojlim_r\pi_q(f^* \MT(d,r)_{\mid Y_2})_2^\wedge \\
\to \varprojlim_r\pi_q( f^*\MT(d,r)_{\mid Y_1 \cup Y_2})_2^\wedge \to &\varprojlim_r\pi_{q-1}( f^*\MT(d,r)_{\mid Y_1 \cap Y_2})_2^\wedge .
\end{align*}
\end{lem}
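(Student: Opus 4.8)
The plan is to start from the ordinary Mayer--Vietoris sequence for the spectra $f^*\MT(d,r)_{\mid -}$ restricted to the decomposition $X = Y_1 \cup Y_2$, which is available because $f^*\MT(d,r)$ is built functorially from the filtered space, so a pushout $Y_1 \cap Y_2 \to Y_1, Y_2 \to X$ of finite subcomplexes produces a homotopy pushout of spectra $f^*\MT(d,r)_{\mid Y_1\cap Y_2} \to f^*\MT(d,r)_{\mid Y_i}$ with total space $f^*\MT(d,r)_{\mid X}$. This gives, for each fixed $r$, a long exact sequence of homotopy groups of the usual Mayer--Vietoris type. First I would $2$-complete: since $X$, $Y_1$, $Y_2$, $Y_1\cap Y_2$ are finite CW complexes, all the homotopy groups appearing are finitely generated (in the relevant range, by Lemma \ref{endelig}, they are moreover finite $2$-primary when $d-r$ is odd), so applying $(-)_2^\wedge$ preserves exactness and yields a long exact sequence of $2$-completed groups for each $r$.

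Next I would pass to the inverse limit over $r$. The maps in the system are the periodicity maps, which by Lemma \ref{endelig} are compatible with the Mayer--Vietoris sequences (the periodicity map is natural in the base space, being a fiberwise construction), so we have an inverse system of long exact sequences of $2$-completed homotopy groups. The key point is that $\varprojlim_r$ is exact on this system: by Lemma \ref{endelig}, for each of the four spaces the groups $\pi_q(f^*\MT(d,r))_2^\wedge$ form an inverse system of $2$-profinite groups (using that every other term, those with $d-r$ odd, is finite $2$-primary, and for the others one uses the James periodicity identification $\pi_q(f^*\MT(d,r)) \cong \pi_{q+ka_r}^s(\cdot)$ with $ka_r$ even), and the inverse limit functor is exact on sequences of $2$-profinite groups with continuous transition maps. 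Therefore $\varprojlim_r$ applied to the inverse system of long exact sequences gives a long exact sequence, which is exactly the claimed Mayer--Vietoris sequence.

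The main obstacle is justifying the exactness of $\varprojlim_r$, i.e.\ the vanishing of the relevant $\varprojlim_r^1$ terms. This requires knowing that the groups in play really are profinite (equivalently, that the $\varprojlim^1$-obstruction vanishes), and this is where Lemma \ref{endelig} does the essential work: for $d-r$ odd the homotopy groups are honestly finite, so the system is Mittag--Leffler, and for the remaining values of $r$ one reduces to that case via the periodicity identification with stable homotopy of a space with cells only in the appropriate range. A secondary technical point, which I would treat as routine, is checking that the transition maps are continuous for the profinite topologies so that the exactness of $\varprojlim$ on profinite systems genuinely applies; this follows because the periodicity maps are maps of spectra and hence induce continuous homomorphisms on $2$-completed homotopy groups, as already used in the proof of Theorem \ref{stabss}.
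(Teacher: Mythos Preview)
Your proposal is correct and follows essentially the same route as the paper: start from the Mayer--Vietoris sequence for each $r$, use finite generation to preserve exactness under $2$-completion, and then use exactness of $\varprojlim$ on profinite groups to pass to the limit. You supply more detail than the paper does on why the completed groups are profinite and why $\varprojlim^1$ vanishes, but the underlying argument is the same.
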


\begin{proof}
As in the proof of Lemma \ref{endelig}, there is a Mayer--Vietoris sequence
\begin{align*}
\to \pi_q(f^* \MT(d,r)_{\mid Y_1 \cap Y_2}) \to \pi_q(f^*\MT(d,r)_{\mid Y_1}) \oplus \pi_q&( f^*\MT(d,r)_{\mid Y_2})\\
 &\to \pi_q(f^* \MT(d,r)_{\mid Y_1 \cup Y_2}) \to .
\end{align*}
Since all groups involved are finitely generated, replacing them by their 2-completions yields a new long exact sequence. The inverse limit functor is exact with respect to long exact sequences of profinite groups, so taking the inverse limit over $r$ yields the desired sequence.
%Generally, inverse limits do not preserve exactness.'
\end{proof}

%\begin{prop}\label{lim1=0}
%Let $X$ be a finite complex and $f:X \to BO$ a map. Then for $q<d$,
%\begin{equation*}
%\pi_q( \widehat{f^*MT}(d)) \cong \varprojlim_r \pi_q (f^*\MT(d,r)).
%\end{equation*}
%\end{prop} 
%
%\begin{proof}
%We need to see that $\varprojlim_r^1 \pi_{q+1}(f^*\MT(d,r))$ vanishes. For $q<d-1$ or $q=d-1$ and $d$ odd, it follows from Lemma \ref{endelig} that the Mittag--Leffler condition is satisfied.
%For $q=d-1$ and $d$ even, we apply the long exact sequence constructed in Corollary \ref{restflg} below. This yields short exact sequences
%\begin{equation*}
%0 \to \Z \to \pi_{d}(f^*\MT(d,r)) \to \pi_{d}(f^*\MT(d+1,r+1)) \to 0
%\end{equation*}
%that map to each other. Thus we get the exact sequence
%\begin{equation*}
%\dotsm \to \varprojlim\nolimits^1_r \Z \to \varprojlim\nolimits^1_r \pi_{d}(f^*\MT(d,r)) \to \varprojlim\nolimits^1_r\pi_{d}(f^*\MT(d+1,r+1)) \to 0.
%\end{equation*}
%The $\Z$'s map isomorphically onto each other, so the first term vanishes, and we already explained that the last term is zero.
%\end{proof}

We are now ready to prove the main theorem of this section. Below, $B(d)$ may denote either $BO(d)$, $BSO(d)$, or $BSpin(d)$, and $MT(d)$ denotes the corresponding spectrum with $MT = \varinjlim_d MT(d)$. 
\begin{thm}\label{hovedthm}
Let $f: X \to BO(d)$ with $X$ a finite complex. Then 
\begin{equation*}
\widehat{f^* \varphi}_{*} : \pi_q(f^*MT(d))_2^\wedge \to \varprojlim_r \pi_q(f^*\MT(d,r))_2^\wedge  
\end{equation*}
is a $(d-1)$-equivalence.
In particular for $X\subseteq B(d)$ compact, we get 
\begin{equation*}
\varinjlim_{X \subseteq B(d)} \pi_q(\widehat{MT}(d)_{\mid X}) \cong \pi_q(MT(d))_2^{\wedge}
\end{equation*}
for $q<d-1$. Taking the direct limit over $d$, 
\begin{equation*}
\varinjlim_d \varinjlim_{X \subseteq B(d)} \pi_q(\widehat{MT}_{\mid X}) \cong {\pi_q(MT)}_2^\wedge
\end{equation*}
for all $q$.
\end{thm}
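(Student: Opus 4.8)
The plan is to reduce the general statement to Lin's theorem (Theorem \ref{linV}) by a double cell-induction over the base space $X$, exploiting the fact that $f^*MT(d)$ is built from suspension spectra of Thom spaces which, over a point, reduce to the sphere spectrum. First I would set up the comparison map $\widehat{f^*\varphi}_*$ as in diagram \eqref{sam} and observe, via Lemma \ref{homiso}, that both sides are homotopy invariants of the pair $(X,f)$, so we may work with finite CW complexes and induct on the number of cells. The base case $X = pt$ is exactly Theorem \ref{linV}, which gives that $\hat\varphi_{0*}:\pi_q(S^0)^\wedge_2 \to \varprojlim_r\pi_q(\V_{d,r})$ is an isomorphism for $q<d-1$ and surjective for $q=d-1$; since $\widehat{V}_d = \varprojlim_r\V_{d,r}$ and $\varprojlim^1$ vanishes in the relevant range by Remark \ref{improveV}, this says $\widehat{f^*\varphi}_*$ is a $(d-1)$-equivalence over a point.

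Next I would run the inductive step. Suppose $X = Y \cup_{S^{n-1}} D^n$. On the target side, Lemma \ref{limMV} supplies a Mayer--Vietoris long exact sequence for $\varprojlim_r\pi_*(f^*\MT(d,r)_{\mid -})^\wedge_2$; on the source side, $\pi_*(f^*MT(d)_{\mid -})^\wedge_2$ has its own Mayer--Vietoris sequence (Thom spaces of restricted bundles, 2-completed, all groups finitely generated). The map $\widehat{f^*\varphi}$ is compatible with both, giving a ladder of long exact sequences. By induction the comparison map is a $(d-1)$-equivalence on $Y$, on $D^n \simeq pt$, and on $S^{n-1}$ (this uses the sphere case, itself obtained by gluing two disks along $S^{n-2}$ and the five lemma). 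The five lemma then upgrades the conclusion to $X$. Assembling all cells yields the $(d-1)$-equivalence for arbitrary finite $X$, and Lemma \ref{homiso} removes finiteness.

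For the displayed consequences: taking $X \subseteq B(d)$ compact and passing to the direct limit over such $X$, the left side $\varinjlim_X \pi_q(\widehat{MT}(d)_{\mid X})$ — noting $\widehat{MT}(d)_{\mid X} = \varprojlim_r \MT(d,r)_{\mid X}$ and that for $q<d-1$ the $\varprojlim^1$-term vanishes so $\pi_q$ of the homotopy inverse limit agrees with $\varprojlim_r\pi_q$ — becomes $\varinjlim_X \varprojlim_r\pi_q(f^*\MT(d,r)_{\mid X})$. Since $B(d) = \varinjlim_X X$ is an increasing union of its finite subcomplexes and $\pi_q$ commutes with such filtered colimits, the $(d-1)$-equivalence over each $X$ together with $\varinjlim_X\pi_q(MT(d)_{\mid X})^\wedge_2 = \pi_q(MT(d))^\wedge_2$ gives the isomorphism for $q<d-1$. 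Finally letting $d\to\infty$, the maps $\widehat{f^*MT}(d)\to\widehat{f^*MT}(d+1)$ induce isomorphisms on $\pi_q$ for $q<d-1$, and $MT = \varinjlim_d MT(d)$, so the last isomorphism holds in all degrees $q$.

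The main obstacle I anticipate is the careful bookkeeping of the two Mayer--Vietoris sequences and the interchange of limits: one must ensure that all groups appearing are finitely generated (so that $2$-completion preserves exactness) and that the relevant $\varprojlim^1$-terms vanish in the range $q<d-1$ — this is where Lemma \ref{endelig} and Remark \ref{improveV} are essential, since without the finiteness/2-profiniteness we could not commute $\varprojlim_r$ past the Mayer--Vietoris sequence nor identify $\pi_q$ of the homotopy limit with $\varprojlim_r\pi_q$. A secondary subtlety is that the induction must be on the connectivity-shifted statement ``$(d-1)$-equivalence'' rather than ``isomorphism,'' so that the five lemma applications degrade the range correctly but never below $d-1$; one should check that each Mayer--Vietoris boundary map lands in the range where the inductive hypothesis already applies.
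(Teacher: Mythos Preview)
Your proposal is correct and follows essentially the same approach as the paper: cell induction starting from Lin's theorem (Theorem~\ref{linV}) for $X=pt$, with the inductive step handled by comparing the 2-completed Mayer--Vietoris sequence for $\pi_*(f^*MT(d)_{\mid -})_2^\wedge$ against the sequence of Lemma~\ref{limMV} via $\widehat{f^*\varphi}_*$ and applying the five lemma, exactly as in the proof of Lemma~\ref{endelig}. Your discussion of the obstacles (finite generation so that 2-completion preserves exactness, and vanishing of $\varprojlim^1$ via Lemma~\ref{endelig}) is accurate and in fact more explicit than the paper's own terse treatment.
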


\begin{proof}
We want to do an induction on the cells in $X$ as in the proof of Lemma \ref{endelig}. Assume that $X$ is built from $Y$ by glueing on a disk $D^n$ such that $D^n \cap Y = S^{n-1}$.
The map $\widehat{f^*\varphi}_*$ takes the Mayer--Vietoris sequence
\begin{equation*}
\to \pi_q(MT(d)_{\mid S^{n-1}})_2^\wedge \to \pi_q(f^*MT(d)_{\mid D^n})_2^\wedge\oplus \pi_q(f^*MT(d)_{\mid Y})_2^\wedge \to \pi_q( \widehat{f^*MT}(d)_{\mid X })_2^\wedge \to 
\end{equation*}
to the exact sequence of Lemma \ref{limMV}. The theorem is known for $X=pt$, so the induction proceeds as in the proof of Lemma \ref{endelig}.
\end{proof}

%\section{Cohomology of the Inverse Limit}\label{algchap}

\subsection{Algebraic approach}\label{sing}

From now on we shall only consider cohomology with $\Z/2$ coefficients understood. The mod 2 Steenrod algebra will  be denoted by $\A$.
%\subsection{The Singer Construction in the Compact Case}\label{scps}
%We wish to compute the homotopy groups of $\widehat{MT}(d) = \varprojlim_r \MT(d,r)$. 
In \cite{rognes}, Proposition~2.2, the following version of the Adams spectral sequence for an inverse system 
\begin{equation*}
\dotsm \to Y_{r+1} \to Y_r
\end{equation*}
of spectra is constructed:

\begin{thm}\label{invadams}
Assume that the spectra $Y_r$ have finite $\Z/2$ cohomology in each dimension and each $\pi_*(Y_r)$ is bounded below. Then there is a spectral sequence with $E_2$-term $E_2^{s,t} = \Ext_\A^{s,t}(\varinjlim_r H^*(Y_r),\Z/2)$ converging strongly to the  homotopy groups $\pi_{t-s}((\varprojlim_r Y_r)_2^\wedge)$. 

For each $k$, $E_k^{s,t}$ is the inverse limit of the Adams spectral sequences $E_k^{s,t}(Y_r)$ for the $Y_r$.
\end{thm}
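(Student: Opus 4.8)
The plan is to derive this spectral sequence exactly as Rognes does in \cite{rognes}, Proposition~2.2, and then verify that the extra hypotheses quoted here — finiteness of $H^*(Y_r;\Z/2)$ in each degree and connectivity of each $\pi_*(Y_r)$ — are what make that construction go through and give strong convergence. First I would fix a functorial mod~2 Adams resolution of each $Y_r$. Concretely, one builds a tower
\begin{equation*}
\dotsm \to Y_r^{(2)} \to Y_r^{(1)} \to Y_r^{(0)} = Y_r
\end{equation*}
together with maps $Y_r^{(s)} \to K_r^{(s)}$ to generalized Eilenberg--MacLane spectra realizing a minimal (or at least functorial) free $\A$-resolution of $H^*(Y_r)$, with $Y_r^{(s+1)} = \fib(Y_r^{(s)} \to K_r^{(s)})$. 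The point of functoriality is that the inverse system $\{Y_r\}$ induces an inverse system of whole Adams towers $\{Y_r^{(s)}\}_r$ for each fixed $s$, compatibly in $s$.

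Next I would take the homotopy inverse limit over $r$ levelwise: set $Y^{(s)} = \holim_r Y_r^{(s)}$ and $K^{(s)} = \holim_r K_r^{(s)}$. Because $\holim$ preserves fiber sequences, $\{Y^{(s)}\}$ is again a tower with $Y^{(s+1)} = \fib(Y^{(s)} \to K^{(s)})$, and $Y^{(0)} = \varprojlim_r Y_r$. The associated spectral sequence of this tower is the one we want. To identify the $E_2$-page I would compute $\pi_* K^{(s)}$. Here the finiteness hypothesis on $H^*(Y_r)$ in each degree enters: it guarantees $K_r^{(s)}$ is, degreewise, a product of finitely many Eilenberg--MacLane spectra, so $\holim_r$ of the $K_r^{(s)}$ is computed degreewise and $H^*(\holim_r K_r^{(s)}) \cong \varinjlim_r H^*(K_r^{(s)})$ — i.e.\ taking cohomology of a homotopy inverse limit of such spectra turns into a direct limit of cohomologies, with no $\lim^1$ obstruction. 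Running the free $\A$-resolutions through this identification shows the $E_1$-complex of the limiting tower is $\Hom_\A(\text{(free resolution of }\varinjlim_r H^*(Y_r)),\Z/2)$, whose cohomology is $\Ext_\A^{s,t}(\varinjlim_r H^*(Y_r),\Z/2)$, as claimed. The statement that $E_k^{s,t} = \varprojlim_r E_k^{s,t}(Y_r)$ then follows from the same degreewise-$\holim$ argument applied page by page, together with vanishing of the relevant $\lim^1$ terms.

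For strong convergence I would invoke the standard criteria for conditionally convergent half-plane spectral sequences (as in \cite{boardman}): one needs the tower $\{Y^{(s)}\}$ to have $\holim_s Y^{(s)} \simeq *$ and a vanishing-$\lim^1$ condition on the cycles, exactly as was checked for the earlier spectral sequence in the excerpt. The boundedness-below hypothesis on each $\pi_*(Y_r)$, combined with the degreewise finiteness, is what forces the Adams towers to be suitably convergent after $2$-completion and lets one identify $\holim$ over $s$ of the limiting tower with the $2$-completion $(\varprojlim_r Y_r)_2^\wedge$, rather than with $\varprojlim_r Y_r$ itself.

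The main obstacle I expect is bookkeeping the two commuting homotopy limits — over the resolution degree $s$ and over the system index $r$ — and controlling all the $\lim^1$ terms that could in principle appear when one moves cohomology across a $\holim_r$ or when one passes from $E_k$-pages to $E_{k+1}$-pages in the limit. The degreewise-finiteness hypothesis is precisely the hypothesis that kills these $\lim^1$'s, so the crux of the argument is to set up the functorial Adams resolution carefully enough that ``$\holim_r$ commutes with everything degreewise'' is literally true, after which the identification of $E_2$ and the convergence statement are formal. Rather than reprove Boardman's convergence machinery, I would cite \cite{rognes}, Proposition~2.2 for the construction and \cite{boardman}, Theorem~7.3 for the strong convergence, spelling out only the verification that our hypotheses match theirs.
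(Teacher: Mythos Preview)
Your sketch is reasonable, but note that the paper does not prove this theorem at all: it is stated as a quotation of \cite{rognes}, Proposition~2.2, with no argument supplied. So there is nothing to compare against beyond the citation itself, and your proposal to cite \cite{rognes} for the construction and \cite{boardman} for convergence is exactly what the paper does (minus the Boardman reference, which is implicit in Rognes's result). Your outline of the underlying mechanism --- functorial Adams towers, levelwise $\holim_r$, finiteness killing $\lim^1$ terms --- is a fair summary of how the cited result is proved, but for the purposes of this paper a bare citation suffices.
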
 

We are interested in the case $Y_r = f^*\MT(d,r)$ and $(\varprojlim_r Y_r)_2^\wedge = \widehat{f^*MT}(d)_2^\wedge$.
Thus we must investigate $\varinjlim_r H^*(f^*\MT(d,r))$ in order to apply the theorem. 

%\begin{rem}
%In \cite{adb} the Adams spectral sequences for the spectra $MTO(d,r)$, $MTSO(d,r)$, and $MTSpin(d,r)$ were considered for low values of $r$. However, no general pattern occured and the spectral sequences became increasingly complicated as $r$ grew large. One can think of the  spectral sequence in Theorem \ref{invadams} as the opposite viewpoint: What happens to these spectral sequences as $r$ tends to infinity?  
%\end{rem}

For simplicity, we first introduce some notation.
\begin{defi}\label{defWH}
Let $B(d)$ denote either $BO(d)$, $BSO(d)$, or $BSpin(d)$. Let $MT$ denote the corresponding spectra. Define
\begin{align}\nonumber
H(d)^*=& H^*(B(d))\\ \nonumber
H^*=& H^*(\varinjlim_d B(d)) = \varprojlim_d H^*(B(d))\\ \label{navn}
WH(d)^*=& \varinjlim _r H^*(\MT(d,r))\\ \nonumber
WH^*=& \varprojlim_d WH(d)^*.
\end{align}
For $X$ compact and $f:X\to BO$, let
\begin{align*}
WH(X)(d)^* &= \varinjlim_r H^*(f^* \MT(d,r))\\
WH(X)^* &= \varprojlim_d\varinjlim_r H^*(f^* \MT(d,r))
\end{align*}
\end{defi}

Recall that 
\begin{align*}
H^*(BO)&\cong \Z/2[w_1,w_2, \dots ]\\
H^*(BSO) &\cong \Z/2[w_2,w_3, \dots ]\\
H^*(BSpin) &\cong H^*(BSO)/\A w_2
\end{align*}
where $\Z/2[x_i,\, i\in I ]$ denotes the polynomial algebra on generators $x_i$ in dimension $i$ and $w_i$ is the $i$th Stiefel--Whitney class for the universal bundle. Multiplication by a Thom class $\bar{u}$ for the complement of the universal bundle defines an isomorphism from $H^*(B(d))$ to $H^*(MT(d))$. We shall often omit the $\bar{u}$ from the notation for $H^*(MT(d))$.
\begin{thm}\label{coholim}
$WH(d)^*$ is isomorphic as an $H^*$-module to
\begin{equation*}
H^* \otimes \Z/2\{\tilde{w}_l,l\leq d\}
\end{equation*}
where $\Z_2\{\tilde{w}_l,l\leq d\}$ is the graded vector space with basis $\tilde{w}_l$ in dimension $l$. The Steenrod algebra $\mathcal{A}$ acts by the Cartan formula. The action on $H^*$ is the usual one, while the action on $\tilde{w}_l$ is given by the formula
\begin{equation}\label{Awl}
\Sq^k(\tilde{w}_l)= \sum_{j=0}^k \sum_{i=0}^j \binom{j-l}{i}w_{j-i}\bar{w}_{k-j}\tilde{w}_{l+i}
\end{equation}
where $\bar{w}_i$ are the dual Stiefel--Whitney classes characterized by $\bar{w}_0=1$ and
\begin{equation*}
\sum_{j=1}^i w_j\bar{w}_{i-j}=0.
\end{equation*}
\end{thm}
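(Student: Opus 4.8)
\textbf{Proof plan for Theorem \ref{coholim}.}

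The plan is to compute $WH(d)^* = \varinjlim_r H^*(\MT(d,r))$ by first understanding each $H^*(\MT(d,r))$ and then passing to the limit. First I would unwind the definition of $\MT(d,r)$. Recall that $\MT(d,r)$ is the cofiber of $(f\circ p_{V_r})^*MT(d) \to (f\circ p_{W_r})^*MT(d)$, which by Lemma \ref{cofdr} (applied over $B(d)$, so with $X = B(d)$ the classifying space itself) identifies $MT(d,r)$ with the cofiber of $MT(d-r)\to MT(d)$. This gives a long exact sequence in cohomology
\begin{equation*}
\dotsm \to H^*(MT(d,r)) \to H^*(MT(d)) \to H^*(MT(d-r)) \to \dotsm,
\end{equation*}
where the middle map is dual to the inclusion coming from $\R^{n+d}\cong \R^{n+d-r}\oplus\R^r$. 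Using the Thom isomorphisms $H^*(MT(d))\cong H^*(B(d))\cdot\bar u_d$ and $H^*(MT(d-r))\cong H^*(B(d-r))\cdot\bar u_{d-r}$, the map $H^*(MT(d))\to H^*(MT(d-r))$ is identified with the composite of the pullback $H^*(B(d))\to H^*(B(d-r))$ (sending $w_i\mapsto w_i$ on Stiefel--Whitney classes, since the relevant bundle is a direct sum with a trivial summand) followed by multiplication by the appropriate Euler-type class encoding the shift of Thom classes $\bar u_d = \bar u_{d-r}\cdot(\text{something})$. Working out that this map is surjective in the range of interest, one gets $H^*(MT(d,r))$ as the kernel, which can be described via the dual Stiefel--Whitney classes $\bar w_i$ (defined in the statement by $\sum_j w_j\bar w_{i-j}=0$, i.e.\ the total dual class).

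The second step is to take the direct limit over $r$. As $r\to\infty$ (by James periodicity, via the periodicity maps $\Sigma^{ka_r}\MT(d,r)\to\MT(d+ka_r,r)$ whose effect on cohomology is understood from \cite{adb}, together with the structure maps $\MT(d,r+1)\to\MT(d,r)$), the spaces $B(d-r)$ in the cofiber sequence recede to $B(-\infty)$, which is a point on each fixed cohomological degree, so $H^*(MT(d-r))$ contributes nothing in the limit. Hence $WH(d)^* = \varinjlim_r H^*(MT(d,r))$ should become all of "$H^*(MT(d))$ with $d$ allowed to go negative", i.e.\ a module over $H^* = H^*(\varinjlim_d B(d))$ freely generated by classes $\tilde w_l$ for $l\le d$, where $\tilde w_l$ corresponds to $\bar w_{d-l}\cdot\bar u$ in the appropriate completed/stabilized Thom spectrum picture (this matches the $X=pt$ case, where $H^*(B(d))=0$ in positive degrees and one recovers $\Z/2\{t^l, l\le d\}\cong\varinjlim_r H^*(\Sigma P_{d,r})$ from the introduction). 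I would verify the module structure by checking it is compatible with the maps in the direct system and using that direct limits are exact.

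The third and genuinely computational step is the Steenrod action, formula \eqref{Awl}. The idea is to compute $\Sq^k$ on the generators $\bar w_{d-l}\bar u$ in each finite stage $H^*(MT(d))$ and check the formula is preserved under the direct system maps. This requires two inputs: the Cartan formula together with the Wu-type formula for $\Sq^k\bar u$ in terms of dual Stiefel--Whitney classes (the Thom class satisfies $\Sq^k\bar u = \bar w_k\bar u$ for the complement bundle), and the known action of $\Sq^k$ on $\bar w_j$. The binomial coefficient $\binom{j-l}{i}$ appearing in \eqref{Awl} is exactly the kind of term produced when one expands $\Sq^k$ of a product using Cartan and re-indexes; the shift $\tilde w_{l+i}$ corresponds to lowering the dual-class index. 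I expect this re-indexing bookkeeping --- matching the triple sum over $j,i$ and the factors $w_{j-i}\bar w_{k-j}\tilde w_{l+i}$ against a brute-force Cartan expansion --- to be the main obstacle; it is a finite but fiddly manipulation, best organized by computing $\Sq$ of the relevant generating function and comparing coefficients. Everything else (surjectivity in the cofiber sequence, exactness of direct limits, stability of the $H^*$-module structure) is routine once the first two steps are set up correctly.
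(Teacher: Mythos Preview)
Your overall strategy --- cofiber sequence, stabilize via periodicity, Cartan formula --- is the same as the paper's, but you have misidentified the generators $\tilde w_l$, and this would propagate to the wrong Steenrod computation.

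The map $MT(d-r)\to MT(d)$ comes from $V\mapsto V\oplus\R^r$ on Grassmannians; the complement bundle $U_{d,n}^\perp$ pulls back to $U_{d-r,n}^\perp$ with no extra summand, so the Thom class $\bar u_d$ restricts to $\bar u_{d-r}$ with no Euler-type twist. Hence $H^*(MT(d))\to H^*(MT(d-r))$ is simply the polynomial restriction sending $w_i\mapsto w_i$ for $i\le d-r$ and $w_i\mapsto 0$ for $i>d-r$, and its kernel is the ideal generated by the \emph{ordinary} Stiefel--Whitney classes $w_{d-r+1},\dots,w_d$, not the dual ones. In the stable range this ideal is a free $H(d)^*$-module on $w_{d-r+1},\dots,w_d$, and after passing through the periodicity isomorphism (which sends $w_{l+ka_r}\mapsto w_l$), the generator $\tilde w_l$ is the stable image of $w_l\,\bar u$, not of $\bar w_{d-l}\,\bar u$. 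Your proposed class $\bar w_{d-l}\,\bar u$ does not even lie in the kernel, since $\bar w_i$ restricts nontrivially to $H^*(B(d-r))$ for every $i\ge 0$.

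Consequently the Steenrod step should use the Wu-type formula for $\Sq^k(w_m)$ in $H^*(BO)$, namely $\Sq^k w_m=\sum_i\binom{m-k+i-1}{i}w_{k-i}w_{m+i}$, together with $\Sq^i\bar u=\bar w_i\,\bar u$; Cartan then gives \eqref{Awl} directly after the mod~$2$ identity $\binom{m-j+i-1}{i}\equiv\binom{j-m}{i}$. With the correct identification of $\tilde w_l$, the ``fiddly re-indexing'' you anticipated essentially disappears.
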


\begin{proof}
It was shown in \cite{adb} that $H^*(MT(d,r))$ is the $H^*$-ideal in $H^*(MT(d))$ ge\-ne\-ra\-ted by $w_{d-r+1},\dots ,w_{d}$.
The periodicity map $g: MT(d,r) \to \Sigma^{-a_r}MT(d+a_r,r) $ induces an isomorphism
\begin{equation}\label{percoho}
g^*:  H^{*}(\Sigma^{-a_r}MT(d+a_r,r))\to H^*(MT(d,r))
\end{equation}
 in dimensions $* < 2(d-r)$. Thus, the inverse system of cohomology groups
\begin{equation*}
\dotsm \to H^{*}(\Sigma^{-(k+1)a_r}MT(d+(k+1)a_r,r)) \to H^*(\Sigma^{-ka_r}MT(d+ka_r ,r))
\end{equation*}
stabilizes in each dimension and therefore
\begin{equation*}
H^*(\MT(d,r))\cong \varprojlim_k H^{*}(\Sigma^{-ka_r}MT(d+ka_r,r)).
\end{equation*}

It was also shown that the map $g^*$ takes the generators $w_{d+a_r-r+1},\dots ,w_{d+a_r}$ to $w_{d-r+1},\dots ,w_{d}$ and commutes with the $H^*(d+a_r)$-action. 
Furthermore, both are isomorphic to the free $H^*(d+a_r)$-module on these generators up to dimension $2(d-r)$.

Thus, in the limit the cohomology groups become
\begin{equation*}
H^*(\MT(d,r)) \cong H^* \otimes \Z_2\{\tilde{w}_l,d-r+1 \leq l\leq d\}
\end{equation*}
where $\tilde{w}_l$ corresponds to $w_{l+ka_r} \in H^l(\Sigma^{-ka_r}MT(d+ka_r,r))$.
Taking the direct limit over $r$ proves the claim.

The formula for the $\A$-action on $\tilde{w}_l$ follows from the formula
\begin{equation}\label{binom}
\Sq^k(w_m)=\sum_{l=0}^k \binom{m-k+l-1}{l}w_{k-l}w_{m+l}
\end{equation}
in $H^*(BO)$, see \cite{milnors}, Problem 8-A, and the fact that $\Sq^i(\bar{u})=\bar{w}_i$. 
\end{proof}

%\begin{rem}
%The proof also works for $MTSpin$ by referring to \cite{adb}, Section 3.6?
%\end{rem}
%\fixme{referencer}

\begin{lem}\label{grassmann}
$WH(X)^*$ is isomorphic to $H^*(X)\otimes \Z/2\{\tilde{v}_l,\, l\in \Z \}$ as an $H^*(X)$-module.
The map $H^*(\mathcal{MTO}(d,r)) \to H^*(f^*\MT(d,r))$ takes $\tilde{w}_l$ to $\tilde{v}_l$ and is a map of $H^*(BO)$-modules via the map $f^*: H^*(BO) \to H^*(X)$. 
\end{lem}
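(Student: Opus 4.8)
The plan is to reduce the statement to Theorem~\ref{coholim} by a direct limit and base-change argument, exploiting the explicit description of $H^*(f^*\MT(d,r))$ coming from the work in \cite{adb}. First I would recall that, just as in the absolute case, $H^*(f^*MT(d,r))$ is the submodule (ideal) of $H^*(f^*MT(d))\cong H^*(X)\otimes\{\bar u\}\cdot(\text{image of }H^*(BO)\text{-ideal})$ determined by the image of $w_{d-r+1},\dots,w_d$ under $f^*:H^*(BO)\to H^*(X)$; this is the base change along $f$ of the absolute statement, because the Thom space $\Th(f^*U_{d,n}^\perp)$ and the fiberwise cone construction are all pulled back from the Grassmannian $G(d,n)$, and the relevant cells and attaching data are pulled back as well. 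Concretely, the map of pairs defining $f^*MT(d,r)$ as the cofiber of \eqref{defdr} is a pullback of the corresponding map over $G(d,n)$, so $H^*(f^*MT(d,r))\cong H^*(X)\otimes_{H^*(G(d,n))}H^*(MT(d,r))$ in the stable range, using that $H^*(G(d,n))$ is a free module over the relevant subalgebras.

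Next I would pass to the periodic limit over the James periodicity maps exactly as in the proof of Theorem~\ref{coholim}. The periodicity map $g: f^*MT(d,r)\to\Sigma^{-a_r}f^*MT(d+a_r,r)$ is a fiberwise application of $g_0$, hence it is again a pullback along $f$ of the absolute periodicity map, and by Lemma~\ref{endelig} (or directly by the connectivity estimate for $g_0$) it induces an isomorphism on $H^*$ in a range growing with the number of periodicity steps. Therefore $H^*(f^*\MT(d,r))\cong\varprojlim_k H^*(\Sigma^{-ka_r}f^*MT(d+ka_r,r))$, and the generators $w_{l+ka_r}$ stabilize to classes $\tilde v_l\in H^l(f^*\MT(d,r))$ for $d-r+1\le l\le d$, which are precisely the images of the $\tilde w_l$ under the map induced by $pt\hookrightarrow X$, i.e.\ along $f^*:H^*(\mathcal{MTO}(d))\to H^*(f^*\MT(d))$. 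So $H^*(f^*\MT(d,r))\cong H^*(X)\otimes\Z/2\{\tilde v_l,\, d-r+1\le l\le d\}$ as an $H^*(X)$-module, with the $H^*(BO)$-module structure factoring through $f^*$. Taking the direct limit over $r$ makes the index range open on the left, and then the inverse limit over $d$ (which only extends the range on the right, the transition maps being the obvious inclusions dual to forgetting top cells) gives $WH(X)^*\cong H^*(X)\otimes\Z/2\{\tilde v_l,\,l\in\Z\}$. The identification of the map $H^*(\mathcal{MTO}(d,r))\to H^*(f^*\MT(d,r))$ sending $\tilde w_l\mapsto\tilde v_l$ and being $f^*$-linear follows by naturality of the whole construction in the Proposition on naturality stated earlier.

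The main obstacle I expect is bookkeeping the ranges: the isomorphism $H^*(f^*MT(d,r))\cong H^*(X)\otimes_{H^*(G(d,n))}H^*(MT(d,r))$ only holds in a range (it fails once the cell structure of the Stiefel fibers reappears), and one must check that as $d,r\to\infty$ by James periodicity these ranges exhaust all degrees, uniformly in the tensor/limit operations. This is the same kind of argument already carried out in \cite{adb} and in the proof of Theorem~\ref{coholim}, so I would phrase it as: for each fixed degree $q$, choose $k$ large enough that the $q$-truncation of all the cohomology groups in the periodic tower have stabilized, apply the base-change isomorphism there, and note the construction of $\tilde v_l$ is independent of $k$; the remaining verifications (compatibility of the $\A$-action, naturality of $\tilde w_l\mapsto\tilde v_l$) are then formal consequences of the Cartan formula and of \eqref{Awl}, since $f^*$ is a map of algebras over $\A$.
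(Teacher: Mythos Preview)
Your overall strategy---reduce to the universal case and take limits---is reasonable, but the key step is not justified, and the paper takes a different and more direct route.

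The gap is in your opening claim. You write that $H^*(f^*MT(d,r))$ is ``the submodule (ideal) \dots\ determined by the image of $w_{d-r+1},\dots,w_d$ under $f^*$'', and then assert the base-change formula
\[
H^*(f^*MT(d,r))\cong H^*(X)\otimes_{H^*(G(d,n))}H^*(MT(d,r)).
\]
The first phrasing is wrong as stated: for a compact $X$ one typically has $f^*(w_l)=0$ for $l$ large, yet the classes $\tilde v_l$ are all nonzero---they are not images of anything in $H^*(X)$. The tensor-product formula is what you really want, but your justification (``$H^*(G(d,n))$ is a free module over the relevant subalgebras'') is the wrong freeness hypothesis. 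What you would need is that $H^*(MT(d,r))$ (equivalently $H^*(V_r(U_{d,n}))$ after Thom isomorphism) is free over $H^*(G(d,n))$ in the stable range, so that the Eilenberg--Moore spectral sequence for the pullback square collapses. That freeness is true and can be extracted from \cite{adb} and the proof of Theorem~\ref{coholim}, but you have not said this, and ``the map of pairs is a pullback'' alone does not imply anything about cohomology.

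The paper argues differently and more concretely: it runs the Serre spectral sequence for the Stiefel bundle $V_{d+ka_r,r}\to V_r(f^*U_{d+ka_r,n})\to X$, compares it to the spectral sequence over the Grassmannian (where the answer is known) to see that all differentials vanish in the relevant range, and reads off $H^*(V_r(f^*U_{d+ka_r,n}))\cong H^*(X)\otimes H^*(V_{d+ka_r,r})$. The pair sequence for $(W_r,V_r)$ then gives the relative cohomology, and a Thom isomorphism finishes. The identification $\tilde w_l\mapsto\tilde v_l$ is checked by an induction on $r$ using the map to the fiber $\Sigma V_{d+ka_r,r}$, not just by abstract naturality. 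Your route and the paper's are morally equivalent (Serre collapse versus Eilenberg--Moore collapse), but the paper's is self-contained, while yours needs the missing freeness input to be made precise.
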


\begin{proof}
Look at the Serre spectral sequence for the fibration 
\begin{equation*}
V_{d+ka_r,r} \to V_r(f^*U_{d+ka_r,n}) \to X
\end{equation*}
for $n$ and $k$ large. 
This has $E_2$-term
\begin{equation*}
E_2^{p,q} \cong H^p(X)\otimes H^q(V_{d+ka_r,r}).
\end{equation*}
%The $\bar{E}^2$-term of the Serre spectral sequence for the fibration 
%\begin{equation*}
%V_{d+ka_r,r} \to V_r(U_{d+ka_r,n}) \to G(d+ka_r,n)
%\end{equation*}
%is
%\begin{equation*}
%{E}_2^{p,q} \cong H^p(G(d+ka_r,n))\otimes H^q(V_{d+ka_r,r}).
%\end{equation*}
%and maps surjectively onto $E_2^{p,q}$ with kernel $ I \otimes H^*(V_{d+ka_r,r})$.
%The non-zero differentials in the relevant part of the spectral sequence hit the ideal generated by $w_{d+ka_r-r +1},\dots ,w_{d+ka_r}$ in $E_2^{*,0}$. Since $m$ is much smaller than $d+ka_r-r$, these differentials must be zero in $E_*^{*,*}$ if $k$ is large enough. Thus there can be no non-zero differentials in this part of the spectral sequence, and
Comparing with the spectral sequence for $V_r(U_{d+ka_r,n})\to G(d+ka_r,n)$ and using the multiplicative structure, we see that  there can be no non-trivial differentials in the lower left corner of the spectral sequence. Hence
\begin{equation*}
H^*(V_r(f^*U_{d+ka_r,n})) \cong H^*(X)\otimes  {H}^*(V_{d+ka_r,r}) 
\end{equation*}
for $*<2(d-r) + ka_r$ as $H^*(X)$-modules.

In the exact sequence
\begin{equation*}
H^*(W_r(f^*U_{d+ka_r,n}),V_r(f^*U_{d+ka_r,n}))\to{H^*(W_r(f^*U_{d+ka_r,n}))}\to {H^*(V_r(f^*U_{d+ka_r,n}))},
\end{equation*}
the last map is an injection by the spectral sequence. Thus for $*<2(d-r) + ka_r$,
\begin{equation*}
H^*(W_r(f^*U_{d+ka_r,n}),V_r(f^*U_{d+ka_r,n})) \cong H^*(X) \otimes {H}^{>0}(\Sigma V_{d+ka_r,r}).
\end{equation*}

To see that the map $H^*(\mathcal{MTO}(d,r)) \to H^*(f^*\MT(d,r))$ is  as claimed, consider
\begin{equation*}
\xymatrix{{H^*(W_r(f^*U_{d+ka_r,n}),V_r(f^*U_{d+ka_r,n}))}\ar[r]&{H^*(\Sigma V_{d+ka_r,r})}\\
{H^*(W_r(U_{d+ka_r,n}), V_r(U_{d+ka_r,n}))} \ar[u]\ar[r]&{H^*(\Sigma V_{d+ka_r,r}).}\ar[u]^{\cong}
}
\end{equation*}
The vertical map to the left takes the generators $w_{d+ka_r-r+1},\dots , w_{d+ka_r}$  to the generators $v_{d+ka_r-r+1},\dots , v_{d+ka_r}$ of $H^0(X) \otimes {H}^{>0}(V_{d+ka_r,r})$, which can be proved by an induction on $r$ using the right hand side of the diagram. It commutes with the $H^*(G(d+ka_r,n))$-module structure. Applying a Thom isomorphism and letting $k$ tend to infinity, the claim follows.
\end{proof}

\subsection{The Singer construction}\label{singerc}
Let $\Z/2[t, t^{-1}]$ be the graded ring of Laurent polynomials in the variable $t$. This is an $\A$-module with $\Sq^k(t^l) = \binom{l}{k} t^{k+l}$. For $l$ negative, the binomial coefficient should be interpreted as
\begin{equation*}
\binom{l}{k} = \frac{l\cdot(l-1)\dotsm (l-k+1)}{k!}.
\end{equation*}
As $\A$-modules,
\begin{equation*}
\varprojlim_d \varinjlim_r H^*(\V_{d,r}) \cong \Sigma \Z/2[t, t^{-1}].
\end{equation*}

Lin's theorem is based on the observation that the map 
\begin{equation}\label{epsmap}
\Sigma \Z/2[t, t^{-1}]\to \Z/2
\end{equation}
induced by $\hat{\varphi}_0 : S^0 \to \widehat{V}$  is an isomorphism on $\Ext$-groups 
\begin{equation*}
\Ext^{s,t}_\A(\Z/2,\Z/2) \cong \Ext^{s,t}_\A(\Sigma \Z/2[t, t^{-1}],\Z/2).
\end{equation*} 
See e.g.\ \cite{adams} for a proof.

The map \eqref{epsmap} was later generalized by Singer in \cite{singer} as follows. Let $M$ be an $\A$-module. Then the Singer construction $R_+M$ is the graded vector space $\Sigma (\Z/2[t, t^{-1}] \otimes M)$, but not with the Cartan action of $\A$. Rather, this is given by the formula
\begin{equation}\label{sqsinger}
\Sq^a(t^b \otimes x) = \sum_j \binom{b-j}{a-2j} t^{a+b-j} \otimes \Sq^j(x).
\end{equation}
The advantage of this module structure is that it makes the map $\epsilon : R_+M \to M$ given by $t^k \otimes x \mapsto \Sq^{k+1}(x)$ into an $\A$-homomorphism.

It was proved by Gunawardena and Miller in \cite{miller} that $\epsilon $ is a $\Tor$-equivalence. In particular it induces isomorphisms
\begin{equation*}
\Ext^{s,t}_\A(M,\Z/2) \to \Ext^{s,t}_\A(R_+M,\Z/2).
\end{equation*}

The inclusion $\V_{d,r} \to f^*\MT(d,r)$ induces an $\A$-linear projection
\begin{equation*}
WH(X)^* = H^*(X) \otimes \Z/2\{\tilde{v}_l,\, l\in \Z\} \to \Sigma \Z/2[t,t^{-1}]
\end{equation*}
with kernel $H^{>0}(X) \otimes \Z/2\{\tilde{v}_l,\, l\in \Z\}$.
Thus $WH^*$ looks like the Singer construction applied to $H^*(f^*MT)$. We shall see that this is indeed the case when $X$ is compact.

\begin{defi}\label{tmult}
Let $\Z/2[t,t^{-1}]$ act on $WH(X)^*$ by the formula $t^l(x\tilde{v}_k) = x\tilde{v}_{k+l}$ for $x\in H^*(X)$. For any module  $M$, $\Z/2[t,t^{-1}]$ acts on $R_+(M) = \Sigma(\Z/2[t,t^{-1}] \otimes M)$ in the obvious way.
\end{defi} 

\begin{thm}\label{singer}
%Let $I$ be an ideal in $H^*$ that is preserved by the Steenrod action and such that $H^*/I$ is finite.
There is a $\Z/2$-linear map $\Phi$ such that the diagram 
\begin{equation}\label{Phidef}
\vcenter{\xymatrix{{WH(X)^*}\ar[r]^-{\Phi}\ar[dr]_{(\widehat{f^*\varphi})^*}&{R_+(H^*(f^*MT))}\ar[d]^{\epsilon}\\
{}&{H^*(f^*MT)}
}}
\end{equation} 
commutes and $\Phi$ commutes with the $\Z/2[t,t^{-1}]$-actions.
%Here $\hat{\varphi}^*$ is induced by the inclusion $\hat{\varphi}: MT \to \widehat{MT}$.
\end{thm}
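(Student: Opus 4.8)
The plan is to build $\Phi$ explicitly from the description of $WH(X)^*$ in Lemma~\ref{grassmann}, namely $WH(X)^* \cong H^*(X)\otimes \Z/2\{\tilde v_l,\, l\in \Z\}$, together with the fact (Theorem~\ref{coholim}) that $H^*(f^*MT(d))$ is the free $H^*(X)$-module generated by a single Thom class $\bar u$ shifted appropriately, so that after stabilizing $\varinjlim_d H^*(f^*MT(d))$ is, as an $H^*(X)$-module, just $H^*(X)$ (with the class $\bar u$ absorbed into the notation). Since $R_+(M) = \Sigma(\Z/2[t,t^{-1}]\otimes M)$, I want to send a basis element $x\tilde v_k \in WH(X)^*$ (with $x\in H^*(X)$) to something of the form $\sum_j t^{?}\otimes (\text{Steenrod operations applied to }x\bar u)$. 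The map $\epsilon$ sends $t^m\otimes y \mapsto \Sq^{m+1}(y)$, so to make the triangle in~\eqref{Phidef} commute I must have $\epsilon\Phi(x\tilde v_k) = (\widehat{f^*\varphi})^*(x\tilde v_k)$, and the right thing is forced once I know what $(\widehat{f^*\varphi})^*$ does on cohomology. So the first step is to compute $(\widehat{f^*\varphi})^*: WH(X)^* \to H^*(f^*MT)$ using the commuting square~\eqref{sam} and the factorization of $\widehat{f^*\varphi}$ through the periodicity maps given in Section~\ref{dils}; by naturality and the point case this should identify $(\widehat{f^*\varphi})^*(\tilde v_k)$ with an explicit Steenrod expression on the Thom class, exactly analogous to $t^k\otimes 1 \mapsto \Sq^{k+1}(1)$ in the classical Singer setup.

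Second, I would define $\Phi$ on the $H^*(X)$-module generators $\tilde v_k$ by the obvious Singer-type formula, $\Phi(\tilde v_k) = t^k\otimes \bar u$ (with appropriate suspension bookkeeping), and extend to all of $WH(X)^* = H^*(X)\otimes\Z/2\{\tilde v_l\}$ by $\Z/2[t,t^{-1}]$-linearity together with the $H^*(X)$-module structure: $\Phi(x\tilde v_k) = t^k\otimes (x\bar u)$. By Definition~\ref{tmult}, $t^l$ acts on the left by $t^l(x\tilde v_k)=x\tilde v_{k+l}$ and on the right on $R_+(H^*(f^*MT))$ in the evident way, so this $\Phi$ is manifestly $\Z/2[t,t^{-1}]$-equivariant, and commutativity of the triangle becomes the identity $\epsilon(t^k\otimes x\bar u) = \Sq^{k+1}(x\bar u) = (\widehat{f^*\varphi})^*(x\tilde v_k)$, which is precisely what Step~1 establishes. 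Note the theorem as stated only asks for a $\Z/2$-linear $\Phi$ making the triangle commute and compatible with the $t$-actions — it does not (yet) assert $\A$-linearity — so I do not need to check the Steenrod formulas~\eqref{Awl} and~\eqref{sqsinger} match here; that comparison is presumably the content of a following result.

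Third, I should check that $\Phi$ is well defined, i.e.\ that the expression $t^k\otimes(x\bar u)$ does not depend on the choices made (the Clifford representation, the integers $k$ used in the periodicity stabilization, the index $d$). This follows from the compatibility statements already proved: Lemma~\ref{henvis} and the inductive choice of orthogonal multiplications make~\eqref{system} strictly commutative, and the stabilization maps in Theorem~\ref{coholim} carry $w_{l+ka_r}$ to $w_{l+(k+1)a_r}$ consistently, so passing to $\varinjlim_r$ and then $\varprojlim_d$ the formula for $\Phi$ is canonical. The main obstacle I anticipate is Step~1 — pinning down $(\widehat{f^*\varphi})^*$ precisely, because $\widehat{f^*\varphi}$ is assembled from the geometric maps $(t,x,v)\mapsto (e_1t,\dots,e_rt,x,v)$ of Section~\ref{dils} and one must trace what these do on $\Z/2$-cohomology through the Thom isomorphisms and the James/periodicity identifications; the point-space case is handled by Theorem~\ref{linV} (the computation that $\varphi_0^*$ is nonzero on $H^{ka_r-1}$), and the general case should reduce to it by the naturality square~\eqref{sam} plus the $H^*(X)$-linearity from Lemma~\ref{grassmann}, but verifying the $H^*(X)$-module structure is respected — rather than just the underlying $\Z/2$-vector space map — is where the care is needed.
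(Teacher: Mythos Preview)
Your proposal has a genuine gap: the formula $\Phi(x\tilde v_k)=t^k\otimes x\bar u$ (or $t^{k-1}\otimes x\bar u$ after fixing degrees) does \emph{not} make the triangle commute. The point is that $(\widehat{f^*\varphi})^*(x\tilde v_l)=x\,w_l\bar u$, where $w_l$ is the $l$th Stiefel--Whitney class of the bundle $f^*U_d$ itself, whereas $\epsilon(t^{l-1}\otimes x\bar u)=\Sq^l(x\bar u)=\sum_i\Sq^i(x)\,\bar w_{l-i}\,\bar u$. These disagree already for $x=1$ and $l=2$ whenever $w_1\neq 0$ (since $\bar w_2=w_2+w_1^2$), and for $l=4$ whenever $w_2\neq 0$ even if $w_1=0$. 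In the point case the two happen to coincide because all $w_l$ and $\bar w_l$ vanish for $l>0$, so the naturality square \eqref{sam} does not let you reduce to that case; your Step~1 would in fact discover that $(\widehat{f^*\varphi})^*(\tilde v_k)$ is $w_k\bar u$ rather than $\Sq^k(\bar u)$, and the whole plan collapses there. More structurally, your $\Phi$ is $H^*(X)$-linear, but no $H^*(X)$-linear $\Phi$ can satisfy $\epsilon\Phi=(\widehat{f^*\varphi})^*$, because $\epsilon$ applies Steenrod operations which do not respect $H^*(X)$-multiplication.

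The paper's construction confronts this directly. Writing any $t$-equivariant $\Phi$ as $\Phi(x\tilde v_l)=\sum_i t^{l-1-i}\otimes\phi_i(x)$, commutativity of the triangle is the infinite system $\sum_i\Sq^{l-i}(\phi_i(x))=x\,w_l$ for all $l$, which is an upper-triangular matrix equation with $\Sq^0=1$ on the diagonal. It is inverted by the corresponding matrix of conjugate squares $\chi(\Sq^j)$, giving $\phi_i(x)=\sum_j\chi(\Sq^{i-j})(x\,w_j)$. Here compactness of $X$ is essential: it forces $H^*(f^*MT)$ to be finite, so only finitely many $\phi_i(x)$ are nonzero and $\Phi$ lands in $R_+(H^*(f^*MT))$ rather than a completion. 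Your proposal never uses compactness, which is another sign that the simple formula cannot be right.
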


\begin{proof}
We must construct a map of the form $\Phi(x\tilde{v}_l)= t^{l-1}\phi(x)$ where $\phi(x)$ is linear of the form $\sum_i  t^{-i} \otimes \phi_i(x)$ with $\phi_i(x) \in H^*(f^*MT)$. Then it will automatically commute with the $\Z/2[t,t^{-1}]$-actions. For the diagram \eqref{Phidef} to commute, these $\phi_i$ must satisfy 
\begin{equation*}
xw_l = (\widehat{f^*\varphi})^*(x\tilde{v}_l) =\epsilon \Phi(x\tilde{v}_l) =\sum_{i=0}^N Sq^{l-i}(\phi_i(x))
\end{equation*}
for all $l$. Here $w_l$ is the Stiefel--Whitney class for the pull-back $f^*U_d$ for $d$ large. This can be written as a matrix equation:
\begin{equation}\label{matrix}
\begin{pmatrix} 1 & \Sq^1 & \dotsm &\Sq^l \\
 0 & 1 & {} & \Sq^{l-1} \\
 \vdots & \ddots & \ddots & \vdots\\
 0 & \dots & 0 & 1 
\end{pmatrix}
\begin{pmatrix} 
\phi_l(x)  \\
 \vdots  \\
 \phi_1(x) \\
 \phi_0(x)
\end{pmatrix}
=
\begin{pmatrix} 
xw_l  \\
 \vdots  \\
 xw_1 \\
 x  
\end{pmatrix}
\end{equation}
Let $\chi(\Sq^k)\in \A$ denote the dual squares defined inductively by $\chi(\Sq^0)=1$ and 
\begin{equation*}
\sum_i \chi(\Sq^i)\Sq^{k-i} = 0.
\end{equation*}
 Then the matrix
\begin{equation}\label{chimatrix}
\begin{pmatrix} 1 & \chi(\Sq^1) & \dotsm &\chi(\Sq^l) \\
 0 & 1 & {} & \chi(\Sq^{l-1}) \\
 \vdots & \ddots & \ddots & \vdots\\
 0 & \dots & 0 & 1 
\end{pmatrix}
\end{equation}
is a right inverse for the matrix in equation \eqref{matrix}, and multiplication on both sides yields a formula defining the $\phi_i$. Note that only finitely many $\phi_i$ can be non-zero, since $H^*(f^*MT)$ is finite and $\phi_i$ has degree $i+\deg(x)$. Clearly, $\Phi$ is linear because each $\phi_i$ is.
\end{proof}

\begin{lem}\label{sqt}
The formula 
\begin{equation*}
\Sq^k(tx)=t\Sq^k(x) + t^2\Sq^{k-1}(x)
\end{equation*}
holds in both $WH(X)^*$ and $R_+(H^*(f^*MT))$.
\end{lem}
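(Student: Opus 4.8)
The plan is to verify the identity directly from the explicit formulas describing the $\A$-action on the two modules — formula \eqref{sqsinger} for $R_+(H^*(f^*MT))$ and formula \eqref{Awl} for $WH(X)^*$ — and to observe that in each case the required coefficient identity is nothing but Pascal's rule $\binom{n}{i}=\binom{n-1}{i}+\binom{n-1}{i-1}$, which, being a polynomial identity in $n$, is valid for all integers and so causes no trouble with the negative-index conventions. Since $\Sq^k$ and multiplication by $t$ are both $\Z/2$-linear, both sides of the asserted formula are additive in $x$, so it is enough to check the identity on a spanning set of elements.

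For $R_+(H^*(f^*MT))$ the spanning elements are $t^b\otimes x$, and multiplication by $t$ sends $t^b\otimes x$ to $t^{b+1}\otimes x$. Applying \eqref{sqsinger}, the left-hand side $\Sq^k(t\cdot(t^b\otimes x))$ equals $\sum_j\binom{b+1-j}{k-2j}\,t^{k+b+1-j}\otimes\Sq^j(x)$, while expanding $t\,\Sq^k(t^b\otimes x)+t^2\,\Sq^{k-1}(t^b\otimes x)$ with \eqref{sqsinger} gives $\sum_j\bigl(\binom{b-j}{k-2j}+\binom{b-j}{k-1-2j}\bigr)t^{k+b+1-j}\otimes\Sq^j(x)$. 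These agree term by term because $\binom{b-j}{k-2j}+\binom{b-j}{k-1-2j}=\binom{b-j+1}{k-2j}$.

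For $WH(X)^*$ the spanning elements are $y\tilde{v}_l$ with $y\in H^*(X)$. Since the $\A$-action obeys the Cartan formula with respect to the $H^*(X)$-module structure (Theorem~\ref{coholim}, Lemma~\ref{grassmann}) and multiplication by $t$ commutes with multiplication by $H^*(X)$ while only raising the $\tilde{v}$-index, the general case follows once the identity is known for $x=\tilde{v}_l$; and by Lemma~\ref{grassmann} the action of $\Sq^k$ on $\tilde{v}_l$ is given by the same formula \eqref{Awl}, with the $w_i$ now the Stiefel--Whitney classes of $f^*U_d$. So I would plug \eqref{Awl} into $t\,\Sq^k(\tilde{w}_l)+t^2\,\Sq^{k-1}(\tilde{w}_l)$, reindex the second summand by $j\mapsto j-1$, $i\mapsto i-1$ so that every term carries the monomial $w_{j-i}\bar{w}_{k-j}\tilde{w}_{l+1+i}$, and read off that its coefficient is $\binom{j-l}{i}+\binom{j-l-1}{i-1}$; by Pascal this is $\binom{j-l-1}{i}+2\binom{j-l-1}{i-1}\equiv\binom{j-l-1}{i}\pmod 2$, which is exactly the coefficient produced by \eqref{Awl} for $\Sq^k(\tilde{w}_{l+1})$.

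I do not anticipate a genuine obstacle: the whole content is the two instances of Pascal's identity, and the only points requiring care are the bookkeeping of summation ranges in the $WH(X)^*$ computation and the consistency of the binomial-coefficient conventions for negative arguments. As a sanity check it is worth noting that the formula is morally the Cartan formula applied to ``multiplication by the degree-one class $t$'', since in $\Z/2[t,t^{-1}]$ one has $\Sq^0(t)=t$, $\Sq^1(t)=t^2$ and $\Sq^i(t)=0$ for $i\geq 2$.
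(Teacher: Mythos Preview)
Your proof is correct and follows essentially the same route as the paper: verify the identity in $R_+(H^*(f^*MT))$ directly from \eqref{sqsinger}, reduce the $WH(X)^*$ case to the generators $\tilde{v}_l$ via the Cartan formula, and then use that the $\A$-action on $\tilde{v}_l$ is inherited from \eqref{Awl} through the map $WH^*\to WH(X)^*$ of Lemma~\ref{grassmann}. The paper simply calls both computations ``a straightforward check''; you have supplied the binomial bookkeeping (Pascal's rule) that the paper omits.
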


\begin{proof}
The formulas hold in $WH^*$ and $R_+(H^*(f^*MT))$. This is a straightforward check using the formulas \eqref{sqsinger} and \eqref{Awl}. In $WH^*(X)\cong H^*(X)\otimes \{\tilde{v}_l,\, l\in \Z\}$, note that it is enough to check the formulas on $\tilde{v}_l$. For these, the claim follows from the formulas in $WH^*$ by considering the map $WH^* \to WH(X)^*$.
\end{proof}

\begin{thm}\label{extiso}
The map $\Phi$ in Theorem \ref{singer} is an isomorphism of Steenrod modules. In particular, $(\widehat{f^*\varphi})^*$ induces an isomorphism on Ext-groups.
\end{thm}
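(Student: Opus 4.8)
The plan is to establish two things about $\Phi$ --- that it is an isomorphism of the underlying graded $\Z/2$-vector spaces and that it is $\A$-linear --- and then to read off the $\Ext$-statement using the Gunawardena--Miller theorem. For the vector-space statement, multiply the matrix equation \eqref{matrix} by its right inverse \eqref{chimatrix}: this gives $\phi_0(x)=x$ and $\phi_i(x)=\sum_{m=0}^{i}\chi(\Sq^{i-m})(xw_m)$, so that $\Phi(x\tilde v_l)=t^{l-1}\otimes x+\sum_{i\ge 1}t^{l-1-i}\otimes\phi_i(x)$. Filtering $WH(X)^*$ by $F_p=\spa\{x\tilde v_l:l\le p\}$ and $R_+(H^*(f^*MT))$ by the corresponding filtration in the power of $t$, the map $\Phi$ is filtration-preserving and on associated graded sends $x\tilde v_p$ to $t^{p-1}\otimes\phi_0(x)=t^{p-1}\otimes x$, which is an isomorphism since $H^*(f^*MT)\cong H^*(X)$. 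As $X$ is a finite complex, $H^*(X)$ is finite in each degree and bounded, hence both filtrations are finite in each total degree, and $\Phi$ is an isomorphism.

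For $\A$-linearity, the two ingredients are that $\Phi$ commutes with the $\Z/2[t,t^{-1}]$-action (Theorem \ref{singer}) and that the relation of Lemma \ref{sqt} holds in both $WH(X)^*$ and $R_+(H^*(f^*MT))$. Together these propagate the identity $\Phi\Sq^k(x\tilde v_l)=\Sq^k\Phi(x\tilde v_l)$ between neighbouring values of $l$: applying $\Phi$ to $\Sq^k(x\tilde v_{l+1})=t\,\Sq^k(x\tilde v_l)+t^2\,\Sq^{k-1}(x\tilde v_l)$ and using $t$-linearity of $\Phi$, the identity at $l$, and Lemma \ref{sqt} in $R_+(H^*(f^*MT))$ gives the identity at $l+1$, while the formal consequence of Lemma \ref{sqt} that expresses $\Sq^k(t^{-1}\eta)$ through the $\Sq^j\eta$ (valid in both modules) runs the argument downwards. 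It therefore suffices to verify $\A$-linearity of $\Phi$ on a single slice $\{x\tilde v_{l_0}:x\in H^*(X)\}$.

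This base case is where I expect the main work to lie. For $X=pt$ it is the computation underlying Lin's theorem and the classical Singer construction, and reduces to the congruence $\binom{k-l}{k}\equiv\binom{l-1}{k}\bmod 2$. For general $X$ one must match $\Sq^k\Phi(x\tilde v_{l_0})$, expanded via \eqref{sqsinger}, against $\Phi\Sq^k(x\tilde v_{l_0})$, expanded via the Cartan formula and \eqref{Awl} and then rewritten through the $\phi_i$ --- a finite but intricate identity among sums of products of mod-$2$ binomial coefficients, the antipode $\chi$, and the dual classes $\bar w_j$. (A partial check: for $x$ of top degree $\dim X$ the difference $\Phi\Sq^k(x\tilde v_l)-\Sq^k\Phi(x\tilde v_l)$ lies in $\ker\epsilon$ and, the leading terms already agreeing, sits in $t$-degrees too low to be nonzero, so it vanishes automatically. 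One can also bypass the computation entirely by using naturality of $\Phi$ in $X$ together with a Mayer--Vietoris and five-lemma induction on the cells of $X$, starting from the point --- the algebraic analogue of the argument in Section \ref{topa}.)

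Granting that $\Phi$ is an isomorphism of Steenrod modules, it induces an isomorphism on $\Ext_\A^{s,t}(\,\cdot\,,\Z/2)$. Since $X$ is compact, $H^*(f^*MT)\cong H^*(X)$ is bounded below, so the Gunawardena--Miller theorem applies and $\epsilon\colon R_+(H^*(f^*MT))\to H^*(f^*MT)$ is likewise an isomorphism on $\Ext_\A$. The triangle \eqref{Phidef} commutes, so $(\widehat{f^*\varphi})^*=\epsilon\circ\Phi$ induces an isomorphism on $\Ext_\A$, as claimed.
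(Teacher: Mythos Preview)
Your vector-space isomorphism argument (leading-term/filtration) is fine and matches the paper. The $\Ext$-conclusion via the commuting triangle \eqref{Phidef} and Gunawardena--Miller is also correct.

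The gap is in the $\A$-linearity step. You reduce to verifying $\Phi\Sq^k=\Sq^k\Phi$ on a single slice $\{x\tilde v_{l_0}\}$ for all $k$, acknowledge this is the hard part, and do not prove it. The direct computation you sketch is exactly the difficulty, and the Mayer--Vietoris suggestion is problematic: $\A$-linearity of a map is not a five-lemma property, and it is not clear that naturality of $\Phi$ in $X$ (which itself needs checking) would let you transport it along exact sequences.

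The paper circumvents this base case entirely by organising the induction differently. Instead of fixing $k$ and inducting on $l$, it fixes all $x$ and inducts on $k$, with the trivial base $\Sq^0=\mathrm{id}$. Set $\delta(x)=\Phi(\Sq^k x)-\Sq^k\Phi(x)$. Lemma~\ref{sqt} and the induction hypothesis for $k-1$ give $\delta(tx)=t\,\delta(x)$, hence $\delta(t^lx)=t^l\delta(x)$ for all $l\in\Z$. Now the key input you are not using: $\epsilon\circ\Phi=(\widehat{f^*\varphi})^*$ is \emph{automatically} $\A$-linear, since it is induced by a map of spectra. Thus $\epsilon(\delta(t^lx))=0$ for every $l$. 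Writing $\delta(x)=\sum_{i=-N}^N t^i\otimes\delta_i$, this says $\sum_i\Sq^{i+l+1}(\delta_i)=0$ for all $l$, an upper-triangular system with $1$'s on the diagonal (inverse given by \eqref{chimatrix}), so all $\delta_i=0$. No slice-by-slice base computation is needed; the $\A$-linearity of the \emph{composite} $\epsilon\Phi$ does all the work.
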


\begin{proof}
%Note that $WH^*/IWH^*$ is an $\A$-module by Theorem \ref{coholim}.
First we show that $\Phi$ is an $\A$-homomorphism.
Assume that for some $k$, 
\begin{equation*}
\Phi(\Sq^{k-1}(x)) = \Sq^{k-1}(\Phi(x))
\end{equation*}
for all $x$. It is clearly satisfied for $k=1$. Then by Lemma \ref{sqt}
\begin{align*}
\Phi(\Sq^{k}(tx)) =& t\Phi(\Sq^k(x)) + t^2\Phi(\Sq^{k-1}(x))\\
 =& t\Phi(\Sq^k(x)) + t^2\Sq^{k-1}(\Phi(x))
\end{align*}
and
\begin{equation*}
\Sq^k(\Phi(tx))=t\Sq^k(\Phi(x)) + t^2\Sq^{k-1}(\Phi(x))
\end{equation*}
since $\Phi$ commutes with $t$.
Introducing the notation 
\begin{equation*}
\delta(x)=\Phi(\Sq^k(x))-\Sq^k(\Phi(x)),
\end{equation*}
the above implies that
\begin{equation*}
\delta(tx) = t\delta(x).
\end{equation*}
Iterating this yields $\delta(t^lx) = t^l\delta(x)$ for all $l \in \Z$. 

We must show that $\delta(x)=0$.
Write $\delta(x) = \sum_{i=-N}^N t^i \otimes \delta_i$ and note that
\begin{align*}
\epsilon(\delta(x))&= \epsilon(\Phi(\Sq^k(x)))- \epsilon(\Sq^k(\Phi(x)))\\
 &= (\widehat{f^*\varphi})^*(\Sq^k(x))-\Sq^k(\widehat{f^*\varphi})^*(x))\\
  &=0
\end{align*}
by commutativity of \eqref{Phidef}.
This means that for all $k$,
\begin{equation*}
\epsilon(\delta(t^kx)) = \epsilon(\sum_{i=-N}^N t^{i+k} \otimes \delta_i) = \sum_{i=-N}^N \Sq^{i+k+1}(\delta_i)=0.
\end{equation*}
This yields the following matrix equation:
\begin{equation*} 
\begin{pmatrix} 1 & \Sq^1 & \dotsm &\Sq^{2N} \\
 0 & 1 & {} & \Sq^{2N-1} \\
 \vdots & \ddots & \ddots & \vdots\\
 0 & \dots & 0 & 1 
\end{pmatrix}
\begin{pmatrix} 
\delta_N  \\
 \delta_{N-1}  \\
 \vdots \\
 \delta_{-N}
\end{pmatrix}
=
\begin{pmatrix} 
0  \\
0 \\
 \vdots   \\
 0
\end{pmatrix}
\end{equation*}
As in the proof of Theorem \ref{singer}, we multiply by the matrix \eqref{chimatrix} and obtain the unique solution $\delta_i=0$ for all $i$.

It is clear that $\Phi $ is injective because $\Phi(\sum_i x_i\tilde{w}_{k-i})$ is of the form $\sum_i t^{k-i-1} \otimes x_i $ plus terms involving only powers of $t$ that are strictly smaller than the largest power occuring in this sum. Thus it is non-zero. Since $\Phi $ is a map between vector spaces that are finite and of the same dimension in each degree, it is also an isomorphism.

Since $\epsilon $ is an Ext-isomorphism, so is $(\widehat{f^*\varphi})^*$. 
\end{proof}

\begin{cor}
$\widehat{f^*\varphi} : f^*MT(d)_2^\wedge \to \widehat{f^*MT}(d)_2^\wedge $ is a weak $(d-1)$-equivalence.
\end{cor}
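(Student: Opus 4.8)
The plan is to run the comparison through Adams spectral sequences, taking the $\Ext$-isomorphism of Theorem~\ref{extiso} as the input, and to obtain the finite-$d$ statement by descending from a stable one. First I would prove that $\widehat{f^*\varphi}\colon f^*MT{}_2^\wedge \to \widehat{f^*MT}{}_2^\wedge$ is a weak equivalence. Since $X$ is a finite complex, $f^*MT$ is a connective spectrum of finite type, so it carries a classical Adams spectral sequence with $E_2$-term $\Ext_\A^{s,t}(H^*(f^*MT),\Z/2)$ converging strongly to $\pi_{t-s}(f^*MT{}_2^\wedge)$. On the other side, for each $d$ the inverse system $Y_r=f^*\MT(d,r)$ meets the hypotheses of Theorem~\ref{invadams}: its cohomology is finite in each degree by Lemma~\ref{grassmann}, and each $Y_r$ is bounded below, being a Thom spectrum over a finite complex. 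So Theorem~\ref{invadams} provides a strongly convergent spectral sequence with $E_2$-term $\Ext_\A^{s,t}(WH(X)(d)^*,\Z/2)$ and abutment $\pi_{t-s}(\widehat{f^*MT}(d){}_2^\wedge)$. Passing to the colimit over $d$ is legitimate because the maps $\widehat{f^*MT}(d)\to\widehat{f^*MT}(d+1)$ are highly connected (they are isomorphisms on $\hat E^1$ in total degree $<d$, see Section~\ref{sss}, hence isomorphisms on $\pi_q$ in a range growing with $d$), so the Adams spectral sequence of $\widehat{f^*MT}=\varinjlim_d\widehat{f^*MT}(d)$ is the colimit of those of the $\widehat{f^*MT}(d)$, which is again strongly convergent with $E_2$-term $\varinjlim_d\Ext_\A^{s,t}(WH(X)(d)^*,\Z/2)=\Ext_\A^{s,t}(WH(X)^*,\Z/2)$ and abutment $\pi_{t-s}(\widehat{f^*MT}{}_2^\wedge)$.

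The map $\widehat{f^*\varphi}\colon f^*MT\to\widehat{f^*MT}$, being compatible with all the structure maps in the systems, induces a morphism from the first of these spectral sequences to the second; by the last clause of Theorem~\ref{invadams} (and compatibility with the colimit over $d$) the induced map on $E_2$-pages is $\Ext_\A(-,\Z/2)$ applied to the $\A$-linear map $(\widehat{f^*\varphi})^*\colon WH(X)^*\to H^*(f^*MT)$. By Theorem~\ref{extiso} this map is an isomorphism. Since both spectral sequences converge strongly, a morphism that is an isomorphism on $E_2$ is an isomorphism on abutments, and so $\widehat{f^*\varphi}\colon f^*MT{}_2^\wedge\to\widehat{f^*MT}{}_2^\wedge$ is a weak equivalence.

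It remains to descend to level $d$. The cofibre of $f^*MT(j)\to f^*MT(j+1)$ is $f^*MT(j+1,1)$, which is $j$-connected (by the analogue of \eqref{MTd1} its bottom cell, coming from the base of the sphere bundle $V_1(f^*U_{j+1})$, lies in dimension $j+1$), so $f^*MT(d)\to f^*MT$ is a $(d-1)$-equivalence, and remains so after $2$-completion; by the connectivity statement for $\widehat{f^*MT}(d)\to\widehat{f^*MT}(d+1)$ noted above, $\widehat{f^*MT}(d)\to\widehat{f^*MT}$ is likewise a $(d-1)$-equivalence after $2$-completion. These maps commute with $\widehat{f^*\varphi}$, and the stable $\widehat{f^*\varphi}$ is an equivalence, so $\widehat{f^*\varphi}\colon f^*MT(d){}_2^\wedge\to\widehat{f^*MT}(d){}_2^\wedge$ is a $(d-1)$-equivalence; this is the second, algebraic proof of the $2$-completed part of Theorem~\ref{hovedthm}. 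I expect the main obstacle to be the bookkeeping in the first paragraph: one must verify that the colimit over $d$ of the inverse Adams spectral sequences of Theorem~\ref{invadams} is still strongly convergent with the stated $E_2$-term, and throughout one must work with $\varinjlim_r H^*(f^*\MT(d,r))=WH(X)(d)^*$ rather than with $H^*(\widehat{f^*MT}(d))$ --- which need not agree with it --- so that a direct comparison of the cofibre of $\widehat{f^*\varphi}$ on cohomology is not available and the argument is forced to go through Theorem~\ref{invadams}; it is for the same reason essential that \emph{both} spectral sequences converge strongly, not merely that they have isomorphic $E_2$-pages.
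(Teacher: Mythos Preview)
Your approach is correct but takes a slightly more circuitous route than the paper. You stabilize first---constructing the Adams spectral sequence for $\widehat{f^*MT}{}_2^\wedge$ as a colimit over $d$ of the inverse-system spectral sequences from Theorem~\ref{invadams}, applying Theorem~\ref{extiso} to get the stable equivalence, and then descending using connectivity of $f^*MT(d)\to f^*MT$ and $\widehat{f^*MT}(d)\to\widehat{f^*MT}$. The paper instead works at level $d$ throughout: it compares the Adams spectral sequence for $f^*MT(d)_2^\wedge$ directly with the Theorem~\ref{invadams} spectral sequence for $\widehat{f^*MT}(d)_2^\wedge$, and transfers the stable $\Ext$-isomorphism of Theorem~\ref{extiso} down to level $d$ via the short exact sequences $0\to K\to WH(X)^*\to WH(X)(d)^*\to 0$ and $0\to K'\to H^*(f^*MT)\to H^*(f^*MT(d))\to 0$, whose kernels live in degrees above $d$ and so contribute nothing to $\Ext^{s,t}$ for $t-s<d$. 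Boardman's comparison theorem then finishes the argument.

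The paper's route sidesteps exactly the bookkeeping you flagged: there is no need to form a colimit of the Theorem~\ref{invadams} spectral sequences, no need to argue that this colimit still converges strongly, and no need to invoke the connectivity of $\widehat{f^*MT}(d)\to\widehat{f^*MT}(d+1)$ from Section~\ref{sss} for the descent. Your approach works, but the comparison of $\Ext$-groups at level $d$ via the obvious short exact sequence is both shorter and cleaner; it is worth knowing that this is all that is required.
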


\begin{proof}
%It follows from Lemma \ref{grassmann} that 
%\begin{equation*}
%\varinjlim_r H^*(\MT(d,r)_{\mid G(m,l)}) \cong WH(d)^*/IWH(d)^* 
%\end{equation*}
%where $I=\langle \bar{w}_k,w_n, k > l,n>m \rangle$ is an ideal in $H^*$ preserved by $\A$. 
According to Theorem \ref{invadams}, there is a spectral sequence  with $E_2$-term 
\begin{equation*}
E^{s,t}_2=\Ext_\mathcal{A}^{s,t}(WH(X)(d)^*,\Z/2)
\end{equation*}
and converging strongly to $\pi_{t-s}(\widehat{f^*MT}(d)_2^\wedge)$. Similarly, there is a spectral sequence converging strongly to $\pi_{t-s}(f^*MT(d)_2^\wedge)$ with 
\begin{equation*}
\bar{E}^{s,t}_2=\Ext_\mathcal{A}^{s,t}( H^*(f^*MT(d)) ,\Z/2)
\end{equation*}
and a map $\widehat{f^*\varphi}_{*}:\bar{E}_2^{s,t}\to E_2^{s,t}$ between them.

From the long exact sequence of $\Ext$-groups associated to the short exact sequence $0\to K \to WH(X)^* \to WH(X)(d)^* \to 0$, it follows that
\begin{equation*}
\Ext_\mathcal{A}^{s,t}(WH(X)(d)^* ,\Z/2) \to \Ext_\mathcal{A}^{s,t}(WH(X)^*,\Z/2)
\end{equation*}
is an isomorphism for $t-s < d$. Similarly for 
\begin{align*}
\Ext_\mathcal{A}^{s,t}( H^*(f^*MT(d)) ,\Z/2) \to \Ext_\mathcal{A}^{s,t}( H^*(f^*MT) ,\Z/2) . 
\end{align*}

Combined with Theorem \ref{extiso}, this shows that also $\widehat{f^*\varphi}_{*}: \bar{E}_2^{s,t}\to E_2^{s,t}$ is an isomorphism for $t-s < d$. Thus it it a $(d-1)$-equivalence on $E_\infty^{s,t}$. Since both spectral sequences converge strongly, we conclude by \cite{boardman}, Theorem 2.6, that
\begin{equation}\label{limiso}
\widehat{f^*\varphi}_{*} :\pi_{t-s}(f^*MT(d)_2^\wedge) \to \pi_{t-s}(\widehat{f^*MT}(d)_2^\wedge)
\end{equation} 
is an isomorphism for $t-s < d-1$. Since all the $\Ext$-groups are finite, a modification of Boardman's proof also shows that $\widehat{f^*\varphi}_{*}$ is a surjection for $t-s=d-1$.
\end{proof}
%
%\begin{rem}
%The above proofs generalize to the situation where $f:X \to BO(d)$ is given on a finite complex $X$ to show that the map 
%\begin{equation*}
%f^*MT(d)_2^\wedge \to \widehat{f^*MT}(d)_2^\wedge
%\end{equation*}
% is a $(d-1)$-equivalence, yielding a new proof of Theorem \ref{hovedthm}. The Serre spectral sequence shows, exactly as in the proof of Lemma \ref{grassmann}, that $\varinjlim_r H^*(f^*\MT(d,r))$ is the free $H^*(X)$-module on one generator $\tilde{v}_l$ in each dimension $l\leq d$ and 
% \begin{equation*}
% WH(d)^* \to \varinjlim_r H^*(f^*\MT(d,r))
% \end{equation*}
% is an $\A$-homomorphism taking $\tilde{w}_l$ to $\tilde{v}_l$. The proof that this is the Singer construction on $H^*(f^*MT)$ goes as before. The map $\Phi $ can be constructed as in Theorem \ref{singer}. The crucial point is to reprove Lemma~\ref{sqt}. Observing that it is enough to check the formula on $\tilde{v}_l$, it follows from the formulas for $\tilde{w_l}\in WH(d)$. The proof of Theorem~\ref{extiso} then carries over. 

%\end{rem}

\section{The non-compact case}\label{nonres}
Note how compactness of $X$ was essential in the proof of Theorem \ref{singer} in order to make $\Phi$ well-defined and show that it is an isomorphism. This section is devoted to the non-compact case, in which the spectra behave completely differently.
\subsection{The unoriented case}
We first consider the unoriented spectra $MTO$. Now $H^*$ will denote $H^*(MTO)$, while $WH^*$ is as in Definition~\ref{defWH}. 
$WH^*$ might still look like the Singer construction applied to $H^*$. However, we shall see that the algebraic behavior is very different. In fact, $WH^*$ has infinite $\Ext^{0,*}_\A$-groups in all dimensions. 

It is well-known that $H^*$ is a free $\A$-module, see e.g.\ \cite{stong}. The set $\Hom_\A^*(H^*,\Z/2)$ is known to be a polynomial algebra $\Z/2[\xi_k,\, k\neq 2^s-1]$. The multiplication comes from the map $\Delta^* : H^* \to H^* \otimes H^*$ induced by the direct sum map $BO\times BO \to BO$. This $\Delta^* $ is the $H^*(BO)$-linear map given by the formula \begin{equation}\label{BOmult}
w_k \mapsto \sum_j w_j \otimes w_{k-j}
\end{equation}
where $w_k\in H^*$ corresponds to the $k$th Stiefel--Whitney class in $H^*(BO)$ under the Thom isomorphism. 
The classical way to prove that $H^*$ is free is by an algebraic study of the map $\Delta^*$, see \cite{stong}, Chapter VI. In this section we try to generalize this to $WH^*$. However, some difficulties occur because $WH^*$ is not bounded below.

The map $MT \to \widehat{MT}$ induces a surjective $\mathcal{A}$-homomorphism $WH^* \to H^*$. Thus there is an injection $\Hom_\A^*(H^*,\Z/2)\to \Hom_\A^*(WH^*,\Z/2)$. We even have:

\begin{thm}
There is an $\A$-homomorphism ${\Delta} : WH^* \to H^* \hat{\otimes} WH^*$ where $\A$ acts on the right hand side by the Cartan formula. It is also a map of $H^*(BO)$-modules where $H^*(BO)$ acts on the right hand side by the formula \eqref{BOmult}. This makes $\Hom_\A^*(WH^*,\Z/2)$ into a module over $\Hom_\A^*(H^*,\Z/2)$. 
\end{thm}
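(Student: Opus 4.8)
The plan is to construct $\Delta$ by mimicking the construction of $\Phi$ in Theorem~\ref{singer}, but taking values in $H^* \hat\otimes WH^*$ instead of in the Singer construction $R_+(H^*(f^*MT))$. Recall that $WH^*$, by Theorem~\ref{coholim}, is (as $H^*(BO)$-module) a completed tensor product of the form $H^* \hat\otimes \Z/2\{\tilde w_l, l \in \Z\}$, where the basis elements $\tilde w_l$ sit in negative as well as positive degrees; and $\Z/2[t,t^{-1}]$ acts on it by $t^l \cdot \tilde w_k = \tilde w_{k+l}$ as in Definition~\ref{tmult}. First I would define, for an element written in the form $x\tilde w_l$ with $x \in H^*$, a candidate value $\Delta(x\tilde w_l) = \sum_i (\text{something}) \otimes \tilde w_{l-i}$, where the coefficients are forced by requiring that composing $\Delta$ with the projection $H^* \hat\otimes WH^* \to H^* \hat\otimes H^*$ (induced by $WH^* \to H^*$, $\tilde w_k \mapsto w_k$) recovers $\Delta^*$ on the quotient, exactly as $\epsilon\circ\Phi = (\widehat{f^*\varphi})^*$ in diagram~\eqref{Phidef}. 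Concretely this again becomes an upper-triangular matrix equation with $\Sq$'s on the diagonal band and $w_l$'s on the right, solved by the dual-Steenrod matrix \eqref{chimatrix}; the key new point compared with Theorem~\ref{singer} is that, because $WH^*$ is \emph{not} bounded below, the sum defining $\Delta(x\tilde w_l)$ need not terminate, so I must interpret the target as the completed tensor product $H^*\hat\otimes WH^*$ — this is precisely why the hat is there — and check that in each fixed total degree only finitely many terms contribute, so the formula makes sense.

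Once $\Delta$ is defined, I would verify $H^*(BO)$-linearity directly from the defining matrix equation (the right-hand side is built from the $w_l$, which are $H^*(BO)$-module generators behaving via \eqref{BOmult}), and then verify $\A$-linearity by the same inductive device used in Theorem~\ref{extiso}: establish the analogue of Lemma~\ref{sqt}, namely $\Sq^k(t x) = t\Sq^k(x) + t^2\Sq^{k-1}(x)$ on both $WH^*$ and $H^*\hat\otimes WH^*$, introduce $\delta(x) = \Delta(\Sq^k x) - \Sq^k(\Delta x)$, show $\delta(tx) = t\,\delta(x)$ hence $\delta(t^l x) = t^l\delta(x)$ for all $l \in \Z$, and then kill $\delta$ by applying the projection to $H^*\hat\otimes H^*$ and inverting the same upper-triangular matrix to conclude all components of $\delta$ vanish. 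The only subtlety is that all of these matrix inversions are now infinite in one direction; I would phrase them degreewise so that in each bidegree the relevant matrix is finite and genuinely invertible.

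Finally, the module structure on $\Hom_\A^*(WH^*,\Z/2)$ over $\Hom_\A^*(H^*,\Z/2)$ is formal once $\Delta$ is an $\A$-map: given $g \in \Hom_\A^*(H^*,\Z/2)$ and $h \in \Hom_\A^*(WH^*,\Z/2)$, the composite
\begin{equation*}
WH^* \xrightarrow{\ \Delta\ } H^* \hat\otimes WH^* \xrightarrow{\ g\hat\otimes h\ } \Z/2 \hat\otimes \Z/2 = \Z/2
\end{equation*}
is an $\A$-linear map $WH^* \to \Z/2$ because $\Delta$ is $\A$-linear and the Cartan action on the tensor product is compatible with the augmentations $g$, $h$; associativity and unitality of this action follow, as usual, from coassociativity and counitality of $\Delta$ up to the usual comparison with $\Delta^*$. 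I expect the main obstacle to be purely the convergence/completion bookkeeping: making precise that $\Delta$ lands in $H^*\hat\otimes WH^*$ rather than an honest tensor product, that the infinite matrix equations are solvable degreewise, and that $g\hat\otimes h$ is well-defined on the completion — everything else is a direct transcription of the arguments already given for $\Phi$ in Theorems~\ref{singer} and~\ref{extiso}.
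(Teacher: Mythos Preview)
Your approach is genuinely different from the paper's, and the analogy with Theorem~\ref{singer} is misplaced in a way that creates a real gap. In Theorem~\ref{singer} the upper-triangular matrix with $\Sq^k$'s on the diagonal arose because the Singer map $\epsilon$ is defined by $\epsilon(t^k\otimes x)=\Sq^{k+1}(x)$; that is where the Steenrod squares enter. Here there is no $\epsilon$, and the constraint you propose---compatibility with $\Delta^*$ under the projection $WH^*\to H^*$, $\tilde w_k\mapsto w_k$---does not produce a matrix of Steenrod operations at all. Writing $\Delta(\tilde w_l)=\sum_{j}c_j\otimes\tilde w_{l-j}$ and projecting gives $\sum_{j\le l}c_j\otimes w_{l-j}=\Delta^*(w_l)=\sum_{j=0}^l w_j\otimes w_{l-j}$, which simply forces $c_j=w_j$ by linear independence of the $w_{l-j}$; no $\chi(\Sq^k)$ inversion is involved, and the constraint by itself leaves the $c_j$ for $j>l$ undetermined. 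You need an additional hypothesis (essentially $t$-equivariance) to pin down the formula $\Delta(\tilde w_l)=\sum_{j\ge 0}w_j\otimes\tilde w_{l-j}$, and then you must separately verify $\A$-linearity of this explicit formula.

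The paper avoids all of this by observing that $\Delta$ is induced on cohomology by the direct-sum map of spectra
\[
MT(d')\wedge MT(d+ka_r,r)\to MT(d'+d+ka_r,r)
\]
coming from $BO(d')\times BO(d+ka_r)\to BO(d'+d+ka_r)$. Because this is a map of spectra, the induced map on $\Z/2$-cohomology is automatically an $\A$-homomorphism given by the Whitney-sum formula \eqref{BOmult}; one then passes to the various limits to obtain $WH^*\to H^*\hat\otimes WH^*$. The $H^*(BO)$-module compatibility is read off from an obvious commutative square. There is no matrix equation and no separate verification of $\A$-linearity. Your $\delta(tx)=t\delta(x)$ argument could in principle be salvaged to prove $\A$-linearity of the explicit formula above, but the ``inversion'' step again reduces to the linear independence of the $w_j$, not to the $\chi(\Sq)$ matrix; so even the repaired version of your plan is considerably more laborious than simply invoking the geometric origin of $\Delta$.
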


Here $H^*\hat{\otimes}WH^*$ denotes the inverse limit $\varprojlim_d H(d)^*\otimes WH^*$. One can think of this as a submodule of $\prod_k H^k \otimes WH^{*-k}$.

\begin{proof}
The direct sum map $ BO(d') \times BO(d+ka_r) \to BO(d' + d + ka_r)$ induces a map 
\begin{equation*}
MT(d')\wedge MT(d + ka_r,r) \to MT(d' + d + ka_r,r).
\end{equation*}
% Then the inclusion $BO( d'+d -r + ka_r) \to BO(d' + d + ka_r)$ yields a commutative diagram
%\begin{equation*}
%\xymatrix{{H^*(MT(d' + d + ka_r))}\ar[d] \ar[r]&{H^*(MT(d'))\otimes H^*(MT(d + ka_r))}\ar[d]\\
%{H^*(MT(d' + d - r + ka_r ))}\ar[r]&{H^*(MT(d'))\otimes H^*(MT(d - r + ka_r )).}
%}
%\end{equation*}
This defines an $\A$-homomorphism 
\begin{equation*}
H^*(MT(d' + d + ka_r ,r))\to H^*(MT(d'))\otimes H^*(MT(d + ka_r,r))
\end{equation*}
given by the formula \eqref{BOmult}.
This map commutes with the periodicity maps and 
the maps in the inverse system, inducing an $\A$-homomorphism
\begin{equation*}
WH(d'+d)^* \to H(d')^* \otimes WH(d)^* .
\end{equation*}
Taking the inverse limit over $d$, we get a map $WH^* \to H(d')^*\otimes WH^*$, and the inverse limit over $d'$ yields the desired map $WH^*\to H^*\hat{\otimes} WH^*$. 

The $H^*(BO)$-module structure comes from the commutative diagram
\begin{equation*}
\xymatrix@C=14pt{{MT(d')\wedge MT(d + ka_r)}\ar[r]\ar[d]&{MT(d') \wedge MT(d + ka_r)\wedge( BO(d')\times BO(d + ka_r))_+}\ar[d]\\
{MT(d' + d + ka_r )}\ar[r]&{MT(d' + d + ka_r)\wedge BO(d' + d + ka_r)_+.}
}
\end{equation*}
\end{proof}

Next we construct an infinite collection of elements in $\Hom_\A^t(WH^*,\Z/2)=\Ext_\A^{0,t}(WH^*,\Z/2)$ in each dimension $t$. 
The formula
\begin{equation*}
\Sq^2(1)=\bar{w}_2 = w_2 + w_1^2
\end{equation*}
holds in $H^*$.
Thus there is an $\A$-homomorphism $\xi_2 : H^2 \to \Z/2$ taking the value 1 on both $w_2$ and $w_1^2$. This element plays a special role. 

\begin{thm}\label{xi2}
$\xi_2$ is invertible in $\Hom_\A^*(WH^*,\Z/2)$, i.e.\ there are elements $\xi_2^{-n}$ satisfying $\xi_2^{m} \cdot \xi_2^{-n} = \xi_2^{m-n}$ for all $n,m \in \N$. The monomials $\xi_2^{n} \xi_I$ for $n \in \Z$ and $\xi_I \in \Z/2[\xi_4,\xi_5,\dots ]$ are linearly independent in $\Hom_\A(WH^*,\Z/2)$. Moreover, the $\Hom_\A^*(H^*,\Z/2)$-module structure of $\Hom_\A^*(WH^*,\Z/2)$ extends to a module structure over $\Hom_\A^*(H^*,\Z/2)[\xi_2^{-1}]$.
\end{thm}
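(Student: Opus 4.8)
The plan is to construct the operator ``multiplication by $\xi_2^{-1}$'' on $\Hom_\A^*(WH^*,\Z/2)$, and then to put $\xi_2^{0}$ equal to the image of the unit $1\in\Hom_\A^0(H^*,\Z/2)$ under the injection $\Hom_\A^*(H^*,\Z/2)\hookrightarrow\Hom_\A^*(WH^*,\Z/2)$ coming from the surjection $WH^*\twoheadrightarrow H^*$ (here $H^*=H^*(MTO)$), and $\xi_2^{-n}:=\xi_2^{-1}\cdot\xi_2^{-(n-1)}$ for $n\ge 1$. The first step is to make the $\xi_2$-action on functionals explicit. Using the comultiplication $\Delta\colon WH^*\to H^*\hat{\otimes}WH^*$ constructed above --- which, like the coproduct \eqref{BOmult}, is linear in Stiefel--Whitney classes --- together with the identity $\Sq^2(1)=\bar w_2=w_2+w_1^2$ and the description of $WH^*$ from Theorem \ref{coholim} in terms of the generators $\tilde w_l$, $l\in\Z$, one computes that $\xi_2\cdot\phi=(\xi_2\otimes\phi)\circ\Delta$ satisfies $(\xi_2\cdot\phi)(\tilde w_l)=\phi(\tilde w_{l-2})$ and, on a general monomial $x\,\tilde w_l$ with $x\in H^{>0}(BO)$, equals $\phi$ precomposed with the degree $-2$ shift $t^{-2}\colon x\,\tilde w_l\mapsto x\,\tilde w_{l-2}$ (cf.\ Definition \ref{tmult}) plus terms of strictly smaller Stiefel--Whitney weight. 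Thus, with respect to the filtration of $WH^*$ by $H^{>0}(BO)$-weight, multiplication by $\xi_2$ is the shift $t^{-2}$ plus a strictly weight-decreasing correction.

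The next step is to invert this. Solving $\xi_2\cdot\psi=\phi$ for $\psi$ is a triangular system: its leading part forces $\psi(\tilde w_l)=\phi(\tilde w_{l+2})$, i.e.\ $\psi=\phi\circ t^{2}$ to leading order, and the correction terms are governed by the identity obtained by iterating Lemma \ref{sqt} (whose proof covers $WH^*$; the odd intermediate $t$-powers cancel mod $2$),
\begin{equation*}
\Sq^k(t^m x)=\sum_{i\ge 0}\binom{m}{i}\,t^{m+i}\,\Sq^{k-i}x,
\end{equation*}
which measures the failure of $\phi\circ t^{2}$ to be $\A$-linear. Because each finite-stage spectrum $\MT(d,r)$ has bounded-below cohomology with only finitely many classes $\tilde w_l$, while $WH^*$ is unbounded below --- this being exactly what distinguishes the non-compact case --- one can solve the system one Stiefel--Whitney weight at a time, compatibly as $r\to\infty$, producing a well-defined functional $\xi_2^{-1}\cdot\phi$ which one then checks to annihilate $\A^+WH^*$ and to satisfy $\xi_2\cdot(\xi_2^{-1}\cdot\phi)=\phi=\xi_2^{-1}\cdot(\xi_2\cdot\phi)$.

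Since $\xi_2\cdot(-)$ and $\xi_2^{-1}\cdot(-)$ are then mutually inverse degree $\pm 2$ operators commuting with the whole $\Hom_\A^*(H^*,\Z/2)$-action, the module structure extends to one over $\Hom_\A^*(H^*,\Z/2)[\xi_2^{-1}]$, and the $\xi_2^{-n}$ satisfy $\xi_2^m\cdot\xi_2^{-n}=\xi_2^{m-n}$ for all $m,n\in\N$. For the linear independence of the monomials $\xi_2^{n}\xi_I$ ($n\in\Z$, $\xi_I\in\Z/2[\xi_4,\xi_5,\dots]$): when $n\ge 0$ this follows because $\Hom_\A^*(H^*,\Z/2)\hookrightarrow\Hom_\A^*(WH^*,\Z/2)$ is injective and $\Hom_\A^*(H^*,\Z/2)=\Z/2[\xi_k,\,k\neq 2^s-1]$ is a free polynomial algebra; in general it follows since the invertible operator $\xi_2^{-1}\cdot(-)$ carries linearly independent sets to linearly independent sets, and a leading-term argument in the top power of $t$ occurring shows that no nonzero $\Z/2$-combination of monomials involving negative powers of $\xi_2$ can lie in the image of $\Hom_\A^*(H^*,\Z/2)$.

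The step I expect to be hardest is the well-definedness and $\A$-linearity of $\xi_2^{-1}\cdot\phi$: the recursive solution of the triangular system must be shown to be consistent (really killing $\A^+WH^*$) and to assemble compatibly over $r$, and it is precisely here that boundedness-below at the finite stages is used against the unboundedness of $WH^*$. Pinning down the exact shape of $\Delta$ and of the $\A$-action \eqref{Awl} on the low-degree classes $\tilde w_l$, $w_1\tilde w_l$ and their relatives --- with $\Sq^2(1)=w_2+w_1^2$ as the seed of the whole computation --- is the computational heart of that step.
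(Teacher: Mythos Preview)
Your approach differs substantially from the paper's. You propose to build an operator $\xi_2^{-1}\cdot(-)$ by inverting a triangular system on $\Z/2$-linear functionals and then verifying $\A$-linearity afterwards. The paper instead defines $\xi_2^{-n}$ directly: an element $p_i\tilde w_{-i-2n}\in WH^{-2n}$ comes from some $p_i w_{ka_r-i-2n}$ at a finite stage $H^*(MT(d+ka_r,r))$, and there one simply evaluates the \emph{positive} power $\xi_2^{(ka_r/2-n)}$. Well-definedness reduces to the observation that the multinomial coefficients $\binom{ka_r/2-n}{a,b}$ appearing in the explicit formula depend only on $ka_r/2-n$ modulo a power of $2$ which eventually divides $a_r/2$. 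The payoff is that $\A$-linearity is free --- each $\xi_2^{(ka_r/2-n)}$ is already an $\A$-homomorphism --- so the step you flag as hardest never arises. The multiplicativity $\xi_2^m\cdot\xi_2^{-n}=\xi_2^{m-n}$ is then checked by a single commutative diagram at finite stages.

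Your route is not hopeless, but as written the $\A$-linearity verification is a genuine gap, and the hints you give do not close it. The triangular recursion is on $H^*(BO)$-weight, which is bounded \emph{below} (at $0$) and unbounded above at every stage; the fact that finite-stage spectra have only finitely many $\tilde w_l$ is irrelevant to this recursion, and iterating Lemma~\ref{sqt} gives you control over $\Sq^k(t^m x)$ but no obvious inductive mechanism for showing your $\psi$ kills $\A^{>0}WH^*$. There is a clean way to finish along your lines: the same triangular structure shows $\xi_2\cdot(-)$ is injective even on \emph{arbitrary} $\Z/2$-linear functionals of a fixed degree; then for your recursively constructed $\psi$ with $\xi_2\cdot\psi=\phi$, the functional $\psi'\colon y\mapsto\psi(\Sq^j y)$ satisfies $\xi_2\cdot\psi'=0$ by the Cartan formula for $\Delta$ together with $\A$-linearity of $\xi_2$ and $\phi$, whence $\psi'=0$. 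If this is what you intended, it should be said --- it is the entire content of the hard step.

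For linear independence, your final sentence is more complicated than needed. Once $\xi_2\cdot(-)$ is invertible, multiply any relation $\sum a_{n,I}\xi_2^{n}\xi_I=0$ by a large power of $\xi_2$ so that every exponent becomes nonnegative, and then invoke the known linear independence in $\Hom_\A^*(H^*,\Z/2)$. This is exactly the paper's argument, and your ``invertible operators preserve linear independence'' remark already contains it.
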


\begin{proof}
We must determine the value of $\xi_2^{-n}(p_i\tilde{w}_{-i-2n})$  for every $p_i \in H^i(BO)$. This comes from some $p_iw_{ka_r - i - 2n} \in H^{ka_r-2n}(MT(d + ka_r,r))$ in the direct system \eqref{navn} and here we have the $\A$-homomorphism
\begin{equation}\label{xi-n}
H^{ka_r-2n}(MT(d + ka_r,r))\to H^{ka_r-2n}(MT(d + ka_r)) \xrightarrow{\xi_2^{(k\frac{a_r}{2}-n)}} \Z/2.
\end{equation}
Define 
\begin{equation*}
\xi_2^{-n}(p_i\tilde{w}_{-i-2n})= \xi_2^{(k\frac{a_r}{2}-n)} (p_iw_{ka_r - i - 2n} ).
\end{equation*}
We must check that this definition does not depend on the choice of $ka_r$ for $r$ large enough.

The $(k\frac{a_r}{2}-n)$-fold direct sum map $ BO\times \dotsm \times BO \to BO$ induces
\begin{equation*}
\Delta^*: H^{ka_r-2n}(MT(d + ka_r))\cong H^{ka_r-2n}(MTO) \to H^2(MTO)^{\otimes (k\frac{a_r}{2}-n)}.
\end{equation*}
To evaluate $\xi_2^{(k\frac{a_r}{2}-n)} (p_iw_{ka_r - i - 2n} )$, we must apply $\xi_2$ to each factor of 
\begin{equation*}
\Delta^*(p_iw_{ka_r - i - 2n} )=\Delta^*(p_i)\Delta^*(w_{ka_r - i - 2n} ).
\end{equation*}
 This yields the formula
\begin{equation}\label{xiformel} 
\xi_2^{(k\frac{a_r}{2}-n)}(p_i\tilde{w}_{ka_r-i-2n}) = \sum_{a+2b=i}\binom {k\frac{a_r}{2} - n}{a,b}\xi_1^a\xi_2^b(p_i).
\end{equation}
Here 
\begin{equation*}
\xi_1^a\xi_2^b : H^{i}(BO) \xrightarrow{\Delta} H^0(BO)^{\otimes (k\frac{a_r}{2} - n-a-b)}\otimes H^1(BO)^{\otimes a} \otimes H^2(BO)^{\otimes b} \to \Z/2
\end{equation*}
is the evaluation in each factor of the $\Z/2$-linear maps $\xi_1: H^1(BO) \to \Z/2$ and $\xi_2 :H^2(BO) \to \Z/2$, given by $\xi_1(w_1)=1$, $\xi_2(w_1^2)=1$, and $\xi_2(w_2)=1$.

The multinomial coefficient in \eqref{xiformel} only depends on $(k\frac{a_r}{2} - n) \bmod 2^N$ where $N$ is the smallest number such that $i<2^N $. But $2^N$ divides $\frac{a_r}{2}$ for all $r$ sufficiently large. Also, $\xi_1^a\xi_2^b(p_i)$ is independent of $ka_r$ when this is larger than $i$, since a larger $ka_r$ only will add more copies of $1\in H^0(BO)$ to the formula for ${\Delta}^*(p_i)$. Hence \eqref{xiformel} does not depend on $ka_r$ for $r$ sufficiently large, and we have a well-defined $\Z/2$-linear map.

We must see that this map is actually an $\A$-homomorphism. Let $x \in WH^{-2n-j}$ be given. We must see that $\xi_2^{-n}(Sq^j(x))=0$. For this, choose $d,r$ and $k$ so large that $x$ comes from some $x'\in H^*(MT(d+ka_r,r))$ and so that we may compute
\begin{equation*}
\xi_2^{-n}(\Sq^j(x))=\xi_2^{(k\frac{a_r}{2}-n)}(\Sq^j(x')).
\end{equation*}
This is zero because $\xi_2^{(k\frac{a_r}{2}-n)}$ is an $\A$-homomorphism.

Now we look at how the $\xi_2^{-n}$ multiply. This is given by the map 
\begin{equation*}
WH^* \to H^* \hat{\otimes} WH^* \to H^{m} \otimes WH^{-2n} \xrightarrow{\xi_2^{m}\otimes \xi_2^{-n}}  \Z/2.
\end{equation*}
For sufficiently large $d',d,r$ and $N$, we have the following diagram
\begin{equation*}
\xymatrix{{H^*(MT(d'+d + 2^N ,r))}\ar[r]\ar[d]&{H^*(MT(d'))\otimes H^*(MT(d + 2^N,r))}\ar[d]\\
{H^*(MT(d' + d + 2^N ))}\ar[r]\ar[dr]_{\xi_2^{(m+2^{N-1}-n)}}&{H^*(MT(d'))\otimes H^*(MT(d + 2^N))}\ar[d]^{\xi_2^m \otimes \xi_2^{(2^{N-1} -n)}}\\
{}&{\Z/2.}
}
\end{equation*}
But $\xi_2^{(2^{N-1}+m-n)}(p_iw_{2^{N} + k}) = \xi_2^{(m-n)}(p_iw_{ k})$. This is by definition when $m-n$ is negative and because \eqref{xiformel} also holds for $m-n$ positive. Thus the result follows by commutativity of the diagram.

Finally, the linear independence follows from this multiplicative structure. Suppose a finite sum of monomials $\sum a_{n,I}\xi_2^{-n}\xi_I=0$ is given. We can multiply this by a large power of $\xi_2$ so that the sum only contains positive  powers of $\xi_2$. Since all such monomials are linearly independent, all $a_{n,I}$ must be 0. 

Let $\xi \in \Hom_\A(H^*,\Z/2)[\xi_2^{-1}]$ and $\eta \in \Hom_\A(WH^*,\Z/2)$. Then for $p_i \tilde{w}_l \in WH^*$ we define 
\begin{equation*}
\xi \cdot \eta(p_i \tilde{w}_l) = \xi_2^{2^{N-1}} \xi \cdot \eta (p_i \tilde{w}_{l+2^N})
\end{equation*}
for $N$ large. As before, one can write down the formulas to see that, for a given $p_i \tilde{w}_l$, it is independent of $N$ for $N$ large and it has the desired properties. 
%Furthermore, 
%\begin{equation*}
%\xi  \eta(\Sq^k(x)) = \xi_2^{2^{N-1}} \xi  \eta (t^{2^N}\Sq^k(x)) = \xi_2^{2^{N-1}} \xi  \eta (\Sq^k(t^{2^N}x))=0
%\end{equation*}
%for $N$ large and multiplication by $t$ as in Definition \ref{tmult}.
\end{proof}

Let $\A(n)$ denote the Hopf subalgebra of $\A$ generated by the elements $\Sq^{2^i}$ for $i\leq n$. This is finite by \cite{milnor}, and $\A $ is free over each $\A(n)$ by general results on Hopf algebras given in \cite{moore}. %In the following, $\A^+$ and $\A(n)^+$ will denote elements of positive degree in the respective algebras. 

We want to generalize the proof that $H^*$ is free over $\A$ to $WH^*$. However, since $WH^*$ is not bounded below, the proof only works over the finite subalgebras $\A(n)$. It turns out that this is enough to compute the $\Ext $-groups we are after.

\begin{lem}\label{free}
$WH^*$ is free over $\A(n)$. %As generators one may take any lift of the map $WH^* \to WH^*/\A_n^+WH^*$.
\end{lem}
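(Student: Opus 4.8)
The strategy is to mimic Stong's classical argument that $H^*(MTO) = H^*$ is a free $\A$-module, but to carry it out over the finite Hopf subalgebra $\A(n)$ and to keep track of the fact that $WH^*$ is not bounded below. The starting point is the description of $WH^*$ from Theorem \ref{coholim} and Lemma \ref{grassmann}: as an $H^*(BO)$-module $WH^* \cong H^*(BO)\otimes \Z/2\{\tilde v_l,\ l\in\Z\}$, with the $\A$-action determined by the Cartan formula together with \eqref{Awl}. First I would fix $n$ and choose $N$ with $2^N$ larger than the top degree of the generator $\Sq^{2^n}$, so that the action of $\A(n)$ on $\tilde v_l$ only "sees" the residue of $l$ modulo $2^N$; concretely the periodicity map $g$ and the James periodicity built into the definition of $\MT(d,r)$ identify the $\A(n)$-submodule generated by $\tilde v_l$ with the one generated by $\tilde v_{l+2^N}$. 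Hence $WH^*$, as an $\A(n)$-module, is a (possibly infinite) direct sum, over a set of coset representatives $l$, of shifted copies of a single bounded-below $\A(n)$-module $M_l = \A(n)\text{-span of } H^*(BO)\cdot\tilde v_l$; so it suffices to show each $M_l$ is $\A(n)$-free.

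Next I would identify $M_l$ with a familiar bounded-below module. Using the $\Z/2[t,t^{-1}]$-action of Definition \ref{tmult} and the map $\widehat{f^*\varphi}^*$, the submodule $M_l$ for $l\gg 0$ is, up to suspension, a quotient or submodule of $H^*(MTO)=H^*$ displayed with its $\A(n)$-module structure; more precisely, stabilizing in the direct system \eqref{navn}, $M_l$ is the $\A(n)$-module underlying $H^*(\Sigma^{-ka_r}MTO(d+ka_r,r))$ for suitable $k,r$, which sits inside $H^*$ as the $H^*(BO)$-ideal generated by $w_{d-r+1},\dots,w_d$ shifted by the Thom class. Now one invokes the classical input: $H^* = H^*(MTO)$ is $\A$-free (\cite{stong}), hence $\A(n)$-free since $\A$ is $\A(n)$-free (\cite{moore}), and the relevant ideal-plus-shift is again $\A(n)$-free because it is a direct summand (split by the $H^*(BO)$-module structure, compatibly with $\A(n)$ in the stable range — this is exactly what the periodicity isomorphism \eqref{percoho} buys us). I would phrase this as: $WH^*$ is, as an $\A(n)$-module, an (infinite) direct sum of suspensions of $\A(n)$-free modules, and an arbitrary direct sum of free modules is free.

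I expect the main obstacle to be the unboundedness below. The honest worry is whether "an infinite direct sum of $\A(n)$-free modules" really reassembles $WH^*$ correctly, rather than some completion of it — i.e. whether the coset decomposition over $l\in\Z/2^N$ is a genuine internal direct sum in $WH^*$ and whether the $\A(n)$-action respects it on the nose, not merely in a range. Resolving this is precisely where one must use that $\A(n)$ is \emph{finite}: for a fixed element $a\in\A(n)$ of degree $\le 2^{n+1}$, $a\cdot(x\tilde v_l)$ lands in a bounded band of degrees, so the formula \eqref{Awl} has only finitely many terms and involves only $\tilde v_{l'}$ with $l'\equiv l\bmod 2^N$ once $2^N$ exceeds $\deg a$; this both makes the decomposition $\A(n)$-stable and rules out any completion phenomenon. (The same statement is false for the full algebra $\A$, which is why the lemma is stated only for $\A(n)$ and why the authors warn that the unbounded-below situation breaks Stong's argument over $\A$ itself.) Once this bookkeeping is in place, freeness is immediate from the classical case.
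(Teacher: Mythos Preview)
Your proposed direct sum decomposition of $WH^*$ does not exist. The key claim in your final paragraph---that for $a\in\A(n)$ the element $a\cdot(x\tilde v_l)$ involves only $\tilde v_{l'}$ with $l'\equiv l\bmod 2^N$---is simply false. From \eqref{Awl} one computes $\Sq^1(\tilde w_l)=(1-l)\tilde w_{l+1}$ (the $\tilde w_l$-coefficient $\bar w_1+w_1$ vanishes), so already $\Sq^1(\tilde w_0)=\tilde w_1$, and $1\not\equiv 0\bmod 2^N$ for any $N\ge 1$. More generally $\Sq^k(\tilde w_l)$ contributes to $\tilde w_{l+i}$ for all $0\le i\le k$, so no partition of the index set $\{l\}$ into residue classes, or into blocks $[k\cdot 2^N,(k+1)2^N)$, is preserved by the $\A(n)$-action. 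What the periodicity actually gives you is that multiplication by $t^{2^N}$ is an $\A(n)$-linear automorphism of $WH^*$; but a periodic module is not the same as a direct sum of shifted copies of one piece (think of $\Z/2[t,t^{-1}]$ with $\Sq^1 t^l=l\,t^{l+1}$). So step~1 of your plan collapses, and with it the reduction to a bounded-below summand of $H^*$.

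There is a further issue even granting the decomposition: your ``$M_l$ is an $\A(n)$-summand of $H^*$ by the $H^*(BO)$-module splitting'' is not justified. The ideal $(w_{d-r+1},\dots,w_d)\subset H^*(MTO(d))$ is an $H^*(BO)$-module summand, but the $H^*(BO)$-splitting has no reason to be $\A(n)$-linear. You would need a separate argument here, and this is in fact close to the original difficulty.

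The paper's proof uses a genuinely different mechanism that you do not mention: the comultiplication $\Delta\colon WH^*\to H^*\hat\otimes WH^*$ induced by the direct sum map $BO\times BO\to BO$. This is the heart of Stong's argument. Since $H^*$ is $\A$-free, hence $\A(n)$-free, one may choose an $\A(n)$-linear retraction $H^*\to\A(n)$; composing gives, for each degree $t$, an $\A(n)$-map $WH^*\to\A(n)\otimes M^t_n$ onto a free module on the degree-$t$ indecomposables. One then assembles these into $WH^*\to\bigoplus_t \A(n)\otimes M^t_n$; finiteness of $\A(n)$ is used twice---first to see that this map is surjective in each degree (only finitely many $t$ contribute), and second to kill the kernel $K$ via $K=(\A(n)^{>0})^lK$ for all $l$. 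This is where bounded-belowness is replaced by finiteness of $\A(n)$, not by a periodicity splitting.
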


\begin{proof}
Let $D^t_n = \A(n)WH^{<t} + WH^{>t}$. This is a submodule over $\A(n)$. Therefore, $M^t_n = WH^* / D^t_n$ is again an $A(n)$-module with trivial action, and the projection $WH^*\to M^t_n$ is $\A(n)$-linear. Since $\A$ is free over $\A(n)$, $H^*$ is also free over $\A(n)$, so we may choose an $\A(n)$-linear projection $H^* \to \A(n)$. Thus there is a well-defined $\A(n)$-homomorphism 
\begin{equation}\label{An}
WH^* \to H^* \hat{\otimes} WH^* \to \A(n){\otimes} M^t_n.
\end{equation} 
This is clearly surjective, since a lift $M^t_n \to WH^t$ of the projection defines an $\A(n)$-linear map $i_t :\A(n) \otimes  M^t_n  \to WH^*$ whose composition with \eqref{An} is clearly the identity. Then also 
\begin{equation}\label{sumM}
WH^* \to \bigoplus_{t=N}^\infty \A(n) \otimes M^t_n 
\end{equation}
must be surjective. This can be seen using the splitting $i_t$ since the $\A(n) \otimes M^{N}_n $ summand is hit by $i_{N} (\A(n) \otimes M^{N}_n)$. Then $i_{N+1}(\A(n)  \otimes M^{N+1}_n)$ hits the $\A(n) \otimes M^{N+1}_n $ summand mod $\A(n) \otimes M^{N}_n $. Continuing this way, we see that it is surjective in each dimension. %Since $\A(n)$ is finite, the right hand side of \eqref{sumM} is a finite sum in each dimension.
 Letting $N \to -\infty$, finiteness of $\A(n)$ implies that
\begin{equation*}
WH^* \to \bigoplus_{t=-\infty}^\infty \A(n)\otimes  M^t_n
\end{equation*}
is a surjection.
Let $K$ denote the kernel. Then $WH^*$ splits as a sum  
\begin{equation*}
K \oplus \bigoplus_{t=\infty}^\infty i_t(\A(n) \otimes M^t_n )
\end{equation*}
 of $\A(n)$-modules. 
Every element in $K$ must be decomposable, otherwise they would be non-zero in some $M^t_n$. Because of the splitting, we must have $\A(n)^{>0}K=K$. Iterating this, we get $(\A(n)^{>0})^lK=K$ for all $l$. But $\A(n)$ is finite, so $K$ must be zero.
\end{proof}

Define $M^* =\varinjlim_n M_n^* = WH^*/\A^{>0}WH^*$.  In the following, the $\Z/2$-dual of a $\Z/2$ vector space $V$ will be denoted by $V^\vee$.
 
\begin{thm}
\begin{equation*}
\Ext_\A^{s,t} (WH^*,\Z/2)=\begin{cases}
(M^t)^\vee & \textrm{ for } s=0,\\
0&\textrm{ for } s>0.
\end{cases}
\end{equation*}
% $\Ext_\A^{0,t} (WH^*,\Z/2)=(M^t)^\vee $, which is infinite by Theorem \ref{xi2}, and $\Ext_\A^{s,t} (WH^*,\Z/2) =0$ for $s>0$.
\end{thm}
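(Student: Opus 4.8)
The plan is to deduce the theorem directly from Lemma~\ref{free} together with the computation of $\Ext$ over a polynomial (or exterior) Hopf algebra, passing to the limit over the subalgebras $\A(n)$. The essential observation is that $\Ext_\A^{s,t}(WH^*,\Z/2)$ can be computed through a change-of-rings argument using the fact that $\A$ is free as a right $\A(n)$-module for every $n$, and that every element of $\A^{>0}$ lies in some $\A(n)$, so that $\A = \varinjlim_n \A(n)$ in the relevant sense.

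First I would fix $n$ and use Lemma~\ref{free}: since $WH^*$ is free over $\A(n)$, we have $\Ext_{\A(n)}^{s,t}(WH^*,\Z/2)=0$ for $s>0$ and $\Ext_{\A(n)}^{0,t}(WH^*,\Z/2)=\Hom_{\A(n)}(WH^*,\Z/2)$. By the standard identification of $\A(n)$-module maps out of a free module, this $\Hom$ group is $(M_n^t)^\vee$, where $M_n^* = WH^*/\A(n)^{>0}WH^*$, because an $\A(n)$-linear map to the trivial module $\Z/2$ must kill $\A(n)^{>0}WH^*$ and is then determined by its values on a basis of $M_n^*$. Next I would invoke the change-of-rings spectral sequence (or, more directly, the fact that for a module which is free over $\A(n)$ the extension groups over $\A$ are computed by $\Ext_{\A/\!/\A(n)}$; here one should be a little careful and instead pass to the limit). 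The cleanest route is: $\Ext_\A^{s,t}(WH^*,\Z/2) = \varprojlim_n$ or $\varinjlim_n$ of $\Ext_{\A(n)}^{s,t}(WH^*,\Z/2)$ in an appropriate sense, but since each of these already vanishes for $s>0$, we conclude $\Ext_\A^{s,t}(WH^*,\Z/2)=0$ for $s>0$. For $s=0$ one has $\Hom_\A(WH^*,\Z/2) = \varprojlim_n \Hom_{\A(n)}(WH^*,\Z/2) = \varprojlim_n (M_n^t)^\vee$, and since $M^* = \varinjlim_n M_n^*$ by definition, dualizing turns the colimit into a limit and gives $(M^t)^\vee$.

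The main obstacle I anticipate is making the limiting argument over the $\A(n)$ rigorous in the unbounded-below setting: $WH^*$ is not bounded below, so one cannot naively apply the usual minimal-resolution arguments, and one has to be careful that $\Hom$ and $\Ext$ commute with the relevant (co)limits. The key points to check are that (i) an $\A$-linear map $WH^* \to \Z/2$ is the same as a compatible family of $\A(n)$-linear maps, which holds since every element of $\A^{>0}$ is in some $\A(n)$; (ii) the colimit $M^* = \varinjlim_n M_n^*$ is the quotient $WH^*/\A^{>0}WH^*$, which is immediate since $\A^{>0} = \bigcup_n \A(n)^{>0}$; and (iii) the vanishing $\Ext_{\A(n)}^{s,t}(WH^*,\Z/2)=0$ for $s>0$, valid for each $n$, already forces the corresponding $\Ext_\A$-group to vanish, because any $\A$-free resolution of $\Z/2$ restricts to an $\A(n)$-free resolution (as $\A$ is $\A(n)$-free), so $\Ext_\A^{s,t}(WH^*,\Z/2)$ is a subquotient-type limit of the $\Ext_{\A(n)}$ groups and thus zero in positive degrees. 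I would also need to confirm finiteness in each bidegree so that the dualization steps behave well; this follows from $WH(X)(d)^*$ and hence $WH^*$ being finite-dimensional in each degree, as established in Theorem~\ref{coholim}.

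In summary, the proof reduces to: (1) restrict to $\A(n)$ and apply Lemma~\ref{free} to get freeness, hence vanishing of higher $\Ext$ over $\A(n)$ and identification of $\Hom_{\A(n)}(WH^*,\Z/2)$ with $(M_n^t)^\vee$; (2) use that $\A$ is free over each $\A(n)$ and $\A^{>0}=\bigcup_n\A(n)^{>0}$ to propagate the vanishing to $\Ext_\A$ in positive homological degree; (3) identify $\Hom_\A(WH^*,\Z/2)$ with $\varprojlim_n(M_n^t)^\vee = (M^t)^\vee$ using $M^* = \varinjlim_n M_n^*$. Step (2) is the delicate one and is where the care about unboundedness must be spent.
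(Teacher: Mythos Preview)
Your overall strategy---use Lemma~\ref{free} to get freeness over each $\A(n)$, then pass to the limit over $n$---is exactly the paper's approach. The difference is in how the limit step is executed. The paper works with $\Tor$ rather than $\Ext$: freeness over $\A(n)$ gives $\Tor_{s,t}^{\A(n)}(\Z/2^\vee,WH^*)=0$ for $s>0$ and $M_n^t$ for $s=0$; then the standard isomorphism $\varinjlim_n\Tor^{\A(n)}_{s,t}(\Z/2^\vee,WH^*)\cong\Tor^\A_{s,t}(\Z/2^\vee,WH^*)$ (cited from~\cite{adams}) yields the $\Tor$-groups over $\A$; finally the duality $\Ext^{s,t}_\A(WH^*,\Z/2)\cong\Hom(\Tor^\A_{s,t}(\Z/2^\vee,WH^*),\Z/2)$ (also from~\cite{adams}) converts this to the desired $\Ext$-statement. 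This route makes your step~(2) routine: $\Tor$ commutes with the filtered colimit of subalgebras, and the single dualization at the end is harmless.

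Your own justification for step~(2) is where the proposal wobbles. The sentence ``any $\A$-free resolution of $\Z/2$ restricts to an $\A(n)$-free resolution, so $\Ext_\A^{s,t}(WH^*,\Z/2)$ is a subquotient-type limit of the $\Ext_{\A(n)}$ groups'' does not hold together: a projective resolution of $\Z/2$ computes $\Tor^\A_*(\Z/2,WH^*)$ or $\Ext^*_\A(\Z/2,-)$, not $\Ext^*_\A(WH^*,\Z/2)$, and applying $\Hom_\A(-,\Z/2)$ versus $\Hom_{\A(n)}(-,\Z/2)$ to a resolution of $WH^*$ gives no direct comparison of the resulting cohomologies. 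The $\Tor$-then-dualize manoeuvre is precisely what sidesteps this, and it also absorbs the unboundedness concern you correctly flag, since the resolution in play is of $\Z/2$ (bounded) rather than of $WH^*$. Your identifications for $s=0$ are fine and agree with the paper.
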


\begin{proof}
It follows from Lemma \ref{free} that 
\begin{equation*}                   
\Tor_{s,t}^{\A(n)}(\Z/2^\vee,WH^*)=
\begin{cases}
WH^t/\A(n)^{>0}WH^* &\textrm{ for }s=0,\\
0&\textrm{ for } s>0.
\end{cases}
\end{equation*}
There is an isomorphism 
\begin{equation*}
\varinjlim_n \Tor_{s,t}^{\A(n)}(\Z/2^\vee,WH^*) \to \Tor_{s,t}^{\A}(\Z/2^\vee,WH^*),
\end{equation*}
see e.g.\ \cite{adams}.
This allows us to calculate 
\begin{equation*}
\Tor_{0,t}^{\A}(\Z/2^\vee,WH^*)\cong \varinjlim_n WH^t/\A(n)^{>0}WH^* \cong WH^t/\A^{>0} WH^* = M^t.
\end{equation*}
But again by \cite{adams}, Lemma 4.3,  
\begin{equation*}
\Ext^{s,t}_\A(WH^*,\Z/2) \cong \Hom(\Tor_{s,t}^{\A}(\Z/2^\vee,WH^*),\Z/2),
\end{equation*}
and the claim follows.
\end{proof}

\begin{cor}
For all $t < d$, $\pi_t(\widehat{MT}(d)_2^\wedge) \cong (M^t)^\vee$. 
\end{cor}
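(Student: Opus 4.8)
The plan is to combine the previous theorem on $\Ext_\A^{s,t}(WH^*,\Z/2)$ with the inverse Adams spectral sequence of Theorem~\ref{invadams}, exactly as in the proof of the corresponding corollary in the compact case. First I would recall that for $X = BO$, Theorem~\ref{invadams} applies to the inverse system $Y_r = \MTO(d,r)$: each $Y_r$ has finite $\Z/2$ cohomology in each dimension and $\pi_*(Y_r)$ is bounded below, so there is a spectral sequence with $E_2^{s,t} = \Ext_\A^{s,t}(\varinjlim_r H^*(\MTO(d,r)),\Z/2) = \Ext_\A^{s,t}(WH(d)^*,\Z/2)$ converging strongly to $\pi_{t-s}(\widehat{MTO}(d)_2^\wedge)$.

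Next I would pass from $WH(d)^*$ to $WH^*$ in the range $t-s < d$. From the short exact sequence $0 \to K \to WH^* \to WH(d)^* \to 0$, where $K$ is concentrated in degrees $\geq d$ (recall $WH(d)^* \cong H^*\otimes\Z/2\{\tilde w_l, l\leq d\}$ while $WH^* \cong H^*\otimes\Z/2\{\tilde w_l, l\in\Z\}$ by Theorem~\ref{coholim}), the long exact sequence of $\Ext$-groups gives that $\Ext_\A^{s,t}(WH(d)^*,\Z/2) \to \Ext_\A^{s,t}(WH^*,\Z/2)$ is an isomorphism for $t-s < d$. Then I invoke the preceding theorem, which computes $\Ext_\A^{s,t}(WH^*,\Z/2)$ to be $(M^t)^\vee$ for $s=0$ and $0$ for $s>0$. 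Hence in the range $t-s<d$ the spectral sequence $E_2$-page is concentrated entirely in the line $s=0$, where it equals $(M^t)^\vee$.

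Finally, since the spectral sequence is concentrated on the $s=0$ line in the relevant range, there are no non-trivial differentials into or out of this range (differentials $d_k$ change $s$ by $k\geq 2$), so $E_2 = E_\infty$ there, and there are no extension problems since each $\pi_t$ is assembled from a single $E_\infty$ term. Strong convergence then gives $\pi_t(\widehat{MTO}(d)_2^\wedge) \cong E_\infty^{0,t} \cong (M^t)^\vee$ for $t < d$. (Taking $s=0$ forces $t = t-s < d$.) I do not expect any serious obstacle here: the only point requiring a little care is verifying that the hypotheses of Theorem~\ref{invadams} genuinely hold for $Y_r = \MTO(d,r)$ — in particular that $\pi_*(\MTO(d,r))$ is bounded below, which follows since $\MTO(d,r)$ is a finite desuspension of a suspension spectrum of a Thom space — and confirming that the degree-shift bookkeeping in the $\Ext$ comparison is consistent, i.e.\ that the cells of $K$ really do lie in degrees $\geq d$ so that the comparison isomorphism holds precisely for $t-s<d$.
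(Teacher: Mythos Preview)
Your proof is correct and follows essentially the same approach as the paper's own argument: apply Theorem~\ref{invadams} to the inverse system $\MTO(d,r)$, use the short exact sequence $0\to K\to WH^*\to WH(d)^*\to 0$ to identify $\Ext_\A^{s,t}(WH(d)^*,\Z/2)\cong\Ext_\A^{s,t}(WH^*,\Z/2)$ for $t-s<d$, invoke the preceding theorem to see the $E_2$-page is concentrated on the line $s=0$, and conclude. The paper's proof is just a two-sentence version of exactly this; your added care about the hypotheses of Theorem~\ref{invadams}, the degree bookkeeping for $K$ (which in fact lies in degrees $\geq d+1$, slightly better than the $\geq d$ you state), and the absence of extension problems is all accurate and welcome.
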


\begin{proof}
We have that $\Ext^{s,t}_\A(WH(d)^*,\Z/2) \cong \Ext^{s,t}_\A(WH^*,\Z/2)$ whenever $t-s < d$, so in these dimensions the spectral sequence of Theorem \ref{invadams} is concentrated on the line $s=0$. Hence there can be no non-trivial differentials in this area. 
\end{proof}

\subsection{More on the structure of $WH^*$}\label{dual}
We saw in Section \ref{nonres} that $WH^*$ has the $\Ext $-groups of a free $\A$-module. To see that it is in fact a free module and find an explicit description of $M^t$, we need a more convenient basis for $H^*$. 

Let $v_{i}\in H_{i}(BO)$ be the image of the generator in $H_{i}(BO(1))\cong \Z/2$. The direct sum map $BO \times BO \to BO$ induces a multiplication in $H_*(BO)$ dual to the comultiplication $\Delta^*$ in $H^*(BO)$, and the dual of the cup product in $H^*(BO)$ defines a comultiplication $\Delta_*$ on $H_*(BO)$.

\begin{prop} 
$H_*(BO)$ is a polynomial algebra on the generators
$v_{i}$. The comultiplication is given by 
$\Delta_*(v_{i})=\sum_{j}v_{j}\otimes v_{i-j}$.
\end{prop}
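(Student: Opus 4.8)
The plan is to identify $H_*(BO)$ with a known Hopf algebra via the standard machinery of the cellular structure of $\R P^\infty$ and the direct sum map. First I would recall that $BO(1) = \R P^\infty$ has one cell in each dimension, so $H_*(BO(1);\Z/2)$ has a basis $\{1, v_1, v_2, \dots\}$ with $v_i$ the image of the generator of $H_i(\R P^\infty)$, and the inclusion $BO(1) \to BO$ lets us regard these $v_i$ as classes in $H_*(BO)$. The comultiplication $\Delta_*$ on $H_*(BO)$ is dual to the cup product on $H^*(BO) = \Z/2[w_1, w_2, \dots]$; restricting to $BO(1)$, the cup product on $H^*(\R P^\infty) = \Z/2[t]/(t^2)$ --- wait, rather on $H^*(\R P^\infty) = \Z/2[t]$ --- has $t^k \cdot t^l = t^{k+l}$, so dually $\Delta_*(v_i) = \sum_j v_j \otimes v_{i-j}$ on $H_*(\R P^\infty)$, and this formula persists in $H_*(BO)$ because $\Delta_*$ is induced by the diagonal, which commutes with the inclusion $BO(1) \to BO$.

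Next I would establish that the $v_i$ are polynomial generators of $H_*(BO)$. The cleanest route is to dualize the splitting principle / the known structure of $H^*(BO)$: $H_*(BO)$ is the graded dual of the polynomial Hopf algebra $\Z/2[w_1, w_2, \dots]$ (with $\Delta^*(w_k) = \sum w_j \otimes w_{k-j}$), and it is a classical fact (e.g.\ from Milnor--Moore, or by the argument in Stong's book cited in the excerpt) that this dual is itself polynomial. Concretely, one checks that the monomials $v_{i_1} \cdots v_{i_k}$ (as the $\{i_j\}$ range over finite multisets) are linearly independent and span: linear independence follows because their images under iterated comultiplication detect them, and the count of such monomials in each degree matches $\dim_{\Z/2} H_n(BO)$, which equals the number of partitions of $n$ (the same as the number of monomials in the $w_i$'s). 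Alternatively, one can pair the $v$-monomials against the $w$-monomials and show the pairing matrix is (block) triangular with $1$'s on the diagonal in a suitable ordering, which simultaneously gives independence and spanning.

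The one genuinely delicate point is the multiplicative statement: that $H_*(BO)$ is the polynomial algebra \emph{on precisely the $v_i$} (and not merely polynomial on some other generators). For this I would use the fact that the multiplication on $H_*(BO)$ dual to $\Delta^*$ is induced by the direct sum map $BO \times BO \to BO$, together with the observation that $v_i \in H_i(BO(1)) \subset H_i(BO)$ generates the indecomposables: the map $H_*(BO(1)) \to \mathrm{PH}_*(BO)$ onto the primitives --- no, more precisely, one shows the $v_i$ map onto a basis of the module of indecomposables $H_*(BO)/(H_+^{}(BO))^2$ in each degree by dualizing the statement that $w_i$ generate $H^*(BO)$ as an algebra and that the primitives in $H^*(BO)$ are spanned by the $\mathrm{Sq}$-images appropriately. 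Since $H_*(BO)$ is graded-connected and commutative over $\Z/2$ with indecomposables spanned by $\{v_i\}$, and since we have already matched dimensions with the free polynomial algebra on the $v_i$, the natural surjection $\Z/2[v_1, v_2, \dots] \to H_*(BO)$ is an isomorphism. I expect the main obstacle to be precisely bookkeeping this indecomposables argument cleanly --- i.e.\ verifying that no hidden relations among the $v_i$ appear --- but the dimension count against partitions of $n$ makes this a finite check in each degree rather than a conceptual difficulty.
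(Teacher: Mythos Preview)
Your outline is essentially the standard argument and is correct in substance, though the paper itself gives no proof at all: it simply cites Switzer, Chapter~16, as a reference for this well-known fact. So there is nothing to compare against --- the authors treat this proposition as background.

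One small remark on your sketch: the self-corrections (``wait, rather\ldots'', ``no, more precisely\ldots'') suggest you should tighten the argument before writing it up. The cleanest version is exactly what you converge on: (i) identify $v_i$ via $H_*(\R P^\infty) \to H_*(BO)$; (ii) read off $\Delta_*(v_i)$ by dualizing the cup product on $H^*(\R P^\infty) = \Z/2[t]$; (iii) observe that $\Z/2[v_1,v_2,\dots]$ and $H_*(BO)$ have the same dimension (partitions of $n$) in each degree, so the obvious algebra map $\Z/2[v_1,v_2,\dots] \to H_*(BO)$ is an isomorphism once you know it is surjective, which in turn follows from the fact that $H_*(BO)$ is generated as an algebra by the image of $H_*(BO(1))$ (dual to the splitting principle for $H^*(BO)$). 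The triangular-pairing argument you mention is also standard and works fine.
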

See e.g.\ \cite{switzer}, Chapter 16.
We will need the following relations to the Stiefel-Whitney classes:
\begin{lem}
\label{le:formulas}
\InsertTheoremBreak
\begin{enumerate}
\item [(i)]
\[
w_{i}(v_{i})=
\begin{cases}
1 &\text{ if $i=1$,}\\
0 &\text{ otherwise.}
\end{cases}
\]
\item[(ii)] 
If $j\geq 2$ and $x\in H_{*}(BO)$, then $w_{n}(v_{j}x)=0$.
\item[(iii)] If $x\in H_{*}(BO)$, $a=\prod_{1\leq k\leq n}w_{i_{k}}$, and
$j \geq n$, then $a(v_{j}x)=0$.
\item[(iv)] Assume that $2^{N}>i_{j}$ for all $j>1$. Then
\[
w_{i_{1}+2^{N}}w_{i_{2}}\dotsm w_{i_{n}}(v_{1}^{2^{N}}x)=
w_{i_{1}}w_{i_{2}}\dotsm w_{i_{n}}(x)
\]
and
\[
w_{i_{1}+k2^{N}}w_{i_{2}}\dots w_{i_{n}}(v_{k}^{2^{N}}x)=0.
\]
\end{enumerate}
\end{lem}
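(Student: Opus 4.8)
The four formulas are all statements about the pairing between the polynomial generators $v_i \in H_*(BO)$ and monomials in the Stiefel–Whitney classes $w_i \in H^*(BO)$. The basic tool is the duality: $H^*(BO) = \Z/2[w_1, w_2, \dots]$ and $H_*(BO) = \Z/2[v_1, v_2, \dots]$ are dual Hopf algebras, where the product on $H_*(BO)$ is dual to the coproduct $\Delta^*(w_k) = \sum_j w_j \otimes w_{k-j}$ on $H^*(BO)$, and the coproduct $\Delta_*(v_i) = \sum_j v_j \otimes v_{i-j}$ is dual to the cup product. The key bookkeeping device is that for a product $w_{i_1} \cdots w_{i_n} \in H^*(BO)$ and an element $y \in H_*(BO)$, the pairing $(w_{i_1}\cdots w_{i_n})(y)$ is computed by iterating the coproduct: $(w_{i_1}\cdots w_{i_n})(y) = (w_{i_1} \otimes \cdots \otimes w_{i_n})(\Delta_*^{(n-1)} y)$, where $\Delta_*^{(n-1)}$ is the $(n-1)$-fold iterated coproduct on $H_*(BO)$. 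Since $\Delta_*$ is multiplicative and $\Delta_*(v_j) = \sum_k v_k \otimes v_{j-k}$ with $v_0 = 1$, this iterated coproduct is easy to write down on monomials in the $v_i$.

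For (i), $\Delta_*$ plays no role: one just needs that $w_i$ pairs to $1$ with $v_1^i$ is false — rather, one checks directly that $w_i(v_i) = \langle w_i, v_i\rangle$; since $v_i$ is the image of the generator of $H_i(BO(1))$ and $w_i$ restricts to $t^i$ on $BO(1) = \R P^\infty$ only when... actually the cleanest route is: the total Stiefel–Whitney class pairs with the $v_i$ via the known formula that $H^*(BO) \to H^*(BO(1))$ sends $w_1 \mapsto t$ and $w_i \mapsto 0$ for $i > 1$, so $w_i(v_i) = 0$ for $i \geq 2$ and $w_1(v_1) = 1$. For (ii) and (iii), the point is monomial-degree counting: when I expand $(w_{i_1}\cdots w_{i_n})(v_j x) = (w_{i_1} \otimes \cdots \otimes w_{i_n})(\Delta_*^{(n-1)}(v_j)\cdot \Delta_*^{(n-1)}(x))$, every term in $\Delta_*^{(n-1)}(v_j)$ is of the form $v_{j_1} \otimes \cdots \otimes v_{j_n}$ with $\sum j_\ell = j$; combined with the $x$-contribution, each tensor factor receives a product of $v$'s, and by (i) a $w_{i_\ell}$ can pair nontrivially with such a factor only if that factor is exactly $v_1$ (up to the constant $v_0 = 1$ absorbing lower pieces) — more precisely, $w_m$ pairs nontrivially with a monomial in the $v_i$ only if that monomial is $v_1^m$... no: $w_m(v_1^m) = 1$ is the relevant fact, and $w_m$ kills any monomial involving some $v_k$ with $k \geq 2$ by (ii) applied inductively. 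So for (ii): in $v_j x$ with $j \geq 2$, after applying the coproduct the generator $v_j$ must be split among the factors, and since $j \geq 2$ and there is essentially one $w_n$ "receiving" things, the contribution vanishes by degree reasons — I would make this precise by noting $n$ is a single index here, so $w_n(v_j x) = \langle w_n, v_j x\rangle = 0$ because $w_n$ is primitive-ish and pairs to zero with anything decomposable that isn't $v_1^n$, and $v_j x$ with $j \geq 2$ is never of that form. For (iii), with $a = \prod_{k=1}^n w_{i_k}$ and $j \geq n$: expanding via the $n$-fold coproduct, each $w_{i_k}$ must pair nontrivially with its tensor factor, which forces (by (i)/(ii)) each factor to contain a $v_1$, i.e. at least $n$ copies of $v_1$ distributed among the factors must come from somewhere; but $v_j$ only splits into one $v$ per factor summing to $j$, and paired with the requirement that every factor already needs its own $v_1$ from $x$ or from $v_j$, a counting argument (we need $n$ generators $v_1$ but $v_j$ contributes degree $j$ split as at most... ) forces the term to vanish — again the honest statement is that each $w_{i_k}$ demands its slot be $v_1^{i_k}$, so altogether $x$ and $v_j$ together must provide $v_1^{i_1} \otimes \cdots \otimes v_1^{i_n}$, which has $n$ tensor factors each a positive power of $v_1$, whereas $v_j$ contributes to at most... the slick version: $a(v_j x) = (w_{i_1} \otimes \dots \otimes w_{i_n})(\Delta^{(n-1)}_*(v_j)\Delta^{(n-1)}_*(x))$ and $\Delta^{(n-1)}_*(v_j) = \sum v_{j_1}\otimes\cdots\otimes v_{j_n}$; for the pairing to be nonzero we'd need each slot to be a power of $v_1$ only, hence each $j_\ell \in \{0,1\}$, so $j = \sum j_\ell \leq n$, contradicting $j \geq n$ unless $j = n$ and all $j_\ell = 1$, in which case $x$ must contribute $1$ in every slot, impossible since $\Delta^{(n-1)}_*(x)$ with $x \neq 1$ has positive degree somewhere — and if $x = 1$ then $j \geq n$ with $j = n$ is the boundary which I'd check is also killed, so actually the clean hypothesis $j \geq n$ handles it... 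I would present (iii) as a strict inequality argument with a separate quick check of the equality case. Finally (iv) is the James-periodicity–type relation: $\Delta_*(v_1^{2^N}) = (v_1 \otimes 1 + 1 \otimes v_1)^{2^N} = v_1^{2^N} \otimes 1 + 1 \otimes v_1^{2^N}$ since we are in characteristic $2$ and $2^N$ is a $2$-power (freshman's dream), and more generally $\Delta_*^{(n-1)}(v_1^{2^N}) = \sum_{\text{partitions}} (\text{multinomial})\, v_1^{?}\otimes\cdots$, but mod $2$ the multinomial coefficient $\binom{2^N}{\cdot}$ is nonzero only for the "all in one slot" distributions; so $\Delta_*^{(n-1)}(v_1^{2^N} x)$ only gets a $v_1^{2^N}$ added entirely to a single slot. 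Combined with the hypothesis $2^N > i_j$ for $j > 1$, the only slot that can absorb $v_1^{2^N}$ and still pair nontrivially is the first one (the $w_{i_1 + 2^N}$ slot, whose degree is large enough), giving $w_{i_1 + 2^N} w_{i_2}\cdots w_{i_n}(v_1^{2^N} x) = w_{i_1}w_{i_2}\cdots w_{i_n}(x)$; and for $v_k^{2^N}$ with $k \geq 2$, part (iii) or (ii) kills it since $v_k^{2^N}$ involves $v_k$ with $k \geq 2$.

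The plan of attack is therefore: (1) record the dual-Hopf-algebra setup and the iterated-coproduct formula for pairings; (2) prove (i) from the restriction $H^*(BO) \to H^*(BO(1))$; (3) deduce (ii) as the $n=1$ degenerate version of the monomial-support analysis; (4) prove (iii) by the strict-inequality counting argument above, treating the equality case $j = n$ separately; (5) prove (iv) using the freshman's dream $\Delta_*(v_1^{2^N}) = v_1^{2^N}\otimes 1 + 1 \otimes v_1^{2^N}$ together with the degree hypothesis to pin down which slot absorbs the extra power, and invoke (ii)/(iii) for the $v_k^{2^N}$ statement. The main obstacle I anticipate is being careful in (iii) and (iv) about exactly which iterated-coproduct terms survive mod $2$ and survive the pairing: the argument is a clean counting argument but writing it without errors requires fixing good notation for $\Delta_*^{(n-1)}$ on products of $v_i$'s and keeping track of the constraint "each $w_{i_k}$ forces its slot to be $v_1^{i_k}$" (which itself leans on (i) plus (ii)) simultaneously with the partition-of-$j$ constraint. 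Everything else is routine bookkeeping in $\Z/2$.
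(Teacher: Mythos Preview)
Your approach is essentially the same as the paper's: restrict to $BO(1)$ for (i), use the Hopf-algebra duality for (ii)--(iv), and handle (iv) via the Frobenius identity $\Delta_*(v_k^{2^N})=\Delta_*(v_k)^{2^N}$. Two remarks.

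First, for (ii) the paper's argument is slicker than your coproduct-on-$H_*$ route: since the Pontryagin product is dual to $\Delta^*$, one has directly
\[
w_n(v_j x)=\Delta^*(w_n)(v_j\otimes x)=\sum_k w_k(v_j)\,w_{n-k}(x)=w_j(v_j)\,w_{n-j}(x)=0
\]
by (i), with no need to analyze which monomials $w_n$ can pair against.

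Second, your instinct about the boundary case in (iii) is correct, but your expectation is wrong: the case $j=n$ is \emph{false}. For instance with $n=2$, $a=w_1^2$, $j=2$, $x=1$ one computes $w_1^2(v_2)=(w_1\otimes w_1)(\Delta_* v_2)=w_1(v_1)^2=1$. The paper's own proof in fact only treats $j>n$ (the sentence reads ``But $j>n$ implies\dots''), and that is the only case used later (cf.\ the proof of Lemma~\ref{le:topology}, which invokes (iii) for $j>n$). So read the hypothesis as $j>n$; do not try to ``check the equality case is also killed,'' because it is not.
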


\begin{proof}
The first formula follows from the definition of $v_{i}$ because the
restriction of $w_{i}$ to $H^i(BO(1))$ is non-trivial if and only if $i=1$.

Formula (ii) follows from (i) by using the comultiplication in $H^{*}(BO)$:
\begin{equation*}
w_{n}(v_{j}x)=\Delta^*(w_n)(v_jx)=w_{j}(v_{j})\cdot w_{n-j}(x)=0. 
\end{equation*}

Formula (iii)
 follows from (ii) since
\[
a(v_{j}x)=(\prod_{1\leq k\leq n}w_{i_{k}})(\Delta_* v_{j}\Delta_* x)
\]
is a sum of terms of the form 
$\prod_{1\leq k\leq n}(w_{i_{k}}(v_{i_{k}}x_k))$ with
$\sum_{1\leq k \leq n} i_{k}=j$. But $j>n$ implies $i_{k}\geq 2$ for some $k$, so these terms all equal 0.

Finally, (iv) follows from (ii) and the formula
$\Delta_*(v_{k}^{2^{N}}x)= \Delta_*(v_k)^{2^N}\Delta_*(x)$ where only the term $(v_{k}^{2^{N}}\otimes 1) \Delta_*(x)$ contributes when we evaluate $w_{i_{1}+k2^{N}}w_{i_{2}}\dotsm w_{i_{n}}$.
\end{proof}

We may filter $H^*$ by length of monomials
\[
H^{*}[n]=\Z/2 \{w_{i_{1}}\dotsm w_{i_{n}}\in H^{*}\}.
\]
%The inclusion $i_n:H^*[n]\to H^*$ is not compatible with the action
%of $\A$, but the projection $p_{n}:H^*\to H^*[n]$ is. That is, if a
%monomial $a$ in the $w_{i}$ has length at least $n+1$, then so has every 
%monomial in $\Sq^{i}(a)$. This follows from Formula \eqref{binom} and the Cartan formula.

\begin{lem}
Evaluation induces a perfect pairing in each dimension
\[
\mu_{n}:\Z/2[v_{1},v_{2},\dots,v_{n}]\otimes H^{*}[n]\to \Z/2.
\]
\end{lem}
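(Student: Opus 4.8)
The statement to prove is that evaluation gives a perfect pairing $\mu_n:\Z/2[v_1,\dots,v_n]\otimes H^*[n]\to\Z/2$ in each dimension. Here $H^*[n]$ is spanned by length-$n$ monomials $w_{i_1}\dotsm w_{i_n}$ in the Stiefel--Whitney classes (in $H^*=H^*(MTO)$, via the Thom isomorphism identification with $H^*(BO)$), and $\Z/2[v_1,\dots,v_n]$ is the subalgebra of $H_*(BO)$ generated by $v_1,\dots,v_n$. Since both sides are finite-dimensional in each degree, it suffices to show the pairing is nondegenerate on one side; I would show the matrix of $\mu_n$, with respect to the monomial bases, is triangular with $1$'s on the diagonal after a suitable ordering.

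First I would fix bases: on the algebra side, monomials $v^{\mathbf a}=v_1^{a_1}\dotsm v_n^{a_n}$; on $H^*[n]$, monomials $w_{i_1}\dotsm w_{i_n}$ which I would normalize by allowing the exponents $i_k\ge 1$ and recording the multiset $\{i_1,\dots,i_n\}$, i.e. a partition into exactly $n$ parts. Note $\dim(\Z/2[v_1,\dots,v_n])_m$ counts partitions of $m$ into parts of size $\le n$, while $\dim H^*[n]_m$ counts partitions of $m$ into exactly $n$ parts; these agree by conjugation of Young diagrams, so the two sides indeed have equal dimension in each degree and a nondegeneracy check on either side suffices. The key computational input is Lemma~\ref{le:formulas}: part (i) gives $w_i(v_i)=\delta_{i,1}$, and part (iii) gives that a length-$n$ monomial $a=\prod_{1\le k\le n}w_{i_k}$ annihilates $v_j x$ whenever $j>n$ (more precisely $j\ge n$ with $x\ne 1$-issues handled by degree), which controls which pairings vanish.

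The main step is to compute $\mu_n(v^{\mathbf a}\otimes w_{i_1}\dotsm w_{i_n})$ by repeatedly applying the comultiplication $\Delta_*$ on $H_*(BO)$, under which $v_i\mapsto\sum_j v_j\otimes v_{i-j}$, and the fact that evaluating a product $\prod w_{i_k}$ against $\Delta_*^{(n)}(v^{\mathbf a})$ is a sum of products $\prod_k w_{i_k}(\text{piece}_k)$. Using (i) and (ii), the only surviving terms split $v^{\mathbf a}$ into $n$ factors each of which must be a single $v_{i_k}$ with $i_k=1$, \emph{unless} one uses that $w_i$ can pair nontrivially against $v_i$ alone only for $i=1$. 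Wait — (i) says $w_i(v_i)=0$ for $i\ne 1$, so in fact the clean triangularity comes out differently: one should dualize, viewing $H^*[n]$ as the image of $(\Z/2[v_1,\dots,v_n])^\vee$ under the symmetrization/Newton map sending the monomial symmetric function to a product of $w$'s. Concretely, I would order partitions $\lambda$ (of a fixed degree $m$, into $\le n$ parts on the $v$-side) by dominance or lexicographic order and show $\mu_n\big(m_\lambda(v)\otimes w_{\lambda'}\big)=1$ while $\mu_n\big(m_\lambda(v)\otimes w_{\mu'}\big)=0$ for $\mu'$ strictly smaller, where $\lambda'$ is the conjugate partition and $w_\mu$ denotes the $w$-monomial with exponents the parts of $\mu$. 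This triangularity, which is exactly the classical statement that $\{w_\mu\}$ and $\{m_\lambda(v)\}$ are dual bases up to a unitriangular change (the Stiefel--Whitney analogue of elementary symmetric functions vs. monomial symmetric functions), gives invertibility of the pairing matrix and hence perfectness.

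**Main obstacle.** The real work is the bookkeeping in that triangularity computation: showing that expanding $\Delta_*^{(n)}$ of a $v$-monomial and evaluating against a $w$-monomial produces $1$ on the "diagonal" pair and $0$ strictly below it, in the chosen ordering. The tool is entirely Lemma~\ref{le:formulas}(i)--(iii), together with the polynomial structure of $H_*(BO)$ and the coproduct formula $\Delta_*(v_i)=\sum_j v_j\otimes v_{i-j}$; the subtlety is choosing the ordering on partitions (dominance order on conjugates) so that the vanishing in (iii) — monomials of length $n$ kill $v_j(\cdot)$ for $j$ too large — lines up with "strictly below the diagonal." Once the ordering is right the unitriangularity is forced, and since the two graded pieces have equal finite dimension, perfectness of $\mu_n$ follows immediately.
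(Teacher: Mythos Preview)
Your approach—showing the pairing matrix is unitriangular with respect to dominance order, via the symmetric-function dictionary ($v_i\leftrightarrow h_i$, $w_i\leftrightarrow e_i$, and the classical $e_\lambda$-to-$m_\mu$ transition)—does work in principle. One small wrinkle: your normalization $i_k\ge 1$ gives partitions into \emph{exactly} $n$ parts, which does not match $\dim \Z/2[v_1,\dots,v_n]_m$ under conjugation; you want $i_k\ge 0$ (monomials of length $\le n$), giving partitions into at most $n$ parts, and then the conjugation bijection applies correctly. The notation $m_\lambda(v)$ is also misleading, since the $v_i$ are not symmetric-function variables but rather correspond to the $h_i$; what you mean is the $h$-monomial $h_\lambda$.

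That said, the paper's proof bypasses all of this combinatorics. The full evaluation pairing $\mu:\Z/2[v_1,v_2,\dots]\otimes H^*\to\Z/2$ is already perfect (this is just $H_*(BO)\cong H^*(BO)^\vee$), so the adjoint $\mu^*:H^*\to\Hom(H_*,\Z/2)$ is an isomorphism and in particular injective on the subspace $H^*[n]$. Lemma~\ref{le:formulas}(iii) says precisely that every $a\in H^*[n]$ vanishes on $v_jx$ for $j>n$, i.e.\ on the ideal $(v_{n+1},v_{n+2},\dots)$; hence $\mu^*|_{H^*[n]}$ factors through $\Hom(\Z/2[v_1,\dots,v_n],\Z/2)$, and the factored map $\mu_n^*$ is still injective. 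The dimension count then finishes. So the paper uses (iii) once, as a single vanishing statement that cuts the global perfect pairing down to the sub-pairing, rather than as fuel for a term-by-term triangularity check. Your route effectively reproves this by hand and recovers the explicit dual bases—more informative, but not needed for the lemma, and the wavering in the middle of your plan is a sign that a cleaner structural argument was available.
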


\begin{proof}
Evaluation defines a perfect pairing
\begin{equation*}
\mu:\Z/2[v_{1},v_{2},\dots]\otimes H^{*}\to \Z/2.
\end{equation*}
The restriction to $H^{*}[n]\subset H^{*}$ yields a map 
\[
\mu_{n}:\Z/2[v_{1},v_{2},\dots,v_{n}]\otimes H^{*}[n]\to \Z/2.
\]
According to in Lemma \ref{le:formulas} (iii), the adjoint map
\[
\mu_{n}^{*}:H^{*}[n] \to
\Hom(\Z/2[v_{1},v_{2},\dots,v_{n}],\Z/2)
\]
must be injective, since the isomorphism $\mu^{*}$ is.
In each degree, $\Z/2[v_{1},v_{2},\dots,v_{n}]$ and
$H^{*}[n]$ are finite of the same dimension, 
so $\mu_{n}^{*}$ is an isomorphism, i.e.\ $\mu_n$ is a perfect
pairing.
\end{proof}

We now want to a give a similar description of $WH^*$ and its dual. 
By Theorem~\ref{coholim}, $WH^*$ has a $\Z/2$-basis 
\[
\{\tilde{w}_{i_{1}}w_{i_{2}}\dotsm w_{i_n} \mid i_{1}\in \Z, n \in \N, \ i_{l}\geq 0
\text{ for } l\geq 2\}.
\]
Again there is a filtration of $WH^*$ by length of monomials
\[
WH^{*}[n]=\Z/2 \{\tilde{w}_{i_{1}}\dotsm w_{i_{n}}\mid i_{1}\in \Z,  \ i_{l}\geq 0
\text{ for } l\geq 2\}.
\]
$WH^{*}[n]$ is not an $\A$-module, but $\Sq^k : WH^{*}[n] \to WH^{*}[n+k]$ by Formula \eqref{Awl} and \eqref{binom}.
%
%The inclusion $i_n:WH^*[n]\to WH^*$ is not compatible with the action
%of $\A$, but the projection $WH^*\to WH^*[n]$ is by Formula \eqref{Awl} and \eqref{binom}. 

Let $\xi_2\in H_2(BO)$ be as in the previous section. Then $\xi_2=v_{1}^{2}+v_{2}$. 
\begin{defi}
\begin{equation*}
\begin{split}
R(n)&=\Z/2[v_{1},v_{2},\dots,v_{n}][\xi_2^{-1}]\\
R&=\varinjlim_{n} R(n).
\end{split}
\end{equation*}
\end{defi}

The Steenrod algebra acts on $R(n)$, and the inclusion maps $R(n)\to R(n+1)$ are compatible with this action.
Thus $R$ has a natural action of the Steenrod algebra. 

The pairing $\mu_{n}$ extends to a pairing
\[
\tilde{\mu}_{n}:R(n)\otimes WH^{*}[n]\to \Z/2
\]
by the formula 
\[
\tilde{\mu}_n(\xi_2^{k}x,\tilde{w}_{i_{1}}w_{i_{2}}\dotsm w_{i_{n}})
=\mu_n(\xi_2^{2^{N-1}+k}x,{w}_{i_{1}+2^{N}}w_{i_{2}}\dotsm w_{i_{n}})
\]
where $x\in H_*(MTO)$ and $N$ is sufficiently big. 
Repeated use of Lemma \ref{le:formulas} (iv)
shows that this definition is independent of $N$.
The pairing induces a natural map $\tilde{\mu}^{*}:WH^{*}\to \Hom(R,\Z/2)$ compatible with the Steenrod action.

\begin{lem}
\label{le:topology}
For $a\in WH^{*}$, there are natural numbers $m,n$ such that
\begin{equation*}
\tilde{\mu}^{*}(a) \in \Hom(R(m,n),\Z/2) \subseteq \Hom(R,\Z/2)
\end{equation*}
where $R(m,n) = R/I(m,n)$ and $I(m,n)$ is the ideal in $R$ generated by 
\begin{equation*}
 v_{2}^{m},v_{3}^{m},\dots,v_{n}^{m},v_{n+1},v_{n+2},\dots 
\end{equation*}
\end{lem}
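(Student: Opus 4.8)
The plan is to push the definition of $\tilde{\mu}^{*}$ through until all the required vanishing becomes an instance of the comultiplication formulas of Lemma~\ref{le:formulas}. Since every $a\in WH^{*}$ is a finite $\Z/2$-combination of the basis monomials of Theorem~\ref{coholim}, it suffices to produce $m$ and $n$ for a single monomial $a=\tilde{w}_{i_{1}}w_{i_{2}}\dotsm w_{i_{k}}\in WH^{*}[k]$, say of degree $t$, and then take maxima over the finitely many summands. Unwinding the definitions, an element $z\in R$ may be written $z=\xi_2^{c}p$ with $p$ a monomial in the $v_{j}$, and then $\tilde{\mu}^{*}(a)(z)=\mu\bigl(\xi_2^{2^{N-1}+c}p,\;w_{i_{1}+2^{N}}w_{i_{2}}\dotsm w_{i_{k}}\bigr)$ for $N$ sufficiently large; that is, $\tilde{\mu}^{*}(a)(z)$ is the evaluation of a \emph{fixed} length-$k$ product of Stiefel--Whitney classes on the polynomial $\xi_2^{2^{N-1}+c}p$. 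Because the pairing preserves degree, $\tilde{\mu}^{*}(a)$ is concentrated in total degree $t$, and this is the source of the finiteness we are after.

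First I would set $n=\max(k,2)$ and check that $\tilde{\mu}^{*}(a)$ annihilates the ideal $(v_{n+1},v_{n+2},\dots)$. A monomial of this ideal has the form $v_{j}z$ with $j>n\geq k$; after the reduction above one is evaluating a length-$k$ Stiefel--Whitney monomial on a polynomial each term of which is divisible by $v_{j}$ with $j>k$, noting that the factor $\xi_2^{2^{N-1}+c}=(v_{1}^{2}+v_{2})^{2^{N-1}+c}$ introduces only $v_{1}$ and $v_{2}$ and so cannot remove that $v_{j}$. By Lemma~\ref{le:formulas}(iii) every such evaluation is zero, so $\tilde{\mu}^{*}(a)$ already factors through $R(n)=\Z/2[v_{1},\dots,v_{n}][\xi_2^{-1}]$.

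The main step is to annihilate $(v_{2}^{m},\dots,v_{n}^{m})$ for $m$ large. Fix $2\leq i\leq n$ and a monomial $v_{i}^{m}z$; writing $z=\xi_2^{c}p$, the polynomial $\xi_2^{2^{N-1}+c}v_{i}^{m}p$ is divisible by $v_{i}^{m}$ (for $i=2$ this still holds, since each monomial of $(v_{1}^{2}+v_{2})^{e}v_{2}^{m}$ is visibly divisible by $v_{2}^{m}$). When the length-$k$ monomial $w_{i_{1}+2^{N}}w_{i_{2}}\dotsm w_{i_{k}}$ is evaluated on such a term, iterated comultiplication distributes its $m$ copies of $v_{i}$ among the $k$ tensor factors, and by Lemma~\ref{le:formulas}(ii) a surviving term requires every factor to receive only classes $v_{1}$, so each copy of $v_{i}$ deposits a $v_{1}$ into exactly $i$ factors, at least $i-1\geq 1$ of which lie among $w_{i_{2}},\dots,w_{i_{k}}$. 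Hence at least $m$ of the $v_{1}$'s land in the factors $w_{i_{2}},\dots,w_{i_{k}}$, whose total $v_{1}$-capacity in a surviving term is exactly $i_{2}+\dots+i_{k}=t-i_{1}$, independently of $N$. Therefore for $m>t-i_{1}$ there is no surviving distribution and $\tilde{\mu}^{*}(a)(v_{i}^{m}z)=0$. Choosing $m>t-i_{1}$ and $n=\max(k,2)$ makes $\tilde{\mu}^{*}(a)$ factor through $R(m,n)=R/I(m,n)$, as claimed.

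I expect the only genuinely delicate points to be the verification that $\tilde{\mu}^{*}(a)(z)$ is independent of the auxiliary $N$, which is exactly where Lemma~\ref{le:formulas}(iv) enters, just as in the construction of $\tilde{\mu}$, and the bookkeeping in the case $i=2$, where $v_{2}$ is not a polynomial generator independent of the inverted class $\xi_2=v_{1}^{2}+v_{2}$, so that one must check explicitly that divisibility by $v_{2}^{m}$ survives multiplication by the relevant power of $\xi_2$ before the comultiplication argument can be applied.
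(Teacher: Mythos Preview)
Your argument is correct, and the first reduction---choosing $n$ so that $a\in WH^{*}[n]$ and killing $v_{j}$ for $j>n$ via Lemma~\ref{le:formulas}(iii)---is exactly what the paper does. For the second step the paper is considerably shorter: it simply takes $m=2^{N}$ with $2^{N}>i_{j}$ for every $j>1$ and invokes Lemma~\ref{le:formulas}(iv) directly, which immediately gives $\tilde{\mu}^{*}(a)(v_{i}^{m}x)=0$ for $i\geq 2$. Your combinatorial count via Lemma~\ref{le:formulas}(ii)---tracking how many $v_{1}$'s the $m$ copies of $v_{i}$ are forced to deposit into the slots $w_{i_{2}},\dots,w_{i_{k}}$ and comparing with their fixed degree-capacity $i_{2}+\dotsb+i_{k}$---is in effect a hands-on reproof of the relevant case of part~(iv). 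It has the minor advantages that $m$ need not be a power of~$2$ and that the independence from the stabilisation parameter~$N$ is visibly built in, but it is longer and requires the extra bookkeeping you flag for $i=2$. Given that part~(iv) is already available, the paper's one-line application of it is the more economical route.
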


\begin{proof}
Choose $n$ such that $a\in WH^{*}[n]$. Then by Lemma \ref{le:formulas} (iii),
$\tilde{\mu}^{*}(a)(v_{j}x)=0$ for $j>n$. 

Assume without restriction that
$a=\tilde{w}_{i_{1}}w_{i_{2}}\cdots w_{i_{n}}$. 
Choose $m=2^{N} > i_{j}$ for all $j>1$. Then
$\tilde{\mu}^*(a)(v_{i}^{m}x)=0$ by Lemma \ref{le:formulas} (iv). 
\end{proof}

\begin{defi}
Let $R^\wedge = \varprojlim_{m,n} R(m,n)$ with the inverse limit topology. 
If $x \in I(m,n)$, then $\Sq^k(x) \in I(m-k,n-k)$. This defines a continuous action of the Steenrod algebra on $R^\wedge$. Let 
\begin{equation*}
\Hom^{\topo}(R,\Z/2)= \varinjlim_{m,n} \Hom (R(m,n),\Z/2) \subseteq \Hom(R,\Z/2)
\end{equation*}
 denote the $\A$-module of continuous homomorphisms.
\end{defi} 
%
% Then
%$\Hom^{\rm{top}}(R,\Z/2)\subset \Hom(R,\Z/2)$, and according to
Lemma \ref{le:topology} implies that $\tilde{\mu}$ takes values in $\Hom^{\topo}(R,\Z/2)$. The following theorem is the key ingredient in the description of $WH^*$:

\begin{thm}\label{tildeiso}
The map $\tilde{\mu}^* :WH^*\to \Hom^{\topo}(R,\Z/2)$ is an isomorphism.
\end{thm}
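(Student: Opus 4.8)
The strategy is to establish the isomorphism degree by degree, exploiting the fact that in each fixed dimension only finitely many of the spaces $R(m,n)$ contribute. First I would fix a total degree $t$ and observe that by Lemma~\ref{le:topology}, $\tilde\mu^*$ restricted to $WH^t$ lands in $\Hom(R(m,n),\Z/2)$ for $m,n$ large enough depending only on $t$; since the degree-$t$ part of $\Hom^{\topo}(R,\Z/2)$ is exactly $\varinjlim_{m,n}\Hom(R(m,n),\Z/2)^t$ and this colimit stabilizes, it suffices to show that for each sufficiently large pair $(m,n)$ the composite
\begin{equation*}
WH^*[n] \xrightarrow{\tilde\mu^*} \Hom(R(m,n),\Z/2)
\end{equation*}
is an isomorphism in degrees where it is defined, after passing to the appropriate subquotient of $WH^*$. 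The filtration $WH^*[n]$ by monomial length is the natural bookkeeping device here: $R(m,n)$ only sees monomials $\tilde w_{i_1}w_{i_2}\cdots w_{i_k}$ with $k\le n$ and with all $i_j<m$ for $j\ge 2$ (the $v_\ell$ with $\ell>n$ and the high powers $v_\ell^m$ are killed in $R(m,n)$).

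Second, I would reduce to the already-established perfect pairing $\mu_n\colon \Z/2[v_1,\dots,v_n]\otimes H^*[n]\to\Z/2$. The pairing $\tilde\mu_n$ was defined precisely so that $\tilde\mu_n(\xi_2^k x,\tilde w_{i_1}w_{i_2}\cdots w_{i_n}) = \mu_n(\xi_2^{2^{N-1}+k}x, w_{i_1+2^N}w_{i_2}\cdots w_{i_n})$ for $N\gg0$, and the $\xi_2$-localization on the $R(n)$ side matches the shift $\tilde w_{i_1}\mapsto w_{i_1+2^N}$ on the $WH^*$ side (this is the content of Theorem~\ref{coholim} and Theorem~\ref{xi2}: multiplication by $t$, i.e.\ by $\xi_2$ up to the relevant bookkeeping, is an isomorphism). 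Concretely, localizing $\Z/2[v_1,\dots,v_n]$ at $\xi_2=v_1^2+v_2$ is a flat (indeed, a localization) operation, so perfectness of $\mu_n$ in each degree passes to a perfect pairing between $R(n)$ and the localized module, which is the degree-shifted analogue of $WH^*[n]$. Then I quotient by the extra relations $v_2^m,\dots,v_n^m,v_{n+1},\dots$ defining $R(m,n)$; dually this restricts attention to the finite-dimensional piece of $WH^*[n]$ on which $\tilde\mu^*$ is supported, and the pairing remains perfect because we are taking a quotient on one side and the orthogonal complement on the other, both finite-dimensional of equal dimension in each degree by a monomial count.

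Third, having shown $\tilde\mu^*$ is a $\Z/2$-linear isomorphism in each degree (injectivity from injectivity of $\mu_n^*$ as in the earlier lemma, surjectivity/bijectivity from the equal-dimension count), I note that $\tilde\mu^*$ is $\A$-linear by construction — the action of $\Sq^k$ on $R^\wedge$ was defined so that $\Sq^k\colon I(m,n)\to I(m-k,n-k)$, making $\Hom^{\topo}(R,\Z/2)$ into an $\A$-module, and $\tilde\mu$ is compatible with the Steenrod action as stated right after its definition. An isomorphism of graded $\Z/2$-vector spaces that is $\A$-linear is an isomorphism of $\A$-modules, so we are done.

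**Main obstacle.** The delicate point is the bookkeeping in the second step: verifying carefully that the $\xi_2$-localization on $R(n)$, combined with passing to the quotient $R(m,n)$, is dual to exactly the portion of $WH^*[n]$ that $\tilde\mu^*(a)$ is supported on, with matching dimensions in every degree. One must check that the $N$-independence of $\tilde\mu_n$ (from repeated application of Lemma~\ref{le:formulas}(iv)) is compatible with the colimit defining $\Hom^{\topo}$, and that no element of $WH^*[n]$ is invisible to all $R(m,n)$ — i.e.\ that $\tilde\mu^*$ is genuinely injective on all of $WH^*$, not merely on each finite stage. This is where the proof could go wrong if the compatibility of the various limits (the inverse limit defining $R^\wedge$, the direct limit defining $\Hom^{\topo}$, and the direct limit over $r$ defining $WH^*$) were not exactly as arranged; fortunately the degreewise finiteness makes all of these stabilize, so the argument goes through.
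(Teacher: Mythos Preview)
Your overall strategy---reduce to finite pieces and leverage the perfect pairing $\mu_n$---is the right shape, but the crucial second step contains a genuine gap.

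The assertion that ``localization at $\xi_2$ preserves perfectness of $\mu_n$'' is not well-posed here. The pairing $\mu_n$ identifies $H^*[n]^t \cong \Hom(\Z/2[v_1,\dots,v_n]^t,\Z/2)$ with both sides finite. After inverting $\xi_2$, each $R(n)^t$ becomes countably infinite-dimensional, so its full linear dual is uncountable---not $WH^*[n]^t$, which is countable. Flatness of localization says nothing about a pairing against a space that is not itself a module over the ring being localized. What must be shown is that $WH^*[n]\to \Hom^{\topo}(R(n),\Z/2)$ is an isomorphism, and that requires matching compatible \emph{finite} pieces on both sides.

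Your ``monomial count'' is asserted but not carried out, and with the $R(m,n)$ filtration it fails naively. Take $m=2$, $n=3$: then $\dim R(2,3)^t=4$ for every $t$ (one basis monomial for each $(j_2,j_3)\in\{0,1\}^2$), whereas the basis monomials $\tilde w_{i_1}w_{i_2}w_{i_3}\in WH^*[3]^t$ with $i_2,i_3<2$---the ones Lemma~\ref{le:topology} guarantees land in $\Hom(R(2,3),\Z/2)$---number only $3$. The fourth direction in $\Hom(R(2,3),\Z/2)$ is hit by a monomial with $i_2=2$, which Lemma~\ref{le:topology} alone does not place there. So the ``orthogonal complement equals obvious subspace'' picture breaks down.

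The paper handles this by changing filtrations. Proposition~\ref{R1} trades $\xi_2$-localization for $v_1$-localization, yielding finite quotients $R_{n,J}$ indexed by a partial order on $\mathcal J_n$. The heart of the proof is then an explicit order-preserving bijection $\alpha\colon \mathcal J_n\to \mathcal I_n$, $\alpha(j_1,\dots,j_n)=(i_k)$ with $i_k=\sum_{m\geq k}j_m$, together with the triangularity formula
\[
w_I(v^J)=\begin{cases}1&\text{if }I=\alpha(J),\\0&\text{unless }\alpha(J)\leq I,\end{cases}
\]
proved by an induction using Lemma~\ref{le:formulas}. This upper-triangular-with-unit-diagonal structure is exactly what forces the finite-stage restrictions of $\tilde\mu^*$ to be isomorphisms. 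It is the substantive content of the argument, and your proposal supplies no analogue of it.
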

%
%\begin{equation*}
% \tilde{\mu}^* : WH^*\to \Hom^\textrm{top}(R,\Z/2)
%\end{equation*}
%  is
%contained in $\Hom^{\rm{top}}(R,\Z/2) = \varinjlim_{m,n} \Hom (R(m,n),\Z/2)$. 

We first give two equivalent filtrations of $R^\wedge$. The first one is convenient for describing the $\A$-action on $\Hom^{\topo}(R,\Z/2)$, while the second is useful in the proof of Theorem \ref{tildeiso}. First of all, we  need to describe the $\A$-module $H_*=H_*(MTO)$.

\begin{lem}
The Steenrod algebra acts on $H_*(MTO)$ by
\[
\Sq^{k}_{*}v_{i}=\binom{i-k-1}{k}v_{i-k}.
\]
\end{lem}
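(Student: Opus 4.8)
The plan is to dualize the formula $\Sq^k(w_m)=\sum_{l=0}^k \binom{m-k+l-1}{l}w_{k-l}w_{m+l}$ from \eqref{binom} and transport it to $H_*(BO)$ via the perfect pairing, then push it through the Thom isomorphism to $H_*(MTO)$. First I would recall that $v_i \in H_i(BO)$ is defined as the image of the generator of $H_i(BO(1))$, and that $v_i$ is characterized by the pairing $w_j(v_i) = \delta_{i,1}$ when $j=i$ together with the behaviour under the coproduct $\Delta_*(v_i) = \sum_j v_j \otimes v_{i-j}$, as established in the preceding Lemma \ref{le:formulas}.

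The key computational step is to evaluate the dual operation $\Sq^k_*$ on $v_i$ by pairing against an arbitrary monomial in $H^*(BO)$. Since $\Sq^k_*$ lowers degree by $k$, $\Sq^k_* v_i$ lives in $H_{i-k}(BO)$, and it suffices to determine its pairing with the one-variable classes $w_m$ and with decomposable classes. For a decomposable $a = w_{i_1}\cdots w_{i_p}$ with $p \geq 2$, adjointness gives $\langle w_{i_1}\cdots w_{i_p}, \Sq^k_* v_i\rangle = \langle \chi(\Sq^k)(w_{i_1}\cdots w_{i_p}), v_i\rangle$ (or one uses $\langle \Sq^k(w_{i_1}\cdots w_{i_p}), v_i\rangle$ directly depending on which convention for $\Sq^k_*$ is intended — I would fix the convention to be $\langle \Sq^k x, y\rangle = \langle x, \Sq^k_* y\rangle$); by the Cartan formula this expands into a sum of products of at least two Stiefel--Whitney classes evaluated on $v_i$, which vanishes by Lemma \ref{le:formulas}(iii). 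Hence $\Sq^k_* v_i$ is detected entirely by the classes $w_m$, $m = i-k$, and one computes $\langle w_{i-k}, \Sq^k_* v_i\rangle = \langle \Sq^k w_{i-k}, v_i\rangle$. Expanding $\Sq^k w_{i-k}$ by \eqref{binom} gives a sum $\sum_l \binom{i-2k+l-1}{l} w_{k-l}w_{i-k+l}$; pairing against $v_i$ and again invoking Lemma \ref{le:formulas} kills all terms with $w_{k-l}$ of positive degree unless $k-l = 0$ or the whole thing is $w_i(v_i)$. The surviving term gives $\binom{i-k-1}{k}$ as the coefficient, so $\Sq^k_* v_i = \binom{i-k-1}{k} v_{i-k}$ in $H_*(BO)$; since the Thom isomorphism for $MTO$ is compatible with the $\A$-action up to the universal Thom class correction $\Sq^i(\bar u) = \bar w_i$, and one checks (as in the proof of Theorem \ref{coholim}) that the relevant correction terms do not affect the single-generator $v_i$, the same formula holds in $H_*(MTO)$.

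The main obstacle I anticipate is bookkeeping with the dual Steenrod action and the Thom-isomorphism twist: one must be careful that $\Sq^k_*$ on $H_*(MTO)$ really is computed by the naive formula and not by a version twisted by $\bar w_*$, and that the binomial coefficient arithmetic (which involves $\binom{i-2k+l-1}{l}$ reindexing to $\binom{i-k-1}{k}$, possibly via a Vandermonde-type identity mod $2$) comes out correctly. I would handle this by doing the computation first purely in $H_*(BO)$ using only the pairing $\mu$ and Lemma \ref{le:formulas}, where there is no Thom class, and only afterwards transport along the Thom isomorphism, noting that it is an isomorphism of $\A$-modules once one accounts for $\bar u$ — but since we only need the statement on the generators $v_i$ and $H^*(MTO) \cong H^*(BO) \cdot \bar u$ as $\A$-modules via $\Sq^i \bar u = \bar w_i$, the transported formula is immediate. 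The argument is otherwise routine, parallel to the classical computation of the $\A$-coaction on $H_*(BO)$.
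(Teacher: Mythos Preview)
Your approach has a real gap. You compute the pairing $\langle w_{i-k}, \Sq^k_* v_i\rangle = \langle \Sq^k w_{i-k}, v_i\rangle$ in $H_*(BO)$, but $w_m$ does not detect $v_m$ under this pairing: by Lemma~\ref{le:formulas}(i) one has $w_m(v_m)=0$ for $m\geq 2$. So the ``surviving term'' $\binom{i-k-1}{k}\, w_i(v_i)$ you isolate is actually zero, and pairing with the single class $w_{i-k}$ tells you nothing about $\Sq^k_* v_i$. The cohomology class that detects $v_m$ is $w_1^{\,m}$, not $w_m$.

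More seriously, even a correct computation in $H_*(BO)$ yields the wrong binomial coefficient. Since $v_i$ is the push-forward of the generator of $H_i(BO(1))=H_i(\R P^\infty)$, and $\Sq^k w_1^{\,i-k} = \binom{i-k}{k} w_1^{\,i}$ in $H^*(\R P^\infty)$, dualizing gives $\Sq^k_* v_i = \binom{i-k}{k} v_{i-k}$ in $H_*(BO)$, not $\binom{i-k-1}{k}$; at $i=4$, $k=2$ these differ mod $2$. The discrepancy is precisely the Thom twist you dismiss as not affecting the single generator: under $H^*(MTO)\cong H^*(BO)\cdot\bar u$ one has $\Sq^k(x\bar u)=\sum_l \Sq^l(x)\,\bar w_{k-l}\,\bar u$, and this correction is the whole content of the lemma. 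The paper's proof works in $H^*(MTO(1))$, where $\bar w_j$ restricts to $w_1^{\,j}$; the twisted formula then collapses to $\sum_{l=0}^k \binom{i-k}{l}\,w_1^{\,i} = \binom{i-k-1}{k}w_1^{\,i}$ mod $2$, and dualizing over the rank-one base (where $w_1^{\,m}\leftrightarrow v_m$ really is a dual basis) gives the claim.
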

\begin{proof}
In $H^*(BO)$ the formula $\Sq^k(w_1^{i-k})= \binom{i-k}{k}w_1^{i}$ holds. In $H^*$ the formula becomes
\[
\Sq^k(w_1^{i-k})= \sum_{l=0}^k \binom{i-k}{l}w_1^{i-k+l}\bar{w}_{l-k}.
\]
This restricts to 
\[
\Sq^k(w_1^{i-k})= \sum_{l=0}^k \binom{i-k}{l}w_1^{i} = \binom{i-k-1}{k} w_1^i
\]
in $H^*(MTO(1))$ because $\bar{w}_{l-k}$ restricts to $w_1^{l-k}$. Dualizing yields the formula.
\end{proof}
 
Let $S = \Z/2[v_{2},v_{3},\dots ]$ be the subalgebra in $ H_*$. 

\begin{cor}\label{subS}
$S$ is an $\A$-submodule of $H_*$.
As algebras with $\A$-action, 
\[
\Z/2[v_{1},v_{2},\dots,v_{n}]\cong
\Z/2[v_{1}]\otimes \Z/2[v_{2},v_{3},\dots,v_{n}].
\]
\end{cor}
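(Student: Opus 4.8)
The plan is to deduce both assertions from the explicit action formula $\Sq^k_*v_i=\binom{i-k-1}{k}v_{i-k}$ of the preceding lemma, together with the Cartan formula for the $\A$-action on $H_*$.

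First I would record that Cartan formula. The comultiplication $\Delta^*\colon H^*\to H^*\otimes H^*$ of \eqref{BOmult} is induced by the direct sum maps of Thom spectra, hence is $\A$-linear when $H^*\otimes H^*$ carries the usual (Cartan) action, i.e.\ $\Delta^*\Sq^k=\sum_{i+j=k}(\Sq^i\otimes\Sq^j)\Delta^*$. Dualizing, the polynomial product on $H_*$ — which by definition is dual to $\Delta^*$ — satisfies $\Sq^k_*(xy)=\sum_{i+j=k}(\Sq^i_*x)(\Sq^j_*y)$. Consequently, a sub-polynomial-algebra of $H_*$ generated by a subset of the $v_1,v_2,\dots$ is automatically an $\A$-submodule as soon as each $\Sq^k_*$ maps each of the chosen generators into it.

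Next I would run through the generators. For $i\geq 2$: if $k\leq i-2$ then $v_{i-k}$ is again some $v_j$ with $j\geq 2$; if $k=i-1$ then $\binom{i-k-1}{k}=\binom{0}{i-1}=0$ since $i-1\geq 1$; if $k=i$ then $\binom{-1}{i}\equiv 1\pmod 2$ and $v_0=1\in S$; and if $k>i$ then $\Sq^k_*v_i=0$, there being no class $v_j$ with $j<0$. Thus $\Sq^k_*v_i\in S$ for all $k$, so $S$ is an $\A$-submodule. The same bookkeeping shows that $\Z/2[v_2,\dots,v_n]$ is an $\A$-submodule, that $\Z/2[v_1,\dots,v_n]$ is one (for $i\leq n$ one has $\Sq^k_*v_i=\binom{i-k-1}{k}v_{i-k}$ with $i-k\leq n$), and, taking $i=1$ where $\Sq^1_*v_1=1$ and $\Sq^k_*v_1=0$ for $k\geq 2$, that $\Z/2[v_1]$ is one. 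For the isomorphism of algebras with $\A$-action, the evident algebra isomorphism $\Z/2[v_1,\dots,v_n]\xrightarrow{\ \cong\ }\Z/2[v_1]\otimes\Z/2[v_2,\dots,v_n]$ sending $v_1^a m$ to $v_1^a\otimes m$ (with $m\in\Z/2[v_2,\dots,v_n]$) is $\A$-linear: on the tensor product $\Sq^k_*$ acts through the coproduct of $\A$ as $\sum_{i+j=k}\Sq^i_*\otimes\Sq^j_*$, which under the isomorphism is exactly $\Sq^k_*(v_1^a m)=\sum_{i+j=k}(\Sq^i_*v_1^a)(\Sq^j_*m)$ from the Cartan formula, using $\Sq^i_*v_1^a\in\Z/2[v_1]$ and $\Sq^j_*m\in\Z/2[v_2,\dots,v_n]$, i.e.\ the submodule claims just proved.

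The step I expect to require the most care is the first one: pinning down that the homology product on $H_*(MTO)$ is precisely the one dual to the direct sum comultiplication $\Delta^*$, so that the Cartan formula holds in the form used, and treating the degree-$0$ class $v_0=1$ consistently so that the identity $\Sq^i_*v_i=1$ is legitimate. Everything else is elementary binomial-coefficient bookkeeping.
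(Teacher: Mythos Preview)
Your proposal is correct and follows essentially the same route as the paper's proof: both arguments reduce to checking, via the formula $\Sq^k_*v_i=\binom{i-k-1}{k}v_{i-k}$, that the action on generators never produces a $v_1$ from a $v_i$ with $i\geq 2$, and then invoke the Cartan formula to pass from generators to the full polynomial algebra. You are simply more explicit than the paper, which compresses the case analysis into the single observation that $\binom{j-1}{k}=1$ with $j=1$ forces $k=0$, and which leaves the Cartan formula for the homology product implicit.
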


\begin{proof}
Clearly, $\Z/2[v_1]$ is a subalgebra of 
$\Z/2[v_{1},v_{2},\dots,v_{n}]$ which is closed under the
action of $\A$. It is not quite as obvious that
the subalgebra $\Z/2[v_{2},v_{3},\dots ]$ is closed, but it follows
from the preceeding formula. If 
$\Sq^k_*(v_i)=v_j$ we have that $\binom{j-1}k=1$. So if $j=1$,
we must have $k=0$.
The inclusions combine to an isomorphism of algebras
\[
\Z/2[v_{1}]\otimes \Z/2[v_{2},v_{3},\dots,v_{n}]
\to \Z/2[v_{1},v_{2},\dots,v_{n}]
\]
 compatible with the $\A$-action.
\end{proof}

It follows from Corollary \ref{subS} that $S\{1,v_1\}=S\oplus Sv_1$ is an $\A$-submodule of $H_*$.

\begin{prop}\label{Rhat}
$R^\wedge$ is isomorphic as a topological $\A$-module to the completion of $R$ with respect to the subspaces
\begin{equation*}
J_r = \{\xi_2^m p_0 + \dots + \xi_2^{m-k}p_k \mid p_i \in S\{1,v_1\},\ \deg(p_i)\geq r \}.
\end{equation*}
One can think of $R^\wedge$ as the space of power series $\xi_2^m p_0 + \xi_2^{m-1}p_1+ \dotsm$  with coefficients $p_i \in S\{1,v_1\}$ and $\A$ acting on the coefficients. 
\end{prop}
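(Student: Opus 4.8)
The plan is to produce a concrete description of $R$ as a graded $\A$-module, identify both defining filtrations inside it, and then conclude that the two completions coincide. First I would change polynomial generators by $v_2\mapsto \xi_2=v_1^2+v_2$: since $v_1,\xi_2,v_3,\dots,v_n$ is again a polynomial generating set for $\Z/2[v_1,\dots,v_n]$, inverting $\xi_2$ gives $R(n)=\Z/2[\xi_2,\xi_2^{-1}]\otimes\Z/2[v_1,v_3,\dots,v_n]$. Because $v_2^{a}=v_1^{2a}$ plus terms of strictly smaller $v_1$-degree with coefficients in $\Z/2[\xi_2,\xi_2^{-1}]$, the monomial basis $\{v_1^{b_1}v_3^{b_3}\dotsm v_n^{b_n}\}$ of the second factor over $\Z/2[\xi_2,\xi_2^{-1}]$ can be replaced, by a unipotent (triangular in the $v_1$-degree) change of basis, by $\{v_1^{\epsilon}v_2^{a_2}\dotsm v_n^{a_n}\mid\epsilon\in\{0,1\}\}$. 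Passing to the colimit over $n$ yields a graded isomorphism $R\cong \Z/2[\xi_2,\xi_2^{-1}]\otimes S\{1,v_1\}$, where $S=\Z/2[v_2,v_3,\dots]$ and $S\{1,v_1\}=S\oplus Sv_1$.

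Next I would check that this isomorphism is $\A$-linear for the action on the right given by $\A$ acting on the $S\{1,v_1\}$-factor only; this makes sense since $S\{1,v_1\}$ is an $\A$-submodule of $H_*(MTO)$ by Corollary~\ref{subS}. The key point is that $\Sq^k_*\xi_2=0$ for every $k\geq 1$: for $k\geq 3$ this holds by degree, while $\Sq^1_*\xi_2=0$ and $\Sq^2_*\xi_2=\Sq^2_*(v_1^2)+\Sq^2_*(v_2)=v_0+v_0=0$ follow from the Cartan formula together with the formula $\Sq^k_*v_i=\binom{i-k-1}{k}v_{i-k}$. Hence $\Sq^k_*(\xi_2 a)=\xi_2\,\Sq^k_*(a)$, so multiplication by $\xi_2$, and therefore by $\xi_2^{-1}$ and by any $\xi_2^{m}$, is $\A$-linear on $R$. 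Under the identification above, the $\A$-action is thus exactly the one ``on the coefficients'', which is the last assertion of the proposition.

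It then remains to compare the two completions. Under $R\cong\Z/2[\xi_2,\xi_2^{-1}]\otimes S\{1,v_1\}$ one has $J_r=\Z/2[\xi_2,\xi_2^{-1}]\otimes(S\{1,v_1\})_{\geq r}$, and, since each $v_j$ with $j\geq 2$ commutes with $\xi_2$ and preserves $S\{1,v_1\}$, also $I(m,n)=\Z/2[\xi_2,\xi_2^{-1}]\otimes\big(\mathfrak a(m,n)\cdot S\{1,v_1\}\big)$ where $\mathfrak a(m,n)=(v_2^m,\dots,v_n^m,v_{n+1},v_{n+2},\dots)\subseteq S$. I would then verify that the filtrations $\{J_r\}_r$ and $\{I(m,n)\}_{m,n}$ are mutually cofinal in $R$: every nonzero element of $\mathfrak a(m,n)$ has degree at least $\min(2m,n+1)$, so $I(m,n)\subseteq J_r$ whenever $m\geq r/2$ and $n\geq r-1$; conversely, a monomial of $S\{1,v_1\}$ lying outside $\mathfrak a(m,n)\cdot S\{1,v_1\}$ has degree at most $1+\deg\big(\Z/2[v_2,\dots,v_n]/(v_2^m,\dots,v_n^m)\big)$, so $J_r\subseteq I(m,n)$ once $r$ is large relative to $m,n$. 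Both filtrations consist of subspaces for which the $\A$-action is continuous (for $J_r$ because $\Sq^k$ raises degree and hence preserves $J_r$; for $I(m,n)$ because $\Sq^k I(m+k,n+k)\subseteq I(m,n)$), so the completions of $R$ along them agree as topological $\A$-modules. By definition the completion along $\{I(m,n)\}$ is $R^\wedge$. Finally, computing in a fixed total degree $d$ one finds that $R/J_r$ and $R/I(m,n)$ are finite-dimensional there and that their inverse limits are both $\prod_{c\leq d/2}\xi_2^{c}\otimes(S\{1,v_1\})_{d-2c}$, i.e.\ the space of power series $\xi_2^{m}p_0+\xi_2^{m-1}p_1+\dotsm$ with $p_i\in S\{1,v_1\}$, which justifies the concrete picture in the statement.

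I expect the cofinality of the two filtrations, and the attendant degree bookkeeping (finite-dimensionality of $R/I(m,n)$ in each degree and the fact that the socle degrees of $\Z/2[v_2,\dots,v_n]/(v_2^m,\dots,v_n^m)$ tend to infinity), to be the main technical obstacle; the identity $\Sq^k_*\xi_2=0$ is the crucial input that keeps the $\A$-module structure transparent throughout.
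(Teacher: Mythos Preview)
Your argument is correct and follows essentially the same route as the paper: identify $R\cong\Z/2[\xi_2,\xi_2^{-1}]\otimes S\{1,v_1\}$ as an $\A$-module using $\Sq^k_*\xi_2=0$, then show the filtrations $\{I(m,n)\}$ and $\{J_r\}$ are cofinal. One small slip: the dual Steenrod operations $\Sq^k_*$ on $R$ \emph{lower} degree, so $\Sq^k_*(J_r)\subseteq J_{r-k}$ rather than $J_r$; this still yields continuity, but your stated reason (``$\Sq^k$ raises degree and hence preserves $J_r$'') is backwards.
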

Note, however, that the ring structure is not clear from this description, as $S\{1,v_1\}$ is not closed under multiplication.
\begin{proof}
The topologies agree because 
\begin{align*}
J_r\subset I(m,n) \quad&\text{if $r > \frac{n(n+1)}{2}m$},\\
I(m,n)\subset J_r \quad&\text{if $\min\{m,n\} > r$}.
\end{align*}
Furthermore, if $x \in J_r$, then $\Sq^k_*(x) \in J_{r-k}$. Thus we get the same $\A$-action.

Observe that
\begin{equation*}
R \cong \Z/2[\xi_2,\xi_2^{-1}] \otimes S\{1,v_1\}
\end{equation*}
as $\A$-modules. Thus any element of $R$ has a unique representation of the form $\xi_2^m p_0 + \dots + \xi_2^{m-k}p_k$.
It is then clear that the completion of $R$ in the $J_r$'s is the space of power series of the form $\xi_2^m p_0 + \xi_2^{m-1}p_1+ \dotsm$ with $p_i \in S\{1,v_1\}$ and $\A$ acting on coefficients, since it acts trivially on $\xi_2$. 
%$R^\wedge$ is isomorphic to the completion of $R$ with respect to the ideals $J_{r}$ generated by polynomials $v_1^m p_0 + \dots + v_1^{m-k}p_k$ where each $p_i\in \Z/2[v_2,v_3,\dots]$ has degree at least $r$. The topologies agree because we have 
%\begin{gather*}
%J_r\subset I_{m,n} \text{ if $r > (n-1)m$}\\
%I_{m,n}\subset J_r \text{ if $\min(m,n) \leq r$}.
%\end{gather*}
%Thus we may think of elements of $R^\wedge $ as power series of the form $v_1^m p_0 + v_1^{m-1}p_1+ \dots$ with $p_i \in M$. But this is the same as the completion of $R^\wedge$ in the completed ideals $\hat{J}_r = \varprojlim_s J_r/J_s$. These consist of  power series where each $p_i$ has degree at least $r$.
%
%Now,
%\begin{align*}
%\xi_2^{-1} = v_1^{-2}+ v_1^{-4}v_2 + v_1^{-6}v_2^2+ \dots
%\end{align*}
%is an element of $R^\wedge$.
%Thus if we let $L_r$ denote the ideal of power series $\xi_2^m p_0 + \xi_2^{m-1}p_1+ \dots$ where each $p_i\in \ZZ/2[v_2\dots]\{1,v_1\}$ has degree at least $r$, then $L_{r} \subseteq \hat{J}_{r-1}$. Similarly
%\begin{align*}
\end{proof}

\begin{prop}\label{R1}
$R^\wedge $ is isomorphic as a topological ring to the ring $R_1^\wedge$ of power series 
\[
v_1^mp_0+v_1^{m-1}p_1+\dots +v_1^{m-i}p_i+\dotsm
\] 
with coefficients $p_i \in S$.
\end{prop}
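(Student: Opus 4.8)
The plan is to realise both $R^\wedge$ and $R_1^\wedge$ as the completion of one and the same (non‑complete) ring, equipped with one and the same topology. Write $R_1=\Z/2[v_1^{\pm1},v_2,v_3,\dots]$ for the dense subring of Laurent polynomials sitting inside $R_1^\wedge$, and let $\tilde R=\Z/2[v_1,v_2,v_3,\dots][v_1^{-1},\xi_2^{-1}]$ be the localisation of $\Z/2[v_1,v_2,\dots]$ at the multiplicative set generated by $v_1$ and $\xi_2=v_1^2+v_2$; it contains both $R=\Z/2[v_1,v_2,\dots][\xi_2^{-1}]$ and $R_1$.

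First I would show that $v_1$ is a unit in $R^\wedge$ and that $\xi_2$ is a unit in $R_1^\wedge$. Indeed, in $R$ one has $v_1^2=\xi_2+v_2=\xi_2(1+\xi_2^{-1}v_2)$, and $1+\xi_2^{-1}v_2$ is invertible in $R^\wedge$ with inverse $\sum_{k\geq0}\xi_2^{-k}v_2^k$, the series converging because $v_2^k$ has degree $2k$ so the coefficients of its $\xi_2$-expansion tend to infinity in degree; hence $v_1^{-2}=\sum_{k\geq0}\xi_2^{-k-1}v_2^k\in R^\wedge$ and $v_1^{-1}=v_1v_1^{-2}\in R^\wedge$. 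Symmetrically, $\xi_2=v_1^2(1+v_1^{-2}v_2)$ gives $\xi_2^{-1}=\sum_{k\geq0}v_1^{-2k-2}v_2^k\in R_1^\wedge$. Since $R\hookrightarrow R^\wedge$ and $R_1\hookrightarrow R_1^\wedge$ are injective and $v_1$, resp.\ $\xi_2$, is sent to a unit, the localisation $\tilde R$ embeds into both $R^\wedge$ and $R_1^\wedge$; the two embeddings agree on generators, hence identify the same abstract ring by the universal property of localisation. Moreover $\tilde R$ is dense in each completion, since it contains the dense subring $R$, resp.\ $R_1$.

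It then remains to check that the subspace topology induced on $\tilde R$ from $R^\wedge$ coincides with that induced from $R_1^\wedge$; granting this, both $R^\wedge$ and $R_1^\wedge$ are the Hausdorff completion of $\tilde R$ for this common topology, so the identity of $\tilde R$ extends to the desired topological ring isomorphism (which is automatically degree‑preserving, and, as all the $\A$-actions are the continuous extensions of the one on $R$, also $\A$-equivariant). For the comparison of topologies I would prove that the inclusion $R\hookrightarrow R_1^\wedge$ is continuous, and symmetrically $R_1\hookrightarrow R^\wedge$. The basic neighbourhood $J_r$ of $R$ from Proposition~\ref{Rhat} is spanned over $\Z/2$ by the monomials $\xi_2^i v_J$ and $\xi_2^i v_J v_1$, where $v_J$ is a monomial of $S=\Z/2[v_2,v_3,\dots]$ of degree $\geq r$, resp.\ $\geq r-1$; rewriting $\xi_2^i$ via $\xi_2=v_1^2+v_2$ for $i\geq0$ and via $\xi_2^{-1}=\sum_k v_1^{-2k-2}v_2^k$ for $i<0$, one sees that in the $v_1$-expansion of such a monomial every coefficient lies in $S$ in degree $\geq r-1$, so $J_r$ lands in the corresponding neighbourhood $J'_{r-1}$ of $R_1^\wedge$. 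The symmetric computation, using $v_1^{-2}=\sum_k\xi_2^{-k-1}v_2^k$, shows that the neighbourhood $J'_r$ of $R_1$ lands in $J_r\subseteq R^\wedge$. Thus the two filtrations of $\tilde R$ are cofinal in each other; equivalently, one may phrase the conclusion as producing directly the continuous ring maps $R^\wedge\to R_1^\wedge$ and $R_1^\wedge\to R^\wedge$ extending these inclusions and checking that they are mutually inverse on the dense subrings $R$ and $R_1$ (using that a multiplicative extension necessarily sends $\xi_2^{-1}$ to $\xi_2^{-1}$).

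The step I expect to be the main obstacle is this last comparison of topologies. Because of the relation $v_1^2=\xi_2+v_2$, the filtrations $\{J_r\}$ and $\{J'_r\}$ are filtrations by additive subgroups rather than by ideals, so one cannot simply invoke continuity of multiplication; instead one must carry out the explicit bookkeeping above on how the lowest‑degree coefficient of a monomial transforms when one passes between the $\xi_2$-adic and the $v_1$-adic normal forms. A secondary technical point is to make sure the geometric‑series expressions for $v_1^{-2}$ and $\xi_2^{-1}$ genuinely converge in the respective completions, and that their partial sums — which lie in $R$, resp.\ $R_1$ — approximate in the correct topology, so that the density of $\tilde R$ and the continuity of the extended maps are legitimate.
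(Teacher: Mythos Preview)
Your proposal is correct and follows essentially the same approach as the paper: both arguments invert $v_1$ in $R^\wedge$ and $\xi_2$ in $R_1^\wedge$ using the identical geometric-series formulas, and then compare the filtrations $\{J_r\}$ and $\{L_r\}$ (your $\{J'_r\}$) to see they are cofinal. Your introduction of the common dense subring $\tilde R$ makes the logical structure more transparent than the paper's terse ``there are isomorphisms $J_r\to L_{r-1}$'', but the underlying content is the same.
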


\begin{proof}
$R_1^\wedge$ is its own completion in the ideals 
\begin{equation*}
L_r = \{v_1^mp_0+v_1^{m-1}p_1+\dotsm  \mid p_i \in S,\ \deg(p_i)\geq r\},
\end{equation*}
while $R^\wedge $ is its own completion in the  
\begin{equation*}
J_r = \{\xi_2^mp_0+\xi_2^{m-1}p_1+\dotsm  \mid p_i \in S\{1,v_1\},\ \deg(p_i)\geq r\}.
\end{equation*}
But $v_1$ is invertible in $R^\wedge$ by the formula
\begin{equation*}
v_1^{-1} = v_1\xi_2^{-1}(1+v_2\xi_2^{-1} + v_2^{2}\xi_2^{-2}+\dotsm )
\end{equation*}
and $\xi_2$ is invertible in $R^\wedge_1$ via the formula
\begin{equation*}
\xi_2^{-1} = v_1^{-2}+ v_1^{-4}v_2 + v_1^{-6}v_2^2+ \dotsm.
\end{equation*}
Thus there are isomorphisms $J_r \to L_{r-1}$. This induces a continuous isomorphism $R_1^\wedge \cong R^\wedge$.
%In particular it contains the ring $\tilde{R}=\Z/2[v_1,v_1^{-1}]\otimes S$ and $R^\wedge$ is the completion of $\tilde{R}$ with respect to the ideals $I_{m,n}=(v_2^m,v_3^m,\dots,v_n^m,v_{n+1},v_{n+2},\dots)$ because the quotients are exactly $R(m,n)$.
%
%On the other hand, the ring of power series is the completion of $\tilde{R}$ in the ideals $J_r$ generated by polynomials in $\Z/2[v_2,v_3,\dots]$ of degree at least $r$. But the two completions agree because
%\begin{gather*}
%J_r\subset I_{m,n} \text{ if $r > \frac{n(n+1)}{2}m$}\\
%I_{m,n}\subset J_r \text{ if $\min(m,n) \leq r$}.
%\end{gather*}
\end{proof}

%The following lemma is the crucial step in the determination of $WH^*$.

\begin{proof}[Proof of Theorem \ref{tildeiso}]
We consider $R^\wedge$ with the topology given in Proposition \ref{R1}. 

Let ${\cal I}_n$ be the set of sequences $i_1,i_2,\dots,i_n$ 
with $i_1\in \Z$ and $i_k\geq 0$ for $k \geq 2$ such that 
$i_2\geq i_3\geq \dots \geq i_n$. For $I\in {\cal I}_n$ define
$w_I=\tilde{w}_{i_1}w_{i_2}\dotsm w_{i_n}$. Then the $w_I$ for $I\in {\cal I}_n$ 
form a basis for $WH^*[n]$. Let 
\begin{equation*}
l_s(I)=\sum_{k=s}^n i_k.
\end{equation*}
 Then the degree
of $w_I$ is $l_1(I)$. We can define a
partial order on ${\cal I}_n$ by saying that
$I\leq I^\prime$ if $l_1(I)=l_1(I^\prime)$ and $l_s(I)\leq l_s(I^\prime)$ for $2\leq s \leq n$.

Let ${\cal J}_n$ be the set of sequences $j_1,j_2,\dots,j_n$ where
$j_1\in \Z$ and $j_k\geq 0$ for $k\geq 2$. Let 
$v^J=v_1^{j_1}v_2^{j_2}\cdots v_n^{j_n}$. The set of $v^J$ for $J\in {\cal J}_n$
constitutes a basis for $\Z/2[v_1,v_2,\dots,v_n][{v_1}^{-1}]$. Let
\begin{equation*}
l_s(J)=\sum_{k=s}^n (k-s+1) j_k. 
\end{equation*}
The degree of $v^J$ is $l_1(J)$. 

There is a bijection $\alpha:{\cal J}_n\to {\cal I}_n$ given by
$\alpha(j_1,j_2,\dots,j_n)=(i_{1},i_{2},\dots,i_{n})$ with
$i_{k}=\sum_{k\leq m \leq n} j_{m}$.
Note that $l_s(\alpha(J))=l_s(J)$. Give ${\cal J}_n$ 
the partial order
$J\leq J^\prime$ if $l_s(J)\leq l_s(J^\prime)$ for $2\leq s \leq n$ and $l_1(J)=l_1(J^\prime)$.
Then  $\alpha$ preserves the partial order and so does the inclusion ${\cal J}_n \to {\cal J}_{n+1}$. In particular, $\alpha$ induces a bijection $\varinjlim_n {\cal J}_n \to \varinjlim_n {\cal I}_n $.

We claim that
\begin{equation}
\label{wIvJ}
w_I(v^J)=
\begin{cases}
1 &\text{ if $\alpha(J)=I$,}\\
0 &\text{ unless $\alpha(J) \leq I$}.
\end{cases}
\end{equation}
To prove this formula, we first compute using Lemma \ref{le:formulas}:
\[
\tilde{w}_{i_1}w_{i_2}\cdots w_{i_n}(v_1^{j_1}v_2^{j_2}\cdots v_n^{j_n})=
\begin{cases}
\tilde{w}_{i_1-j_n}w_{i_2-j_n}\cdots w_{i_n-j_n}(v_1^{j_1}v_2^{j_2}\cdots v_{n-1}^{j_{n-1}})
&\text{$j_n\leq i_n$,}\\
0&\text{$j_n > i_n$.}
\end{cases}
\]
The claim \eqref{wIvJ} follows for $\alpha(J)=I$ by induction on $n$.

Now assume $w_I(v^J)\neq 0$. We claim that for all $1\leq s\leq n$,
\begin{equation}\label{induktion}
\tilde{w}_{i_1}w_{i_2}\cdots w_{i_n}(v_1^{j_1}v_2^{j_2}\cdots v_n^{j_n}) = \sum \tilde{w}_{i_1^\prime}w_{i_2^\prime}\cdots w_{i_n^\prime}(v_1^{j_1}v_2^{j_2}\cdots v_s^{j_s})
\end{equation}
where the sum is over some non-empty set of $(i_1',\dots,i_n')\in {\cal J}_n$  satisfying for all $2 \leq t\leq s+1$
\begin{equation}\label{l(I)}
\sum_{k\geq t}i_k^\prime \leq \sum_{k\geq t} i_k - \sum_{k \geq s+1} (k-t+1)j_k. 
\end{equation}
This is clearly true for $s=n$. Assume that this is true for some $s$.
To compute \eqref{induktion}, we must evaluate on $\Delta_*(x)\Delta_*(v_s)^{j_s}$ where $x = v_1^{j_1}v_2^{j_2}\cdots v_{s-1}^{j_{s-1}}$. In $\Delta_*(v_s)^{j_s}$, only terms of the form $\sum v_1^{l_1} \otimes \dotsm \otimes v_1^{l_n}$ contribute. The $l_k$ must satisfy $l_k \leq j_s$ and $\sum l_k = sj_s$. In particular,
\begin{equation}\label{suml_k}
\sum_{k \geq t} l_k \geq (s-t+1)j_s.
\end{equation}
Thus by Lemma \ref{le:formulas},
\begin{align*}
\tilde{w}_{i_1'}w_{i_2'}\cdots w_{i_n'}(v_1^{j_1}v_2^{j_2}\cdots v_s^{j_s}) =& \sum_l \tilde{w}_{i_1'}w_{i_2'}\cdots w_{i_n'}(\Delta_*(x) v_1^{l_1} \otimes \dotsm \otimes v_1^{l_n})\\=&\sum_l \tilde{w}_{i_1'-l_1}w_{i_2'-l_2}\cdots w_{i_n'-l_n}(x)
\end{align*}
where only terms with $i_k'-l_k \geq 0$ for all $k\geq 2$ contribute to the sum.
By \eqref{suml_k} and the induction hypothesis,
\[
\sum_{k\geq t}(i_k^\prime -l_k) \leq \sum_{k\geq t} i_k' - (s-t+1)j_s \leq \sum_{k\geq t} i_k - \sum_{k \geq s} (k-t+1)j_k. 
\]
Thus the claim follows for $s-1$.

For each $s\geq 1$, put $t=s+1$ in \eqref{l(I)}. This yields
\[
0 \leq \sum_{k\geq t} i_k^\prime  \leq l_{s+1}(I) - l_{s+1}(J) = l_{s+1}(I) - l_{s+1}(\alpha(J)) 
\]
where the first inequality follows because at least one term in the sum \eqref{induktion} is non-zero by the assumption $w_I(v^J)\neq 0$. But this means that $\alpha(J) \leq I$, proving \eqref{wIvJ}.

Let ${\cal J}_{n, J}=\{J^\prime\in {\cal J}_n\mid  J^\prime < J\}$ and note that this is a finite set.
Let 
\begin{equation*}
R_{n,J} = \Z/2 [v_1,v_2,\dots ][v_1^{-1}]/\Z/2 \{v^{J'} \mid J' \notin {\cal J}_{n, J}\}.
\end{equation*}
By Proposition \ref{R1}, there is an isomorphism of topological rings 
\begin{equation*}
R^\wedge \cong \varprojlim_{n,J} R_{n,J}.
\end{equation*}
For each $J\in {\cal J}_n$, define the homomorphism
$v^{J*}:\Z/2[v_1,v_2,\dots][{v_1}^{-1}] \to \Z/2$  by $v^{J*}(v^{J^\prime})=1$ if and only if $J=J^\prime$. Then the $v^{J'*}$ for $J'\in {\cal J}_{n,J}$ form a basis for the vector space $\Hom(R_{n,J},\Z/2)$. Thus the collection of all the $v^{J*}$ constitutes a basis for
\begin{equation*}
\Hom^{\textrm{top}}(R^\wedge,\Z/2) = \varinjlim_{n,J} \Hom(R_{n,J},\Z/2).
\end{equation*}
 
 We claim that
$\tilde{\mu}^*(w_{\alpha(J')})$ for $J'\in {\cal J}_{n,J}$ also form a basis for $\Hom(R_{n,J},\Z/2)$. 
 According to \eqref{wIvJ}, $\tilde{\mu}^*$ restricts to a map
\[\tilde{\mu}^*: \Z/2\{w_{\alpha(J^\prime)} \mid J^\prime\in {\cal J}_{n, J}\}\to
\Z/2\{v^{J^\prime*} \mid J^\prime\in {\cal J}_{n,  J}\}.
\]
Using \eqref{wIvJ}, an induction on $J$ shows that $\tilde{\mu}^*$ is a surjection and hence an isomorphism. Taking the direct limit over $n$ and $J$ finishes the proof of the lemma.
%
%For $J\in {\cal J}_n$ we define the homomorphism
%$v^{J*}:\Z/2[v_1,v_2,\dots,v_n][\frac1{v_1}] \to \Z/2$  by $v^{J*}v^{J^\prime}=1$ if and only if $J=J^\prime$. Then, $v^{J*}$ for $J\in {\cal J}_n$ forms a basis for the vector space
%$\Hom^{\rm{top}}(\Z/2[v_1,v_2,\dots,v_n][\frac1{v_1}],\Z/2)$. We claim that
%$E(w_I)$ for $I\in {\cal I}_n$ also form a basis. This would
%obviously complete the proof that $E$ is an isomorphism.
%
%Note that the set ${\cal J}_{n, < J}=\{J^\prime\in {\cal J}_n\mid  J^\prime < J\}$ is a finite set. According to (*), $E$ restricts to a map
%\[
%E: \mathrm{span}(w_{\alpha(J^\prime)} \mid J^\prime\in {\cal J}_{n,< J})\to
%\mathrm{span}(v^{(J^\prime*)} \mid J^\prime\in {\cal J}_{n, < J})
%\]
%Using induction on $J$, using (*), we can show that $E$ is a surjection and hence an isomorphism, which
%finishes the proof of the lemma.
\end{proof}

%This means that 
%\begin{align*}
%\Hom(WH^*,\Z/2) &\cong \Hom(\Hom^{\textrm{top}}(R^\wedge,\Z/2),\Z/2) \\
%&\cong \Hom(\varinjlim_{n,m}\Hom(R(m,n),\Z/2),\Z/2)\\
%& \cong R^\wedge.
%\end{align*}

Since $H^*$ is free over $\A$, $H_* \cong \Z/2[\xi_2,\xi_4,\xi_5,\dots]\otimes \A_*$ where $\A_*$ is the $\Z/2$-dual of $\A$.

\begin{lem}\label{Mv_1}
There is an isomorphism of $\A$-modules
\begin{equation*}
\phi : \Z/2[\xi_4,\xi_5,\dots]\otimes \A_* \cong S\{1,v_1\}.
\end{equation*}
\end{lem}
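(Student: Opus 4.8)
The plan is to obtain $\phi$ by comparing two descriptions of $H_* = H_*(MTO)$ as an $\A$-module and cancelling a common polynomial factor. Throughout, $\Sq^k_*$ denotes the downward action on $H_*$ from the lemma just above Definition~\ref{Rhat}, and the Cartan formula applies.

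\emph{Step 1.} First I would record that $\xi_2 = v_1^2 + v_2$ is $\A$-primitive: from $\Sq^k_* v_i = \binom{i-k-1}{k}v_{i-k}$ and the Cartan formula one gets $\Sq^1_*\xi_2 = \Sq^1_* v_1^2 + \Sq^1_* v_2 = 0$, $\Sq^2_*\xi_2 = \Sq^2_* v_1^2 + \Sq^2_* v_2 = 1 + 1 = 0$, and $\Sq^k_*\xi_2 = 0$ for $k\geq 3$ for degree reasons, so $\Sq^k_*(\xi_2^j a) = \xi_2^j\Sq^k_*(a)$ for all $a\in H_*$. Since $S\{1,v_1\} = S\oplus v_1 S$ is an $\A$-submodule of $H_*$ (Corollary~\ref{subS}: $\Sq^1_* v_1 = 1$, $\Sq^{\geq 2}_* v_1 = 0$ give $\Sq^k_*(v_1 s) = v_1\Sq^k_* s + \Sq^{k-1}_* s$), each $\xi_2^j S\{1,v_1\}$ is again an $\A$-submodule; these are independent by a leading-term argument in the variable $v_1$, their sum lies in $H_* = \Z/2[v_1,v_2,\dots]$ because $\xi_2, v_1 \in H_*$, and it exhausts $H_*$ by a Poincaré series count ($\tfrac{1}{1-t^2}\cdot(1+t)\cdot\prod_{i\geq 2}(1-t^i)^{-1} = \prod_{i\geq 1}(1-t^i)^{-1}$). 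Thus there is an isomorphism $H_* \cong \Z/2[\xi_2]\otimes S\{1,v_1\}$ of modules over the graded algebra $\Z/2[\xi_2]$ and over $\A$, with $\A$ acting trivially on $\Z/2[\xi_2]$ and $\xi_2$ acting by multiplication on both sides.

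\emph{Step 2.} The isomorphism $H_* \cong \Z/2[\xi_2,\xi_4,\xi_5,\dots]\otimes\A_*$ recorded just above this lemma is, by Thom's theorem (Stong), an isomorphism of algebras in which the polynomial factor is the subalgebra of $\A$-primitives and $\A$ acts through the $\A_*$-factor — this is the refinement of the $\A$-freeness of $H^*(MTO)$. Splitting off the generator $\xi_2$ (here $2 \neq 2^s-1$), so $\Z/2[\xi_2,\xi_4,\xi_5,\dots] = \Z/2[\xi_2]\otimes\Z/2[\xi_4,\xi_5,\dots]$, and restricting scalars to $\Z/2[\xi_2]$ gives an isomorphism $H_* \cong \Z/2[\xi_2]\otimes\bigl(\Z/2[\xi_4,\xi_5,\dots]\otimes\A_*\bigr)$, again of $\Z/2[\xi_2]$-modules with $\A$-action, $\A$ trivial on $\Z/2[\xi_2]$ and $\xi_2$ acting by multiplication.

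\emph{Step 3.} Composing the isomorphisms of Steps 1 and 2 yields a $\Z/2[\xi_2]$-linear and $\A$-linear isomorphism $\Z/2[\xi_2]\otimes S\{1,v_1\} \cong \Z/2[\xi_2]\otimes\bigl(\Z/2[\xi_4,\xi_5,\dots]\otimes\A_*\bigr)$. Applying $-\otimes_{\Z/2[\xi_2]}\Z/2$ with $\Z/2 = \Z/2[\xi_2]/(\xi_2)$ — a degree-preserving, $\A$-linear operation since $|\xi_2| = 2 > 0$ and $\A$ acts trivially on $\Z/2[\xi_2]/(\xi_2)$ — cancels the common factor and produces the required isomorphism $\phi:\Z/2[\xi_4,\xi_5,\dots]\otimes\A_* \xrightarrow{\ \cong\ } S\{1,v_1\}$ of $\A$-modules. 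I expect the only non-bookkeeping input to be Step 2: one needs the $\A$-freeness of $H^*(MTO)$ to be compatible with the ring structure, equivalently that $MTO$ splits as a wedge of suspensions of $H\Z/2$ indexed by a polynomial basis of $\pi_*(MTO)\otimes\Z/2$, so that $H_*(MTO)$ is free over its subalgebra of primitives with the $\A$-action diagonal; granting that, the rest is the Cartan formula and Poincaré series identities.
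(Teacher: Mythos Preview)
Your proof is correct and follows essentially the same route as the paper: both arguments establish the two $\A$-module decompositions $H_* \cong \Z/2[\xi_2]\otimes S\{1,v_1\}$ and $H_* \cong \Z/2[\xi_2]\otimes\bigl(\Z/2[\xi_4,\xi_5,\dots]\otimes\A_*\bigr)$, then quotient by the $\A$-submodule $\xi_2\cdot H_*$ to cancel the common factor. You supply more detail than the paper does---the explicit verification that $\xi_2$ is $\A$-annihilated, the Poincar\'e series count for Step~1, and the explicit attention to $\Z/2[\xi_2]$-linearity so that the quotient step is justified---but the skeleton is identical.
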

Note that it is not a ring isomorphism, as the right hand side is not closed under multiplication. 
%Even the generators $\phi(\xi_i)$ do not form a polynomial algebra.

\begin{proof}
Since
\begin{equation*}
\Z/2[\xi_2]\otimes S\{1,v_1\} \cong H_* \cong \Z/2[\xi_2] \otimes \Z/2[\xi_4,\xi_5,\dots] \otimes \A_*
\end{equation*}
as $\A$-modules,  dividing out by the $\A$-submodule $\xi_2\cdot H_*$ yields
\begin{equation*}
S\{1,v_1\} \cong H_*/\xi_2 \cdot H_* \cong \Z/2[\xi_4,\xi_5,\dots] \otimes \A_*
\end{equation*}
as $\A$-modules. 
\end{proof}

Let $S_\A = \phi(\Z/2[\xi_4,\xi_5,\dots])$.  We are now ready to give a full description of the $\A$-module structure of $WH^*$.

\begin{thm}
$WH^*$ is a free $\A$-module with
\begin{equation*}
WH^*/\A^{>0} WH^* \cong \Hom^{\topo}( S_{\A}\otimes \Z/2[\xi_2,\xi_2^{-1}],\Z/2)  %\bigoplus_{k=-\infty}^\infty (\xi_2^k \cdot S_\A)^\vee 
\end{equation*}
where $ S_{\A}\otimes\Z/2[\xi_2,\xi_2^{-1}]$ is topologized as the space of power series in $\xi_2$ with coefficients in $S_{\A}$.
%where $(\xi_2^k \cdot S_\A)^\vee \subseteq \Hom^{\topo}_\A(R^\wedge,\Z/2)$ are the $\A$-homomorphisms that vanish on all $\xi_2^l\cdot p$ with $l\neq k$ and $p\in S_\A$.
\end{thm}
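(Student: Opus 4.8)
The plan is to derive both assertions at once by dualising the explicit description of $WH^*$ obtained in this section. Recall that Theorem~\ref{tildeiso} identifies $WH^*$ with $\Hom^{\topo}(R,\Z/2)$, the continuous $\Z/2$-dual of the topological $\A$-module $R^\wedge$, and that Proposition~\ref{Rhat} presents $R^\wedge$ as the module of power series $\xi_2^m p_0 + \xi_2^{m-1}p_1 + \dotsm$ with coefficients $p_i\in S\{1,v_1\}$, on which $\A$ acts coefficientwise and trivially on $\xi_2$. So the first step is simply to feed in the $\A$-module structure of the coefficient module $S\{1,v_1\}$.

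For this I would use Lemma~\ref{Mv_1}, which provides an $\A$-linear isomorphism $S\{1,v_1\}\cong\Z/2[\xi_4,\xi_5,\dots]\otimes\A_* = S_\A\otimes\A_*$. The essential point is that, since $H^*=H^*(MTO)$ is free over $\A$, the isomorphism $H_*\cong\Z/2[\xi_2,\xi_4,\xi_5,\dots]\otimes\A_*$, and hence the induced isomorphism on the quotient $S\{1,v_1\}=H_*/\xi_2 H_*$, has $\A$ acting only on the $\A_*$ tensor factor: this is the dual statement to the freeness of $H^*$. Substituting into Proposition~\ref{Rhat} gives an isomorphism of topological $\A$-modules
\begin{equation*}
R^\wedge\ \cong\ \widehat{S_\A\otimes\Z/2[\xi_2,\xi_2^{-1}]}\ \hat{\otimes}\ \A_*,
\end{equation*}
where the hat denotes completion in the $\xi_2$-adic (power-series) topology and $\A$ acts only on the last factor.

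Now I would dualise. Since $\A_*$ is finite-dimensional in each degree and $(\A_*)^\vee\cong\A$ as left $\A$-modules, up to the canonical antiautomorphism $\chi$, taking continuous $\Z/2$-duals turns the completed tensor factor $\A_*$ into an uncompleted copy of $\A$. This yields
\begin{equation*}
WH^*\ \cong\ \Hom^{\topo}(R^\wedge,\Z/2)\ \cong\ \Hom^{\topo}\!\left(S_\A\otimes\Z/2[\xi_2,\xi_2^{-1}],\Z/2\right)\otimes\A,
\end{equation*}
the $\Hom^{\topo}$ on the right being taken for the power-series topology on $S_\A\otimes\Z/2[\xi_2,\xi_2^{-1}]$ exactly as in the statement. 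This exhibits $WH^*$ as a free $\A$-module --- consistent with the earlier vanishing of $\Ext_\A^{s,*}(WH^*,\Z/2)$ for $s>0$ --- whose module of indecomposables is $WH^*/\A^{>0}WH^*\cong\Hom^{\topo}(S_\A\otimes\Z/2[\xi_2,\xi_2^{-1}],\Z/2)$, as claimed.

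The genuine work is in the topological bookkeeping, which I expect to be the main obstacle. One must verify that under Lemma~\ref{Mv_1} the $I(m,n)$-topology on $R^\wedge$ corresponds to the product topology on power series with coefficients in $S_\A\otimes\A_*$, so that passing to continuous duals commutes with splitting off the $\A_*$ factor degree by degree. One must also identify precisely which left $\A$-module structure on $\A_*$ occurs here --- the one dual to left multiplication, twisted by $\chi$ --- to be sure its dual is a free rank-one $\A$-module rather than a nontrivial twist; this requires a small amount of Hopf-algebra care but does not affect the conclusion. Once these compatibilities are in place, the chain of isomorphisms above is formal.
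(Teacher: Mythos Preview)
Your proposal is correct and follows essentially the same route as the paper: both rest on Theorem~\ref{tildeiso}, Proposition~\ref{Rhat}, and Lemma~\ref{Mv_1}, and both conclude by dualising the power-series description of $R^\wedge$ to exhibit $WH^*$ as free on the claimed indecomposables. The one difference in execution is that the paper does not attempt to factor $R^\wedge$ as a completed tensor product with $\A_*$ directly; instead it introduces a new filtration $I_l=\{\xi_2^{l-1}p_1+\xi_2^{l-2}p_2+\dotsm\}$, identifies each quotient $R^\wedge/I_l$ with the shifted copy $\xi_2^l\cdot H_*$, and then uses the split short exact sequences $0\to\xi_2^{l+1}S\{1,v_1\}\to\xi_2^{l+1}H_*\to\xi_2^lH_*\to0$ to write $\Hom^{\topo}(R^\wedge,\Z/2)=\varinjlim_l(\xi_2^lH_*)^\vee\cong\bigoplus_k(\xi_2^kS\{1,v_1\})^\vee$ before invoking Lemma~\ref{Mv_1}. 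This sidesteps precisely the ``topological bookkeeping'' you flag as the main obstacle: by passing through finite-in-each-degree quotients isomorphic to shifts of $H_*$, the paper never has to make sense of $\hat\otimes\,\A_*$ or verify that continuous duals commute with splitting off $\A_*$. Your compressed version is valid once that bookkeeping is done, but the paper's filtration trick is how it is actually carried out.
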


\begin{proof}
By Theorem \ref{Rhat}, $WH^* \cong \Hom^{\topo}(R^\wedge,\Z/2)$ where $R^\wedge$ can be thought of as the space of power series in $\xi_2^{\pm 1}$ with coefficients in $S\{1,v_1\}$. In each degree, this is the same as the completion of $R^\wedge$ in the subspaces
\begin{equation*}
I_l = \big\{\xi_2^{l-1}p_1 + \xi_2^{l-2}p_2 + \dotsm \mid p_k \in S\{1,v_1\}\big\}.
\end{equation*}
Note that $I_l$ is an $\A$-submodule of $R^\wedge$.
The composition
\begin{equation*}
\xi_2^l \cdot H_* \to R^\wedge \to R^\wedge/I_l 
\end{equation*}
is an isomorphism of $\A$-modules where
\begin{equation*}
\xi_2^l \cdot H_* \cong \Z/2\{\xi_2^k, k\geq l\} \otimes S_\A \otimes \A_*.
\end{equation*}

For each $l$, there is a split short exact sequence
\begin{equation*}
0\to \xi_2^{l+1}\cdot S\{1,v_1\} \to \xi_2^{l+1} \cdot H_* \to \xi_2^{l} \cdot H_* \to 0
\end{equation*}
of $\A$-modules. Thus 
\begin{equation*}
\xi_2^{l} \cdot H_* \cong \bigoplus_{k=l}^\infty \xi_2^k \cdot S\{1,v_1\}.
\end{equation*}
Furthermore, the diagram
\begin{equation*}
\xymatrix{{\xi_2^{l+1} \cdot H_*}\ar[r]\ar[d]&{R^\wedge/I_{l+1} }\ar[d]\\
{\xi_2^l \cdot H_*}\ar[r]&{R^\wedge/I_l }
}
\end{equation*}
commutes. This means that as $\A$-modules
\begin{equation*}
\begin{split}
\Hom^{\topo}(R^\wedge,\Z/2) \cong & \varinjlim_l (R^\wedge/I_l)^\vee \\
\cong &\varinjlim_l (\xi_2^l \cdot H_*)^\vee \\
\cong &\bigoplus_{k=-\infty}^\infty (\xi_2^k \cdot S\{1,v_1\})^\vee \\
\cong &\bigoplus_{k=-\infty}^\infty (\xi_2^k \cdot S_\A)^\vee \otimes \A,
\end{split}
\end{equation*}
proving the claim.
%But the split short exact sequence
%\begin{align*}
%0\to \xi_2^{l+1}\cdot M\{1,v_1\} \to \xi_2^{l+1} \cdot H_* \to \xi_2^{l} \cdot H_* \to 0
%\end{align*}
%of $\A$-modules
%dualizes to 
%\begin{align*}
%0 \to (\xi_2^{l} \cdot H_*)^\vee \to  (\xi_2^{l+1} \cdot H_* )^\vee \to (\xi_2^{l+1}\cdot S\{1,v_1\} )^\vee \to 0
%\end{align*}
%which is a split short exact sequence of free $\A$-modules. In particular, the first map is the inclusion of a free direct summand.
%Writing up the modules explicitly, yields
%\begin{align*}
%\varinjlim_l (\xi_2^l H_*)^\vee \cong & \varinjlim_l (\Z/2\{\xi_2^k, k\geq l\} \otimes S_\A)^\vee \otimes \A \\
%\cong & \Hom^{\textrm{\top}}(\Z/2[\xi_2,\xi_2^{-1}] \otimes S_\A,\Z/2) \otimes \A.
%\end{align*}
\end{proof}

\begin{cor}
\begin{equation*}
\Ext_\A^{0,*}(WH^*,\Z/2)  \cong \Hom_\A^*(WH^*,\Z/2) \cong \prod_{l=-\infty}^\infty \xi_2^l \cdot S_\A.
\end{equation*}
\end{cor}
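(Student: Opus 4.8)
Granting the preceding theorem, the corollary is essentially formal, and the plan is as follows. Since $WH^*$ is a free, hence projective, $\A$-module, one has $\Ext_\A^{s,t}(WH^*,\Z/2)=0$ for all $s>0$, while $\Ext_\A^{0,*}(WH^*,\Z/2)=\Hom_\A^*(WH^*,\Z/2)$ by the definition of $\Ext^0$. So it suffices to compute the latter $\Hom$-module. For any $\A$-module $M$, an $\A$-linear map $M\to\Z/2$ annihilates $\A^{>0}M$, since every $\Sq^i$ with $i>0$ acts as zero on $\Z/2$; conversely, every $\Z/2$-linear map $M/\A^{>0}M\to\Z/2$ becomes $\A$-linear when pulled back along the projection $M\to M/\A^{>0}M$. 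Hence, degreewise, $\Hom_\A^*(M,\Z/2)\cong (M/\A^{>0}M)^\vee$. Taking $M=WH^*$ and using the identification $WH^*/\A^{>0}WH^*\cong\Hom^{\topo}\!\bigl(S_\A\otimes\Z/2[\xi_2,\xi_2^{-1}],\Z/2\bigr)$ from the preceding theorem reduces the statement to computing the $\Z/2$-dual of $\Hom^{\topo}\!\bigl(S_\A\otimes\Z/2[\xi_2,\xi_2^{-1}],\Z/2\bigr)$.

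For this, write $V=S_\A\otimes\Z/2[\xi_2,\xi_2^{-1}]$ with the power-series topology from the preceding theorem. In each internal degree, $V$ is the inverse limit $\varprojlim_l V/I_l$ of its quotients by the standard open submodules $I_l$ (truncating the $\xi_2$-power series below order $l$), each of which is finite-dimensional in every degree because $S_\A$ is bounded below. A continuous functional $V\to\Z/2$ vanishes on some $I_l$, so $\Hom^{\topo}(V,\Z/2)\cong\varinjlim_l (V/I_l)^\vee$; dualizing and using finite-dimensionality degreewise gives $\bigl(\Hom^{\topo}(V,\Z/2)\bigr)^\vee\cong\varprojlim_l (V/I_l)^{\vee\vee}\cong\varprojlim_l V/I_l\cong V$. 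By construction $V$ is the space of power series $\sum_l\xi_2^l p_l$ with $p_l\in S_\A$, i.e.\ exactly $\prod_{l=-\infty}^\infty\xi_2^l\cdot S_\A$, whence $\Ext_\A^{0,*}(WH^*,\Z/2)\cong\Hom_\A^*(WH^*,\Z/2)\cong\prod_{l=-\infty}^\infty\xi_2^l\cdot S_\A$.

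The argument has no genuine obstacle once the preceding theorem is available; the only point needing care is that $WH^*$ is \emph{not} bounded below, so all duals and (co)limits above must be understood degreewise, and one must verify that the $I_l$ really do have finite-dimensional quotients in each degree — which is what legitimizes the double-dual collapse $(V/I_l)^{\vee\vee}\cong V/I_l$ and the exchange of $\varinjlim$ with $(-)^\vee$. I expect this to occupy only a few lines.
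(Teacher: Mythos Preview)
Your argument is correct and matches the paper's intended reasoning; the paper states the corollary without proof, treating it as immediate from the preceding theorem (whose proof already exhibits $WH^* \cong \bigoplus_k (\xi_2^k \cdot S_\A)^\vee \otimes \A$, from which the corollary follows by dualizing). One small notational slip worth cleaning up: you write that $V = S_\A \otimes \Z/2[\xi_2,\xi_2^{-1}]$ ``is the inverse limit $\varprojlim_l V/I_l$'' and later that ``$V$ is the space of power series'', but strictly these describe the completion $\hat V$, not $V$ itself (which consists of Laurent polynomials); this is harmless, since $\Hom^{\topo}(V,\Z/2)=\Hom^{\topo}(\hat V,\Z/2)$ and what you correctly compute is $(\Hom^{\topo}(V,\Z/2))^\vee \cong \hat V = \prod_l \xi_2^l \cdot S_\A$.
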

%\begin{proof}
%$\Hom_\A(WH^*,\Z/2)$ are the elements of $R^\wedge$ annihilated by the positive degree elements of the Steenrod algebra.
%But by Theorem \ref{Rhat} 
%\begin{align*}
%\Sq^k(\xi_2^m p_0 + \xi_2^{m-1}p_1+\dots) = \xi_2^m \Sq^k_*(p_0) + \xi_2^{m-1} \Sq^k_*(p_1)+\dots.
%\end{align*}
%so for this to be zero for all $k$ we must have $p_i \in S_0$ for all $i$, by Lemma \ref{Mv_1}.
%\end{proof}

\subsection{The oriented case}\label{MTor}
We now turn to the oriented situation. Hence, $WH^*$ will now denote the oriented version of the cohomology group from Definition \ref{defWH} and $H^*$ will denote $H^*(MTSO)$. In the unoriented case, we have seen that $WH^*$ is a free module, generalizing the situation for $H^*$. In the oriented case, the generalization from $H^*$ is not so obvious, but we do get something similar. The results we get in the oriented case are not as complete as those obtained in the unoriented case.

Recall that the $\A$-module $H^*$ is a direct sum of copies of $\A$ and $\A/\A\Sq^1$, see e.g.\ \cite{switzer}. In particular, the Thom class generates an $\A/ \A \Sq^1$ summand. This means that one can choose an $\A$-linear projection $H^* \to \A/ \A\Sq^1$ onto this summand.

There is map $WH^* \to H^* \hat{\otimes} WH^*$ constructed exactly as in the unoriented case. Combining this with the projection $H^* \to \A/ \A\Sq^1$, we get:
\begin{prop} \label{comod}
There is an $\A$-homomorphism
\begin{equation*}
WH^* \to  \A/\A\Sq^1 \hat{\otimes} WH^*.
\end{equation*}
\end{prop}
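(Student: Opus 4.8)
The plan is to reduce the proposition to two facts that are essentially already in hand: the existence of a coproduct-type map $\Delta\colon WH^* \to H^*\hat{\otimes}WH^*$, and the existence of an $\A$-linear retraction of $H^* = H^*(MTSO)$ onto one of its $\A/\A\Sq^1$-summands.

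First I would construct $\Delta$ exactly as in the unoriented case, now with $BSO$ in place of $BO$. The direct sum maps $BSO(d')\times BSO(d+ka_r)\to BSO(d'+d+ka_r)$ induce maps of Thom spectra $MTSO(d')\wedge MTSO(d+ka_r,r)\to MTSO(d'+d+ka_r,r)$ — using the frame-bundle description of $MTSO(d,r)$ just as in the proof of the corresponding unoriented statement — hence on mod $2$ cohomology an $\A$-homomorphism $H^*(MTSO(d'+d+ka_r,r))\to H^*(MTSO(d'))\otimes H^*(MTSO(d+ka_r,r))$ given by formula \eqref{BOmult}. These maps commute with the periodicity maps and with the structure maps of the inverse system over $r$; passing to the limit in $r$ and then to the inverse limits over $d$ and $d'$ produces $\Delta\colon WH^*\to H^*\hat{\otimes}WH^*$, which is $\A$-linear once $H^*\hat{\otimes}WH^*$ is given the Cartan action.

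Second, I would invoke the classical computation of $H^*(MTSO)$ as an $\A$-module: it is a direct sum of copies of $\A$ and of $\A/\A\Sq^1$, and the Thom class $\bar u$, for which $\Sq^1\bar u = \bar w_1 = 0$ in the oriented case, generates one of the $\A/\A\Sq^1$-summands; see e.g.\ \cite{switzer}. Fixing an $\A$-linear projection $p\colon H^*\to \A/\A\Sq^1$ onto this summand, the required map is the composite $WH^* \xrightarrow{\Delta} H^*\hat{\otimes}WH^* \xrightarrow{\,p\,\hat{\otimes}\,1\,} \A/\A\Sq^1\hat{\otimes}WH^*$.

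The only point needing a word of justification — and I do not expect a genuine obstacle — is that $p\,\hat{\otimes}\,1$ is a well-defined $\A$-homomorphism between the completed tensor products. This is routine: $p$ is in each degree a linear map of finite-dimensional graded pieces, and $\A/\A\Sq^1$ is connective and of finite type, so applying $p$ to the $H^*$-factor of an element of $H^*\hat{\otimes}WH^*\subseteq \prod_k H^k\otimes WH^{*-k}$ lands in $\prod_k (\A/\A\Sq^1)^k\otimes WH^{*-k} = \A/\A\Sq^1\hat{\otimes}WH^*$; and $\A$-linearity follows from $\A$-linearity of $p$ together with the fact that $\A$ acts diagonally on both sides through the termwise finite coproduct of $\A$, so the action is compatible with the completion. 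All of the real content sits in the unoriented construction of $\Delta$, whose proof transfers verbatim, and in the standard structure theorem for $H^*(MTSO)$.
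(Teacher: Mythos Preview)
Your proposal is correct and follows essentially the same approach as the paper: the paper simply states that the map $WH^*\to H^*\hat{\otimes}WH^*$ is constructed exactly as in the unoriented case and then composes with the chosen $\A$-linear projection $H^*\to\A/\A\Sq^1$ onto the summand generated by the Thom class. You have spelled out a few routine details (well-definedness of $p\,\hat{\otimes}\,1$ on the completed tensor product) that the paper leaves implicit, but the argument is the same.
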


The sequence $H^* \xrightarrow{\Sq^1} H^* \xrightarrow{\Sq^1} H^*$ is not exact. In fact, the cohomology is a polynomial algebra on one generator in each dimension divisible by 4. This cohomology corresponds to the $\A/\A\Sq^1$ summands of $H^*$. 

In $WH^*$ we do not have a similar phenomenon:

\begin{prop}\label{nosplit}
The sequence $WH^* \xrightarrow{\Sq^1} WH^* \xrightarrow{\Sq^1} WH^*$ is exact.
\end{prop}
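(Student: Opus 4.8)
The plan is to read off the action of $\Sq^1$ on $WH^*$ from Theorem~\ref{coholim} and then check exactness by a short bookkeeping argument; essentially no geometry is involved.

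First I would record the structure. By Theorem~\ref{coholim}, $WH^*$ is the free $H^*$-module on classes $\tilde w_l$, $l\in\Z$, where $H^*\cong\Z/2[w_2,w_3,\dots]$ and $\A$ acts by the Cartan formula; in particular every homogeneous element is a \emph{finite} $H^*$-combination of the $\tilde w_l$. Since $w_1=\bar w_1=0$ in the oriented case, formula~\eqref{Awl} with $k=1$ collapses to $\Sq^1(\tilde w_l)=(l+1)\tilde w_{l+1}$; thus $\Sq^1(\tilde w_{2k})=\tilde w_{2k+1}$ and $\Sq^1(\tilde w_{2k+1})=0$. As $\Sq^1$ is a derivation, for $a\in H^*$ this gives
\begin{equation*}
\Sq^1(a\,\tilde w_{2k})=(\Sq^1 a)\,\tilde w_{2k}+a\,\tilde w_{2k+1},\qquad
\Sq^1(a\,\tilde w_{2k+1})=(\Sq^1 a)\,\tilde w_{2k+1}.
\end{equation*}

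Next I would prove $\Ker\Sq^1\subseteq\Ima\Sq^1$ directly (the reverse inclusion being $\Sq^1\Sq^1=0$). Given homogeneous $x$ with $\Sq^1 x=0$, write $x=\sum_k a_k\tilde w_{2k}+\sum_k b_k\tilde w_{2k+1}$ with $a_k,b_k\in H^*$, a finite sum. Collecting the coefficients of $\tilde w_{2k}$ and of $\tilde w_{2k+1}$ in $\Sq^1 x$ gives
\begin{equation*}
\Sq^1 x=\sum_k(\Sq^1 a_k)\,\tilde w_{2k}+\sum_k(a_k+\Sq^1 b_k)\,\tilde w_{2k+1},
\end{equation*}
so $\Sq^1 x=0$ forces $a_k=\Sq^1 b_k$ for every $k$ (and then $\Sq^1 a_k=0$ automatically). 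Setting $y=\sum_k b_k\tilde w_{2k}$, the first display yields $\Sq^1 y=\sum_k(\Sq^1 b_k)\tilde w_{2k}+\sum_k b_k\tilde w_{2k+1}=x$, so $x\in\Ima\Sq^1$.

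I do not expect a real obstacle. The only points worth isolating are: that $WH^*$ is genuinely the direct sum $\bigoplus_{l\in\Z}H^*\tilde w_l$ with finitely supported elements, so that the decomposition of $x$ and the choice of $y$ are legitimate (this is immediate from the description of $H^*(\MT(d,r))$ in the proof of Theorem~\ref{coholim}); and the conceptual content, namely that although $\Sq^1$ is \emph{not} exact on $H^*$ itself --- its $\Sq^1$-homology being a polynomial algebra on generators in degrees divisible by $4$ --- the extra classes $\tilde w_{2k+1}$ in $WH^*$ force the ``even part'' of any $\Sq^1$-cycle to be $\Sq^1$ of its own ``odd part,'' which is exactly what kills the homology.
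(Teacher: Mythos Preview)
Your proof is correct and is essentially identical to the paper's: you both write $x=\sum a_k\tilde w_{2k}+\sum b_k\tilde w_{2k+1}$, read off from $\Sq^1 x=0$ that $a_k=\Sq^1 b_k$, and observe that $y=\sum b_k\tilde w_{2k}$ satisfies $\Sq^1 y=x$. The only extra content in your write-up is the explicit derivation of $\Sq^1\tilde w_l=(l+1)\tilde w_{l+1}$ from \eqref{Awl} and the remark about finite support, both of which are helpful clarifications.
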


\begin{proof}
Let $x = \sum_i p_{2i}\tilde{w}_{2i} + \sum_i p_{2i+1}\tilde{w}_{2i+1}$ be some element of $WH^*$. If $x \in \Ker(\Sq^1)$, then
\begin{equation*}
0=\Sq^1(x)= \sum_i (\Sq^1(p_{2i}) \tilde{w}_{2i} + p_{2i}\tilde{w}_{2i+1})  + \sum_i \Sq^1(p_{2i+1})\tilde{w}_{2i+1}.
\end{equation*}
This happens precisely if $p_{2i} = \Sq^1(p_{2i+1})$ for all $i$. But then 
\begin{equation*}
x = \sum_i \Sq^1(p_{2i+1})\tilde{w}_{2i} + p_{2i+1}\tilde{w}_{2i+1} = \Sq^1(\sum_i p_{2i+1}\tilde{w}_{2i}),
\end{equation*}
proving the claim.
\end{proof}

However, there are relations. For instance, $\tilde{w}_0$ and $\tilde{w}_4$ are both indecomposable, but
\begin{equation*}
\Sq^{2,1,2}(\tilde{w}_0) = \Sq^1(\tilde{w}_4). 
\end{equation*}
In particular, there is no chance that $WH^*$ splits as a sum of a free module and some $\A/\A\Sq^1$ summands, but we shall see that the $\Ext $-groups behave as if it did. As in the unoriented case, the proof goes by considering $WH^*$ as a module over $\A(n)$.

\begin{lem}\label{sq1}
Suppose that $x_i \in WH^k$ represent linearly independent elements of $WH^k/\A(n) WH^{<k}$ with $\Sq^1(x_i)$ linearly independent in $WH^{k+1}/ \A(n) WH^{<k}$. Then $\sum_i a_i(x_i)\neq 0$ in $WH^*/ \A(n) WH^{<k}$ for any $a_i \in \A(n)$.
\end{lem}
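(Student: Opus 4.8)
The plan is to reformulate the statement as a freeness assertion, reduce to a single generator using the Poincaré-duality structure of $\A(n)$, and then handle that case by means of Proposition~\ref{nosplit}.

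First I would set $N = WH^*/\A(n)WH^{<k}$. Since $\A(n)$ is a subalgebra of $\A$, the subspace $\A(n)WH^{<k}$ is an $\A(n)$-submodule of $WH^*$, so $N$ is an $\A(n)$-module, and $N$ vanishes in degrees $<k$ because $1\in\A(n)$. An arbitrary $\sum_i a_i(x_i)$ splits into homogeneous summands, each of the form $\sum_i a_i'(x_i)$ with all $a_i'\in\A(n)^d$ for a common $d$, so it suffices to prove: if $a_i\in\A(n)^d$ are not all zero then $\sum_i a_i(x_i)\neq 0$ in $N$. Equivalently, the $\A(n)$-linear map $\bigoplus_i\A(n)\to N$ carrying the $i$-th basis element to the class of $x_i$ is injective, i.e.\ the classes $[x_i]$ generate a \emph{free} $\A(n)$-submodule of $N$. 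For $d=0$ this is precisely the first hypothesis and for $d=1$ precisely the second, so we may assume $d\geq 2$.

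Next I would reduce to one generator. Recall that $\A(n)$ is a finite Poincaré-duality algebra: it has a one-dimensional socle $\Z/2\{\sigma\}$ in its top degree $T_n$, and for every nonzero $a\in\A(n)^d$ there is $b\in\A(n)^{T_n-d}$ with $ba=\sigma$. Given a putative relation $\sum_i a_i(x_i)=0$ in $N$ with $a_{i_0}\neq 0$, pick $b$ with $ba_{i_0}=\sigma$. Since $ba_i\in\A(n)^{T_n}=\Z/2\{\sigma\}$ for each $i$, applying $b$ to the relation yields $\sigma\bigl(\sum_{i\in S}x_i\bigr)=0$ in $N$, where $S=\{i:ba_i=\sigma\}\ni i_0$. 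Setting $y=\sum_{i\in S}x_i$, the two hypotheses give that $[y]$ and $[\Sq^1 y]$ are nonzero in $N$, being nonempty subsums of the given linearly independent families. Hence the lemma reduces to the single-generator statement: \emph{if $y\in WH^k$ has $[y]\neq 0$ in $N^k$ and $[\Sq^1 y]\neq 0$ in $N^{k+1}$, then $\sigma y\neq 0$ in $N$}, i.e.\ $\A(n)\cdot[y]$ is free of rank one.

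For this, note that $\A(n)\Sq^1$ is a nonzero left ideal, hence contains the socle, so $\sigma=\tau\Sq^1$ for some $\tau\in\A(n)^{T_n-1}$; moreover $\tau\notin\A(n)\Sq^1$, since $\tau=\tau'\Sq^1$ would force $\sigma=\tau'\Sq^1\Sq^1=0$. Now $\sigma y=\tau(\Sq^1 y)$, and writing $z=\Sq^1 y$ we have $\Sq^1 z=0$ and $[z]\neq 0$ in $N$, so $\A(n)\Sq^1$ lies in the annihilator of $[z]$ and $\A(n)\cdot[z]$ is a quotient of $\A(n)/\A(n)\Sq^1$. If this annihilator is \emph{exactly} $\A(n)\Sq^1$, then $\A(n)\cdot[z]\cong\A(n)/\A(n)\Sq^1$, whence $\tau z\neq 0$ (because $[\tau]\neq 0$ there), giving $\sigma y\neq 0$ as desired. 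So everything comes down to showing that $[z]$ acquires no extra relations in $N$. This is exactly where the special structure of $WH^*$ must be used — for a general finite $\A(n)$-module the claim is false, as $\A(n)/\mathrm{soc}(\A(n))$ shows — and the input should be Proposition~\ref{nosplit}: $z$ lies in the image of $\Sq^1$, and the exactness of $WH^*\xrightarrow{\Sq^1}WH^*\xrightarrow{\Sq^1}WH^*$, together with an induction on $n$ exploiting that $\A$, and hence $WH^*$, is free over each $\A(n-1)$, should rule out any relation on $[z]$ beyond $\Sq^1[z]=0$ being absorbed into $\A(n)WH^{<k}$.

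I expect this last step to be the main obstacle. The Poincaré-duality reduction and the base cases $d\leq 1$ and $n=0$ are formal, but controlling the submodule $\A(n)WH^{<k}$ — showing it does not swallow $\sigma y$ even though $\Sq^1 z=0$ makes $[z]$ superficially resemble an $\A/\A\Sq^1$-generator — genuinely requires Proposition~\ref{nosplit}, and perhaps also the $\A/\A\Sq^1$-comodule structure of Proposition~\ref{comod}, rather than any abstract finiteness property of $\A(n)$-modules.
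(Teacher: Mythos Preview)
Your reduction via Poincar\'e duality of $\A(n)$ is valid and elegant, but the proof is genuinely incomplete: you correctly identify the final step --- showing that the annihilator of $[z]=[\Sq^1 y]$ in $N$ is exactly $\A(n)\Sq^1$ --- as the main obstacle, and you do not carry it out. Your proposed route through Proposition~\ref{nosplit} and an induction on $n$ using freeness over $\A(n-1)$ does not work here: we are in the \emph{oriented} section, where $WH^*$ is not known to be free over $\A(n-1)$ (Lemma~\ref{free} is an unoriented result, and in the oriented case the whole point of this lemma and its sequels is that such freeness fails). Nor does $\Sq^1$-exactness of $WH^*$ by itself descend to the quotient $N=WH^*/\A(n)WH^{<k}$ in any obvious way, so Proposition~\ref{nosplit} alone does not close the gap.

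The paper instead uses Proposition~\ref{comod} directly, with no Poincar\'e-duality detour. The comodule map $WH^*\to\A/\A\Sq^1\,\hat\otimes\,WH^*$ descends to an $\A(n)$-linear map
\[
N\;\longrightarrow\;\A/\A\Sq^1\otimes V,\qquad V=WH^*/(\A(n)WH^{<k}+WH^{>k+1}),
\]
sending $x_i\mapsto 1\otimes[x_i]$, hence $\Sq^1 x_i\mapsto 1\otimes[\Sq^1 x_i]$. Since $V$ lives only in degrees $k$ and $k+1$, the diagonal $\A(n)$-action on $\A/\A\Sq^1\otimes V$ is easy to compute: $\sum_i a_i(x_i)$ maps to $\sum_i(a_i\otimes[x_i]+a_i'\otimes[\Sq^1 x_i])$, and one reads off nonvanishing in both cases (all $a_i\in\A(n)\Sq^1$, or some $a_j\notin\A(n)\Sq^1$) from the linear independence hypotheses. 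This is a two-case, half-page argument. Note that the same map would also finish \emph{your} reduction: with $z=\Sq^1 y\in V^{k+1}$ one gets $a[z]\mapsto a\otimes[z]$, forcing $a\in\A\Sq^1\cap\A(n)=\A(n)\Sq^1$. So your strategy can be rescued, but only by the comodule input you flagged as optional --- and once you use it, the direct argument is shorter.
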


\begin{proof}
The map in Proposition \ref{comod} takes $\A(n)WH^{<k}$  to $\A/\A \Sq^1 \hat{\otimes} \A(n)WH^{<k}$. Thus there is an $\A(n)$-linear map
\begin{equation*}
WH^*/\A(n)WH^{<k} \to \A/\A \Sq^1 {\otimes} WH^*/(\A(n)WH^{<k} + WH^{>k+1}).
\end{equation*}
Here $x_i$ maps to $1 \otimes x_i $ and thus $\Sq^1(x_i)$ maps to $1 \otimes \Sq^1(x_i)$, which is non-zero. 

If all $a_i \in \A(n) \Sq^1$, then $\sum_i a_i(x_i)$ maps to $\sum_i a_i'\otimes \Sq^1(x_i)$ where $a_i=a_i'\Sq^1$ and $a_i' \in \A(n)/\A(n) \Sq^1$ is non-zero (otherwise $a_i=0$). It follows from the assumptions that $\sum_i a_i'\otimes \Sq^1(x_i)$ is non-zero.

If at least one $a_j \notin \A(n) \Sq^1$, then $\sum_i a_i(x_i)$ maps to $\sum_i ( a_i \otimes x_i  + a_i'\otimes \Sq^1(x_i))$ for suitable $a_i'$. This is non-zero because at least one $a_j$ is non-zero in $\A(n)/ \A(n) \Sq^1$ and the $x_i$'s are linearly independent.
\end{proof}

Let $M_n^* = WH^*/\A(n)^{>0} WH^*$ be the space of indecomposables.
Define the subspace 
\begin{equation*}
K^k_n = \Ker(\Sq^1: M_n^k \to WH^{k+1}/ \A(n) WH^{<k} ).
\end{equation*}
Let $D_n^*$ be any complement so that $M_n^* = D_n^* \oplus K_n^*$ and choose a lift $M_n^* \to WH^*$ of the natural projection. This defines a map 
\begin{equation*}
\A(n) \otimes M_n^* \to WH^*.
\end{equation*}
This is surjective because $\A(n)$ is finite.

\begin{lem}\label{modD}
\begin{equation*}
\Tor_{s,t}^{\A(n)} (\Z/2^\vee,WH^*/\A(n) D_n^*) \cong K_n^{t-s}. 
\end{equation*}
The projection $WH^*/\A(n) D_n^* \to K_n^{t-s}$ induces an isomorphism
\begin{equation*}
\Tor_{s,t}^{\A(n)} (\Z/2^\vee,WH^*/\A(n) D_n^*) \to \Tor_{s,t}^{\A(n)}(\Z/2^\vee,K_n^{t-s}).
\end{equation*}
\end{lem}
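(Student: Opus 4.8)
The plan is to reduce the statement to the structural claim that $Q:=WH^*/\A(n)D_n^*$ is, as a graded $\A(n)$-module, a direct sum of suspensions of $\A(n)/\A(n)\Sq^1$ -- one summand $\Sigma^{|y|}\bigl(\A(n)/\A(n)\Sq^1\bigr)$ for each element $y$ of a chosen homogeneous basis of $K_n^*$ -- and then to read off the $\Tor$-groups from this description.

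First I would identify the indecomposables of $Q$. Since $\A(n)^{>0}Q$ is the image of $\A(n)^{>0}WH^*$ and a lift of $D_n^*$ generates $\A(n)D_n^*$, one gets $Q/\A(n)^{>0}Q = WH^*/(\A(n)D_n^*+\A(n)^{>0}WH^*)=M_n^*/D_n^*=K_n^*$. Hence the map $WH^*/\A(n)D_n^*\to K_n^{t-s}$ in the statement is just the projection of $Q$ onto its indecomposables followed by projection to a single degree, and in homological degree $0$ it already realizes $\Tor_{0,t}^{\A(n)}(\Z/2^\vee,Q)=K_n^t$. Fixing a homogeneous section $\lambda\colon K_n^*\to Q$ gives a surjection of $\A(n)$-modules $\A(n)\otimes K_n^*\twoheadrightarrow Q$, surjective because $\A(n)$ is finite, exactly as in the paragraph preceding the lemma.

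The heart of the argument is the structural claim. For $y\in K_n^k$ the definition of $K_n^*$ says that $\Sq^1$ applied to a lift of $y$ to $WH^k$ lies in $\A(n)WH^{<k}$; writing $WH^{<k}=\A(n)\bigl(D_n^{<k}\oplus K_n^{<k}\bigr)$ and using Lemma \ref{sq1} inductively on degree to split off the part generated by the $D$-generators, one corrects $\lambda$ by a decomposable element so that $\Sq^1\lambda(y)$ lands in $\A(n)D_n^*$ and therefore vanishes in $Q$; thus the surjection factors through $\bigl(\A(n)/\A(n)\Sq^1\bigr)\otimes K_n^*\twoheadrightarrow Q$. Showing this is injective -- i.e.\ that $Q$ carries no relations beyond the $\Sq^1$-relations just exhibited -- is the part that needs real work: I would argue by induction on degree, at each stage using Lemma \ref{sq1} to pin down the free contribution of $D_n^*$ and Proposition \ref{nosplit} (exactness of $WH^*\xrightarrow{\Sq^1}WH^*\xrightarrow{\Sq^1}WH^*$) to force every $\A(n)$-relation of $WH^*$ to be built from $\Sq^1$-relations, so that dividing out $\A(n)D_n^*$ leaves exactly $\bigoplus_y\Sigma^{|y|}\A(n)/\A(n)\Sq^1$. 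This is the step I expect to be the main obstacle.

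Granting the structural claim, the remainder is bookkeeping. Because $\A(n)$ is free over $\A(0)$, the exterior algebra on $\Sq^1$ (Milnor--Moore, \cite{moore}), right multiplication by $\Sq^1$ on $\A(n)$ has kernel equal to its image $\A(n)\Sq^1$, so $\A(n)/\A(n)\Sq^1$ admits the periodic free resolution $\cdots\xrightarrow{\Sq^1}\Sigma^2\A(n)\xrightarrow{\Sq^1}\Sigma\A(n)\xrightarrow{\Sq^1}\A(n)\to\A(n)/\A(n)\Sq^1\to 0$; applying $\Z/2^\vee\otimes_{\A(n)}(-)$ annihilates every differential, so $\Tor_{s,t}^{\A(n)}\bigl(\Z/2^\vee,\Sigma^k\A(n)/\A(n)\Sq^1\bigr)=\Z/2$ if $t-s=k$ and $0$ otherwise. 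Summing over the basis of $K_n^*$ gives $\Tor_{s,t}^{\A(n)}(\Z/2^\vee,Q)\cong K_n^{t-s}$. Comparing the periodic resolution with a minimal resolution of $\Z/2$ shows that the generator in homological degree $s$ maps to $h_0^s$ (the $s$-fold product of the class represented by $\Sq^1$), which generates $\Tor_{s,s}^{\A(n)}(\Z/2^\vee,\Z/2)\cong\Z/2$, the stem-$0$ part of $\Ext_{\A(n)}$ being the $h_0$-tower; hence the projection $Q\to K_n^{t-s}$ induces exactly the claimed isomorphism $\Tor_{s,t}^{\A(n)}(\Z/2^\vee,Q)\xrightarrow{\ \sim\ }\Tor_{s,t}^{\A(n)}(\Z/2^\vee,K_n^{t-s})$.
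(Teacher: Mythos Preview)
Your overall plan---reduce to a structural description of $Q=WH^*/\A(n)D_n^*$ and read off $\Tor$---is natural, but it is genuinely different from the paper's argument and your key step has a real gap.

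The paper does \emph{not} attempt to prove that $Q$ splits as $\bigoplus_y \Sigma^{|y|}\A(n)/\A(n)\Sq^1$. Instead it filters $Q$ by the images of $\A(n)K_n^{<l}$, shows (using the coaction map of Proposition~\ref{comod}, not Lemma~\ref{sq1} or Proposition~\ref{nosplit}) that each successive quotient is $\A(n)/\A(n)\Sq^1\otimes K_n^l$, and then runs the long exact sequences of $\Tor$ in the filtration parameter. These break into short exact sequences, and together with vanishing for $l>t-s$ and stabilization for $l\ll 0$ (using finiteness of $\A(n)$) they yield the result. The point is that one never needs to know whether the extensions in the filtration split.

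Your argument, by contrast, hinges on showing that $\Sq^1\lambda(y)=0$ in $Q$ for a suitable lift $\lambda(y)$ of $y\in K_n^k$. But the definition of $K_n^k$ only gives $\Sq^1\hat y\in\A(n)WH^{<k}$, not $\Sq^1\hat y\in\A(n)D_n^*$; the difference is exactly the part of $\A(n)WH^{<k}$ coming from $K_n^{<k}$, and nothing you cite kills it. ``Correcting $\lambda$ by a decomposable'' means finding $c\in\A(n)^{>0}WH^*$ with $\Sq^1 c$ congruent to the $K$-part of $\Sq^1\hat y$ modulo $\A(n)D_n^*$, and neither Lemma~\ref{sq1} (which concerns $D$-type generators, not $K$-type) nor Proposition~\ref{nosplit} (which is an $\A(0)$-statement about $WH^*$, not about $Q$) supplies such a $c$. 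The paper's own example $\Sq^{2,1,2}(\tilde w_0)=\Sq^1(\tilde w_4)$ illustrates the difficulty: if $\tilde w_0$ lies in $K_n^*$ then $\Sq^1(\tilde w_4)$ is a nonzero element of $\A(n)K_n^{<4}$ in $Q$, and you would need to exhibit a decomposable preimage under $\Sq^1$ modulo $\A(n)D_n^*$. Likewise your injectivity step (``every $\A(n)$-relation of $WH^*$ is built from $\Sq^1$-relations'') is asserted rather than argued; this is precisely what the paper avoids by working in the associated graded and with $\Tor$ directly. If you want to salvage your approach, the natural move is the paper's: replace the global splitting claim by the filtration, prove the injectivity of $\A(n)/\A(n)\Sq^1\otimes K_n^l\hookrightarrow Q/\A(n)K_n^{<l}$ via the coaction $WH^*\to \A/\A\Sq^1\,\hat\otimes\,WH^*$, and assemble the answer from the resulting long exact sequences.
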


\begin{proof}
We claim that the composition 
\begin{align*}
\A(n) \otimes K_n^l  \to WH^* \to WH^*/\A(n)(D_n^* \oplus K_n^{<l})
\end{align*}
 induces an injective $\A(n)$-homomorphism
\begin{equation*}
\A(n)/\A(n) \Sq^1 \otimes K_n^l \to WH^*/\A(n)(D_n^* \oplus K_n^{<l}).
\end{equation*}

Clearly, $\A(n)\Sq^1 \otimes K_n^l$ is in the kernel of the map by construction of $K_n^l$. On the other hand, assume $\sum a_i\otimes k_i$ maps to zero, i.e.\ there is a relation
\begin{equation}\label{summer}
\sum_i a_i(k_i) = \sum_j b_j(k_j) + \sum_m c_m(d_m)
\end{equation}
for some $k_i \in K_n^l$, $k_j\in K_n^{<l}$, $d_m \in D_n^*$, and $a_i,b_j,c_m \in \A(n)$.
Note that there can be no $d_m $ of dimension greater than $l$ by Lemma \ref{sq1}.

Consider the map induced by the one in Proposition \ref{comod}
\begin{equation*}
WH^* \to (\A/ \A \Sq^1) \otimes  M_n^{l}.
\end{equation*}
Then the first sum in \eqref{summer} is mapped to $\sum_i a_i \otimes k_i $, the second sum is mapped to 0, and the third sum is mapped to
\begin{equation*}
\sum_{ \dim d_m = l} c_m \otimes d_m .
\end{equation*}
%The dots represent terms $d_m' \otimes c_m'$ where $d_m' $ has strictly lower dimension than $d_m$.  
Thus we get the equality $\sum_i a_i \otimes k_i = \sum_{ \dim d_m = l} c_m \otimes d_m$. But $K_n^*\cap D_n^* =0$, so this can only happen if $a_i , c_l \in \A \Sq^1$. But then $a_i\Sq^1 =0$ which implies that $a_i \in \A(n) \Sq^1$ since $\A(n)$ is free over $\A(0)$. This proves the claim.

It follows that we have a short exact sequence of $\A(n)$-modules
\begin{equation*}
\A(n)/\A(n) \Sq^1 \otimes K^l_n  \to WH^*/ \A(n)(D_n^*\oplus K_n^{<l}) \to WH^*/ \A(n)(D_n^*\oplus K_n^{< l+1}).
\end{equation*}
Introducing the notation
\begin{equation*}
T_{s,t}^l = \Tor_{s,t}^{\A(n)}(\Z/2^{\vee},WH^*/ \A(n)(D_n^*\oplus K_n^{<l})), 
\end{equation*}
this yields a long exact sequence of $\Tor$-groups for fixed $t$ and $l$
%This induces a long exact sequence of $\Tor $-groups
\begin{align}\label{torexact}
\dotsm \to \Tor_{s,t}^{\A(n)}(\Z/2^{\vee},\A(n)/\A(n)\Sq^1 &\otimes K^l_n )  \to T_{s,t}^l \to T_{s,t}^{l+1} \\
\to &
 \Tor_{s-1,t}^{\A(n)}(\Z/2^{\vee},\A(n)/\A(n)\Sq^1 \otimes K^l_n )\to \dotsm. \nonumber 
\end{align}
%\begin{align}\label{torexact}
%\dots &\to \Tor_{s,t}^{\A(n)}(\Z/2^{\vee},\A(n)/\A(n)\Sq^1 \otimes K^l_n )\nonumber \\
%&\qquad \qquad\to \Tor_{s,t}^{\A(n)}(\Z/2^{\vee},WH^*/ \A(n)(D_n^*\oplus K_n^{<l})) \\ 
%  &\qquad \qquad \qquad \qquad\to \Tor_{s,t}^{\A(n)}(\Z/2^{\vee},WH^*/ \A(n) (D_n^*\oplus K_n^{< l+1}))\nonumber \\ 
%   &\qquad \qquad \qquad \qquad \qquad \qquad \to \Tor_{s-1,t}^{\A(n)}(\Z/2^{\vee},\A(n)/\A(n)\Sq^1 \otimes K^l_n )\to \dotsm. \nonumber
%\end{align}
Note that 
\begin{equation*}
\Tor_{s,t}^{\A(n)}(\Z/2^{\vee}, \A(n)/\A(n)\Sq^1 \otimes  K^l_n) =\begin{cases} K_n^l &\textrm{ for } s=t-l,\\
0 &\textrm{ otherwise,}
\end{cases}
\end{equation*}
as one can see directly from a resolution, using the fact that $\A(n)$ is free over $\A(0)$. A direct construction of resolutions also shows that the composition
\begin{align}
\Tor_{s,t}^{\A(n)}(\Z/2^{\vee},\A(n)/\A(n)\Sq^1 \otimes K^{t-s}_n) \to  T_{s,t}^{t-s} %\Tor_{s,t}^{\A(n)}(\Z/2^{\vee},WH^*/ \A(n)(D_n^*\oplus K_n^{<l}))\nonumber \\
 \to \Tor_{s,t}^{\A(n)}(\Z/2^{\vee},K_n^{t-s}) \cong  K_n^{t-s}\label{compiso}
\end{align}
is an isomorphism. In particular, the first map must be injective. Hence the long exact sequence \eqref{torexact} breaks up into short exact sequences. When $s=t-l$, this is the sequence
\begin{align}\label{exactK}
0 \to \Tor_{s,t}^{\A(n)}(\Z/2^{\vee},\A(n)/\A(n)\Sq^1 \otimes K^{t-s}_n) \to T_{s,t}^{t-s} \to T_{s,t}^{t-s+1} \to 0,
\end{align}
%\begin{align}\label{exactK}
%0&\to \Tor_{s,t}^{\A(n)}(\Z/2^{\vee},\A(n)/\A(n)\Sq^1 \otimes K^l_n)\\
% &\qquad \qquad \to \Tor_{s,t}^{\A(n)}(\Z/2^{\vee},WH^*/ \A(n)(D_n^*\oplus K_n^{<l}))\nonumber\\
% &\qquad \qquad \qquad \qquad\to \Tor_{s,t}^{\A(n)}(\Z/2^{\vee},WH^*/ \A(n) (D_n^*\oplus K_n^{< l+1})) \to 0,\nonumber 
%\end{align}
while for all other $s$, we get isomorphisms
\begin{equation}\label{toriso}
T_{s,t}^l \cong T_{s,t}^{l+1}.
\end{equation}
%\begin{gather}\label{toriso}
%\begin{split}
%\Tor_{s,t}^{\A(n)}&(\Z/2^{\vee},WH^*/ \A(n)(D_n^*\oplus K_n^{<l}))\\
% &\cong \Tor_{s,t}^{\A(n)}(\Z/2^{\vee},WH^*/ \A(n) (D_n^*\oplus K_n^{< l+1})).
%\end{split}
%\end{gather}

We now wish to compute $\Tor_{s,t}^{\A(n)}(\Z/2^{\vee},WH^*/ \A(n) D_n^*)$ for $s$ and $t$  fixed. 
For $l> t-s$,
%\begin{equation*}
$T_{s,t}^l =0$
%\Tor_{s,t}^{\A(n)}(\Z/2^{\vee},WH^*/ \A(n)(D_n^*\oplus K_n^{<l})) =0
%\end{equation*}
because $WH^*/\A(n)(D_n^*\oplus K_n^{<l})$ is zero below degree $l$. Thus, \eqref{exactK} and \eqref{toriso}  yield isomorphisms for all $l\leq t-s$
\begin{equation}\label{t-l-s}
\Tor_{s,t}^{\A(n)}(\Z/2^{\vee},\A(n)/\A(n)\Sq^1 \otimes K^{t-s}_n) \cong T_{s,t}^{t-s} \cong T_{s,t}^{l}. %\Tor_{s,t}^{\A(n)}(\Z/2^{\vee},WH^*/ \A(n)(D_n^*\oplus K_n^{<t-s})).
\end{equation}

Furthermore,
\begin{equation}\label{isotor}
\Tor_{s,t}^{\A(n)}(\Z/2^{\vee},WH^*/ \A(n) D_n^*) \to T_{s,t}^l%\Tor_{s,t}^{\A(n)}(\Z/2^{\vee},WH^*/ \A(n) (D_n^*\oplus K_n^{<l}))
\end{equation} 
is an isomorphism for $l$ sufficiently small. More precisely, the kernel of
\begin{equation*}
WH^*/  \A(n) D_n^* \to WH^* / \A(n)(D_n^*\oplus K_n^{<l})
\end{equation*}
is zero above dimension $l + \dim \A(n)$ and thus one can choose a resolution such that the $s$th term is zero above dimension $l + (s+1)\dim \A(n)$. So if $l$ is so small that $t>l + (s+1)\dim \A(n)$, the $\Tor$-groups of the kernel vanish, and the long exact sequence of $\Tor $-groups yields the isomorphism. 

In combination with \eqref{t-l-s}, \eqref{isotor} shows that the first map in
\begin{equation*}
\Tor_{s,t}^{\A(n)}(\Z/2^{\vee},WH^*/ \A(n) D_n^*) \to T_{s,t}^{t-s}%\Tor_{s,t}^{\A(n)}(\Z/2^{\vee},WH^*/ \A(n)(D_n^*\oplus K_n^{<t-s}))
\to \Tor_{s,t}^{\A(n)}(\Z/2^{\vee}, K_n^{t-s})\cong K_n^{t-s}
\end{equation*}
%\begin{equation*}
%\begin{split}
%\Tor_{s,t}^{\A(n)}(\Z/2^{\vee},WH^*/ \A(n) D_n^*) &\to \Tor_{s,t}^{\A(n)}(\Z/2^{\vee},WH^*/ \A(n)(D_n^*\oplus K_n^{<t-s}))\\
% &\to \Tor_{s,t}^{\A(n)}(\Z/2^{\vee}, K_n^{t-s})\cong K_n^{t-s}
%\end{split}
%\end{equation*} 
is an isomorphism, and the second is an isomorphism by \eqref{compiso} and \eqref{t-l-s}.
\end{proof}

\begin{lem}\label{torK}
\begin{equation*}
\Tor_{s,t}^{\A(n)} (\Z/2^\vee,WH^*) \cong 
\begin{cases}
M_n^{t-s} &\textrm{ for } s=0,\\
K_n^{t-s} &\textrm{ for } s>0.
\end{cases}
\end{equation*}
For all $s>0$, the projection $WH^*\to K_n^{t-s}$ induces an isomorphism 
\begin{equation*}
\Tor_{s,t}^{\A(n)}(\Z/2^{\vee},WH^*) \to \Tor_{s,t}^{\A(n)}(\Z/2^{\vee}, K_n^{t-s}).
\end{equation*}
\end{lem}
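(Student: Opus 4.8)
The plan is to exhibit $WH^*$ as an extension of $WH^*/\A(n)D_n^*$ by a free $\A(n)$-module and then feed Lemma~\ref{modD} through the resulting long exact $\Tor$-sequence. For a fixed internal degree $t$ write $T_s(N)=\Tor_{s,t}^{\A(n)}(\Z/2^\vee,N)$.

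The heart of the matter is to show that the chosen lift $M_n^*\to WH^*$ makes the map $\A(n)\otimes D_n^*\to WH^*$ injective, so that $\A(n)D_n^*\cong\A(n)\otimes D_n^*$ is a \emph{free} $\A(n)$-module. Suppose $\sum_{k,i}a_{k,i}(d_{k,i})=0$ is a homogeneous relation in which, for each $k$, the $d_{k,i}$ form a basis of $D_n^k$ and $a_{k,i}\in\A(n)$, not all zero; let $k_0$ be the largest $k$ with some $a_{k_0,i}\neq0$. Every term with $k<k_0$ lies in $\A(n)WH^{<k_0}$, so modulo $\A(n)WH^{<k_0}$ we obtain $\sum_i a_{k_0,i}(d_{k_0,i})\equiv0$. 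But $WH^{k_0}/\A(n)WH^{<k_0}=M_n^{k_0}$, so the $d_{k_0,i}$ are linearly independent there, and since $D_n^{k_0}$ is a complement of $K_n^{k_0}=\Ker(\Sq^1)$ the elements $\Sq^1(d_{k_0,i})$ are linearly independent in $WH^{k_0+1}/\A(n)WH^{<k_0}$; Lemma~\ref{sq1} then forces $\sum_i a_{k_0,i}(d_{k_0,i})\neq0$ in $WH^*/\A(n)WH^{<k_0}$, a contradiction. Hence $\A(n)D_n^*$ is free, so $T_s(\A(n)D_n^*)=0$ for $s>0$ and $T_0(\A(n)D_n^*)=D_n^t$.

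Next, apply $\Tor$ to the short exact sequence $0\to\A(n)D_n^*\to WH^*\to WH^*/\A(n)D_n^*\to0$. For $s\geq2$ freeness gives $T_s(WH^*)\cong T_s(WH^*/\A(n)D_n^*)$ at once. For $s=1$ there is an exact sequence
\begin{align*}
0\to T_1(WH^*)&\to T_1(WH^*/\A(n)D_n^*)\xrightarrow{\partial}D_n^t\\
&\to M_n^t\to T_0(WH^*/\A(n)D_n^*)\to0,
\end{align*}
and the map $D_n^t\to M_n^t$ is $\Z/2\otimes_{\A(n)}(-)$ applied to $\A(n)D_n^*\hookrightarrow WH^*$, which is the inclusion $D_n^t\hookrightarrow D_n^t\oplus K_n^t=M_n^t$ because the lift splits the projection $WH^*\to M_n^*$; being injective it forces $\partial=0$, so $T_1(WH^*)\xrightarrow{\sim}T_1(WH^*/\A(n)D_n^*)$. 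Finally $T_0(WH^*)=\Z/2\otimes_{\A(n)}WH^*=M_n^t$, the $s=0$ case of the lemma.

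Combining with Lemma~\ref{modD}, which identifies $T_s(WH^*/\A(n)D_n^*)\cong K_n^{t-s}$ compatibly with the projection onto $K_n^{t-s}$, yields $T_s(WH^*)\cong K_n^{t-s}$ for all $s>0$. For the last assertion, the projection $WH^*\to K_n^{t-s}$ factors as $WH^*\to WH^*/\A(n)D_n^*\to K_n^{t-s}$; on $T_s$ with $s>0$ the first arrow is the isomorphism just produced and the second is the one from Lemma~\ref{modD}, so their composite is the desired isomorphism $T_s(WH^*)\to T_s(K_n^{t-s})$. The step I expect to require real care is the freeness of $\A(n)D_n^*$: namely verifying that a general mixed-degree $\A(n)$-linear relation among the lifts of $D_n^*$ really does reduce, in top degree modulo $\A(n)WH^{<k_0}$, to the single-degree hypothesis of Lemma~\ref{sq1} — in particular that $WH^{k_0}/\A(n)WH^{<k_0}$ coincides with $M_n^{k_0}$ and that $\Sq^1$ descends to the quotients as needed.
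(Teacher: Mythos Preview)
Your proof is correct and follows essentially the same route as the paper: establish injectivity of $\A(n)\otimes D_n^*\to WH^*$ via Lemma~\ref{sq1}, then run the long exact $\Tor$-sequence for $0\to\A(n)D_n^*\to WH^*\to WH^*/\A(n)D_n^*\to0$ and invoke Lemma~\ref{modD}. You spell out the injectivity argument and the $s=1$ boundary-map analysis in more detail than the paper does (it simply asserts ``By Lemma~\ref{sq1}, the map $\A(n)\otimes D_n^*\to WH^*$ is injective'' and ``The $s=0$ part is the short exact sequence $0\to D_n^*\to M_n^*\to K_n^*\to0$''), but the structure is the same, and your verification that $(WH^{k_0}/\A(n)WH^{<k_0})^{k_0}=M_n^{k_0}$ and that $\Sq^1|_{D_n^{k_0}}$ is injective into $WH^{k_0+1}/\A(n)WH^{<k_0}$ is exactly what is needed to feed into Lemma~\ref{sq1}.
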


\begin{proof}
By Lemma \ref{sq1}, the map 
\begin{equation*}
\A(n) \otimes D_n^*  \to WH^*
\end{equation*}
is injective, so there is a long exact sequence
\begin{align*}
\to \Tor_{s,t}^{\A(n)}(&\Z/2^{\vee},\A(n)\otimes D_n^*) \to \Tor_{s,t}^{\A(n)}(\Z/2^{\vee},WH^*)\\
 \to &\Tor_{s,t}^{\A(n)}(\Z/2^{\vee},WH^*/ (\A(n)\otimes D_n^* )) \to \Tor_{s-1,t}^{\A(n)}(\Z/2^{\vee},\A(n) \otimes D_n^*) \to.
\end{align*}
The $s=0$ part is the short exact sequence $0\to D_n^* \to M_n^* \to K_n^*\to 0$, and for $s>0$, we get isomorphisms
\begin{equation*}
 \Tor_{s,t}^{\A(n)}(\Z/2^{\vee},WH^*) \to \Tor_{s,t}^{\A(n)}(\Z/2^{\vee},WH^*/ (\A(n)\otimes D_n^*)).
\end{equation*}
The claim now follows from Lemma \ref{modD}.
\end{proof}

Define
\begin{align*}
M^*=&WH^*/\A^{>0} WH^*\\
K^k=&\Ker(\Sq^1: M^k \to WH^* / \A WH^{<k}).
\end{align*}
The projection $M^*_n \to M_{n+1}^*$ takes $K_n^*$ to $K_{n+1}^*$. Clearly, 
\begin{align*}
M^* &= \varinjlim_n M_n^*\\
K^* &= \varinjlim_n K_n^*. 
\end{align*}

We can now describe the $\Ext$ groups of $WH^*$. To determine extensions, recall the multiplicative structure of Adams spectral sequences.  
Let $h_0 \in \Ext^{1,1}_{\A}(\Z/2,\Z/2)$ be the generator.  Then there is a map $\Ext^{s,t}_{\A}(WH^*, \Z/2) \to \Ext^{s+1,t+1}_{\A}(WH^*, \Z/2)$ given by multiplication $h_0$. See e.g.\ \cite{mccleary} for the definition. 

\begin{thm}
\begin{align*}
\Tor^{s,t}_{\A}(\Z/2^\vee,WH^*)&\cong
\begin{cases}
M^t &\textrm{ for }s=0,\\
K^{t-s} &\textrm{ for } s>0.
\end{cases}\\
\Ext^{s,t}_{\A}(WH^*, \Z/2)&\cong 
\begin{cases}
(M^t)^\vee & \textrm{ for }s=0,\\
(K^{t-s})^\vee & \textrm{ for }s>0.
\end{cases}
\end{align*}
For $s>0$, 
\begin{equation*}
h_0^s: \Ext^{0,t-s}_{\A}(WH^*, \Z/2) \to \Ext^{s,t}_{\A}(WH^*, \Z/2)
\end{equation*}
is the projection $(M^{t-s})^\vee \to (K^{t-s})^\vee$.
\end{thm}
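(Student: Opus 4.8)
The plan is to obtain the first two formulas by passing to the limit in Lemma~\ref{torK}, and then to read off the $h_0$-action from a minimal free resolution. For the $\Tor$-computation I would argue exactly as in the unoriented case: since $\A=\bigcup_n\A(n)$ one has a natural isomorphism $\Tor^\A_{s,t}(\Z/2^\vee,WH^*)\cong\varinjlim_n\Tor^{\A(n)}_{s,t}(\Z/2^\vee,WH^*)$ (see \cite{adams}). The isomorphisms of Lemma~\ref{torK} are induced by the natural projections $WH^*\to M_n^t$ (for $s=0$) and $WH^*\to K_n^{t-s}$ (for $s>0$), which are compatible with the structure maps $M_n^*\to M_{n+1}^*$, $K_n^*\to K_{n+1}^*$ and with the change-of-rings maps, so passing to the colimit identifies $\Tor^\A_{0,t}(\Z/2^\vee,WH^*)\cong M^t$ and $\Tor^\A_{s,t}(\Z/2^\vee,WH^*)\cong K^{t-s}$ for $s>0$, with $M^*=\varinjlim_n M_n^*$ and $K^*=\varinjlim_n K_n^*$. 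Since $WH^*$ is finite in each degree, dualizing via $\Ext^{s,t}_\A(WH^*,\Z/2)\cong\Hom_{\Z/2}(\Tor^\A_{s,t}(\Z/2^\vee,WH^*),\Z/2)$ (\cite{adams}, Lemma~4.3) then gives the two $\Ext$-formulas.

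For the statement about $h_0$ it is enough to handle the single multiplication $h_0\colon\Ext^{s,t}_\A(WH^*,\Z/2)\to\Ext^{s+1,t+1}_\A(WH^*,\Z/2)$ and compose. By the $\Tor$-computation a minimal free $\A$-resolution $\cdots\to P_1\to P_0\to WH^*\to 0$ has $P_0$ free on a graded basis of $M^*$ and $P_s$ free on a graded basis of $\Sigma^s K^*$ for $s\ge 1$; for a minimal resolution, multiplication by $h_0$ on $\Ext^*_\A(WH^*,\Z/2)=\Hom_\A(P_\bullet,\Z/2)$ is the transpose of the $\Sq^1$-component of the differential $d_{s+1}\colon P_{s+1}\to P_s$ (see e.g.\ \cite{mccleary}). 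So the claim reduces to: the $\Sq^1$-component of $d_1$ realizes the inclusion $K^*\hookrightarrow M^*$, and the $\Sq^1$-component of $d_{s+1}$ is, under the evident reindexing, the identity of $\Sigma^{s+1}K^*$ for $s\ge 1$. The first of these is essentially the definition of $K^*$: a degree-$(l+1)$ relation among the chosen generators of $WH^*$ of the form $\Sq^1\tilde m+(\text{terms }a\otimes m'\text{ with }\deg a\ge 2)=0$ exists exactly when $\Sq^1\tilde m$ is decomposable, i.e.\ when $m\in K^l=\Ker(\Sq^1\colon M^l\to WH^*/\A WH^{<l})$, so choosing the generators of $P_1$ accordingly makes $h_0\colon(M^{t-1})^\vee\to(K^{t-1})^\vee$ the dual of $K^*\hookrightarrow M^*$, i.e.\ the projection. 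The higher cases follow from the analogous bookkeeping, using that $\Tor_s$ and $\Tor_{s+1}$ are both shifts of $K^*$ and that, by minimality together with the $\Tor$-computation, the only relations among the degree-$s$ generators are the $\Sq^1$-relations inherited from the previous step. Composing, $h_0^s\colon\Ext^{0,t-s}_\A(WH^*,\Z/2)=(M^{t-s})^\vee\to\Ext^{s,t}_\A(WH^*,\Z/2)=(K^{t-s})^\vee$ is the projection.

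I expect the main obstacle to be that last point, namely controlling the $\Sq^1$-components of the higher differentials $d_{s+1}$ $(s\ge 1)$ and ruling out interfering extra $\Sq^1$-terms. A cleaner alternative, if this becomes awkward, is to reuse the filtration from the proof of Lemma~\ref{torK}: over each $\A(n)$ it exhibits $WH^*$ as built from shifted copies of $\A(n)$ and of $\A(n)/\A(n)\Sq^1$, and since $h_0$ acts on $\Ext_\A(\A/\A\Sq^1,\Z/2)\cong\Z/2[h_0]$ as an isomorphism in each homological degree, the remaining task is to check that this $h_0$-action is compatible with the associated filtration spectral sequence and survives to $E_\infty$ before one passes to the colimit over $n$ — which is where the genuine work would sit.
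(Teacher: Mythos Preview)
Your colimit argument for $\Tor$ has a gap that the paper's proof actually works to close. The isomorphism in Lemma~\ref{torK} is stated via ``the projection $WH^*\to K_n^{t-s}$'', but $K_n^{t-s}$ is a \emph{subspace} of $M_n^{t-s}$, so any such projection depends on a choice of complement $D_n^*$, and there is no reason these choices can be made compatibly as $n$ varies. The paper fixes this by proving a choice--free statement: the image of the injection
\[
\Tor_{s,t}^{\A(n)}(\Z/2^\vee,WH^*)\hookrightarrow \Tor_{s,t}^{\A(n)}(\Z/2^\vee,M_n^{t-s})\cong M_n^{t-s}
\]
(induced by the \emph{canonical} quotient $WH^*\to M_n^{t-s}$) is exactly $K_n^{t-s}$. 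This uses the freedom to choose $D_n^{t-s}$: if some $x$ in the image were not in $K_n^{t-s}$, one could put $x\in D_n^{t-s}$ and contradict the isomorphism of Lemma~\ref{torK}. Once you have this, compatibility with $M_n^{t-s}\to M_{n+1}^{t-s}$ is automatic and the colimit is $K^{t-s}$, as you wanted.

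For the $h_0$--action your minimal--resolution approach runs into a more serious problem: $WH^*$ is not bounded below, and the existence of a minimal free $\A$--resolution with $P_0=\A\otimes M^*$ is exactly what fails in this situation (this is why the paper consistently works over the finite $\A(n)$). The paper sidesteps all of your $\Sq^1$--bookkeeping by a much shorter device that you already have in hand once the image--in--$M_n^{t-s}$ statement above is proved. Namely, the canonical $\A$--linear projection $WH^*\to M^{t-s}$ (trivial action on the target) induces
\[
\Ext^{s,t}_\A(M^{t-s},\Z/2)\longrightarrow \Ext^{s,t}_\A(WH^*,\Z/2),
\]
and this map commutes with $h_0$--multiplication. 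Since $M^{t-s}$ is a sum of copies of $\Sigma^{t-s}\Z/2$, one has $\Ext^{s,t}_\A(M^{t-s},\Z/2)=h_0^s\cdot(M^{t-s})^\vee$ with $h_0^s$ an isomorphism there. Dualizing the $\Tor$ statement (image equals $K^{t-s}\subset M^{t-s}$) identifies the displayed map at $(s,t)$ with the projection $(M^{t-s})^\vee\to(K^{t-s})^\vee$ and at $(0,t-s)$ with the identity; chasing the square gives $h_0^s=(M^{t-s})^\vee\to(K^{t-s})^\vee$ directly, with no resolution of $WH^*$ needed.
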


\begin{proof}
We first determine $\Tor_{s,t}^{\A}(\Z/2^\vee,WH^*)$. Obviously, $\Tor_{0,t}^{\A}(\Z/2^\vee,WH^*) \cong M^t$ by definition. For higher $s$, we must compute  $\varinjlim_n \Tor_{s,t}^{\A(n)}(\Z/2^\vee,WH^*)$, so we need to see what the maps in the direct system look like.

Suppose $M_n^* = D_n^* \oplus K_n^*$ is any splitting and that we have chosen $M_n^* \to WH^*$.
Look at the composite map
\begin{equation*}
WH^* \to M_n^{t-s} \to K_n^{t-s}.
\end{equation*}
This induces a diagram on $\Tor$-groups
\begin{equation*}
\xymatrix{
{\Tor_{s,t}^{\A(n)}(\Z/2^\vee,WH^*)}\ar[r]& {\Tor_{s,t}^{\A(n)}(\Z/2^\vee,M_n^{t-s})}\ar[r]\ar[d]^{\cong} &{\Tor_{s,t}^{\A(n)}(\Z/2^\vee,K_n^{t-s})}\ar[d]^{\cong}\\
{}&{M_n^{t-s}}\ar[r]&{K_n^{t-s}.}
}
\end{equation*}
The composition $\psi: \Tor_{s,t}^{\A(n)}(\Z/2^\vee,WH^*)\to K_n^{t-s}$ is an isomorphism by Corollary~\ref{torK}.

We claim that the image $K_n'$ of ${\Tor_{s,t}^{\A(n)}(\Z/2^\vee,WH^*)}$ in $M_n^{t-s}$ is exactly the subspace ${K_n^{t-s}}$.
Indeed, assume that $x\in K_n'$ is not contained in $K_n^{t-s}$. Hence we may choose $D_n^{t-s}$ such that $x\in D_n^{t-s}$. Then $\psi(x)=0$, which is a contradiction. Thus $K_n' \subseteq K_n^{t-s}$, and by surjectivity of $\psi$ they must be equal. 

The composition $WH^* \to M_n^{t-s} \to M_{n+1}^{t-s}$ yields a diagram of $\Tor$-groups
\begin{equation*}
\xymatrix{{\Tor_{s,t}^{\A(n)}(\Z/2^\vee,WH^*)}\ar[r]\ar[d]&{\Tor_{s,t}^{\A(n)}(\Z/2^\vee,M_{n}^{t-s})}\ar[d]\\
{\Tor_{s,t}^{\A(n+1)}(\Z/2^\vee,WH^*)}\ar[r]&{ \Tor_{s,t}^{\A(n+1)}(\Z/2^\vee,M_{n+1}^{t-s}).}
}
\end{equation*}
Here the right vertical map is the projection $M_{n}^{t-s}\to M_{n+1}^{t-s}$ and the left vertical map is the map of subspaces $K_n \to K_{n+1}$. 

Taking the direct limit over $n$ shows that the injection
\begin{equation*}
\Tor_{s,t}^{\A}(\Z/2^\vee,WH^*) \to \Tor_{s,t}^{\A }(\Z/2^\vee,M^{t-s})
\end{equation*}
has image exactly $K^{t-s}$. 
%Furthermore, if we choose a projection $M^*\to K^{t-s}$ this induces an isomorphism 
%\begin{align*}
%\Tor_{s,t}^{\A}(\Z/2^\vee,WH^*) \to \Tor_{s,t}^{\A_{n}}(\Z/2^\vee,K^{t-s}).
%\end{align*}

Dualizing, we see that
\begin{equation}\label{extM}
\Ext^{s,t}_{\A}(M^{t-s}, \Z/2) \to \Ext^{s,t}_{\A}(WH^*, \Z/2)
\end{equation}
is an isomorphism for $s=0$ and is exactly the map $(M^{t-s})^\vee \to (K^{t-s})^\vee$ otherwise.
But the map \eqref{extM} commutes with multiplication by $h_0$, and
\begin{equation*}
\Ext^{s,t}_{\A}(M^{t-s}, \Z/2) \cong h_0^s\Ext^{0,t-s}_{\A}(M^{t-s}, \Z/2)=h_0^s (M^{t-s})^\vee.
\end{equation*}
\end{proof}

%The above computes the $E_2$-term of the Adams spectral sequence in Theorem~\ref{invadams}. As opposed to the unoriented case, there may be differentials in this spectral sequence. A more explicit description of the spaces $M^*$ and $K^*$ would give a better picture of the spectral sequence. So far, we do not have any general results in this direction.

The above yields a first description of the $E^2$-term of the spectral sequence. However, we would like a more explicit description of $M^*$ and $K^*$, as in the unoriented case.

 A direct computation shows that $\Hom_\A(H^*,\Z/2)$ contains the element $\xi_2^2 \in \Hom_\A^4(H^*(MTO),\Z/2)$ that takes the value 1 on both $w_4$ and $w_2^2$. The proof of Theorem \ref{xi2} carries over to show that:
\begin{prop}
The homomorphism $\xi_2^2$ is invertible in
$\Hom_\A(WH^*,\Z/2)$ and this becomes a module over $\Hom_\A(H^*,\Z/2)[\xi_2^{-2}]$.
\end{prop}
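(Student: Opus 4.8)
The plan is to transcribe the proof of Theorem~\ref{xi2} to the oriented setting, with $\xi_2^2$ in the role previously played by $\xi_2$. First one checks, as asserted just above the proposition, that $\xi_2^2$ really does define an element of $\Hom_\A^4(H^*(MTSO),\Z/2)$: since the $\A$-action on $\Z/2$ is trivial, $\A$-linearity amounts to $\xi_2^2$ vanishing on $\A^{>0}H^*(MTSO)$, and because $\bar{u}w_4$ and $\bar{u}w_2^2$ span $H^4(MTSO)$ while $\Sq^4(\bar{u})=\bar{w}_4=(w_2^2+w_4)\bar{u}$, assigning the value $1$ to both generators is consistent with all the Steenrod relations in degree $4$. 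The structural novelty in the oriented case is then purely a $2$-adic bookkeeping matter. To define $\xi_2^{-2n}:=(\xi_2^2)^{-n}$ on a class $p_i\tilde{w}_{-i-4n}\in WH^{-4n}$, with $p_i\in H^i(BSO)$, one represents it by $p_iw_{ka_r-i-4n}\in H^{ka_r-4n}(MTSO(d+ka_r,r))$ in the direct system~\eqref{navn} and applies the $(k\tfrac{a_r}{4}-n)$-fold direct-sum map $BSO\times\dotsm\times BSO\to BSO$ followed by $\xi_2^2$ on each tensor factor. This requires $4\mid ka_r$; since the $2$-adic valuations $\nu_2(a_r)$ tend to $\infty$ (from the table $4\mid a_r$ for $r\geq 3$, and $a_{r+8}=16a_r$), this holds for $r$ large, and the stronger divisibility $2^N\mid a_r$ for $r\gg 0$ is exactly what is needed below.

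Next I would carry over the verification that $\xi_2^{-2n}$ is well defined and $\A$-linear. The analogue of formula~\eqref{xiformel} is actually simpler than the unoriented one, because $H^*(BSO)$ has no degree-one generator and hence there is no $\xi_1$-contribution: $(\xi_2^2)^{(k\frac{a_r}{4}-n)}$ evaluated on $p_i\tilde{w}_{ka_r-i-4n}$ equals $\binom{k\frac{a_r}{4}-n}{i/4}$ times the value obtained by applying $\xi_2^2$ in each factor of the iterated diagonal of $p_i$ when $4\mid i$, and is $0$ otherwise. The binomial coefficient depends only on $k\frac{a_r}{4}-n$ modulo $2^N$ for any $2^N>i$, and $2^{N+2}\mid a_r$ for $r$ large, so it stabilizes to $\binom{-n}{i/4}\bmod 2$; the remaining $H^0(BSO)$-factors of the diagonal of $p_i$ contribute nothing once $ka_r>i$. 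Hence the definition is independent of the choice of $ka_r$ for $r$ large, and $\xi_2^{-2n}$ is an $\A$-homomorphism because each $(\xi_2^2)^{(k\frac{a_r}{4}-n)}$ is one on $H^*(MTSO(d+ka_r,r))$.

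Finally, the multiplicative relation $\xi_2^{2m}\cdot\xi_2^{-2n}=\xi_2^{2(m-n)}$ follows verbatim from the argument in Theorem~\ref{xi2}: one uses the commuting square built from the direct-sum maps $MTSO(d')\wedge MTSO(d+2^N,r)\to MTSO(d'+d+2^N,r)$ and the coproduct $WH^*\to H^*\hat{\otimes}WH^*$ constructed as in the unoriented case, together with the identity $(\xi_2^2)^{(2^{N-2}+m-n)}(p_iw_{2^N+k})=(\xi_2^2)^{(m-n)}(p_iw_k)$, which holds by definition for $m-n<0$ and by the formula above for $m-n\geq 0$ (the degree shift $2^N$ is absorbed by $2^{N-2}$ copies of the degree-$4$ class $\xi_2^2$, so $N\geq 2$ suffices). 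The corresponding linear-independence statement, and therefore the extension of the $\Hom_\A^*(H^*,\Z/2)$-action on $\Hom_\A^*(WH^*,\Z/2)$ to an action of $\Hom_\A^*(H^*,\Z/2)[\xi_2^{-2}]$ via $\xi\cdot\eta(p_i\tilde{w}_l)=(\xi_2^2)^{2^{N-2}}\,\xi\cdot\eta(p_i\tilde{w}_{l+2^N})$ for $N$ large, then follows exactly as in the unoriented case: one checks independence of $N$ for $N$ large by writing out the formulas, and any putative relation becomes, after multiplying by a sufficiently high power of $\xi_2^2$, a relation involving only non-negative powers of $\xi_2^2$, where linear independence is already known. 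The one genuinely non-routine point is keeping the $2$-adic valuations of the $a_r$ straight so that the fractional exponents $k\tfrac{a_r}{4}-n$ are integers and the associated binomial coefficients stabilize; granting that, every step is a faithful translation of the arguments of Theorem~\ref{xi2}.
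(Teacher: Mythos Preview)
Your approach is exactly the paper's --- the paper offers no proof beyond ``The proof of Theorem~\ref{xi2} carries over'' --- and your detailed transcription is essentially correct. There is, however, one genuine error in your explicit analogue of formula~\eqref{xiformel}. You claim that $(\xi_2^2)^{(k\frac{a_r}{4}-n)}(p_iw_{ka_r-i-4n})$ vanishes unless $4\mid i$, reasoning that $H^1(BSO)=0$ removes the $\xi_1$-terms. That is true, but you have overlooked the contribution from $w_2$: when a factor of $\Delta^*(w_{ka_r-i-4n})$ is $w_2$, the corresponding factor of $\Delta^*(p_i)$ lies in $H^2(BSO)=\Z/2\{w_2\}$, and $\xi_2^2(w_2\cdot w_2)=1$. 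So factors with $w$-part in $\{w_0,w_2,w_4\}$ all contribute, and the correct formula reads
\[
(\xi_2^2)^{(k\frac{a_r}{4}-n)}(p_iw_{ka_r-i-4n})=\sum_{2a+4b=i}\binom{k\frac{a_r}{4}-n}{a,b}\,\zeta_2^a(\xi_2^2)^b(p_i),
\]
where $\zeta_2:H^2(BSO)\to\Z/2$ sends $w_2\mapsto 1$, in direct parallel with~\eqref{xiformel}. In particular the value need not vanish when $i\equiv 2\bmod 4$.

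Fortunately this does not damage the argument. The multinomial coefficient still depends only on $k\frac{a_r}{4}-n$ modulo $2^N$ for $2^N>i$, and $\zeta_2^a(\xi_2^2)^b(p_i)$ is independent of $ka_r$ once $ka_r>i$, so the stabilization you need still holds. The $\A$-linearity and multiplicativity arguments do not use the explicit formula at all and go through unchanged. (A minor point: your check that $\xi_2^2$ is $\A$-linear only verifies $\Sq^4(\bar u)$; you should also note that $\Sq^2(w_2\bar u)=0$ and $\Sq^1(w_3\bar u)=0$ in $H^*(MTSO)$, though both happen to vanish identically.)
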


This certainly yields an infinite family of elements in $(M^*)^\vee$. We can say a bit more about the size and structure of $(K^*)^\vee$ and its complement 
\begin{equation*}
B^k = \Ker(\Hom_\A^k(WH^*,\Z/2) \to \Hom_\A^k(K^*,\Z/2)) = \Ker( (M^k)^\vee \to (K^k)^\vee).
\end{equation*}

\begin{prop} \label{Bhom}
$\xi_2^2 : \Hom_\A^k(WH^*,\Z/2) \to \Hom_\A^{k+4}(WH^*,\Z/2)$ restricts to an isomorphism $B^k \to B^{k+4}$ with inverse $\xi_2^{-2}$.
\end{prop}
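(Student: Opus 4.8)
The plan is to express $B^{k}$ entirely in terms of multiplication by $h_{0}$, and then to use that multiplication by $\xi_{2}^{2}$ is invertible and commutes with $h_{0}$. First I would rewrite the definition of $B^{k}$ using the theorem computing $\Ext^{*,*}_{\A}(WH^{*},\Z/2)$ in the oriented case: there $h_{0}^{s}\colon\Ext^{0,t-s}_{\A}(WH^{*},\Z/2)\to\Ext^{s,t}_{\A}(WH^{*},\Z/2)$ is exactly the projection $(M^{t-s})^{\vee}\to(K^{t-s})^{\vee}$. Taking $s=1$ gives $B^{k}=\Ker\bigl(h_{0}\colon\Ext^{0,k}_{\A}(WH^{*},\Z/2)\to\Ext^{1,k+1}_{\A}(WH^{*},\Z/2)\bigr)$, and comparing $h_{0}^{s+1}$ with $h_{0}\circ h_{0}^{s}$ shows that for $s\geq 1$ the map $h_{0}\colon\Ext^{s,t}_{\A}(WH^{*},\Z/2)\to\Ext^{s+1,t+1}_{\A}(WH^{*},\Z/2)$ is the identity of $(K^{t-s})^{\vee}$; in particular $h_{0}$ is onto $\Ext^{1,*}$ and bijective in higher filtration.

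Next I would make the action of $\xi_{2}^{\pm 2}$ visible on all of $\Ext^{*,*}_{\A}(WH^{*},\Z/2)$. The coaction $\Delta\colon WH^{*}\to H^{*}\hat{\otimes}WH^{*}$ together with the $\A$-homomorphism $\xi_{2}^{2}\colon H^{*}\to\Z/2$ gives an $\A$-linear endomorphism $\mu=(\xi_{2}^{2}\otimes\mathrm{id})\circ\Delta$ of $WH^{*}$ of internal degree $-4$; it is $\A$-linear precisely because $\xi_{2}^{2}$ kills all positive-degree Steenrod operations on $H^{*}$. Precomposition with $\mu$ is multiplication by $\xi_{2}^{2}$ on $\Hom^{*}_{\A}(WH^{*},\Z/2)=(M^{*})^{\vee}$, and $\mu$ induces maps on every $\Ext^{s,*}_{\A}(WH^{*},\Z/2)$ commuting with the $\Ext^{*,*}_{\A}(\Z/2,\Z/2)$-module structure, hence with $h_{0}$. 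By the Proposition stating that $\xi_{2}^{2}$ is invertible in $\Hom_{\A}(WH^{*},\Z/2)$ this induced map is an isomorphism on $\Ext^{0,*}$, and combined with the structure of $h_{0}$ recorded above ($\Ext^{1,*}$ hit by $\Ext^{0,*}$, $h_{0}$ bijective above filtration $0$) one deduces that it is an isomorphism on $\Ext^{s,*}$ for all $s$, with inverse ``multiplication by $\xi_{2}^{-2}$''. Granting this, the proposition is a two-line chase: for $\eta\in B^{k}$ one has $h_{0}(\xi_{2}^{2}\eta)=\xi_{2}^{2}(h_{0}\eta)=0$, so $\xi_{2}^{2}(B^{k})\subseteq B^{k+4}$; the same argument with $\xi_{2}^{-2}$ gives $\xi_{2}^{-2}(B^{k+4})\subseteq B^{k}$; and since $\xi_{2}^{2}$ and $\xi_{2}^{-2}$ are mutually inverse on $\Hom^{*}_{\A}(WH^{*},\Z/2)$ they restrict to mutually inverse isomorphisms $B^{k}\cong B^{k+4}$.

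I expect the only genuine work to lie in the middle step: propagating invertibility of $\xi_{2}^{2}$ from $\Ext^{0,*}$ to the higher $\Ext$-groups. Concretely, since $\Ext^{1,k+1}_{\A}(WH^{*},\Z/2)\cong(K^{k})^{\vee}$ and $\mu$ carries $K^{k+4}$ into $K^{k}$ by $\A$-linearity (because $\Sq^{1}\mu=\mu\Sq^{1}$ and $\mu$ lowers degree by $4$), the point is that $\mu\colon K^{k+4}\to K^{k}$ is \emph{onto}, not merely injective. I would deduce this from the bijectivity of the induced map $\bar\mu\colon M^{k+4}\to M^{k}$ (forced by the quoted Proposition, since the transpose of a bijective linear map of vector spaces is bijective) together with the $\Sq^{1}$-exactness of $WH^{*}$ from Proposition~\ref{nosplit}, which is what controls the quotient $WH^{*}/\A WH^{<*}$ appearing in the definition of $K^{*}$; a snake-lemma comparison of the two $\Sq^{1}$-sequences then reduces surjectivity of $\bar\mu$ on $K^{*}$ to injectivity of $\mu$ on $\Sq^{1}(WH^{*})\bmod\A WH^{<*}$, which is the one place where a substantive argument about the module structure, rather than the formal manipulations above, is required.
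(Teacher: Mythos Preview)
Your framework is sound and, for the forward direction $\xi_2^2(B^k)\subseteq B^{k+4}$, it coincides with the paper's: both use the $\A$-linear endomorphism $\mu=(\xi_2^2\otimes\mathrm{id})\circ\Delta$ of $WH^*$, which sends representatives of $K^{k+4}$ to representatives of $K^k$ (equivalently, commutes with $h_0$). The divergence is in the reverse direction.

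The gap you flag is real, but your snake-lemma reduction does not close it. With $C^*=M^*/K^*$, the snake lemma for $0\to K\to M\to C\to 0$ and the bijectivity of $\bar\mu$ on $M^*$ yield $\mathrm{Coker}(\bar\mu|_K)\cong\Ker(\bar\mu|_C)$, so surjectivity of $\bar\mu$ on $K^*$ is \emph{equivalent} to injectivity of $\bar\mu$ on $C^*$. Unwinding, the latter says: if $[\mu(x)]\in K^k$ then $[x]\in K^{k+4}$, i.e.\ $\bar\mu^{-1}$ preserves $K$. That is exactly the statement you are trying to prove, not a simplification of it. There is no obvious reason $\mu$ should be bijective on $WH^*$ itself (it is built from a projection $H^*\to\Z/2$), so you cannot lift decompositions along $\mu$; and Proposition~\ref{nosplit} concerns $\Sq^1$-exactness of $WH^*$, not of the quotients $WH^{k+1}/\A WH^{<k}$ in which $K$ is defined, so it does not feed directly into the comparison.

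The paper bypasses this entirely by exploiting the $t$-periodicity built into the \emph{definition} of negative powers of $\xi_2$. By construction, $(\xi_2^{2n}\cdot\xi)(x)=(\xi_2^{2n+2^{N-1}}\cdot\xi)(t^{2^N}x)$ for $N$ large, so it suffices to treat positive exponents once one checks that $t^{2^N}$ carries $K^{k+4n}$ into $K^{k+4n+2^N}$. This follows from Lemma~\ref{sqt}: iterating $\Sq^j(tx)=t\Sq^j(x)+t^2\Sq^{j-1}(x)$ gives $\Sq^j(t^{2^N}x)=t^{2^N}\Sq^j(x)$ whenever $2^N>j$, so for a fixed decomposition $\Sq^1(x)=\sum a_i(x_i)$ one may choose $N$ large enough that $t^{2^N}$ commutes with every $a_i$, whence $\Sq^1(t^{2^N}x)=\sum a_i(t^{2^N}x_i)\in\A WH^{<k+4n+2^N}$. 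This is the concrete ``argument about the module structure'' you were looking for, and it lives on the $WH^*$ side rather than in a dual $\Ext$-comparison.
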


\begin{proof}
Let $\xi \in B^k$. We claim that $\xi^{2n}_2 \cdot \xi \in B^{k+ 4n}$ for all $n \in \Z$. That is, $\xi^{2n}_2\cdot \xi(x)=0$ for all $x \in K^{k+ 4n}$. Note that it is enough to show this for positive $n$ because
 \begin{equation*}
\xi^{2n}_2 \cdot \xi(x) = \xi^{2n + 2^{N-1}}_2 \cdot \xi(t^{2^N}x)
 \end{equation*}
 and $x \in K^{k+ 4n}$ implies $t^{2^N}x \in K^{k+ 4n + 2^N}$ for $N$ large enough. Here multiplication by $t$ is as defined in Definition \ref{tmult}.

But for positive $n$, $\xi^{2n}_2 \cdot \xi$ factors as $\A$-linear maps 
\begin{equation*}
WH^* \to H^* \hat{\otimes} WH^* \xrightarrow{\xi_2^{2n} \otimes \textrm{id}} (\Z/2)^{4n} {\otimes} WH^* \xrightarrow{1\otimes\xi} \Z/2.
\end{equation*}

Assume that $x$ maps to $1\otimes x'$ in $(\Z/2)^{4n} {\otimes} WH^*$, and hence $\Sq^1(x)$ maps to $  1 \otimes \Sq^1(x')$.
%Now $\Delta(x) =  1 \otimes x'$ and  $\Delta(\Sq^1x) =  1 \otimes \Sq^1(x')$ in $(\Z/2)^{4n} {\otimes} WH^*$. 
But $\Sq^1(x)$ decomposes as $\sum_i a_i (x_i)$, so if $x_i $ maps to $  1 \otimes x_i'$, then
\begin{equation*}
1 \otimes \Sq^1(x') = \sum_i 1 \otimes a_i (x_i').
\end{equation*}
Thus $\xi(x') =0 $, proving the claim. The proposition now follows because $\xi_2^2$ restricts to $B^k \to B^{k+4}$ and $\xi_2^{-2}$ restricts to an inverse.
\end{proof}

Define
\begin{align*}
K(H)^k &= \Ker(\Sq^1 : H^k/\A H^{<k} \to H^{k+1}/\A H^{<k})\\
B(H)^k &= \Ker(\Hom_\A^k(H^*,\Z/2) \to \Hom_\A^k(K(H)^*,\Z/2)).
\end{align*}

\begin{lem}\label{K(H)}
The map $WH^*/\A^{>0} WH^* \to H^*/\A^{>0} H^*$ maps $K^*$ surjectively onto $K(H)^*$.
\end{lem}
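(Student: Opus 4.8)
The inclusion $\widehat{f^*\varphi}^{\,*}(K^*)\subseteq K(H)^*$ — where I write $\pi\colon WH^*\to H^*=H^*(MTSO)$ for the surjective $\A$-map $WH^*\to H^*$ and $\bar\pi\colon M^*\to M(H)^*$ for the induced map on indecomposables — is formal: if $x\in WH^k$ satisfies $\Sq^1x\in\A WH^{<k}$ then $\Sq^1\pi(x)=\pi(\Sq^1x)\in\A H^{<k}$. For the reverse inclusion it is enough, since $\pi$ is onto and hence $\bar\pi$ is onto, to lift a representative $g\in H^k$ of a given class in $K(H)^k$ to some $x\in WH^k$ with $\Sq^1x\in\A WH^{<k}$. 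First I would reduce the target: using the splitting of $H^*$ into copies of $\A$ and $\A/\A\Sq^1$, a degree count shows that $\Sq^1$ of a free generator of degree $k$ never lies in the image of $\bigoplus_{i\ge 2}\Sq^i$ on $H^{<k}$, so the free part of $M(H)^*$ contributes nothing to $K(H)^*$; hence $K(H)^k$ is spanned by the classes of the generators $g$ of the $\A/\A\Sq^1$-summands, and for these $\Sq^1g=0$ in $H^*$ exactly. Write $g=p\,\bar u$ with $p\in H^{*}(BSO)$, $\deg p=k$, $\Sq^1p=0$.

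Next I would lift using the explicit form of $\pi$, which on the monomial basis of Theorem~\ref{coholim} is the $H^*(BSO)$-linear map with $\pi(\tilde w_l)=w_l$ for $l\ge 0$ (so $\pi(\tilde w_0)=\bar u$) and $\pi(\tilde w_l)=0$ for $l<0$; cf.\ Lemma~\ref{grassmann} and the computation $xw_l=\widehat{f^*\varphi}^{\,*}(x\tilde v_l)$ in the proof of Theorem~\ref{singer}. Set $x:=p\,\tilde w_0\in WH^k$, so $\pi(x)=g$. By the Cartan formula and \eqref{Awl}, which in the oriented case gives $\Sq^1\tilde w_l=(l+1)\tilde w_{l+1}$, and since $\Sq^1p=0$, one gets $\Sq^1x=p\,\tilde w_1$. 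Now \eqref{Awl} also gives $\Sq^2\tilde w_{-1}=\binom{3}{2}\tilde w_1=\tilde w_1$ and $\Sq^1\tilde w_{-1}=0$, so the Cartan formula yields
\[
p\,\tilde w_1=\Sq^2\!\left(p\,\tilde w_{-1}\right)+\left(\Sq^2p\right)\tilde w_{-1}.
\]
The first summand lies in $\A WH^{<k}$ because $p\,\tilde w_{-1}\in WH^{k-1}$.

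It remains to absorb $(\Sq^2p)\,\tilde w_{-1}$ into $\A WH^{<k}$, i.e.\ to show the class of $p\,\tilde w_1$ vanishes in $WH^{k+1}/\A WH^{<k}$, so that $\bar x\in K^k$; this is the step I expect to be the main obstacle. The naive approach — iterating the identity above, pushing the tilde-index further and further negative using the coefficients $\binom{2^s-l}{2^s}$ of \eqref{Awl} (evaluated by Lucas' theorem) — is delicate precisely because $WH^*$ is unbounded below, so one must argue that some \emph{finite} combination of $\Sq^{\ge 2}$ of lower-degree elements represents $p\,\tilde w_1$. I would instead organise this the way Prop.~\ref{Bhom} handles the analogous vanishing: apply the comultiplication $WH^*\to\A/\A\Sq^1\,\hat\otimes\,WH^*$ of Prop.~\ref{comod}, under which $\Sq^1$-relations are preserved while the tilde-index is shifted down by a controlled amount, thereby reducing the statement to the single base case $\tilde w_1=\Sq^2\tilde w_{-1}\in\A WH^{<0}$ already established. (One could alternatively try to deduce the needed vanishing by comparison with the point case, where it follows from Lin's theorem, Theorem~\ref{linV}, via the $\A$-linear surjection $WH^*\to\Sigma\Z/2[t,t^{-1}]$ of Section~\ref{singerc}.) Once $\bar x\in K^k$ is established, $\bar\pi(\bar x)$ is the given class of $K(H)^k$, and by linearity $\bar\pi$ restricts to a surjection $K^*\to K(H)^*$.
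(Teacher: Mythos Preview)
Your setup is correct and matches the paper's: identify $K(H)^*$ with the span of the $\A/\A\Sq^1$-generators, which concretely are the squares $p=\prod w_{2k}^{2n_k}$, and lift each to $WH^*$. Your computations $\Sq^1\tilde w_l=(l+1)\tilde w_{l+1}$, $\Sq^2\tilde w_{-1}=\tilde w_1$, and the resulting identity $p\tilde w_1=\Sq^2(p\tilde w_{-1})+(\Sq^2p)\tilde w_{-1}$ are all right.

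The gap is exactly where you flag it: you take the naive lift $x=p\tilde w_0$ and then try to show $(\Sq^2p)\tilde w_{-1}\in\A WH^{<k}$. Your suggested routes through Proposition~\ref{comod} or the projection to $\Sigma\Z/2[t,t^{-1}]$ do not obviously close this; the comultiplication argument of Proposition~\ref{Bhom} is about dual elements, not about producing decompositions in $\A WH^{<k}$, and the point case only sees the image of $(\Sq^2p)\tilde w_{-1}$ modulo $H^{>0}(BSO)\otimes\Z/2\{\tilde w_l\}$, which is already zero and gives no information. In fact there is no reason to expect the naive lift $p\tilde w_0$ itself to lie in $K^k$; the two classes $[p\tilde w_0]$ and $[p\tilde w_0+(\Sq^2p)\tilde w_{-2}]$ can differ in $M^k$, and only one of them need be in $K^k$.

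The paper's resolution is a one-line change of lift rather than an attempt to absorb the leftover term: take
\[
x \;=\; p\,\tilde w_0 \;+\; \Sq^2(p)\,\tilde w_{-2}.
\]
This still maps to $p$ in $H^*$ since $\tilde w_{-2}\mapsto 0$, and now, using that $p$ is a square so $\Sq^1p=\Sq^3p=0$,
\[
\Sq^1 x \;=\; p\,\tilde w_1 + \Sq^2(p)\,\tilde w_{-1} \;=\; \Sq^2\!\bigl(p\,\tilde w_{-1}\bigr)\in \A WH^{<k},
\]
which is precisely your identity with the two terms on the right recombined. So the ``main obstacle'' disappears once you add the correction $\Sq^2(p)\tilde w_{-2}$ to the lift; no iterated pushing of the tilde-index or appeal to Lin's theorem is needed. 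Note that the vanishing of $\Sq^3p$ is essential here, which is why the concrete description of the $K(H)^*$-generators as squares (not merely as $\A/\A\Sq^1$-generators with $\Sq^1p=0$) is being used.
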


\begin{proof}
As representatives for a basis of $K(H)^*$ we may take all products $\prod {w}_{2k}^{2n_k}$, c.f.~\cite{switzer}, Chapter 20. Such a $w^2 = \prod {w}_{2k}^{2n_k}$ lifts to $w^2 \tilde{w}_0 + \Sq^2(w^2)\tilde{w}_{-2}$, which represents an element of $M^*$. In fact, this lies in $K^*$ because
\begin{equation*}
\Sq^1(w^2\tilde{w}_0 + \Sq^2(w^2)\tilde{w}_{-2}) = w^2\tilde{w}_1 + \Sq^2(w^2)\tilde{w}_{-1}= \Sq^2(w^2\tilde{w}_{-1}).
\end{equation*}
Thus $K^* \to K(H)^*$ is surjective. 
\end{proof}

\begin{prop}
\begin{equation*}
\Hom_\A(H^*,\Z/2)[\xi_2^{-2}] \cap B^* = B(H)^*[\xi_2^{-2}].
\end{equation*}
\end{prop}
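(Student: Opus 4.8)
The plan is to reduce the statement to an un-localized identity and then invert $\xi_2^2$. Throughout I would identify $\Hom_\A^*(H^*,\Z/2)$ with its image under the injection $\iota$ into $\Hom_\A^*(WH^*,\Z/2)$ dual to the surjection $\pi\colon WH^*\to H^*$ induced by $MT\to\widehat{MT}$; concretely $\iota(\xi)=\xi\circ\pi$. Since $\xi_2^2$ acts invertibly on $\Hom_\A^*(WH^*,\Z/2)$ by the proposition stating that $\xi_2^2$ is invertible there, this identification extends to an injection of $\Hom_\A^*(H^*,\Z/2)[\xi_2^{-2}]$, so the left-hand side of the asserted equality is a genuine subspace of $\Hom_\A^*(WH^*,\Z/2)$; one checks from the construction of the comultiplications that $\iota$ is linear over $\Z/2[\xi_2^{\pm2}]$, which is what makes the two $\xi_2^2$-localizations compatible.

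The core step I would establish first is the un-localized identity $\Hom_\A^k(H^*,\Z/2)\cap B^k=B(H)^k$ for every $k$. Write $\bar\pi\colon M^k=WH^k/\A^{>0}WH^k\to M(H)^k=H^k/\A^{>0}H^k$ for the map induced by $\pi$. Being $\A$-linear, $\pi$ commutes with $\Sq^1$ and carries $\A WH^{<k}$ into $\A H^{<k}$, hence induces a $\Sq^1$-compatible map $WH^{k+1}/\A WH^{<k}\to H^{k+1}/\A H^{<k}$; it follows that $\bar\pi(K^k)\subseteq K(H)^k$, and by Lemma~\ref{K(H)} this is in fact an equality. Since $\iota(\xi)=\xi\circ\bar\pi$ as a functional on $M^k$, we get
\[
\iota(\xi)|_{K^k}=0 \iff \xi|_{\bar\pi(K^k)}=0 \iff \xi|_{K(H)^k}=0,
\]
i.e.\ $\iota(\xi)\in B^k$ precisely when $\xi\in B(H)^k$. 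This is the only place where genuine structural information is used, namely Lemma~\ref{K(H)}; everything else is formal.

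Finally I would deduce the two inclusions. For $B(H)^*[\xi_2^{-2}]\subseteq\Hom_\A^*(H^*,\Z/2)[\xi_2^{-2}]\cap B^*$: write an element as $\xi_2^{-2n}\xi$ with $\xi\in B(H)^*$; by the core identity $\xi\in B^*$, and $\xi_2^{-2n}$ maps $B^*$ into $B^*$ by Proposition~\ref{Bhom}, so the element lies in $B^*$, and it lies in $\Hom_\A^*(H^*,\Z/2)[\xi_2^{-2}]$ by construction. Conversely, given $\eta$ in the left-hand intersection, write $\eta=\xi_2^{-2n}\xi$ with $\xi\in\Hom_\A^*(H^*,\Z/2)$; applying $\xi_2^{2n}$ and using Proposition~\ref{Bhom} gives $\xi=\xi_2^{2n}\eta\in B^*$, so $\xi\in\Hom_\A^*(H^*,\Z/2)\cap B^*=B(H)^*$ by the core identity, whence $\eta=\xi_2^{-2n}\xi\in B(H)^*[\xi_2^{-2}]$. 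As a byproduct this shows $B(H)^*$ is stable under $\xi_2^2$, so the right-hand side is unambiguous as a $\Z/2[\xi_2^{\pm2}]$-module. The main obstacle is not in this last deduction, which is pure bookkeeping, but in the fact that the whole argument rests on Lemma~\ref{K(H)}; the only subtlety to watch in the write-up is the compatibility of the $\xi_2^2$-module structures under $\iota$, i.e.\ that all the maps in sight are genuinely $\Z/2[\xi_2^{\pm2}]$-linear.
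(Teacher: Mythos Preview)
Your argument is correct and follows essentially the same route as the paper's proof: both rest on Proposition~\ref{Bhom} (to move freely between powers of $\xi_2^2$ inside $B^*$) and Lemma~\ref{K(H)} (to identify $\Hom_\A(H^*,\Z/2)\cap B^*$ with $B(H)^*$). The only difference is organizational: you isolate the un-localized identity $\Hom_\A^k(H^*,\Z/2)\cap B^k=B(H)^k$ as a separate step and are more explicit about the compatibility of $\iota$ with the $\xi_2^2$-action, whereas the paper folds this into the running argument.
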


\begin{proof}
The inclusion $\supseteq $ follows from Proposition \ref{Bhom}.

Assume $ \xi_2^{2n} \cdot \xi \in B^*$ for some $\xi \in \Hom_\A(H^*,\Z/2)$. Then also $ \xi_2^{2n+2N} \cdot \xi \in B^*$, and for $N$ large, 
\begin{equation*}
 \xi_2^{2n+2N} \cdot \xi \in \Hom_\A(H^*,\Z/2) \cap B^*.
\end{equation*}
 But then $ \xi_2^{2n+2N} \cdot \xi$ vanishes on $K^*$. Thus by Lemma \ref{K(H)}, $ \xi_2^{2n+2N} \cdot \xi \in B(H)^*$.
\end{proof}

\begin{cor}
$M^*/K^*$ is infinite in all dimensions and $K^*$ is infinite in all dimensions divisible by 4.
\end{cor}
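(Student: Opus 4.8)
The plan is to translate both assertions into statements about the graded $\Z/2$-vector space $B^*$ and its place inside $\Hom_\A(WH^*,\Z/2)$, and then to run everything through the $\xi_2^2$-periodicity. Since $M^k$ carries the trivial $\A$-action we have $\Hom_\A^k(WH^*,\Z/2)=(M^k)^\vee$, and the restriction map $(M^k)^\vee\to(K^k)^\vee$ along $K^k\hookrightarrow M^k$ is surjective; hence $B^k=(M^k/K^k)^\vee$ and $(K^k)^\vee\cong\Hom_\A^k(WH^*,\Z/2)/B^k$. As a graded vector space is infinite-dimensional in a given degree exactly when its dual is, the Corollary is equivalent to: (a) $B^k$ is infinite-dimensional for every $k$; and (b) $\Hom_\A^{4n}(WH^*,\Z/2)/B^{4n}$ is infinite-dimensional for every $n$.

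For (b): because $\xi_2^2$ is invertible in $\Hom_\A(WH^*,\Z/2)$ (the Proposition preceding Proposition~\ref{Bhom}) and, by Proposition~\ref{Bhom}, restricts to the isomorphism $B^k\to B^{k+4}$, it induces an isomorphism $\Hom_\A^k(WH^*,\Z/2)/B^k\to\Hom_\A^{k+4}(WH^*,\Z/2)/B^{k+4}$; thus $\dim(K^k)^\vee$ depends only on $k$ mod $4$, and in particular $\dim K^{4n}$ is independent of $n$. By Lemma~\ref{K(H)} there is a surjection $K^{4n}\twoheadrightarrow K(H)^{4n}$, and by the description of $K(H)^*$ used in its proof (a polynomial algebra on the classes $w_{2k}^2$ of degree $4k$, cf.\ \cite{switzer}, Chapter~20) one has $\dim K(H)^{4n}\to\infty$. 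Hence $\dim K^0=\dim K^{4n}\geq\dim K(H)^{4n}\to\infty$, forcing $\dim K^0=\infty$ and therefore $\dim K^{4m}=\infty$ for all $m$, which is (b).

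For (a): again $\dim B^k$ depends only on $k$ mod $4$ by Proposition~\ref{Bhom}, so it suffices to treat $k\in\{0,1,2,3\}$. Here I use the inclusion $B^*\supseteq B(H)^*[\xi_2^{-2}]$ contained in the Proposition stating $\Hom_\A(H^*,\Z/2)[\xi_2^{-2}]\cap B^*=B(H)^*[\xi_2^{-2}]$. Since $\xi_2^2$ is invertible in $\Hom_\A(WH^*,\Z/2)$, the subspaces $\xi_2^{-2n}B(H)^{k+4n}$ form an increasing chain in $\Hom_\A^k(WH^*,\Z/2)$ with $\dim\xi_2^{-2n}B(H)^{k+4n}=\dim B(H)^{k+4n}$, so $\dim\bigl(B(H)^*[\xi_2^{-2}]\bigr)^k=\sup_n\dim B(H)^{k+4n}$. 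Now $\dim B(H)^j$ equals the number of free $\A$-summands of $H^*(MSO;\Z/2)$ with bottom class in degree $j$: for $j\not\equiv 0$ mod $4$ the $\A/\A\Sq^1$-summands of $H^*(MSO;\Z/2)$ are absent, so $K(H)^j=0$ and $B(H)^j=\Hom_\A^j(H^*,\Z/2)$, while for $j\equiv 0$ one subtracts the $K(H)^j$-duals. This number is the $\Z/2$-rank of the torsion of $\Omega^{SO}_j$, which by Wall's determination of $\Omega^{SO}_*$ (see e.g.\ \cite{stong}) is unbounded within each residue class mod $4$; concretely $B(H)^*\subseteq\Hom_\A^*(H^*(MSO);\Z/2)$ contains a polynomial subalgebra on generators in coprime degrees not divisible by $4$ (for instance degrees $5$ and $11$, from $\Omega^{SO}_5\cong\Omega^{SO}_{11}\cong\Z/2$), whose dimension in degree $m$ tends to $\infty$ for all large $m$. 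Hence $\sup_n\dim B(H)^{k+4n}=\infty$ for each $k$, so $\dim B^k=\infty$ for all $k$, proving (a).

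The main obstacle is the final step: one must check that $B(H)^*[\xi_2^{-2}]$ is genuinely infinite-dimensional in each degree — not merely nonzero or eventually of constant dimension — and this rests entirely on the cobordism-theoretic input that the $2$-torsion of $\Omega^{SO}_*$, equivalently the free $\A$-summands of $H^*(MSO;\Z/2)$, has unbounded rank in every residue class mod $4$. Pinning down exactly which degrees carry such summands, and that they generate enough of $B(H)^*$ for the argument, is the one place where the proof is not purely formal; everything else is bookkeeping with the $\xi_2^2$-action and duality.
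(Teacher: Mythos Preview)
The paper gives no explicit proof, intending the Corollary to follow directly from Proposition~\ref{Bhom}, Lemma~\ref{K(H)}, and the preceding Proposition; your argument runs exactly along these lines and is correct.

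On the step you flag as the obstacle: the identification $\Hom_\A(H^*(MSO),\Z/2)\cong\pi_*(MSO)\otimes\Z/2$ embeds this ring into the integral domain $\mathfrak{N}_*=\pi_*(MO)$ (the forgetful map has kernel exactly $2\,\pi_*(MSO)$), and under this embedding $B(H)^*$ is the torsion ideal. In particular $B(H)^*$ is closed under multiplication, your classes $\alpha_5,\alpha_{11}$ map to nonzero elements of $\mathfrak{N}_*$ (indeed $\alpha_5$ is the polynomial generator $a_5$), and one checks they are algebraically independent there; so the monomials $\alpha_5^a\alpha_{11}^b$ with $a+b\geq 1$ really are independent in $B(H)^*$. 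This gives the unbounded growth of $\dim B(H)^{k+4n}$ in every residue class mod~$4$, including residue $0$ (via $\alpha_5^4$ or $\alpha_5\alpha_{11}$). Your increasing-chain claim $\xi_2^{-2n}B(H)^{k+4n}\subseteq\xi_2^{-2(n+1)}B(H)^{k+4(n+1)}$ is likewise justified by this embedding: combine Proposition~\ref{Bhom} with $B^*\cap\Hom_\A(H^*,\Z/2)=B(H)^*$ (immediate from Lemma~\ref{K(H)}) to get $\xi_2^2 B(H)^j\subseteq B(H)^{j+4}$, and use that $\xi_2^2$ corresponds to $[\C P^2]$, a non-zero-divisor in $\mathfrak{N}_*$.
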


We do not know whether $K^*$ is trivial in dimensions not divisible by $4$ as is the case for $K(H)^*$.

\begin{cor}
The map $MT(d) \to \widehat{MT}(d)_2^\wedge$ induces an injection on the $E_2^{s,t}$-term of the Adams spectral sequences for $t-s \leq d$.  
\end{cor}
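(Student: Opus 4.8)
The plan is to imitate the argument given in the compact case right after Theorem~\ref{extiso}, replacing the Singer-construction isomorphism used there by a weaker injectivity statement read off from the $\Tor$-computation above. First I would pin down the map of $E_2$-terms: by Theorem~\ref{invadams} the inverse-limit Adams spectral sequence for $\widehat{MT}(d){}_2^\wedge=(\varprojlim_r\MT(d,r))_2^\wedge$ has $E_2$-term $\Ext_\A^{s,t}(WH(d)^*,\Z/2)$ and equals $\varprojlim_r$ of the Adams spectral sequences of the $\MT(d,r)$. The maps $MT(d)\to\MT(d,r)$ out of which $\widehat{f^*\varphi}$ is built are compatible with the inverse system, so they induce a map out of the Adams spectral sequence of $MT(d)$ which on $E_2$ is
\begin{equation*}
\rho_d^*\colon\Ext_\A^{s,t}(H^*(MT(d)),\Z/2)\longrightarrow\Ext_\A^{s,t}(WH(d)^*,\Z/2),
\end{equation*}
dual to the $\A$-map $\rho_d=\varinjlim_r\bigl(H^*(\MT(d,r))\to H^*(MT(d))\bigr)\colon WH(d)^*\to H^*(MT(d))$ sending $\tilde w_l\mapsto w_l$; in particular $\rho_d$ is surjective since $\tilde w_0\mapsto 1$ and $H^*(MT)\twoheadrightarrow H^*(MT(d))$. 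So the goal is injectivity of $\rho_d^*$ for $t-s\le d$.

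Next I would reduce to the stable modules $WH^*$ and $H^*=H^*(MT)$ of Definition~\ref{defWH}. The kernel of $WH^*\to WH(d)^*$ is the $\A$-submodule generated by $\{\tilde w_l:l>d\}$ and the kernel of $H^*\to H^*(MT(d))$ is the ideal generated by $\{w_l:l>d\}$; both are concentrated in degrees $\ge d+1$, so a minimal-resolution count gives $\Ext_\A^{s,t}(-,\Z/2)=0$ on them for $t-s\le d$. Using the long exact $\Ext$-sequences and the commuting square $\rho_d\circ(\mathrm{proj})=(\mathrm{proj})\circ\rho$ (naturality in $d$, with $\rho\colon WH^*\to H^*$ the surjective $\A$-map induced by $MT\to\widehat{MT}$), one gets that the projections are $\Ext$-isomorphisms for $t-s\le d-1$ and that $\ker\rho_d^*$ injects into $\ker\bigl(\Ext_\A^{s,t}(H^*(MT(d)),\Z/2)\to\Ext_\A^{s,t}(H^*,\Z/2)\bigr)$; since $H^*\to H^*(MT(d))$ is an isomorphism on the $\A/\A\Sq^1$-summands generated in degrees $\le d$, this kernel vanishes in the range $t-s\le d$. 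It therefore suffices to prove that $\rho^*\colon\Ext_\A^{s,t}(H^*,\Z/2)\to\Ext_\A^{s,t}(WH^*,\Z/2)$ is injective for all $(s,t)$.

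Finally I would prove $\rho^*$ injective by dualizing: since $\Ext_\A^{s,t}(-,\Z/2)\cong\Hom(\Tor_{s,t}^\A(\Z/2^\vee,-),\Z/2)$ naturally (\cite{adams}), it is enough that $\rho$ be surjective on $\Tor_{s,t}^\A(\Z/2^\vee,-)$ for every $s,t$. For $s=0$ this is the surjection $WH^*/\A^{>0}WH^*\twoheadrightarrow H^*/\A^{>0}H^*$ coming from the surjection $\rho$. For $s>0$ the theorem computing $\Tor^{s,*}_\A(\Z/2^\vee,WH^*)$ above identifies $\Tor_{s,t}^\A(\Z/2^\vee,WH^*)$, via the natural map to indecomposables, with the subspace $K^{t-s}\subseteq M^{t-s}=WH^*/\A^{>0}WH^*$, and the analogous computation for $H^*$ (a sum of copies of $\A$ and $\A/\A\Sq^1$) identifies $\Tor_{s,t}^\A(\Z/2^\vee,H^*)$ with $K(H)^{t-s}\subseteq M(H)^{t-s}$. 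By naturality of these identifications, $\rho_*$ on $\Tor_{s,t}^\A$ is the restriction to $K^{t-s}$ of $M^{t-s}\to M(H)^{t-s}$, which lands in and surjects onto $K(H)^{t-s}$ by Lemma~\ref{K(H)}. Hence $\rho_*$ is surjective on all $\Tor^\A_s$, so $\rho^*$ is injective on all $\Ext_\A^s$, and the corollary follows.

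The step I expect to be the main obstacle is making the naturality in the last paragraph precise: the computation of $\Tor_{s,*}^\A(\Z/2^\vee,WH^*)$ for $s>0$ passes through the finite subalgebras $\A(n)$ and several auxiliary modules (Lemma~\ref{torK} and the theorem following it), and one must verify that the resulting identification of $\Tor_{s,t}^\A$ with the subspace $K^{t-s}$ of indecomposables is functorial enough that the surjection $\rho$ induces exactly the map $K^*\to K(H)^*$ of Lemma~\ref{K(H)}. The remaining bookkeeping — confirming via the naturality clause of Theorem~\ref{invadams} that the $E_2$-map really is $\rho_d^*$, and disposing of the boundary degree $t-s=d$ in the reduction — needs care but is routine.
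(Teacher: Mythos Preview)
Your proposal is correct and follows essentially the same route as the paper: the heart of both arguments is that $\rho\colon WH^*\twoheadrightarrow H^*$ gives injectivity on $\Hom_\A$ for $s=0$, while for $s>0$ the induced map on $\Ext_\A^{s,t}$ is $(K(H)^{t-s})^\vee\to(K^{t-s})^\vee$, which is injective by the surjectivity in Lemma~\ref{K(H)}. The paper's proof is two sentences and silently absorbs both the reduction from $WH(d)^*,H^*(MT(d))$ to the stable modules $WH^*,H^*$ and the naturality of the identification $\Tor_{s,t}^\A\cong K^{t-s}$; you have made both of these explicit, which is a genuine improvement in rigor. Your worry about the naturality is warranted but resolves: the theorem preceding the corollary shows that the isomorphism $\Tor_{s,t}^\A(\Z/2^\vee,WH^*)\cong K^{t-s}$ is realized by the natural map $\Tor_{s,t}^\A(\Z/2^\vee,WH^*)\to\Tor_{s,t}^\A(\Z/2^\vee,M^{t-s})\cong M^{t-s}$ induced by the projection $WH^*\to M^{t-s}$, and the same holds verbatim for $H^*$ (where it is just the usual minimal-resolution description); since $\rho$ is compatible with the projections to indecomposables, the map on $\Tor$ really is $K^{t-s}\to K(H)^{t-s}$.
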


\begin{proof}
This follows on $E^{0,t}_2=\Ext_\A^{0,t}=\Hom_\A^{0,t}$ because $WH^* \to H^*$ is surjective. For $s>0$, the map $\Ext_\A^{s,t}(H^*,\Z/2) \to \Ext_\A^{s,t}(WH^*,\Z/2)$ is $(K(H)^{t-s})^\vee\to (K^{t-s})^\vee$, which is injective by Lemma \ref{K(H)}.
\end{proof}

Finally, we have not been able to show that the spectral sequence collapses. It seems natural since it does so in both the unoriented case and for $H^*$. Again a better understanding of $M^*$ and $K^*$ would be helpful. It would be enough to show that $\Hom_\A(H^*,\Z/2)[\xi_2^{-2}]$ detects all of $M^*$ as in the unoriented case:
\begin{thm}
Assume that $\Hom_\A(H^*,\Z/2)[\xi_2^{-2}]$ detects all elements of $M^*$. Then the space $K^*$ is trivial except in dimensions divisible by 4. In particular, the Adams spectral sequence collapses.
\end{thm}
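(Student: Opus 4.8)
The plan is to split the statement into two pieces. The substantive input is that the detection hypothesis forces $K^k=0$ for every $k\not\equiv 0\pmod 4$; granting this, the collapse will follow formally from the computation of $\Ext_\A(WH^*,\Z/2)$ and the $h_0$-action on it that have already been established.

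To get the vanishing of $K^*$ off degrees divisible by $4$, I would argue by contradiction: take $k\not\equiv 0\pmod 4$ and $0\neq x\in K^k\subseteq M^k$. By hypothesis there is a $\xi\in\Hom_\A^k(H^*,\Z/2)[\xi_2^{-2}]$ with $\xi(x)\neq 0$; write $\xi=\xi_2^{-2m}\eta$ with $\eta\in\Hom_\A^{k+4m}(H^*,\Z/2)$ for $m$ large enough that $\eta$ is an honest homomorphism out of $H^*$. Since $k+4m\not\equiv 0\pmod 4$ and $K(H)^{j}=0$ for $j\not\equiv 0\pmod 4$ (the $\A/\A\Sq^1$-summands of $H^*=H^*(MTSO)$ sit only in degrees divisible by $4$, cf.\ the discussion around Lemma~\ref{K(H)}), we get $B(H)^{k+4m}=\Hom_\A^{k+4m}(H^*,\Z/2)\ni\eta$, hence $\xi\in B(H)^*[\xi_2^{-2}]$. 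By the identification $\Hom_\A(H^*,\Z/2)[\xi_2^{-2}]\cap B^*=B(H)^*[\xi_2^{-2}]$ this gives $\xi\in B^k$, i.e.\ $\xi$ vanishes on $K^k$ and in particular $\xi(x)=0$, a contradiction. So $K^k=0$.

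For the collapse, I would use that $\Ext_\A^{s,t}(WH^*,\Z/2)\cong(K^{t-s})^\vee$ for $s>0$, that $h_0$ acts as the identity on these groups, and that, by the previous step, these positive-filtration groups are nonzero only in stems $\equiv 0\pmod 4$. Given a nonzero differential $d_r(x)=y$ with $r\geq 2$: $y$ sits in positive filtration, so its stem is $\equiv 0\pmod 4$, forcing the stem of $x$ to be $\equiv 1\pmod 4$; since positive-filtration groups in that stem vanish, $x$ must lie in filtration $0$. Then $h_0x$ lies in a positive-filtration group over a stem $\equiv 1\pmod 4$, which is zero, so $h_0x=0$, and the Leibniz rule (for the permanent cycle $h_0$) gives $h_0y=d_r(h_0x)=0$ in $E_r$. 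On the other hand, the bidegrees of $y$ and of $h_0y$ lie over stems $\equiv 0\pmod 4$, and the vanishing of all positive-filtration groups in the neighbouring stems $\equiv 1$ and $\equiv 3\pmod 4$ shows that no earlier differential enters or leaves those bidegrees, so $y$ and $h_0y$ are represented faithfully on $E_2$; since $h_0$ is the identity on $E_2$ there and $y\neq 0$, we get $h_0y\neq 0$ in $E_r$, a contradiction. Hence every differential vanishes and the spectral sequence collapses at $E_2$.

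The main thing to be careful about is this last bookkeeping — verifying that the relevant $h_0$-multiples are not altered in passing from $E_2$ to $E_r$ — but as indicated it reduces entirely to the sparseness of the $E_2$-term in stems $\not\equiv 0\pmod 4$, which is exactly the content of the first step. Everything else is formal: the previously proved computation of $\Ext_\A(WH^*,\Z/2)$ and its $h_0$-structure, the description of $B(H)^*$ in terms of $K(H)^*$, Lemma~\ref{K(H)}, and the equality $\Hom_\A(H^*,\Z/2)[\xi_2^{-2}]\cap B^*=B(H)^*[\xi_2^{-2}]$.
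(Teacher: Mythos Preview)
Your proof is correct and the overall strategy matches the paper's, but the two halves are packaged a bit differently.

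For the vanishing of $K^k$ when $k\not\equiv 0\pmod 4$, the paper takes $x\in K^*$, shifts it by $t^{2^N}$ so that the detecting functional lands in $\Hom_\A(H^*,\Z/2)$ itself, and then argues that the image of $t^{2^N}x$ in $H^*$ is indecomposable with $\Sq^1$ decomposable, hence lies in $K(H)^*$, which forces the degree to be divisible by $4$. Your argument is the dual of this: instead of pushing $x$ forward, you pull the functional back and invoke the already-proved identity $\Hom_\A(H^*,\Z/2)[\xi_2^{-2}]\cap B^*=B(H)^*[\xi_2^{-2}]$ together with $K(H)^j=0$ for $j\not\equiv 0\pmod 4$. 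This is cleaner, since the $t$-shift work has already been absorbed into Proposition~\ref{Bhom} and the subsequent proposition; you are simply cashing those in.

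For the collapse, the paper just asserts that it ``follows from the multiplicative structure''. You spell this out: a nontrivial $d_r$ would have target in stem $\equiv 0\pmod 4$ and source in stem $\equiv 1\pmod 4$, hence source in filtration $0$; then $h_0$ kills the source but is injective on the target (and the sparseness in stems $\equiv 1,3\pmod 4$ prevents any earlier differential from interfering with $h_0y$), contradicting the Leibniz rule. This is exactly the intended argument, made explicit. Your care about whether $h_0y$ could be altered between $E_2$ and $E_r$ is warranted and correctly resolved by the same sparseness.
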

\begin{proof}
Let $[x] \in K^*$. Then there is a $\xi \cdot \xi_2^{-2n} \in \Hom_\A(H^*,\Z/2)[\xi_2^{-2}]$ such that $\xi \cdot \xi_2^{-2n}(x) = 1$. Let $\Sq^1(x)= \sum_i a_i(x_i)$ for suitable $a_i \in \A$. Then for a sufficiently large $N$
\begin{align*}
\xi \cdot \xi_2^{2^{N-1}-2n}(t^{2^N} x) &= 1\\
\Sq^1(t^{2^N}x)&= \sum_i a_i(t^{2^N}x_i)
\end{align*}
where $\xi \cdot \xi_2^{2^{N-1}-2n} \in \Hom_\A(H^*,\Z/2)$. This means that the image of $t^{2^N} x$ in $H^*$ is indecomposable and $\Sq^1(t^{2^N}x)$ is decomposable over $\A^{>1}$. Hence its dimension must be divisible by 4. Thus also $x$ must have dimension divisible by 4.

This implies that there can only be higher $\Ext$-groups in dimensions divisible by 4. Then it follows from the multiplicative structure that there can be no non-trivial differentials.
\end{proof}

%
%
%\addcontentsline{toc}{chapter}{Bibliography}

\end{document}